\documentclass{amsart}

\pdfoutput=1

\makeatletter
\@namedef{subjclassname@1991}{2020 Mathematics Subject Classification}
\makeatother

\setlength\textheight{7.7in}
\setlength\textwidth{6.5in}
\setlength\oddsidemargin{0in}            
\setlength\evensidemargin{0in}
\setlength\parindent{0.25in}
\setlength\marginparwidth{0.8in}

\usepackage{etex,amsfonts, amsmath, amsthm, amssymb, stmaryrd, epsfig, graphics,
  psfrag, latexsym, mathtools, mathrsfs,enumitem, subfig,
  longtable, booktabs, yfonts, centernot,ifthen,eso-pic} 
\usepackage{xcolor}
\usepackage[all,cmtip]{xy}
\usepackage[percent]{overpic}
\usepackage{tikz} \usetikzlibrary{matrix,arrows,shapes,calc}
\definecolor{darkblue}{rgb}{0,0,0.4} 
 \usepackage[colorlinks=true, citecolor=darkblue, filecolor=darkblue, linkcolor=darkblue,urlcolor=darkblue]{hyperref} 
\usepackage[all]{hypcap}
\usepackage{xr-hyper}

\newcounter{OutlineCompletion}
\AddToShipoutPicture{%
 \AtPageLowerLeft{
\begin{tikzpicture}[remember picture, overlay,xshift=4.25in,scale=0.3,xscale=3,yscale=2,yshift=0.3in,xshift=-3cm,every node/.style={outer sep=0pt,inner sep=1pt,scale=0.3}]

\draw[white] (1,0) rectangle (5,3);

\ifthenelse{\theOutlineCompletion>27}
{\node (cob) at (2,0) {$\Cob$};}{}

\ifthenelse{\theOutlineCompletion>27}
{\node (cobe) at (1,0) {$\CobE$};}{}

\ifthenelse{\theOutlineCompletion>35}
{\node (cobd) at (1,1.5) {$\CobD$};}{}

\ifthenelse{\theOutlineCompletion>39}
{\node (cobdmore) at (1,0.75) {$\wh{\CobD}$};}{}

\ifthenelse{\theOutlineCompletion>0}
{\node (ab) at (2,0.5) {$\AbelianGroups$};}{}

\ifthenelse{\theOutlineCompletion>0}
{\node (gab) at (4,0) {${\mGrAbelianGroups}$};}{}

\ifthenelse{\theOutlineCompletion>0}
{\node (kom) at (4.5,0.5) {${\mComplexes}$};}{}

\ifthenelse{\theOutlineCompletion>25}
{\node (burn) at (1.5,1) {$\BurnsideCat$};}{}

\ifthenelse{\theOutlineCompletion>69}
{\node (mcobd) at (2,1.5) {$\mCobD$};}{}

\ifthenelse{\theOutlineCompletion>49}
{\node (mburn) at (2.75,1) {$\mBurnside$};}{}

\ifthenelse{\theOutlineCompletion>0}
{\node (mab) at (3.5,0.5) {$\mAbelianGroups$};}{}

\ifthenelse{\theOutlineCompletion>19}
{\node (sym) at (4,1.5) {$\mSpectra$};}{}

\ifthenelse{\theOutlineCompletion>69}
{\node (mcobds) at (1.5,1.5) {$\underline{\CobD}$};}{}

\ifthenelse{\theOutlineCompletion>75}
{\node (mtR) at (2,3) {$\CCat{N}\ttimes{\mTshape{m}{n}}$};}{}

\ifthenelse{\theOutlineCompletion>54}
{\node (mhR) at (1.2,2.5) {$\mHshape{m}$};}{}

\ifthenelse{\theOutlineCompletion>54}
{\node (miR) at (2.8,2.5) {$\mHshape{n}$};}{}

\ifthenelse{\theOutlineCompletion>70}
{\node (mtsR) at (3.5,3) {$\strictify{\CCat{N}\ttimes\mTshape{m}{n}}$};}{}

\ifthenelse{\theOutlineCompletion>54}
{\node (mtcR) at (4.5,3) {$\mTshape{m}{n}^0$};}{}

\ifthenelse{\theOutlineCompletion>54}
{\node (mhsR) at (3,1.5) {$\mHshape{m}^0$};}{}

\ifthenelse{\theOutlineCompletion>54}
{\node (misR) at (5,1.5) {$\mHshape{n}^0$};}{}

\ifthenelse{\theOutlineCompletion>27}
{\draw[->] (cob) -- (ab);}{}

\ifthenelse{\theOutlineCompletion>29}
{\draw[->,dashed] (cobe) -- (burn);}{}

\ifthenelse{\theOutlineCompletion>39}
{\draw[->,green] (cobdmore) -- (cobd);}{}

\ifthenelse{\theOutlineCompletion>39}
{\draw[->,dashed] (cobd) -- (burn);}{}

\ifthenelse{\theOutlineCompletion>25}
{\draw[->] (burn) -- (ab);}{}

\ifthenelse{\theOutlineCompletion>0}
{\draw[->,blue!50!white] (mab) -- (gab);}{}

\ifthenelse{\theOutlineCompletion>0}
{\draw[->,blue!50!white] (mab) -- (kom);}{}

\ifthenelse{\theOutlineCompletion>0}
{\draw[->] (kom) -- (gab);}{}

\ifthenelse{\theOutlineCompletion>49}
{\draw[->] (mburn) -- (mab);}{}

\ifthenelse{\theOutlineCompletion>39}
{\draw[->] (cobdmore) -- (cobe);}{}

\ifthenelse{\theOutlineCompletion>27}
{\draw[->] (cobe) -- (cob);}{}

\ifthenelse{\theOutlineCompletion>84}
{\draw[->] (mcobd) -- (mburn);}{}

\ifthenelse{\theOutlineCompletion>89}
{\draw[->,brown] (mtR) -- (mcobd);}{}

\ifthenelse{\theOutlineCompletion>86}
{\draw[->] (mhR) -- (mcobd);}{}

\ifthenelse{\theOutlineCompletion>86}
{\draw[->] (miR) -- (mcobd);}{}

\ifthenelse{\theOutlineCompletion>109}
{\draw[->] (mburn) -- (sym);}{}

\ifthenelse{\theOutlineCompletion>19}
{\draw[->] (sym) -- (gab);}{}

\ifthenelse{\theOutlineCompletion>19}
{\draw[->] (sym) -- (kom);}{}

\ifthenelse{\theOutlineCompletion>75}
{\draw[->,blue!50!white] (mhR) -- (mtR);}{}

\ifthenelse{\theOutlineCompletion>75}
{\draw[->,blue!50!white] (miR) -- (mtR);}{}

\ifthenelse{\theOutlineCompletion>70}
{\draw[->,blue!50!white] (mhsR) -- (mtsR);}{}

\ifthenelse{\theOutlineCompletion>70}
{\draw[->,blue!50!white] (misR) -- (mtsR);}{}

\ifthenelse{\theOutlineCompletion>54}
{\draw[->,blue!50!white] (mhsR) -- (mtcR);}{}

\ifthenelse{\theOutlineCompletion>54}
{\draw[->,blue!50!white] (misR) -- (mtcR);}{}

\ifthenelse{\theOutlineCompletion>0}
{\draw[->,red!70!white] (mab) -- (ab);}{}

\ifthenelse{\theOutlineCompletion>49}
{\draw[->,red!70!white] (mburn) -- (burn);}{}

\ifthenelse{\theOutlineCompletion>69}
{\draw[->,red!70!white] (mcobds) -- (cobd);}{}

\ifthenelse{\theOutlineCompletion>69}
{\draw[->,green] (mcobd) -- (mcobds);}{}

\ifthenelse{\theOutlineCompletion>109}
{\draw[->,brown] (mtsR) -- (sym);}{}

\ifthenelse{\theOutlineCompletion>110}
{\draw[->,brown] (mtcR) -- (sym);}{}

\ifthenelse{\theOutlineCompletion>109}
{\draw[->] (mhsR) -- (sym);}{}

\ifthenelse{\theOutlineCompletion>109}
{\draw[->] (misR) -- (sym);}{}

\ifthenelse{\theOutlineCompletion>75}
{\draw[->,green] (mtR) -- (mtsR);}{}

\ifthenelse{\theOutlineCompletion>54}
{\draw[->,green] (mhR) -- (mhsR);}{}

\ifthenelse{\theOutlineCompletion>54}
{\draw[->,green] (miR) -- (misR);}{}

\ifthenelse{\theOutlineCompletion>99}
{\draw[->,brown] (mtsR) -- (mab);}{}

\ifthenelse{\theOutlineCompletion>59}
{\draw[->,brown] (mtcR) -- (kom);}{}

\ifthenelse{\theOutlineCompletion>59}
{\draw[->] (mhsR) -- (mab);}{}

\ifthenelse{\theOutlineCompletion>59}
{\draw[->] (misR) -- (mab);}{}

\ifthenelse{\theOutlineCompletion>110}
{\draw[->,dotted,brown,thick] ($(mtsR)!0.3!(sym)$) -- ($(mtcR)!0.3!(sym)$);}{}

\ifthenelse{\theOutlineCompletion>99}
{\draw[->,dotted,brown,thick] ($(mtsR)!0.8!(mab)$) -- ($(mtcR)!0.8!(kom)$);}{}
\end{tikzpicture}
}}

\captionsetup[subfloat]{labelfont={small,bf},font=normalsize,labelformat=simple,labelsep=colon}


\graphicspath{{draws/}{}}

\numberwithin{equation}{section}

\newtheorem{thm}{Theorem}
\newtheorem{theorem}[thm]{Theorem}

\newtheorem{lem}{Lemma}[section]               
\newtheorem{lemma}[lem]{Lemma}

\newtheorem{corollary}[lem]{Corollary}               

\newtheorem{proposition}[lem]{Proposition}
\newtheorem{citethm}[lem]{Theorem}
\theoremstyle{definition}

\newtheorem{definition}[lem]{Definition} 
  
\newtheorem{conjecture}[lem]{Conjecture}  
  
\newtheorem{construction}[lem]{Construction}
\theoremstyle{remark}     

\newtheorem{principle}[lem]{Principle}
\newtheorem{remark}[lem]{Remark}

\newtheorem{example}[lem]{Example}

\newtheorem{convention}[lem]{Convention}

\numberwithin{figure}{section}
\numberwithin{table}{section}



\newcommand{\R}{\mathbb{R}}

\newcommand{\N}{\mathbb{N}}

\newcommand{\lsub}[2]{{}_{#1}#2}

\newcommand{\mc}{\mathcal}

\newcommand{\wh}{\widehat}
\newcommand{\wt}{\widetilde}

\renewcommand{\emptyset}{\varnothing}

\newcommand{\Wmirror}{\overline}

\newcommand{\smas}{\wedge}

\newcommand{\from}{\colon}
\newcommand{\into}{\hookrightarrow}
\newcommand{\too}{\longrightarrow}

\newcommand{\onto}{\twoheadrightarrow}

\renewcommand{\th}{^{\text{th}}}
\newcommand{\st}{^{\text{st}}}

\renewcommand{\hat}{\widehat}

\DeclareMathOperator{\sh}{sh}

\DeclareMathOperator{\Diff}{Diff}

\DeclareMathOperator{\Ob}{Ob}
\DeclareMathOperator{\Id}{Id}

\DeclareMathOperator{\colim}{colim}

\DeclareMathOperator{\Hom}{Hom}
\DeclareMathOperator{\HomComplex}{\underline{Hom}}
\DeclareMathOperator{\Fun}{Fun}

\DeclareMathOperator{\Tor}{Tor}

\DeclareMathOperator{\twoHom}{2Hom}

\newcommand{\dg}{\textit{dg} }



\newcommand{\KhCx}{\mc{C}_{\mathit{Kh}}}

\newcommand{\Cat}{\mathscr{C}}

\newcommand{\Dat}{\mathscr{D}}
\newcommand{\Eat}{\mathscr{E}}

\newcommand{\Forget}{\mathcal{F}_{\!\!\scriptscriptstyle\mathit{orget}}}

\newcommand{\op}{\mathrm{op}}

\newcommand{\gr}{\mathrm{gr}}
\newcommand{\intgr}{\gr_{q}}

\newcommand{\KhSpace}{\mathcal{X}_\mathit{Kh}}

\newcommand{\CubeCat}[1]{\underline{2}^{#1}}





\newcommand{\co}{\colon}
\newcommand{\bdy}{\partial}
\newcommand{\RR}{\R}
\newcommand{\DD}{\mathbb{D}}

\newcommand{\pt}{\mathrm{pt}}

\newcommand{\NN}{\N}
\newcommand{\ZZ}{\mathbb{Z}}

\newcommand{\mathcenter}[1]{\vcenter{\hbox{$#1$}}}





\DeclareMathOperator{\Cone}{Cone}

\newcommand{\BurnsideCat}{\mathscr{B}}
\newcommand{\CCat}[1]{\CubeCat{#1}}
\newcommand{\AbelianGroups}{\mathsf{Ab}}
\newcommand{\mAbelianGroups}{\underline{\AbelianGroups}}
\newcommand{\GrAbelianGroups}{\mathsf{Ab}_*}
\newcommand{\mGrAbelianGroups}{\underline{\GrAbelianGroups}}

\newcommand{\SimplicialSets}{\mathsf{SSet}}
\newcommand{\Spectra}{\mathscr{S}}
\newcommand{\mSpectra}{\underline{\Spectra}}
\newcommand{\Complexes}{\mathsf{Kom}}
\newcommand{\mComplexes}{\underline{\Complexes}}
\newcommand{\Total}[1]{\mathsf{Tot}(#1)}

\newcommand{\Sets}{\mathsf{Sets}}
\newcommand{\PermuCat}{\mathsf{Permu}}

\newcommand{\SphereS}{\mathbb{S}} 

\newcommand{\Cob}{\mathsf{Cob}}
\newcommand{\CobE}{\mathsf{Cob}_e}
\newcommand{\CobD}{\mathsf{Cob}_d}

\newcommand{\HKKa}{\mathit{HKK}}


\DeclareMathOperator{\hocolim}{hocolim}




\newcommand{\KTfunc}{\mc{C}_{\mathit{Kh}}}
\newcommand{\KTalg}[1]{\mathcal{H}^{#1}}
\newcommand{\FlatTangles}[2]{\lsub{#1}\hat{\mathsf{B}}_#2}
\newcommand{\Tangles}[2]{\lsub{#1}\mathsf{C}_#2} 
\newcommand{\TangleDiags}[2]{\lsub{#1}\mathsf{D}_#2} 
\newcommand{\Crossingless}[1]{{\mathsf{B}}_{#1}}

\newcommand{\mHshape}[1]{\mathcal{S}_{#1}}
\newcommand{\mTshape}[2]{{}_{#1}\mskip-0.75\thinmuskip\mathcal{T}_{#2}}
\newcommand{\mTshapeStct}[2]{{}_{#1}^{\vphantom{0}}\mskip-0.75\thinmuskip\mathcal{T}_{#2}}
\newcommand{\mGlue}[3]{\mathcal{U}_{#1,#2,#3}}

\newcommand{\mCobD}{\wt{\underline{\mathsf{Cob}_d}}}
\newcommand{\mBurnside}{\underline{\BurnsideCat}}
\newcommand{\ttimes}{\tilde{\times}}

\newcommand{\GrCat}{\mathscr{G}}

\newcommand{\MultiCat}{\Cat}
\newcommand{\MultiDat}{\Dat}


\newcommand{\mV}{\underline{V}_{\HKKa}}

\newcommand{\mHfunc}[1]{\underline{\mathsf{MC}}_{#1}}
\newcommand{\mTfunc}[1]{\underline{\mathsf{MC}}^{\flat}_{#1}}
\newcommand{\mTfuncNF}[1]{{\underline{\mathsf{MC}}}_{#1}}
\newcommand{\mHinv}[1]{\underline{\mathsf{MB}}_{#1}}
\newcommand{\mTinvNF}[1]{{\underline{\mathsf{MB}}}_{#1}}
\newcommand{\mGlFunc}{\underline{G}}

\newcommand{\KTSpecCat}[1]{\mathscr{H}^{#1}}
\newcommand{\KTSpecRing}[1]{\mathscr{H}^{#1}_{\scriptscriptstyle\mathit{ring}}}
\newcommand{\KTSpecBim}[1]{\mathscr{X}(#1)}
\newcommand{\KTSpecBimRing}[1]{\mathscr{X}_{\scriptscriptstyle\mathit{module}}(#1)}

\newcommand{\DTP}{\otimes^{\mathbb{L}}}

\newcommand{\strictify}[1]{({#1})^0}
\newcommand{\npstrictify}[1]{#1^0}
\newcommand{\thicken}[1]{\wt{#1}}

\newcommand{\LL}{\mathbb{L}} 
\newcommand{\std}{\mathrm{std}}

\newcommand{\SModule}{\mathscr{M}}
\newcommand{\SNodule}{\mathscr{N}}
\newcommand{\cM}{\mathcal{M}}
\newcommand{\cN}{\mathcal{N}}
\newcommand{\symGrp}{\mathfrak{S}}

\newcommand{\aTree}{\ensuremath{\tikz[baseline=(char.base)]{
            \draw (0,0) -- (1ex,1.2ex) -- (1ex,2ex);
            \draw (1.3ex,0) -- (1ex,1.2ex) -- (2ex,0);
            \draw (0.7ex,0) -- (0.5ex,0.6ex);
          \node[inner sep=0pt,outer sep=0pt] (char) at (0,0) {};}}}

\newcommand{\sint}{\begingroup\textstyle \coprod\!\endgroup}
\newcommand{\GmHinv}[1]{\underline{\mathsf{MB}}^\bullet_{#1}}
\newcommand{\GmTinvNF}[1]{\underline{\mathsf{MB}}^\bullet_{#1}}
\newcommand{\GG}{G^\bullet}
\newcommand{\GKTSpecCat}[1]{\KTSpecCat{#1}}
\newcommand{\GKTSpecBim}[1]{\KTSpecBim{#1}}

\newcommand{\Roz}[1]{\mathrm{H}^{\mathrm{st}}(\mathbb{S}^2\times\mathbb{S}^1,#1)}
\DeclareMathOperator{\THH}{THH}

\begin{document}


\title{Khovanov spectra for tangles}

\author{Tyler Lawson}
\thanks{\texttt{TL was supported by NSF Grant DMS-1206008 and NSF FRG Grant DMS-1560699.}}
\email{\href{mailto:tlawson@math.umn.edu}{tlawson@math.umn.edu}}
\address{Department of Mathematics, University of Minnesota, Minneapolis, MN 55455}

\author{Robert Lipshitz}
\thanks{\texttt{RL was supported by NSF Grant DMS-1149800 and NSF FRG Grant DMS-1560783.}}
\email{\href{mailto:lipshitz@uoregon.edu}{lipshitz@uoregon.edu}}
\address{Department of Mathematics, University of Oregon, Eugene, OR 97403}

\author{Sucharit Sarkar}
\thanks{\texttt{SS was supported by NSF Grant DMS-1643401 and NSF FRG Grant DMS-1563615.}}
\email{\href{mailto:sucharit@math.ucla.edu}{sucharit@math.ucla.edu}}
\address{Department of Mathematics, University of California, Los Angeles, CA 90095}

\subjclass{Primary: \href{https://mathscinet.ams.org/mathscinet/msc/msc2020.html?t=&s=57K18}{57K18}, \href{https://mathscinet.ams.org/mathscinet/msc/msc2020.html?t=&s=55P43}{55P43}. Secondary: \href{https://mathscinet.ams.org/mathscinet/msc/msc2020.html?t=&s=57K16}{57K16}, \href{https://mathscinet.ams.org/mathscinet/msc/msc2020.html?t=&s=55P42}{55P42}}

\keywords{Khovanov homology, Floer homotopy, ring spectra, tangles}

\date{August 13, 2021}

\begin{abstract}
  We define stable homotopy refinements of Khovanov's arc algebras and tangle invariants.
\end{abstract}
\maketitle
\vspace{-1cm}


\tableofcontents


\section{Introduction}\label{sec:intro}

\subsection{Context}
Quantum topology began in the 1980s with the Jones
polynomial~\cite{Jones-knot-poly}, and Witten's reinterpretation of it
via Yang-Mills theory~\cite{Witten-knot-Jones}. Witten's work was at a
physical level of rigor, but Atiyah~\cite{Atiyah-knot-book},
Reshetikhin-Turaev~\cite{RT-knot-quantum}, and others introduced
mathematically rigorous definitions of topological field theories and
related them to both the Jones polynomial and deep questions in
representation theory.

Around the same time, topological field theories also began to appear
in dimension $4$, in the work of Donaldson~\cite{Donaldson-gauge-poly},
Floer~\cite{Floer-gauge-instanton}, and others. Unlike the Jones polynomial, these
4-dimensional invariants all required partial differential equations
to define. (Curiously, while Donaldson's and Floer's invariants were
archetypal examples for what Witten called topological field
theories~\cite{Witten-gauge-TQFT}, they do not satisfy the axioms
mathematicians came to insist on for topological field theories.) The
connection between these invariants and representation theory was also
less apparent.

In the 1990s, Crane-Frenkel proposed that the Jones polynomial and its
siblings might be extended to 4-dimensional topological field theories
via ``a categorical version of a Hopf
algebra''~\cite{CF-kh-categorify}. Inspired
by this suggestion, Khovanov categorified the Jones
polynomial~\cite{Kho-kh-categorification}. Rasmussen showed that
this categorification could be used to study smooth knot concordance
and even to deduce the existence of exotic smooth structures on $\RR^4$
without recourse to gauge theory~\cite{Ras-kh-slice}.

Answering a question of Khovanov's, Jacobsson proved that Khovanov
homology extends to a (3+1)-dimensional topological field
theory~\cite{Jac-kh-cobordisms}. His proof, which involved explicitly
checking the myriad \emph{movie moves} relating different movie
presentations of a surface, was long and intricate.
Khovanov~\cite{Kho-kh-tangles,Kho-kh-cobordism} and, independently,
Bar-Natan~\cite{Bar-kh-tangle-cob} gave simpler proofs of
functoriality of Khovanov homology, by extending it downwards, to
tangles (as Reshetikhin-Turaev had done for the Jones
polynomial). (Their tangle invariants are different, and since then
several more Khovanov homology invariants of tangles have also been
given~\cite{APS-kh-tangle,CK-kh-tangle,BS-kh-tangle,Roberts-kh-tangle}.)
These tangle invariants also led to categorifications of quantum
groups~\cite{Lauda-kh-sl2,KL-kh-cat-gp,Rouquier-Kh-cat} and tensor
products of representations~\cite{Webster-kh-tensor}, and many other
interesting advances.

Returning to gauge theory and related invariants,
in the 1990s, Cohen-Jones-Segal proposed a program to give stable
homotopy refinements of Floer homology groups, in certain
cases~\cite{CJS-gauge-floerhomotopy}. This program has yet to
be carried out rigorously, but using other techniques, stable homotopy
refinements have been given for certain Floer
homologies~\cite{Man-gauge-swspectrum,KM-gauge-swspectrum,Cohen10:Floer-htpy-cotangent,Kragh:transfer-spectra,KLS-gauge-unfolded}. The
Cohen-Jones-Segal program is in two steps: first they use the Floer
data to build a \emph{framed flow category}, and then they use the
framed flow category to build a space; it is the first step for which
technical difficulties have not yet been resolved.

In a previous paper, we built a framed flow category combinatorially
and then used the second step of the Cohen-Jones-Segal program to
define a Khovanov stable homotopy
type~\cite{RS-khovanov}. Hu-Kriz-Kriz gave another construction of a
Khovanov stable homotopy type, using the Elmendorf-Mandell infinite
loop space machine~\cite{HKK-Kh-htpy}. In another previous paper we
were able to show that these two constructions give equivalent
invariants~\cite{LLS-khovanov-product}. Hu-Kriz-Kriz's construction
factors through the embedded cobordism category of
$(\RR^2,[0,1]\times\RR^2)$, a point that will be important in our
construction of tangle invariants below.

Computations show that this Khovanov stable homotopy type is strictly stronger than Khovanov
homology~\cite{RS-steenrod,Seed-Kh-square} and can be used to give
additional concordance
information~\cite{RS-s-invariant,LLS-khovanov-product}. (A homotopy-theoretic
lift of Khovanov homology which does not have more information than
Khovanov homology was given by
Everitt-Turner~\cite{ET-kh-spectrum,ELST-trivial}.)

We would like to use the Khovanov homotopy type to study smoothly
embedded surfaces in $\RR^4$. Following Khovanov and Bar-Natan, as a
step towards this goal, in this paper we construct an extension of the
Khovanov stable homotopy type to tangles.

\begin{remark}
  Hu-Kriz-Somberg have outlined a construction of a stable homotopy
  type refining $\mathfrak{sl}_n$ Khovanov-Rozansky
  homology~\cite{HKS-Kh-sln}. Their construction passes through
  \emph{oriented tangles}, i.e., tangles in $[0,1]\times \mathbb{D}^2$
  every strand of which runs from $\{0\}\times\mathbb{D}^2$ to
  $\{1\}\times\mathbb{D}^2$. At the time of writing, their
  construction is restricted to a homotopy type localized at a
  ``large'' prime $p$ (depending on $n$).
\end{remark}

\subsection{Statement of results}
In this paper, we give two extensions of the
Khovanov homotopy type to tangles. The first is combinatorial, and has
the form of a multifunctor $\mTinvNF{T}$ from a particular
multicategory to the Burnside category. The functor $\mTinvNF{T}$ is
well-defined up to a notion of stable equivalence
(Theorem~\ref{thm:comb-inv}). (For the special case of knots, this
essentially reduces to the combinatorial invariant described in a
previous paper~\cite{LLS-cube}.) To summarize:

\begin{theorem}
  Given a $(2m,2n)$-tangle $T$ with $N$ crossings, there is an
  associated multifunctor
  \[
    \mTinvNF{T}\co \CCat{N}\ttimes\mTshape{m}{n}\to \mBurnside.
  \]
  Up to stable equivalence, $\mTinvNF{T}$ is an invariant of the isotopy
  class of $T$. The composition of $\mTinvNF{T}$ with the forgetful map
  $\mBurnside\to\mAbelianGroups$ is identified with Khovanov's tangle
  invariant~\cite{Kho-kh-tangles}.
\end{theorem}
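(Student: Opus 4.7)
The plan is to construct $\mTinvNF{T}$ in two stages and then verify the three claims (existence, invariance, and compatibility with Khovanov's complex) in turn. First I would build a cube of flat tangles: labelling the $N$ crossings of $T$ and picking a 0- or 1-resolution at each yields, for every $v\in\{0,1\}^N$, a flat $(2m,2n)$-tangle $T_v$, and each edge of $\CCat{N}$ is a saddle cobordism between the corresponding flat tangles. For every $v$, Khovanov's tangle invariant assigns to $T_v$ a bimodule over the arc algebras $\KTalg{m}$ and $\KTalg{n}$. The multicategory $\mTshape{m}{n}$ is designed precisely so that a multifunctor to $\mAbelianGroups$ encodes exactly this bimodule structure. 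So the first step is to upgrade the standard Khovanov assignment $v\mapsto $ bimodule into a multifunctor $\CCat{N}\ttimes\mTshape{m}{n}\to\mAbelianGroups$ where the $\CCat{N}$-direction records the saddle differentials; this is essentially a repackaging of Khovanov's tangle complex of bimodules.

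The second, and central, step is to lift this multifunctor from $\mAbelianGroups$ to $\mBurnside$. For a flat tangle, the underlying free abelian group has a canonical basis (Khovanov generators labelled by orientations of the circles and arcs in the planar closure against a standard cap/cup set); I would define the Burnside correspondences on generators directly, exactly as in the closed case, and assemble them so that each 2-dimensional face of $\CCat{N}$ carries a specified natural isomorphism (ladybug matching) of compositions of correspondences, and each 3-dimensional face commutes strictly. The multicategorical structure maps of $\mTshape{m}{n}$ (capture the horizontal composition of bimodule multiplications) are handled analogously: on each generator, multiplication in $\KTalg{m}$ is already described combinatorially by surgery/merge operations on the planar picture, and these lift tautologically to spans. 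Checking that all the relevant squares commute up to a canonical (and coherent) 2-morphism in $\mBurnside$ is the main technical obstacle — it amounts to verifying a finite list of local-model coherences of the ladybug-type matchings in the presence of the horizontal multiplicative structure, and is where one has to be careful about signs and orderings of crossings.

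For invariance, I would reduce to a finite list of local moves: the three Reidemeister moves performed inside $(0,1)\times\DD^2$ away from the tangle boundary. For each move I would exhibit an explicit stable equivalence of multifunctors, by producing intermediate cubes of flat tangles and Burnside correspondences that realise the ``cancellation of a delta-pair'' (for RI and RII) or the known chain homotopy equivalence (for RIII). These constructions are the Burnside-enriched, $\mTshape{m}{n}$-equivariant analogues of the invariance arguments already carried out in the closed case and in the earlier paper on the combinatorial cube \cite{LLS-cube}, with the added task that every intermediate assignment and homotopy must also respect the multicategory structure; this last point is what makes it more than a routine adaptation, and I would expect it to be the most delicate part.

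Finally, compatibility with Khovanov's invariant is formal: composing $\mTinvNF{T}$ with the forgetful multifunctor $\mBurnside\to\mAbelianGroups$ collapses each span to the free abelian group on its apex with signed structure maps, which by construction recovers, face by face, the Khovanov differentials and bimodule actions on $T_v$, hence recovers Khovanov's complex of $(\KTalg{m},\KTalg{n})$-bimodules from \cite{Kho-kh-tangles}. Throughout, the appeal to Theorem~\ref{thm:comb-inv} guarantees that ``up to stable equivalence'' is the correct notion making the construction well-defined independently of auxiliary choices (orderings of crossings, choices in the ladybug matching, etc.).
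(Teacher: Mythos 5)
Your proposal reaches the same destination but by a genuinely different route, and in doing so it walks straight into the difficulty the paper's construction is specifically designed to avoid. You propose to define the Burnside correspondences directly on Khovanov generators and then verify, by hand, that the ladybug-type $2$-isomorphisms between different composites of correspondences are coherent — for the cube faces, for the multiplication maps, and for all mixtures of the two. You flag this as ``the main technical obstacle'' and describe it as ``a finite list of local-model coherences,'' but you do not explain why it would reduce to a finite list: the source $\CCat{N}\ttimes\mTshape{m}{n}$ is a groupoid-enriched multicategory (a canonical thickening), and a multifunctor out of it must be compatible with the $2$-morphisms that compare arbitrarily long compositions of basic multimorphisms, not just the square and cube faces. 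As written, your proposal does not engage with this enrichment at all, and nothing in the closed-knot case (which your description closely tracks) tells you how to handle coherence once the arc-algebra multiplication is entangled with the cube differentials.

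The paper resolves this by interposing the divided-cobordism multicategory $\mCobD$. One first constructs $\mTfuncNF{T}\co\CCat{N}\ttimes\mTshape{m}{n}\to\mCobD$ (cobordism-level, no Burnside data yet), and then applies the single multifunctor $\mV\co\mCobD\to\mBurnside$. The whole point of the divided decoration is Lemma~\ref{lem:div-cob-loop}: a loop of divided cobordisms acts trivially on $H_1$, so by Proposition~\ref{prop:CobD-to-Burnside} the Khovanov--Burnside assignment $V_{\HKKa}$ descends from $\CobE$ to $\CobD$, where $2$-morphism sets have at most one element. Consequently all of your ladybug coherences become automatic — there is nothing to check, because the relevant isomorphisms are unique. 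This replaces your open-ended coherence verification with a one-time geometric observation about divides on surfaces. For the invariance and the identification with Khovanov's complex, your outline (Reidemeister moves via cancellation of acyclic pieces; forgetting to $\mAbelianGroups$) matches the paper's Theorem~\ref{thm:comb-inv} and Lemma~\ref{lem:is-Kh} in spirit, although the paper packages the Reidemeister cancellations as insular subfunctors (Definition~\ref{def:insular-subfunctor} and Lemma~\ref{lem:introvert}) rather than ad hoc intermediate cubes. In short: your plan is plausible in outline but, without something playing the role of $\mCobD$, the central coherence step is a genuine gap rather than a finite verification.
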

(This is restated and proved as Lemma~\ref{lem:is-Kh} and Theorem~\ref{thm:comb-inv}, below.)

Next, we use the Elmendorf-Mandell machine to define a spectral
category (category enriched over spectra) $\KTSpecCat{m}$ so that the
homology of $\KTSpecCat{m}$ is the Khovanov arc algebra
$H^m$. (After this introduction we denote the algebra $H^m$ by
$\KTalg{m}$, to avoid conflicting with the notation for singular cohomology.) We
then turn $\mTinvNF{T}$ into a (spectral) bimodule $\KTSpecBim{T}$ over
$\KTSpecCat{m}$ and $\KTSpecCat{n}$, so that the singular chain
complex of $\KTSpecBim{T}$ is quasi-isomorphic, as a complex of
$(H^m,H^n)$-bimodules, to the Khovanov tangle invariant of $T$. We
then prove:
\begin{theorem}
  Up to equivalence of
  $(\KTSpecCat{m},\KTSpecCat{n})$-bimodules, $\KTSpecBim{T}$ is an
  invariant of the isotopy class of $T$. Further, given a $(2n,2p)$-tangle $T'$,
  \[
    \KTSpecBim{T'\circ T}\simeq \KTSpecBim{T}\otimes_{\KTSpecCat{n}}^\LL\KTSpecBim{T'}
  \]
  (where tensor product denotes the tensor product of module spectra).
\end{theorem}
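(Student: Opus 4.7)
The plan is to deduce both statements by transporting structural results about the combinatorial multifunctor $\mTinvNF{T}$ through the Elmendorf--Mandell machine. By construction, $\KTSpecBim{T}$ is obtained from $\mTinvNF{T}$ by post-composing with a fixed chain of functors $\mBurnside \to \permFunc \to \mSpectra$ and then strictifying the resulting multifunctor into an honest bimodule over $\KTSpecCat{m}$ and $\KTSpecCat{n}$. Each step in this pipeline is known (or will be shown) to carry stable equivalences of multifunctors to equivalences of the target structures, so the theorem will reduce to a corresponding combinatorial statement one level down.

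For the invariance claim, the previous theorem already asserts that $\mTinvNF{T}$ is well-defined up to stable equivalence on the multicategory $\CCat{N}\ttimes\mTshape{m}{n}$. First I would check that the Elmendorf--Mandell machine, viewed as a multifunctor from $\mBurnside$-valued multifunctors to spectral bimodules, preserves stable equivalences; this is a formal consequence of Elmendorf--Mandell being a multiplicative weak equivalence from permutative categories to symmetric spectra, combined with cofibrancy properties of the shape multicategory. Then invariance of $\KTSpecBim{T}$ follows immediately.

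For the gluing formula, the main step is to identify a bar-type resolution. The shape multicategory $\mTshape{m}{n}$ is designed so that its objects and multimorphisms encode the arc-algebra bimodule structure at each boundary: the endpoints in $\{0\}\times\mathbb{D}^2$ feed into $\KTSpecCat{m}$ and the endpoints in $\{1\}\times\mathbb{D}^2$ feed into $\KTSpecCat{n}$. Given $T'\circ T$ with $N+N'$ crossings, each resolution of the composite tangle factors uniquely as a pair of resolutions of $T$ and $T'$ that match along the middle $2n$ endpoints. I would package this factorization as a multifunctor from a gluing multicategory (combining $\CCat{N}\ttimes\mTshape{m}{n}$ and $\CCat{N'}\ttimes\mTshape{n}{p}$ over the common ``middle'') and show that the resulting composite multifunctor is stably equivalent to $\mTinvNF{T'\circ T}$; this is essentially a repackaging of Khovanov's bimodule gluing formula at the level of Burnside functors, which can be verified on resolutions since the underlying abelian group statement is already known. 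On the spectral side, the strictified image of this gluing multifunctor computes precisely the two-sided bar construction $B(\KTSpecBim{T},\KTSpecCat{n},\KTSpecBim{T'})$, which is a model for the derived tensor product. The hard part will be this last coherence check: verifying that the strictification procedure applied to the gluing multicategory genuinely produces the bar construction (rather than some other simplicial resolution) and that the relevant cofibrancy/flatness holds so that the bar construction computes the derived, rather than underived, tensor product. Once that comparison is set up, the desired equivalence of bimodule spectra follows by combining it with the combinatorial gluing isomorphism.
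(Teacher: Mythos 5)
Your treatment of the invariance claim is essentially the paper's: one constructs, via the Elmendorf--Mandell machine and cofibrant replacement, a functor from stable equivalences of Burnside-valued multifunctors to equivalences of spectral bimodules (this is Proposition~\ref{prop:equiv-gives-equiv}), then applies the combinatorial invariance theorem. The paper makes this precise by writing down the cofiber sequences induced by a quasi-isomorphism of Burnside functors and using the long exact sequence in homotopy, but your outline is consistent with that.

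The gluing part has a genuine gap. You propose to show that the strictified gluing multifunctor ``computes precisely the two-sided bar construction $B(\KTSpecBim{T},\KTSpecCat{n},\KTSpecBim{T'})$,'' and you correctly flag this coherence check as the hard part. But this is not what the gluing multicategory produces: its value on the objects $(a,T_1T_2,b)$ is by construction the glued invariant $\KTSpecBim{T'\circ T}$, and the extra multimorphisms merely give a \emph{map} from the (underived, then derived) tensor product to $\KTSpecBim{T'\circ T}$. Nothing in the construction identifies the gluing multicategory with a simplicial bar resolution, and I don't see how one would prove such an identification directly. The paper's route is different and circumvents this entirely: it constructs the single map $\KTSpecBim{T}\otimes^{\LL}_{\KTSpecCat{n}}\KTSpecBim{T'}\to\KTSpecBim{T'\circ T}$ induced by the gluing multimorphisms, shows that on singular chains this map recovers Khovanov's isomorphism $\KTfunc(T_1)\otimes_{\KTalg{n}}\KTfunc(T_2)\cong\KTfunc(T_1T_2)$, and then applies the homology Whitehead theorem (Theorem~\ref{citethm:homologywhitehead}) using connectivity of the spectra. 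Two ingredients are missing from your proposal: (1) the Whitehead argument, which replaces the direct equivalence you seek; and (2) the projectivity (``sweetness'') of the classical Khovanov bimodules $\KTfunc(T)$ (Lemma~\ref{lem:Khovanov-DTP}), which is what ensures the underived tensor product of chain complexes already computes the derived one, so that Khovanov's gluing isomorphism matches the derived tensor product on homology. Your mention of ``cofibrancy/flatness'' gestures at this, but conflates the spectrum-level cofibrancy (needed so the bar construction models the derived smash over $\KTSpecCat{n}$, as in Lemma~\ref{lem:pointwise-cofibrant}) with the chain-level projectivity, and neither alone is sufficient.
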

(This is restated and proved as Theorems~\ref{thm:spec-invariance} and~\ref{thm:gluing}, below.)

The outline of the construction is as follows:
\begin{enumerate}
\item We construct a multicategory $\mCobD$, enriched in groupoids, of
  \emph{divided cobordisms}, so that:
  \begin{enumerate}
  \item there is at most one $2$-morphism between any pair of
    morphisms in $\mCobD$;
  \item the Khovanov-Burnside functor $V_{\HKKa}$ from the embedded
    cobordism category to the Burnside category induces a functor $\mV$ from
    $\mCobD$ to the Burnside category; and
  \item the cobordisms involved in the Khovanov arc algebras and
    tangle invariants have (essentially canonical) representatives in
    $\mCobD$.
  \end{enumerate}
  (Sections~\ref{sec:CobD} and~\ref{sec:mCobD}.)
\item We define an \emph{arc-algebra shape multicategory}
  $\mHshape{n}^0$ and \emph{tangle shape multicategory}
  $\mTshapeStct{m}{n}^0$ so that the Khovanov arc algebras and
  tangle invariants are equivalent to multifunctors
  $\mHshape{n}^0\to \mAbelianGroups$ and
  $\mTshapeStct{m}{n}^0\to\mComplexes$. There are also groupoid-enriched
  versions of $\mHshape{n}$ and $\mTshape{m}{n}$, and projection maps
  $\mHshape{n}\to\mHshape{n}^0$, $\mTshape{m}{n}\to\mTshapeStct{m}{n}^0$.
  (Sections~\ref{sec:multi-vec}
  and~\ref{sec:shape}.)
\item The functor $\mHshape{n}^0\to \mAbelianGroups$ factors through a
  functor $\mHshape{n}\to\mCobD$. Similarly, the tangle invariant
  $\mTshapeStct{m}{n}^0\to\mComplexes$ factors through a functor
  $\CCat{N}\ttimes\mTshape{m}{n}\to \mCobD$ from (an appropriate kind
  of) product of $\mTshape{m}{n}$ and a cube. So, we can compose with
  $\mV$ to get functors $\mHinv{n}\co \mHshape{n}\to\mBurnside$
  and $\mTinvNF{T}\co \CCat{N}\ttimes\mTshape{m}{n}\to\mBurnside$.  We
  also digress to note that we can view $\mTinvNF{T}$ as a tangle
  invariant in an appropriate derived
  category. (Section~\ref{sec:tang-to-burn}.)
\item Using the Elmendorf-Mandell $K$-theory machine and rectification
  results, we can turn $\mHinv{n}$ and $\mTinvNF{T}$ into functors
  $\mHshape{n}^0\to\mSpectra$ and $\mTshapeStct{m}{n}^0\to\mSpectra$. We
  reinterpret these functors as a spectral category and spectral
  bimodule, respectively. Whitehead's theorem combined with familiar
  invariance arguments implies that the functor
  $\mTshapeStct{m}{n}^0\to\mSpectra$ is a tangle
  invariant. (Section~\ref{sec:comb-to-top}.)
\item The gluing theorem for tangles follows by considering a map from
  a larger multicategory to $\mCobD$; the corresponding result for the
  Khovanov bimodules; projectivity (sweetness) of the Khovanov
  bimodules; and, again, a version of Whitehead's
  theorem. (Section~\ref{sec:gluing}.)
\end{enumerate}
We precede these constructions with a review of Khovanov's tangle
invariants and some algebraic topology background
(Section~\ref{sec:alg-top}), and follow it with some modest structural
applications (Section~\ref{sec:appl}). We concentrate the discussion of quantum gradings in Section~\ref{sec:q-gr}.

The outline of the construction is summarized by
Figure~\ref{fig:big-outline}. The partial diagrams at the bottom of
the pages, starting on page~\pageref{first-page-num-diag}, track the
progress of our construction.
\begin{figure}
\centering
\begin{tikzpicture}[xscale=3,yscale=2, every node/.style={outer sep=0pt,inner sep=1pt}]


\node (cob) at (2,0) {$\Cob$};
\node (cobe) at (1,0) {$\CobE$};
\node (cobd) at (1,1.5) {$\CobD$};
\node (cobdmore) at (1,0.75) {$\wh{\CobD}$};

\node (ab) at (2,0.5) {$\AbelianGroups$};
\node (gab) at (4,0) {${\mGrAbelianGroups}$};
\node (kom) at (4.5,0.5) {${\mComplexes}$};
\node (burn) at (1.5,1) {$\BurnsideCat$};

\node (mcobd) at (2,1.5) {$\mCobD$};
\node (mburn) at (2.75,1) {$\mBurnside$};
\node (mab) at (3.5,0.5) {$\mAbelianGroups$};
\node (sym) at (4,1.5) {$\mSpectra$};

\node (mcobds) at (1.5,1.5) {$\underline{\CobD}$};


\node (mtR) at (2,3) {$\CCat{N}\ttimes{\mTshape{m}{n}}$};
\node (mhR) at (1.2,2.5) {$\mHshape{m}$};
\node (miR) at (2.8,2.5) {$\mHshape{n}$};


\node (mtsR) at (3.5,3) {$\strictify{\CCat{N}\ttimes\mTshape{m}{n}}$};
\node (mtcR) at (4.5,3) {$\mTshapeStct{m}{n}^0$};
\node (mhsR) at (3,1.5) {$\mHshape{m}^0$};
\node (misR) at (5,1.5) {$\mHshape{n}^0$};

\draw[->] (cob) -- (ab);
\draw[->,dashed] (cobe) -- (burn);
\draw[>->,green] (cobdmore) -- (cobd);
\draw[->,dashed] (cobd) -- (burn);
\draw[->] (burn) -- (ab);
\draw[left hook->,blue!50!white] (mab) -- (gab);
\draw[left hook->,blue!50!white] (mab) -- (kom);
\draw[->] (kom) -- (gab);
\draw[->] (mburn) -- (mab);
\draw[->] (cobdmore) -- (cobe);
\draw[->] (cobe) -- (cob);
\draw[->] (mcobd) -- (mburn);


\draw[->,brown,line width=1.5pt] (mtR) -- (mcobd);
\draw[->] (mhR) -- (mcobd);
\draw[->] (miR) -- (mcobd);
\draw[->] (mburn) -- (sym);
\draw[->] (sym) -- (gab);
\draw[->] (sym) -- (kom);

\draw[right hook->,blue!50!white] (mhR) -- (mtR);
\draw[left hook->,blue!50!white] (miR) -- (mtR);

\draw[right hook->,blue!50!white] (mhsR) -- (mtsR);
\draw[left hook->,blue!50!white] (misR) -- (mtsR);
\draw[right hook->,blue!50!white] (mhsR) -- (mtcR);
\draw[left hook->,blue!50!white] (misR) -- (mtcR);

\draw[-latex,red!70!white] (mab) -- (ab);
\draw[-latex,red!70!white] (mburn) -- (burn);
\draw[-latex,red!70!white] (mcobds) -- (cobd);

\draw[>->,green] (mcobd) -- (mcobds);

\draw[->,brown,line width=1.5pt] (mtsR) -- (sym);
\draw[->,brown,line width=1.5pt] (mtcR) -- (sym);
\draw[->] (mhsR) -- (sym);
\draw[->] (misR) -- (sym);


\draw[>->,green] (mtR) -- (mtsR);
\draw[>->,green] (mhR) -- (mhsR);
\draw[>->,green] (miR) -- (misR);

\draw[->,line width=1.5pt,brown] (mtsR) -- (mab);
\draw[->,line width=1.5pt,brown] (mtcR) -- (kom);
\draw[->] (mhsR) -- (mab);
\draw[->] (misR) -- (mab);

\draw[->,dotted,brown,line width=1.5pt] ($(mtsR)!0.3!(sym)$) -- ($(mtcR)!0.3!(sym)$);
\draw[->,dotted,brown,line width=1.5pt] ($(mtsR)!0.8!(mab)$) -- ($(mtcR)!0.8!(kom)$);
\end{tikzpicture}
\caption{\textbf{The outline of the construction.} We construct the
  above diagram starting with a $(2m,2n)$-tangle diagram
  $T$. {\color{blue!50!white}Hook-tailed} arrows are subcategory
  inclusions, {\color{green} split-tailed} arrows are strictifications
  from groupoid enriched multicategories to ordinary multicategories,
  and {\color{red!70!white}solid-headed} arrows convert a multicategory
  to an ordinary category by forgetting multimorphisms. Only the
  {\color{brown}thick} arrows depend on the tangle $T$. Solid arrows
  are strict, while the dashed arrows are lax. The two
  {\color{brown}dotted} arrows are functors between functor
  categories,
  $\mSpectra^{\strictify{\CCat{N}\ttimes\mTshape{m}{n}}}\to\mSpectra^{\mTshapeStct{m}{n}^0}$
  and
  $\mAbelianGroups^{\strictify{\CCat{N}\ttimes\mTshape{m}{n}}}\to{\mComplexes}^{\mTshapeStct{m}{n}^0}$
  (their only dependence on the tangle is in an overall grading
  shift). The diagram commutes, with the understanding that anything
  involving the strictification arrows only commutes up to (zigzags of)
  natural equivalences, and arrows to $\mComplexes$ only commute up to
  quasi-isomorphisms. The picture does not encompass the quantum
  gradings.}
\label{fig:big-outline}
\end{figure}
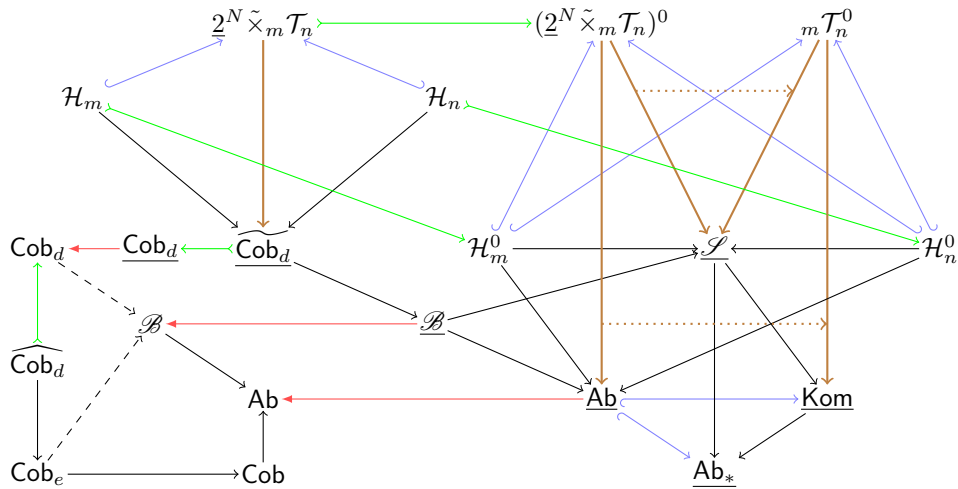

\begin{remark}
  To construct both the combinatorial and topological tangle invariants,
  we use the language of multicategories. There is another construction
  of a combinatorial invariant with at least as much information, using
  the language of enriched bicategories (cf.~\cite{GarnerShulman}); we
  may return to this point in a future paper.
\end{remark}

\emph{Acknowledgments.} We thank Finn Lawler for pointing us
to~\cite{GarnerShulman} and Andrew Blumberg and Aaron Royer for
helpful conversations. Finally, we thank the referee for more helpful
comments and corrections.

\section{Background}\label{sec:alg-top}

\subsection{Homological grading conventions}\label{sec:grading-convention}
In this paper, we will work with chain complexes. We view cochain complexes as chain complexes by negating the grading. In particular, the Khovanov complex was originally defined as a cochain complex~\cite{Kho-kh-categorification}, but we will view it as a chain complex. So, our homological gradings differ from Khovanov's by a sign.

\subsection{Multicategories}
\begin{definition}
  A \emph{multicategory} (or \emph{colored operad}) $\MultiCat$ consists of:
  \begin{enumerate}[label=(M-\arabic*)]
  \item A set or, more generally, class, $\Ob(\MultiCat)$ of \emph{objects};
  \item For each $n\geq 0$ and objects
    $x_1,\dots,x_n,y\in\Ob(\MultiCat)$, a set $\Hom(x_1,\dots,x_n;y)$
    of \emph{multimorphisms} from $(x_1,\dots,x_n)$ to $y$;
  \item a composition map 
    \[
      \Hom(y_1,\dots,y_n;z)\times \Hom(x_{1,1},\dots,x_{1,m_1};y_1)\times\cdots\times\Hom(x_{n,1},\dots,x_{n,m_n};y_n)\to 
      \Hom(x_{1,1},x_{1,2},\dots,x_{n,m_n};z);
    \]
    and 
  \item \label{item:unit} A distinguished element $\Id_x\in\Hom(x;x)$, called the \emph{identity} or \emph{unit},
  \end{enumerate}
  satisfying the following conditions:
  \begin{enumerate}[resume, label=(M-\arabic*)]
  \item Composition is associative, in the sense that the following diagram commutes:
    \[
      \xymatrix{
        {\begin{array}{l}\Hom(y_1,\dots,y_n;z)\\\times\prod_{i=1}^n\Hom(x_{i,1},\dots,x_{i,m_i};y_i)\\\times\prod_{i=1}^n\prod_{j=1}^{m_i}\Hom(w_{i,j,1},\dots,w_{i,j,k_{i,j}};x_{i,j})
          \end{array}}\ar[r]\ar[d]
        &
        {\begin{array}{l}\Hom(x_{1,1},\dots,x_{n,m_n};z)\\\times\prod_{i=1}^n\prod_{j=1}^{m_i}\Hom(w_{i,j,1},\dots,w_{i,j,k_{i,j}};x_{i,j})\end{array}}\ar[d]\\
        {\begin{array}{l}\Hom(y_1,\dots,y_n;z)\\\times\prod_{i=1}^n\Hom(w_{i,1,1},\dots,w_{i,m_i,k_{i,m_i}};y_i)\end{array}}
        \ar[r]
        &
        \Hom(w_{1,1,1},\dots,w_{n,m_n,k_{n,m_n}};z).
      }
    \]
    (Here, all of the maps are composition maps.) 
  \item\label{item:right-id} The identity elements are right identities for composition, in the
    sense that the following diagram commutes:
    \[
      \xymatrix{
        \Hom(x_1,\dots,x_n;y)\ar[r]^=\ar[d]_{\Id\times \prod \Id_{x_i}} & \Hom(x_1,\dots,x_n;y)\\
        \Hom(x_1,\dots,x_n;y)\times\prod_{i=1}^n\Hom(x_i,x_i).\ar[ur]_\circ
      }
    \]
  \item\label{item:left-id} The identity elements are left identities for composition, in the
    sense that the following diagram commutes:
    \[
      \xymatrix{
        \Hom(x_1,\dots,x_n;y)\ar[r]^=\ar[d]_{\Id_{y}\times \Id} & \Hom(x_1,\dots,x_n;y)\\
        \Hom(y,y)\times \Hom(x_1,\dots,x_n;y).\ar[ur]_\circ
      }
    \]
  \end{enumerate}

  Given multicategories $\MultiCat$ and $\MultiDat$, a
  \emph{multifunctor} $F\co\MultiCat\to\MultiDat$ is a map
  $F\co \Ob(\MultiCat)\to\Ob(\MultiDat)$ and, for each
  $x_1,\dots,x_n,y\in\Ob(\MultiCat)$, a map
  $\Hom_{\MultiCat}(x_1,\dots,x_n;y)\to\Hom_{\MultiDat}(F(x_1),\dots,F(x_n);F(y))$
  which respects multi-composition and identity elements.
\end{definition}

Multicategories, which model the notion of multilinear maps, are a
common generalization of a category (a multicategory in which only
multimorphism sets with one input are nonempty) and an operad (a
multicategory with one object). Multicategories were introduced by
Lambek~\cite{Lambek-other-multicat} and
Boardman-Vogt~\cite{BV-other-book}. In Boardman-Vogt's work and most
modern algebraic topology, the multimorphism sets in multicategories
are equipped with actions of the symmetric group; the definition we
have given would be called a non-symmetric multicategory. Some of our
multicategories (notably $\mBurnside$, $\Sets/X$, and $\Spectra$)
are, in fact, symmetric multicategories. In particular, the
multicategories $\Sets/X$ to which we apply Elmendorf-Mandell's
$K$-theory are symmetric multicategories.

A monoidal category $(\Cat,\otimes)$
produces a multicategory, which we will
denote $\underline{\Cat}$, as follows. The objects of
$\underline{\Cat}$ are the same as the objects of $\Cat$, and the
multimorphism sets are given by
\[
\Hom_{\underline{\Cat}}(x_1,\dots,x_n;y)=\Hom_{\Cat}(x_1\otimes\dots\otimes x_n;y)
\]
(for any choice of how to parenthesize the tensor product).  If the
monoidal category happened to be a symmetric monoidal category, as in
the case of abelian groups $\AbelianGroups$, graded abelian groups
$\GrAbelianGroups$, or chain complexes $\Complexes$,
then the corresponding multicategory is a symmetric
multicategory. (These are examples of Hu-Kriz-Kriz's
\emph{$\star$-categories}~\cite{HKK-Kh-htpy}.)
\setcounter{OutlineCompletion}{10}\label{first-page-num-diag}

Many of our multicategories will be enriched in groupoids. That is,
the multimorphism sets will be groupoids (i.e., categories in which
all the morphisms are invertible) and the composition maps are maps of
groupoids (i.e., functors).

Most of our non-enriched multicategories will be rather
simple, in a sense we make precise:

\begin{definition}\label{def:shape-multicat-set}
  Given a finite set $X$, the \emph{shape multicategory of $X$} has
  objects $X\times X$, and the multimorphism set
  $\Hom((a_1,b_1),(a_2,b_2),\dots,(a_n,b_n);(b_0,a_{n+1}))$ consists
  of a single element if $b_i=a_{i+1}$ for all $0\leq i\leq n$, and
  all other multimorphism sets empty. We allow the special case
  $n=0$ which produces a unique zero-input multimorphism in
  $\Hom(\emptyset;(a,a))$ for each $a\in X$.
\end{definition}

Generalizing Definition~\ref{def:shape-multicat-set}, we have the
following variant.
\begin{definition}\label{def:shape-multicat-set-seq}
  Given a finite sequence of finite sets $X^1,\dots,X^k$, the
  \emph{shape multicategory of $(X^1,\dots,X^k)$} has objects
  $\coprod_{i\leq j} X^i\times X^j$ and
  $\Hom((a_1,b_1),(a_2,b_2),\dots,(a_n,b_n);(b_0,a_{n+1}))$ consists
  of a single element if $b_i=a_{i+1}$ for all $0\leq i\leq n$, and
  all other multimorphism sets empty. Once again, we allow the special
  case $n=0$ which produces a unique zero-input multimorphism in
  $\Hom(\emptyset;(a,a))$ for each $a\in \coprod_i X^i$.
\end{definition}

\subsection{Linear categories and multifunctors to abelian groups}
\label{sec:multi-vec}

Many of the algebras that we will encounter in this paper will come
equipped with an extra structure, which we abstract below.
\begin{definition}\label{def:idempotented}
  \emph{An algebra equipped with an orthogonal set of idempotents} is an algebra $A$ and a finite subset $I\subset A$, so that 
  \begin{itemize}
  \item $\iota^2=\iota$ for all $\iota\in I$,
  \item $\iota\iota'=\iota'\iota=0$ for all distinct $\iota,\iota'\in I$, and
  \item $\sum_{\iota\in I}\iota=1$.
  \end{itemize}
\end{definition}
The following three notions are equivalent.
\begin{enumerate}
\item A ring $A$ (algebra over $\ZZ$) equipped with an orthogonal set of idempotents $X$.
\item A linear category (category enriched over abelian groups
  $\AbelianGroups$) with objects a finite set $X$.
\item A multifunctor from the shape multicategory $M$ of a finite set $X$ to the multicategory $\mAbelianGroups$ of abelian groups. 
\end{enumerate}
(A similar statement holds for algebras over any ring $R$; the
corresponding linear category has to be enriched over $R$-modules, and
the corresponding multifunctor should map to the multicategory of
$R$-modules.)

To see the equivalence, given a multifunctor $F\co
M\to\mAbelianGroups$ there is a corresponding linear category with
objects $X$, $\Hom(x,y)=F((x,y))$, composition
$\Hom(y,z)\otimes\Hom(x,y)\to\Hom(x,z)$ is the image of the unique
morphism $(x,y),(y,z)\to(x,z)$, and the identity $\Id_x\in\Hom(x,x)$
is the image of $1$ under the maps $\ZZ\to\Hom(x,x)$, which is the
image under $F$ of the unique morphism $\emptyset\to (x,x)$. Given
a linear category $\Cat$ with finitely many objects, we can form a ring
$A_\Cat=\bigoplus_{x,y\in\Ob(\Cat)}\Hom_\Cat(x,y)$ with multiplication
given by composition (i.e., $a\cdot b \coloneqq b\circ a$) when defined and $0$ otherwise; the ring $A_\Cat$
is equipped with the orthogonal set of idempotents $\{\Id_x\mid
x\in\Ob(\Cat)\}$. From a ring $A$ equipped with an orthogonal set of
idempotents $I$, we obtain a map $F\co
M\to\mAbelianGroups$ by setting $F((x,y))=xAy$ and declaring that $F$
sends the unique morphism $(x,y),(y,z)\to(x,z)$ to the multiplication
map $xAy\otimes yAz\to xAz$ and that $F$ respects composition and
identity maps.

In a similar fashion, given linear categories $\Cat$ and $\Dat$ with finitely many objects, the
following are equivalent notions for bimodules.
\begin{enumerate}
\item A left-$A_\Cat$ right-$A_\Dat$ bimodule $B$.
\item An enriched functor
  $F_A\from\Cat^\op\times\Dat\to\AbelianGroups$; an enriched functor
  between linear categories is one for which the map on morphisms
  $\Hom_{\Cat^{\op}\times\Dat}((c,d),(c',d'))\to\Hom_{\AbelianGroups}(F_A(c,d),F_A(c',d'))$
  is linear, or equivalently,
  $\Hom_{\Cat^{\op}\times\Dat}((c,d),(c',d'))\times F_A(c,d)\to
  F_A(c',d')$ is bilinear.
\item A multifunctor from the shape multicategory $M(\Cat,\Dat)$ of
  $(\Ob(\Cat),\Ob(\Dat))$ to $\mAbelianGroups$, which restricts to the
  multifunctors corresponding to $\Cat$, respectively $\Dat$, (as
  defined above) on the subcategory of $M(\Cat,\Dat)$ which is the
  shape multicategory of $\Ob(\Cat)$, respectively $\Ob(\Dat)$.
\end{enumerate}
Recall from Definition~\ref{def:shape-multicat-set-seq} that $M(\Cat,\Dat)$ consists of the following data.
\begin{itemize}
\item Three kinds of objects:
  \begin{itemize}
  \item Pairs $(x_1,x_2)\in\Ob(\Cat)\times\Ob(\Cat)$.
  \item Pairs $(y_1,y_2)\in\Ob(\Dat)\times\Ob(\Dat)$.
  \item Pairs $(x,y)$ where $x\in\Ob(\Cat)$ and $y\in\Ob(\Dat)$. For
    notational clarity, we will write $(x,y)$ instead as $(x,[B],y)$.
  \end{itemize}
\item A single multimorphism in each of the following cases:
  \begin{itemize}
  \item $(x_1,x_2),(x_2,x_3),\dots,(x_{m-1},x_m)\to(x_1,x_m)$ where $x_1,\dots,x_m\in\Ob(\Cat)$.
  \item $(y_1,y_2),(y_2,y_3),\dots,(y_{n-1},y_n)\to(y_1,y_n)$ where $y_1,\dots,y_n\in\Ob(\Dat)$.
  \item
    $(x_1,x_2),\dots,(x_{m-1},x_m),(x_m,[B],y_1),(y_1,y_2),\dots,(y_{n-1},y_n)\to
    (x_1,[B],y_n)$ where $x_1,\dots,x_m\in\Ob(\Cat)$ and
    $y_1,\dots,y_n\in\Ob(\Dat)$.
  \end{itemize}
\end{itemize}
The bimodule $B$ defines a multifunctor $F_B\co
M(\Cat,\Dat)\to\mAbelianGroups$ as follows:
\begin{itemize}
\item On objects, for $x_1,x_2\in\Ob(\Cat)$ and $y_1,y_2\in\Ob(\Dat)$,
  $F_B(x_1,x_2)=\Hom_\Cat(x_1,x_2)=\Id_{x_1}A_\Cat\Id_{x_2}$,
  $F_B(y_1,y_2)=\Hom_\Dat(y_1,y_2)=\Id_{y_1}A_\Dat\Id_{y_2}$, and
  $F_B(x_1,[B],y_1)=\Id_{x_1}B\Id_{y_1}$.
\item On the first and second types of multimorphisms, $F_B$ is simply
  composition. For the third type, the map $F_B$ sends the
  multimorphism
  \[
  (x_1,x_2),\dots,(x_{m-1},x_m),(x_m,[B],y_1),(y_1,y_2),\dots,(y_{n-1},y_n)\to
  (x_1,[B],y_n)
  \]
  to the product 
  \[
  \Id_{x_1}R_\Cat\Id_{x_2}\otimes\cdots\otimes \Id_{x_{m-1}}R_\Cat\Id_{x_m}
  \otimes \Id_{x_m}B\Id_{y_1}\otimes
  \Id_{y_1}R_\Dat\Id_{y_2}\otimes\cdots\otimes \Id_{y_{n-1}}R_\Dat\Id_{y_n}\to \Id_{x_1}B\Id_{y_n}.
  \]
\end{itemize}
Conversely, every multifunctor $M(\Cat,\Dat)\to\mAbelianGroups$ of the
given form arises as $F_B$ for
the bimodule $B=\bigoplus_{x\in\Ob(\Cat),y\in\Ob(\Dat)}F_B(x,[B],y)$.

Similarly, given a multifunctor $F_B\from
M(\Cat,\Dat)\to\mAbelianGroups$, we can construct an enriched functor
$F_A\from \Cat^\op\times\Dat\to\AbelianGroups$ as follows:
\begin{itemize}
\item On objects, $F_A(c,d)=F_B(c,[B],d)$. 
\item On morphisms, $\Hom_{\Cat^\op\times\Dat}((c,d),(c',d'))\otimes F_A(c,d)\to F_A(c',d')$ is the composition
\[
\Hom_{\Cat^\op\times\Dat}((c,d),(c',d'))\otimes F_A(c,d)
=F_B(c',c)\otimes F_B(c,[B],d)\otimes F_B(d,d')\to F_B(c',[B],d').
\]
\end{itemize}

There are similar equivalences for the notions of differential
$(A_\Cat,A_\Dat)$-bimodules, enriched functors
$\Cat^\op\times\Dat\to\Complexes$, and multifunctors
$M(\Cat,\Dat)\to\mComplexes$.

\subsection{Trees and canonical groupoid enrichments}
To define some enriched multicategories, we will first need some
terminology about trees.

A \emph{planar, rooted tree} is a tree $\aTree$ with some number
$n\geq 1$ of leaves, which has been embedded in $\RR\times[0,1]$ so
that $k\leq n-1$ of the leaves are embedded in $\RR\times\{0\}$, one
leaf is embedded in $\RR\times\{1\}$, and no other edges or vertices
are mapped to $\RR\times\{0,1\}$. The vertices mapped to
$\RR\times\{0\}$ are called \emph{inputs} of $\aTree$ and the vertex
mapped to $\RR\times\{1\}$ is the \emph{output} or \emph{root} of
$\aTree$. We call the remaining vertices of $\aTree$
\emph{internal}. We view planar, rooted trees as directed graphs, in
which edges point away from the inputs and towards the output. In
particular, given a valence $m$ internal vertex $p$ of $\aTree$,
$(m-1)$ of the edges adjacent to $p$ are \emph{input edges} to $p$ and
one edge is the \emph{output edge} of $p$, and the inputs of $p$ are
ordered. We allow the case $m=1$, and call such $0$-input $1$-output
internal vertices \emph{stump leaves}. We view two planar, rooted
trees as equivalent if there is an orientation-preserving
self-homeomorphism of $\RR\times[0,1]$ which preserves
$\RR\times\{0\}$ and $\RR\times\{1\}$ and takes one tree to the other.

Given a tree $\aTree$, the \emph{collapse} of $\aTree$ is the result of
collapsing all internal edges of $\aTree$, to obtain a tree with one
internal vertex (i.e., a \emph{corolla}).

\subsubsection{Canonical groupoid enrichments}\label{sec:enrich}
First, given a non-enriched multicategory $\Cat$ we can enrich $\Cat$
over groupoids \emph{trivially} as follows. Given elements
$f,g\in\Hom_{\Cat}(x_1,\dots,x_n;y)$ define $\Hom(f,g)$ to be empty if
$f\neq g$ and to consist of a single element, the identity map, if
$f=g$.

Next we give a different way of enriching multicategories over groupoids, which
provides a tool for turning lax multifunctors into strict ones
(from a different source), though we will avoid ever actually defining or using
the notion of a lax multifunctor or multicategory.  Suppose $\Cat$ is an unenriched
multicategory. The \emph{canonical thickening}
$\thicken{\Cat}$ is the multicategory enriched in groupoids
defined as follows. The objects of $\thicken{\Cat}$ are the same as
the objects of $\Cat$.  Informally, an object in
$\Hom_{\thicken{\Cat}}(x_1,\dots,x_n;y)$ is a sequence of composable
multimorphisms starting at $x_1,\dots,x_n$ and ending at $y$. The $2$-morphisms record whether two sequences compose to the same multimorphism.

More precisely, an object of $\Hom_{\thicken{\Cat}}(x_1,\dots,x_n;y)$
is a tree $\aTree$ with $n$ inputs, together with a labeling of each edge of
$\aTree$ by an object of $\Cat$ and each internal vertex of $\aTree$ by a
multimorphism of $\Cat$, subject to the following conditions:
\begin{enumerate}
\item The input edges of $\aTree$ are labeled by $x_1,\dots,x_n$ (in that order).
\item The output edge of $\aTree$ is labeled by $y$.
\item At a vertex $v$, if the input edges to $v$ are labeled
  $w_1,\dots,w_k$ and the output edge is labeled $z$ then the vertex
  $v$ is labeled by an element of $\Hom_{\Cat}(w_1,\dots,w_k;z)$. In
  particular, stump leaves of $\aTree$ are labeled by multimorphisms
  in $\Hom_{\Cat}(\emptyset;z)$, i.e., by $0$-input multimorphisms.
\end{enumerate}

Given a morphism $f\in \Hom_{\thicken{\Cat}}(x_1,\dots,x_n;y)$, we can
compose the multimorphisms labeling the vertices according to the
tree to obtain a morphism
$f^\circ\in \Hom_{\Cat}(x_1,\dots,x_n;y)$. Given morphisms
$f,g\in \Hom_{\thicken{\Cat}}(x_1,\dots,x_n;y)$, define
$
  \Hom_{\thicken{\Cat}}(f,g)
$
to have one element if $f^\circ=g^\circ$ and to be empty otherwise.
The unit in $\Hom_{\thicken{\Cat}}(x;x)$ is the tree with one input,
one output, no internal vertices, and edge labeled $x$.  This
completes the definition of the multimorphism groupoids in
$\thicken{\Cat}$.

Composition of multimorphisms is simply gluing of
trees. (When gluing together the external vertices, they disappear
rather than creating new internal vertices.)
\begin{lemma}
  This definition of composition extends uniquely to morphisms in the
  multimorphism groupoids.
\end{lemma}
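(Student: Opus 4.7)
The plan is to exploit the fact that each multimorphism groupoid in $\thicken{\Cat}$ is thin: by construction $\Hom_{\thicken{\Cat}}(f,g)$ has at most one element, and it is nonempty exactly when $f^\circ=g^\circ$. This immediately gives uniqueness of any extension of composition to $2$-morphisms, so the entire content of the lemma is to verify that the natural candidate for composition of $2$-morphisms is well-defined.

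Concretely, suppose we are given an outer multimorphism tree $h\in\Hom_{\thicken{\Cat}}(y_1,\dots,y_n;z)$ together with inner trees $f_i\in\Hom_{\thicken{\Cat}}(x_{i,1},\dots,x_{i,m_i};y_i)$, and that we also have $2$-morphisms $h\Rightarrow h'$ and $f_i\Rightarrow f'_i$; we want a (necessarily unique) $2$-morphism between the concatenated trees $h\circ(f_1,\dots,f_n)$ and $h'\circ(f'_1,\dots,f'_n)$. By the definition of the $2$-morphisms in $\thicken{\Cat}$, this amounts to the equality
\[
\bigl(h\circ(f_1,\dots,f_n)\bigr)^{\!\circ} \;=\; \bigl(h'\circ(f'_1,\dots,f'_n)\bigr)^{\!\circ}
\]
of collapses in $\Cat$.

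The key observation, which I would isolate as a short preliminary claim, is that collapse is compatible with concatenation: for any composable trees,
\[
\bigl(h\circ(f_1,\dots,f_n)\bigr)^{\!\circ} \;=\; h^\circ\circ(f_1^\circ,\dots,f_n^\circ).
\]
This is a straightforward induction on the total number of internal vertices, using the associativity axiom of the underlying multicategory $\Cat$ to re-bracket iterated compositions when one collapses internal edges in different orders. Granting this, the desired equality is immediate since $h^\circ=(h')^\circ$ and $f_i^\circ=(f'_i)^\circ$ by hypothesis.

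Finally, I would check that this rule is genuinely a composition of groupoids: it preserves identity $2$-morphisms and composition of $2$-morphisms, and the resulting operation is strictly associative and unital with respect to the identity trees. All of these follow for free from the thinness of the multimorphism groupoids, since any two parallel $2$-morphisms in $\thicken{\Cat}$ automatically coincide. The only real obstacle is the associativity--of--collapse claim above, and this is exactly where the axioms~\ref{item:unit}, \ref{item:right-id}, and~\ref{item:left-id} of the multicategory $\Cat$ are used.
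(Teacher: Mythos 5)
Your proof is correct and is exactly the elaboration the paper has in mind; the paper's own proof is simply ``This is immediate from the definitions.'' You have correctly identified that thinness of the $\Hom$-groupoids reduces everything to the single fact that collapse commutes with tree concatenation, which in turn is just the associativity of composition in $\Cat$.
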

\begin{proof}
  This is immediate from the definitions.
\end{proof}

\begin{lemma}
  These definitions make $\thicken{\Cat}$ into a multicategory enriched in groupoids.
\end{lemma}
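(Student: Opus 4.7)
The plan is to verify the multicategory axioms by separating the data into two layers: the object layer (labeled trees) and the morphism layer (which records equality of the collapsed composites $(-)^\circ$). First, I would check that each $\Hom_{\thicken{\Cat}}(x_1,\dots,x_n;y)$ is a groupoid. The relation $f^\circ = g^\circ$ is an equivalence relation on labeled trees, so by the defining prescription there is a unique morphism $f\to g$ when $f^\circ=g^\circ$ and none otherwise. Identities exist trivially, composition is forced (since hom-sets have at most one element), and inverses exist because the relation is symmetric.

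Second, I would spell out that composition (tree concatenation at the objects) lifts to morphisms, which is essentially the content of the preceding lemma, by showing that $(-)^\circ$ is \emph{strictly compatible} with concatenation: if $\aTree$ has inputs $y_1,\dots,y_n$ labelled by vertex-morphisms composing to $\aTree^\circ$, and $\aTree_i$ has output $y_i$ composing to $\aTree_i^\circ$, then by the associativity axiom in $\Cat$ the concatenation satisfies $(\aTree\circ(\aTree_1,\dots,\aTree_n))^\circ = \aTree^\circ \circ (\aTree_1^\circ,\dots,\aTree_n^\circ)$. Hence, given $2$-morphisms $f\to f'$ and $g_i\to g_i'$, both concatenations yield labeled trees with the same $(-)^\circ$ value, giving a unique $2$-morphism between them.

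Third, I would verify associativity and the unit axioms \ref{item:right-id}, \ref{item:left-id}. On objects, tree concatenation is strictly associative, and the one-edge tree labelled $x$ is a strict two-sided unit (the resulting concatenated tree is literally equal, not merely equivalent, to the original). On $2$-morphisms, there is nothing to check: each hom-set contains at most one morphism, so any diagram of $2$-morphisms with equal source and target commutes automatically. Finally, functoriality of composition in each groupoid variable is free for the same reason.

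The main obstacle is really organizational rather than conceptual: one must be careful that the chosen conventions for tree concatenation line up on the nose with the composition in $\Cat$ so that associativity is strict and not merely up to some chosen identification, but since the definition of $(-)^\circ$ directly applies the composition of $\Cat$ following the combinatorial shape of $\aTree$, all required coherences descend from the axioms of $\Cat$ itself.
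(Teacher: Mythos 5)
Your proof is correct and follows essentially the same route as the paper: associativity and unitality on objects reduce to strict associativity of tree concatenation and the fact that the one-edge tree is a strict unit, while on $2$-morphisms everything is automatic because each hom-groupoid has at most one morphism between any two objects. The paper's version is just terser, relegating the $(-)^\circ$-compatibility-with-concatenation check to the preceding lemma exactly as you do.
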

\begin{proof}
  At the level of objects of the multimorphism groupoids,
  associativity follows from associativity of composition of trees. At
  the level of morphisms of the multimorphism groupoids, associativity
  trivially holds. The unit axioms follow
  from the fact that gluing on a tree with no internal vertices has no
  effect.
\end{proof}

We will call a multimorphism in $\thicken{\Cat}$ \emph{basic} if the
underlying tree has only one internal vertex.  Every object
in the multimorphism groupoid
$\Hom_{\thicken{\Cat}}(x_1,\dots,x_n;y)$ is a composition of basic
multimorphisms.
 
\begin{figure}
  \begin{tikzpicture}[xscale=2.5,yscale=3,every node/.style={inner sep=1pt,outer sep=0}]
    \foreach \i in {1,...,4}{
      \begin{scope}[xshift=1.1*\i cm]
        
        \coordinate (a1) at (0.2,0.2);
        \coordinate (a2) at (0.5,0.2);
        \coordinate (a3) at (0.8,0.2);
        \coordinate (a4) at (0.5,0.8);

        \draw (0.5,0.5) -- (a1) node[pos=1,anchor=north] {\small $(x,y)$};
        \draw (0.5,0.5) -- (a2) node[pos=1,anchor=north] {\small $(y,z)$};
        \draw (0.5,0.5) -- (a3) node[pos=1,anchor=north] {\small $(z,w)$};
        \draw (0.5,0.5) -- (a4) node[pos=1,anchor=south] {\small $(x,w)$};
        
        \path (0.5,0.5) -- (a\i) node[pos=0,circle,fill=black,minimum width=3pt] {} node[pos=0.5,circle,fill=black,minimum width=2pt] {};

      \end{scope}}
    \foreach \i in {1,...,4}{
      \begin{scope}[xshift=1.1*\i cm,yshift=-1cm]
        
        \coordinate (a1) at (0.2,0.2);
        \coordinate (a2) at (0.5,0.2);
        \coordinate (a3) at (0.8,0.2);
        \coordinate (a4) at (0.5,0.8);

        \draw (0.5,0.5) -- (a1) node[pos=1,anchor=north] {\small $(x,y)$};
        \draw (0.5,0.5) -- (a2) node[pos=1,anchor=north] {\small $(y,z)$};
        \draw (0.5,0.5) -- (a3) node[pos=1,anchor=north] {\small $(z,w)$};
        \draw (0.5,0.5) -- (a4) node[pos=1,anchor=south] {\small $(x,w)$};
        
        \coordinate (b1) at (0.3,0.4);
        \coordinate (b2) at (0.4,0.3);
        \coordinate (b3) at (0.6,0.3);
        \coordinate (b4) at (0.7,0.4);

        \draw (0.5,0.5) -- (b\i) node[pos=0,circle,fill=black,minimum width=3pt] {} node[pos=1,circle,fill=black,minimum width=2pt] {};

      \end{scope}}

    \foreach \i in {1,3}{
      \begin{scope}[xshift=5.5 cm,yshift=0.5cm-0.5*\i cm]
        \coordinate (a1) at (0.2,0.2);
        \coordinate (a2) at (0.5,0.2);
        \coordinate (a3) at (0.8,0.2);
        \coordinate (a4) at (0.5,0.8);

        \draw (0.5,0.5) -- (a1) node[pos=1,anchor=north] {\small $(x,y)$};
        \draw (0.5,0.5) -- (a3) node[pos=1,anchor=north] {\small $(z,w)$};
        \draw (0.5,0.5) -- (a4) node[pos=1,anchor=south] {\small $(x,w)$};
        
        \path (0.5,0.5) -- (a\i) node[pos=0,circle,fill=black,minimum width=3pt] {} node[pos=0.5,circle,fill=black,minimum width=2pt] (a) {};

        \draw (a) -- (a2) node[pos=1,anchor=north] {\small $(y,z)$};

      \end{scope}}
  \end{tikzpicture}
  \caption{\textbf{Some of the multimorphisms in
      $\Hom((x,y),(y,z),(z,w);(x,w))$ from
      Example~\ref{exam:enrichment}.} The edges are labeled by the
    objects and the internal vertices are labeled by multimorphisms in
    the original multicategory (i.e., basic multimorphisms). Edges
    ending in a node are stumps. The original multicategory being the
    shape multicategory of a set, the vertex labels and the internal
    edge labels are forced, and are not
    shown.}\label{fig:basic-enrichment}
\end{figure}
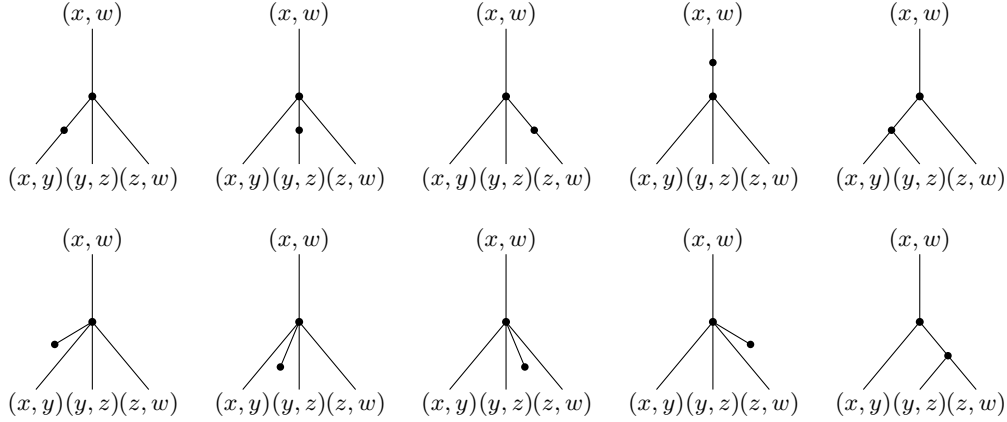

\begin{example}\label{exam:enrichment}
  Consider the canonical groupoid enrichment of the shape
  multicategory of some set $X$
  (cf.~Definition~\ref{def:shape-multicat-set}). For any $x,y,z,w\in
  X$, the multimorphism set $\Hom((x,y),(y,z),(z,w);(x,w))$ consists
  of infinitely many elements since the underlying tree could contain an
  arbitrary number of internal vertices. However, there is exactly one
  multimorphism when the underlying tree has exactly one internal
  vertex, exactly ten when the underlying tree has exactly two
  interval vertices (shown in Figure~\ref{fig:basic-enrichment}), and so on.
\end{example}

There is a canonical projection multifunctor $\thicken{\Cat}\to\Cat$
which is the identity on objects and composes the multimorphisms
associated to the vertices of a tree according to the edges. (Here, we
view $\Cat$ as trivially enriched in groupoids.)
\begin{lemma}\label{lem:unenrich}
  The projection map $\thicken{\Cat}\to\Cat$ is a weak equivalence.
\end{lemma}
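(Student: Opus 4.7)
The plan is to verify the two conditions for a weak equivalence of groupoid-enriched multicategories: that the projection is essentially surjective on objects, and that for every tuple $(x_1,\dots,x_n;y)$ the induced functor of multimorphism groupoids
\[
\pi\co\Hom_{\thicken{\Cat}}(x_1,\dots,x_n;y)\to\Hom_{\Cat}(x_1,\dots,x_n;y)
\]
is an equivalence of groupoids. The first condition is immediate because the projection is the identity on objects. Since $\Cat$ is trivially enriched, the target is a discrete groupoid, so it suffices to check that $\pi$ is essentially surjective and fully faithful.

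For essential surjectivity, given a multimorphism $f\in\Hom_{\Cat}(x_1,\dots,x_n;y)$, I would produce a preimage explicitly: take the corolla with $n$ inputs and one output, label the input edges by $x_1,\dots,x_n$ in order, the output edge by $y$, and the single internal vertex by $f$. This is a well-defined object of $\Hom_{\thicken{\Cat}}(x_1,\dots,x_n;y)$ whose composite $f^\circ$ is $f$ itself.

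For fully faithful, I simply unpack the definition of the morphism sets of the thickening. For any two objects $f,g\in\Hom_{\thicken{\Cat}}(x_1,\dots,x_n;y)$, the set $\Hom_{\thicken{\Cat}}(f,g)$ was defined to have exactly one element when $f^\circ=g^\circ$ and to be empty otherwise. On the other hand, since $\Cat$ is viewed as trivially groupoid-enriched, $\Hom(f^\circ,g^\circ)$ has exactly one element when $f^\circ=g^\circ$ and is empty otherwise. Hence the map of hom-sets induced by $\pi$ is a bijection.

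There is no real obstacle here; both checks reduce to unpacking the definition of $\thicken{\Cat}$, which is essentially designed to make this lemma true. The only point worth a word of care is that we are using the convention that a weak equivalence of groupoid-enriched multicategories is one that is essentially surjective on objects and an equivalence on each multimorphism groupoid; with that convention fixed, the proof is a direct verification.
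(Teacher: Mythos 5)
Your proof is correct and takes essentially the same route as the paper's. The paper phrases the Elmendorf--Mandell criterion as ``equivalence on strictifications plus weak equivalence of nerves of multimorphism groupoids'' and verifies it by observing that components of $\Hom_{\thicken{\Cat}}(x_1,\dots,x_n;y)$ biject with elements of $\Hom_{\Cat}(x_1,\dots,x_n;y)$ and that each component is contractible (every object initial and terminal); your phrasing of the same facts as essential surjectivity (via corollas) plus full faithfulness of the map of multimorphism groupoids is an equivalent packaging of the identical unwinding of definitions.
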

(See~\cite[Definition 12.1]{EM-top-machine} for the definition of a
weak equivalence.)
\begin{proof}
  We must check that projection induces an equivalence on the
  categories of components and that for each $x_1,\dots,x_n,y$ the
  projection map gives a weak equivalence of simplicial nerves
  \begin{equation}\label{eq:nerve-proj}
    \mathscr{N}\Hom_{\thicken{\Cat}}(x_1,\dots,x_n;y)\to
    \mathscr{N}\Hom_{\Cat}(x_1,\dots,x_n;y).
  \end{equation}
  The first statement follows from the fact that
  the components of the
  groupoid $\Hom_{\thicken{\Cat}}(x_1,\dots,x_n;y)$ correspond, under
  the projection, to the elements of $\Hom_{{\Cat}}(x_1,\dots,x_n;y)$.
  The second statement follows from the fact that in each component of
  the multimorphism groupoid
  $\Hom_{\thicken{\Cat}}(x_1,\dots,x_n;y)$, every object is initial
  (and terminal), so
  $\mathscr{N}\Hom_{\thicken{\Cat}}(x_1,\dots,x_n;y)$ is contractible.
\end{proof}

A related construction is strictification: 
\begin{definition}\label{def:strictify}
  Given a multicategory $\Cat$ enriched in groupoids there is a
  \emph{strictification} $\Cat^0$ of $\Cat$, which is an
  ordinary multicategory, with objects
  $\Ob(\Cat^0)=\Ob(\Cat)$ and multimorphism sets
  $\Hom_{\Cat^0}(x_1,\dots,x_n;y)$ the set of isomorphism
  classes (path components) in the groupoid
  $\Hom_{\Cat}(x_1,\dots,x_n;y)$. If we view $\Cat^0$ as
  trivially enriched in groupoids then there is a projection
  multifunctor $\Cat\to\Cat^0$.
\end{definition}

Strictification is a left inverse to thickening, i.e., for
any non-enriched multicategory $\Cat$,
\[
  \strictify{\thicken{\Cat}}\cong\Cat.
\]

A more general notion than a multicategory enriched in groupoids is a
\emph{simplicial multicategory}, i.e., a multicategory enriched in
simplicial sets. Given a multicategory enriched in groupoids $\Cat$,
replacing each $\Hom$ groupoid $\Hom_{\Cat}(x,y)$ by its nerve gives a
simplicial multicategory. One can also \emph{strictify} a
simplicial multicategory $\Dat$ by replacing each $\Hom$ simplicial set by
its set of path components. If $\Dat$ came from a multicategory
$\Cat$ enriched in groupoids by taking nerves then the strictification
$\Cat^0$ of $\Cat$ and the strictification $\Dat^0$ of $\Dat$ are are naturally
equivalent. Our main reason for introducing simplicial multicategories
is that some of the background results we use are stated in that more
general language. For instance, spectra form a simplicial multicategory.

\subsection{Homotopy colimits}\label{sec:hocolim}

In this section we will discuss homotopy colimits in the categories of
simplicial sets and chain complexes.

Given an index category $I$ and a functor $F$ from $I$ to the category
$\SimplicialSets_*$ of based simplicial sets, there is a based homotopy
colimit denoted by $\hocolim_I F$: it is a quotient of the space
\[
\coprod_{p \geq 0} \coprod_{i_0 \to i_1 \to \dots \to i_p} F(i_0)
\smas (\Delta^p)_+
\]
by an equivalence relation induced by simplicial face and
degeneracy operations \cite[XII.2]{BK-top-book}. Similarly, if instead
we are given a functor $F$ from $I$ to the category $\Complexes$ of
complexes, there is a homotopy colimit $\hocolim_I F$ (denoted
$\coprod_* F$ in \cite{BK-top-book}): it is a quotient of the complex
\[
\bigoplus_{p \geq 0} \bigoplus_{i_0 \to i_1 \to \dots \to i_p} F(i_0)
\otimes C_*(\Delta^p),
\]
where $C_*$ is the normalized chain functor on simplicial sets. (More
explicit chain-level descriptions can be given.) In particular, the
natural commutative and associative Eilenberg-Zilber shuffle pairing
$\widetilde C_*(X) \otimes \widetilde C_*(Y) \to \widetilde C_*(X
\smas Y)$, applied to the above constructions,
gives rise to a natural transformation
$\hocolim (\widetilde C_* \circ F) \to \widetilde C_* (\hocolim F)$.

In the following, we use the shorthand \emph{equivalence} to denote both
a weak equivalence of simplicial sets and a quasi-isomorphism of
chain complexes.
\begin{proposition}\label{prop:hocolim-props}
  Homotopy colimits satisfy the following properties.
  \begin{itemize}
  \item Homotopy colimits are functorial: a natural transformation $F
    \to F'$ induces a map $\hocolim F \to \hocolim F'$ that makes hocolim
    functorial in $F$, and a map of diagrams $j\co I \to J$ induces a
    natural transformation $\hocolim (F \circ j) \to \hocolim F$ that 
    makes hocolim functorial in $I$.
  \item Homotopy colimits preserve equivalences: any natural
    transformation $F \to F'$ of functors such that $F(i) \to F'(i)$
    is an equivalence for all $i$ induces an equivalence $\hocolim F
    \to \hocolim F'$.
  \item For a diagram $F$ indexed by $I \times J$, there is a natural
    transformation
    \[
    \hocolim_{i \in I} (\hocolim_{j \in J} F(i \times j)) \to
    \hocolim_{I \times J} F
    \]
    coming from the (non-commutative) Alexander-Whitney pairing (not the
    commutative Eilenberg-Zilber shuffle pairing).  This is an
    isomorphism for a homotopy colimit in simplicial sets, and a
    quasi-isomorphism for a homotopy colimit in complexes. This is
    associative in $I$ and $J$, but not commutative.
  \item The reduced chain functor $\widetilde C_*$ preserves homotopy
    colimits: given a functor $F\co I \to \SimplicialSets_*$, the natural
    map $\hocolim (\widetilde C_* \circ F) \to
    \widetilde C_* (\hocolim F)$ is a quasi-isomorphism.
  \item The smash product $\smas$ and tensor product $\otimes$
    preserve homotopy colimits in each variable, and this is
    compatible with the Eilenberg-Zilber shuffle pairing.
  \end{itemize}
\end{proposition}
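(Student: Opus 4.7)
The plan is to derive each bullet directly from the explicit Bousfield--Kan formulas given immediately before the proposition. Functoriality is immediate from the summand-by-summand definition: a natural transformation $F\to F'$ acts on each wedge summand $F(i_0)\smas(\Delta^p)_+$ (or tensor summand $F(i_0)\otimes C_*(\Delta^p)$) and commutes with the face and degeneracy identifications, while a functor $j\co I\to J$ sends a $p$-chain $i_0\to\cdots\to i_p$ in $I$ to the $p$-chain $j(i_0)\to\cdots\to j(i_p)$ in $J$. Composition in both $F$ and $j$ is strict, so functoriality is on the nose.

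For preservation of equivalences I would filter $\hocolim F$ by the skeletal subobjects spanned by summands with simplicial degree $\leq n$; the $p\th$ associated graded is a wedge (respectively direct sum) of copies of $F(i_0)\smas S^p$ (respectively $F(i_0)[p]$) indexed by nondegenerate $p$-chains in $I$. A levelwise equivalence $F(i)\to F'(i)$ induces an equivalence on each associated graded piece, and then either an induction using the cofiber sequences of the filtration, or the associated (strongly convergent) spectral sequence, gives the equivalence on the total $\hocolim$.

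For the third bullet I would construct the map using the Alexander--Whitney diagonal. A $p$-simplex in $\hocolim_{i\in I}(\hocolim_{j\in J} F(i\times j))$ consists of a $p$-chain $i_0\to\cdots\to i_p$ in $I$ together with, at the vertex $i_0$, a $p$-chain $j_0\to\cdots\to j_p$ in $J$; reading these in parallel yields a $p$-chain in $I\times J$, which is precisely the Alexander--Whitney formula at the chain level, giving the natural assembly map. Associativity follows from the associativity of Alexander--Whitney, and non-commutativity of that pairing accounts for the non-commutativity of the displayed map. To see that the map is an equivalence, I would again filter both sides by skeleta: on associated gradeds the map reduces, after identifying the non-degenerate simplices, to an isomorphism of indexing sets of chains. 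The fourth bullet follows in the same skeletal manner, with the only non-formal input being $\widetilde C_*((\Delta^p)_+)\cong C_*(\Delta^p)$ and the Eilenberg--Zilber theorem on each associated graded. Finally, the fifth bullet is formal: $\smas$ and $\otimes$ commute with all colimits in each variable (they are left adjoints), so they may be pushed inside each summand of the Bousfield--Kan formula, and compatibility with the Eilenberg--Zilber shuffle pairing is built into the construction of the assembly map used in the fourth bullet.

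The main obstacle is the careful bookkeeping for the third bullet, where one has to verify that the Alexander--Whitney formula really does produce a well-defined map compatible with all the face and degeneracy identifications, and then track which variable plays the role of the ``outer'' index to obtain the stated associativity while correctly accounting for the failure of commutativity; everything else is a direct consequence of the explicit Bousfield--Kan construction together with the skeletal filtration argument and known facts from~\cite{BK-top-book}.
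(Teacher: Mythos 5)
The paper provides no proof of this proposition at all: it is stated as a collection of standard facts about Bousfield--Kan homotopy colimits, with the surrounding text citing \cite[XII.2, XII.5]{BK-top-book} for the constructions and expecting the reader to accept or reconstruct the verifications. So there is no authorial proof to compare your argument against; you are supplying a proof for a statement the paper deliberately treats as background.

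Your overall strategy---reading each assertion off the explicit Bousfield--Kan formula, and using the skeletal filtration whose $p$\th\ associated graded is a wedge (respectively direct sum) of copies of $F(i_0)\smas S^p$ (respectively $F(i_0)[p]$) indexed by nondegenerate $p$-chains of $I$---is the standard one and handles bullets one, two, four, and five cleanly. For bullet four, one should also be explicit that the natural map is the one built in the text via the Eilenberg--Zilber shuffle, but your skeletal analysis and the identification $\widetilde C_*((\Delta^p)_+)\cong C_*(\Delta^p)$ do the job. The one genuine gap is in your treatment of the third bullet in the \emph{chain complex} case. Your argument that ``on associated gradeds the map reduces to an isomorphism of indexing sets of chains'' is correct for \emph{simplicial sets}: since $N(I\times J)\cong N(I)\times N(J)$, a $p$-chain in $I\times J$ is literally a pair consisting of a $p$-chain in $I$ and a $p$-chain in $J$, and ``reading in parallel'' is a bijection of indexing sets, giving the claimed \emph{isomorphism}. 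But for complexes the statement only claims a quasi-isomorphism, and after matching the indexing sets one is still left with a comparison of the form $C_*(\Delta^q)\otimes C_*(\Delta^p)$ against $C_*$ of a product of simplices (depending on exactly how the assembly map is bracketed); this is the Alexander--Whitney/Eilenberg--Zilber chain homotopy equivalence, which is emphatically \emph{not} an isomorphism. So the filtration argument cannot reduce to a bijection of sets in this case; you need to invoke the acyclic-models or Eilenberg--Zilber theorem to get a quasi-isomorphism on each associated graded piece, and then conclude by the filtration spectral sequence (or an induction on skeleta). Relatedly, your phrasing ``reading these in parallel \ldots is precisely the Alexander--Whitney formula at the chain level'' conflates two different things: the diagonal map of simplicial sets and the AW chain map are not the same, and only the latter carries signs and front/back face sums. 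The conclusion you want is true, but as written the proof only establishes the simplicial-set half of the third bullet.
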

In particular, these combine to give a natural quasi-isomorphism
\[
(\hocolim_I F) \otimes (\hocolim_J G) \to \hocolim_{I \times J} (F
\otimes G)
\]
which is compatible with associativity (but not commutativity) of the
tensor product.

Homotopy colimits  in the category $\Complexes$ are closely related to
left derived functors. In the following, we view $\AbelianGroups$ as a
subcategory of $\Complexes$, given by the chain complexes
concentrated in degree zero.
\begin{proposition}\label{prop:hocolim-sseq-1}
  Homotopy colimits of complexes satisfy the following properties.
  \begin{itemize}
  \item Write $\AbelianGroups^I$ for the category of functors $I \to
    \AbelianGroups$ and $\colim_I$ for the colimit functor
    $\AbelianGroups^I \to \AbelianGroups$. Then there is a natural
    isomorphism between the left derived functor $\LL_p \colim_I(F)$
    and the homology group $H_p (\hocolim F)$, for each $p \geq 0$ \cite[XII.5]{BK-top-book}.
  \item For a functor $F\co I \to \Complexes$, there is a convergent
    spectral sequence
    \[
    \LL_p \colim_I(H_q \circ F) \Rightarrow H_{p+q}(\hocolim_I F).
    \]
  \item For a functor $F\co I \to \SimplicialSets_*$, there is a
    convergent spectral sequence
    \[
    \LL_p \colim_I(\widetilde H_q \circ F) \Rightarrow \widetilde
    H_{p+q}(\hocolim_I F)
    \]
    for the homology groups of a homotopy colimit \cite[XII.5.7]{BK-top-book}.
  \end{itemize}
\end{proposition}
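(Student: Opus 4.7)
The plan is to establish the three items in order, deriving the middle (complex-valued) case from the first by a bicomplex argument, and the simplicial set case from the complex case via the fact that $\widetilde C_*$ commutes with $\hocolim$, which is already stated in Proposition~\ref{prop:hocolim-props}.

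For the first item, I would cite \cite{BK-top-book} directly: the explicit formula $\hocolim_I F = \bigoplus_{p\geq 0} \bigoplus_{i_0\to\cdots\to i_p} F(i_0)\otimes C_*(\Delta^p)$, for $F\co I\to\AbelianGroups$, is, after applying Dold--Kan, the bar construction $B_*(F,I)$ whose homology is by definition $\LL_*\colim_I F$. So this item is essentially a repackaging of the standard bar resolution computation of derived colimits.

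For the second item, view $\hocolim_I F$ for $F\co I\to \Complexes$ as the totalization of the bicomplex whose $(p,q)$-entry is $\bigoplus_{i_0\to\cdots\to i_p} F(i_0)_q$, where the horizontal differential is the simplicial (bar) differential and the vertical differential comes from the internal differential of $F$. I would filter this bicomplex by the simplicial degree $p$, obtaining a spectral sequence converging to $H_{p+q}(\hocolim_I F)$. Taking the vertical homology first on the $E^0$ page yields $E^1_{p,q} = \bigoplus_{i_0\to\cdots\to i_p} H_q(F(i_0))$, and the induced horizontal differential on $E^1$ is precisely the bar differential for the functor $H_q\circ F\co I\to \AbelianGroups$. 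Applying the first item, $E^2_{p,q} = \LL_p\colim_I(H_q\circ F)$, as desired. For the third item, apply the second to the complex-valued functor $\widetilde C_*\circ F$; by the last bullet of Proposition~\ref{prop:hocolim-props}, the natural map $\hocolim_I(\widetilde C_*\circ F)\to\widetilde C_*(\hocolim_I F)$ is a quasi-isomorphism, so this same spectral sequence abuts to $\widetilde H_{p+q}(\hocolim_I F)$ with $E^2_{p,q} = \LL_p\colim_I(\widetilde H_q\circ F)$.

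The main subtlety is convergence: for the filtration-by-simplicial-degree spectral sequence to converge to the homology of the totalization, one needs the filtration to be bounded and exhaustive in each total degree. This holds automatically provided the complexes $F(i)$ are uniformly bounded below, which is the setting relevant to Khovanov-theoretic applications. In full generality one would need to invoke a conditional convergence argument in the sense of Boardman, or restrict to bounded-below diagrams; for the applications in this paper the latter suffices.
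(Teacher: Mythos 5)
The paper itself does not supply a proof of this proposition; the first and third bullets are stated with citations to \cite{BK-top-book}, and the middle bullet is the evident bridge between them. Your reconstruction is exactly the standard argument underlying those citations: identify $\hocolim$ of an $\AbelianGroups$-valued functor with the (normalized) bar complex computing $\LL_*\colim_I$; for a $\Complexes$-valued $F$, filter the direct-sum totalization of the bicomplex $\bigoplus_{i_0\to\cdots\to i_p} F(i_0)_q$ by the bar degree $p$, take internal homology first to get $E^1_{p,q}=\bigoplus_{i_0\to\cdots\to i_p} H_q F(i_0)$, recognize $d^1$ as the bar differential for $H_q\circ F$, and apply the first bullet to read off $E^2$; then deduce the simplicial-set case by applying this to $\widetilde C_*\circ F$ and the quasi-isomorphism $\hocolim(\widetilde C_*\circ F)\to\widetilde C_*(\hocolim F)$ from the preceding proposition. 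So the route matches the paper's implicit one. (Invoking Dold--Kan is unnecessary, since the formula the paper records is already a chain complex, but this is cosmetic.)

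Your caveat on convergence in the middle bullet is a genuine and worthwhile point that the paper leaves tacit. The filtration by bar degree is bounded below in $p$ and exhaustive, so the spectral sequence always converges conditionally; but once $F$ takes values in complexes that are unbounded below, strong convergence to $H_*(\hocolim F)$ is not automatic (there is a potential $\lim^1$-type obstruction in each total degree). For the third bullet this cannot happen because singular chains are connective, and in the paper's applications all complexes in sight are bounded, so the unqualified claim of convergence is harmless there; but in full generality one should either impose uniform boundedness below or appeal to Boardman's conditional-convergence framework, as you say.
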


\begin{proposition}[{\cite[XII.5.6]{BK-top-book}}]
  Suppose $\Delta$ denotes the category of finite ordinals and
  order-preserving maps, and $A\co \Delta^{\op} \to \Complexes$
  represents a simplicial chain complex $A_\bullet$. Then the chain
  complex $\hocolim_{\Delta^\op} A$ 
  is
  quasi-isomorphic to the total complex of the double complex
  \[
  \cdots \to A_2 \to A_1 \to A_0 \to 0,
  \]
  where the ``horizontal'' boundary maps are given by the standard
  alternating sum of the face maps of $A_\bullet$.
\end{proposition}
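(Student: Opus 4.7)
The plan is to unwind the Bousfield--Kan formula for the homotopy colimit and produce an explicit comparison map with the totalization. By the definition recalled in Section~\ref{sec:hocolim}, $\hocolim_{\Delta^{\op}} A$ is a quotient of
\[
\bigoplus_{p \geq 0} \bigoplus_{[n_p] \to \cdots \to [n_0]} A_{n_0} \otimes C_*(\Delta^p),
\]
where the inner direct sum runs over chains of $p$ composable morphisms in $\Delta$. Viewed this way, the hocolim is the totalization of a double complex whose ``internal'' differential is the differential on $A_{n_0}$ together with the cellular differential on $C_*(\Delta^p)$, and whose ``simplicial'' differential is the bar-construction differential for $A$ over $\Delta^{\op}$.

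First I would define a chain map $\varphi \co \hocolim_{\Delta^{\op}} A \to \Total(A_\bullet)$ via the augmentation. Each summand $A_{n_0} \otimes C_*(\Delta^p)$ admits a natural map to $A_{n_p}$: apply the simplicial operator $[n_p] \to [n_0]$ obtained by composing the chain to $A_{n_0} \to A_{n_p}$, pair with the top-dimensional cell of $\Delta^p$ with the standard Alexander--Whitney sign, and send the lower simplices of $\Delta^p$ to zero. A direct check shows this respects the face and degeneracy relations of the Bousfield--Kan construction, descends to $\varphi$, and is a chain map where the simplicial bar differential matches the alternating-sum face differential $\sum (-1)^i d_i$ on $\Total(A_\bullet)$.

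To show $\varphi$ is a quasi-isomorphism, I would filter both sides by the internal $q$-grading on $A$ and compare spectral sequences using Proposition~\ref{prop:hocolim-sseq-1}. On the hocolim side the $E^2$-page is $\LL_p \colim_{\Delta^{\op}}(H_q \circ A)$. The crux is a classical auxiliary lemma which I would verify: for any simplicial abelian group $B_\bullet \co \Delta^{\op} \to \AbelianGroups$, the derived colimits $\LL_p \colim_{\Delta^{\op}} B_\bullet$ coincide with $\pi_p(B_\bullet)$, equivalently with the homology of the normalized chain complex of $B_\bullet$ under its alternating-sum differential. This follows from the fact that $\colim_{\Delta^{\op}}$ is the coequalizer of $d_0,d_1$ (so $\LL_0 = \pi_0$), and that resolving by free simplicial modules on the representables $\Delta[-,[n]]$ reproduces the normalized Dold--Kan chain complex. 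Granted this lemma, the $E^2$-page of the hocolim spectral sequence becomes $H_p\bigl(H_q(A_\bullet),\sum(-1)^i d_i\bigr)$, matching the $E^2$-page of the standard filtration spectral sequence for $\Total(A_\bullet)$, and $\varphi$ is seen to induce the identity on $E^2$-terms. Both spectral sequences converge since everything is bounded below in simplicial degree, so $\varphi$ is a quasi-isomorphism.

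The main obstacle is the derived-colimit lemma and the verification that $\varphi$ induces the comparison of spectral sequences at the chain level, not merely abstractly on the $E^2$-page. The sign bookkeeping relating the Alexander--Whitney map used in the Bousfield--Kan formula to the alternating-sum face differential, together with the identification of normalized versus unnormalized chains, is the delicate part.
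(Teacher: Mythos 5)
The paper gives no proof of its own here --- it simply cites Bousfield--Kan XII.5.6 --- so I will evaluate your argument on its merits. Your overall strategy is the standard one and is sound: filter, apply the derived-functor spectral sequence from Proposition~\ref{prop:hocolim-sseq-1}, and invoke the classical fact that $\LL_p\colim_{\Delta^{\op}}$ on simplicial abelian groups agrees with $\pi_p$ (equivalently, the homology of the Moore complex). That lemma is correct, though your one-sentence justification is terse; it does follow from the projectivity of $\ZZ[\Delta(-,[n])]$ in $\mathsf{Fun}(\Delta^{\op},\AbelianGroups)$ and a Dold--Kan argument.

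The genuine gap is in the construction of the comparison map $\varphi$. As you describe it, a summand of $\hocolim_{\Delta^{\op}}A$ indexed by a chain $i_0\to\cdots\to i_p$ in $\Delta^{\op}$ (with $i_j=[n_j]$) contributes in total degree $(\text{internal degree in }A_{n_0})+p$, since after the coend identifications with $C_*(\Delta^p)$ only the top cell survives. You send it to $A_{n_p}$, which sits in the $n_p$-th column of $\Total(A_\bullet)$ and hence in total degree $(\text{internal})+n_p$. Since $n_p$ is arbitrary and in general $n_p\neq p$, this map does not preserve total degree and so is not a chain map; already on a $0$-chain $[n_0]$ it sends an element of degree $j$ in $\hocolim$ to degree $j+n_0$ in $\Total$. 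A direct consequence is that your subsequent spectral-sequence comparison cannot proceed, because it requires $\varphi$ to exist. The correct map is the Bousfield--Kan ``last vertex'' comparison: identify $\hocolim A$ with $\int^{[n]}C_*\bigl(N([n]\downarrow\Delta^{\op})\bigr)\otimes A_n$, use the natural weak equivalence $N([n]\downarrow\Delta^{\op})\simeq\Delta[n]$, and then identify $\int^{[n]}C_*(\Delta[n])\otimes A_n$ with $\Total(A_\bullet)$ by Eilenberg--Zilber and normalization. Concretely, on a chain as above this sends $x\in A_{n_0}$ not to the image under the composite $A_{n_0}\to A_{n_p}$ in column $n_p$, but to $g^*(x)\in A_p$ in column $p$, where $g\co[p]\to[n_0]$ is the last-vertex $p$-simplex attached to the chain; this is degree-preserving. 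With that corrected map, the rest of your spectral-sequence argument does go through.
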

\begin{proposition}
  If $A$ is an abelian group, represented by a functor $F\co I \to
  \AbelianGroups$ from the trivial category with one object, then the
  complex $\hocolim_I F$ 
  is the complex
  with $A$ in degree $0$ and $0$ in all other degrees.
\end{proposition}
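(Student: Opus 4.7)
The plan is to unwind the definition of $\hocolim_I F$ given earlier in this section, using crucially the fact that $C_*$ denotes the \emph{normalized} chain functor.

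I would first observe that since $I$ has a single object $*$ and only the identity morphism, for each $p \geq 0$ there is exactly one chain $i_0 \to i_1 \to \cdots \to i_p$ in $I$, all of whose morphisms are identities; $F$ sends this chain to $A$. Hence the expression $\bigoplus_{p \geq 0}\bigoplus_{i_0\to\cdots\to i_p}F(i_0)\otimes C_*(\Delta^p)$ reduces, before quotienting, to $\bigoplus_{p \geq 0} A \otimes C_*(\Delta^p)$, and the simplicial identifications induced by face and degeneracy maps amount to glueing these copies according to the simplicial structure of the nerve $N(I)$.

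Next, I would observe that $N(I)$ has a unique non-degenerate simplex, namely its $0$-simplex: since every morphism in $I$ is an identity, each simplex of positive dimension in the nerve is an iterated degeneracy of the $0$-simplex. Because $C_*$ is normalized, every such degenerate contribution vanishes in the quotient, so only the $p = 0$ summand $A\otimes C_*(\Delta^0) = A$ survives, giving the complex $A$ concentrated in degree $0$.

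No step of this argument presents a genuine obstacle; the only point of care is to track the normalization of $C_*$. As an independent sanity check, one may invoke \Proposition{hocolim-sseq-1}: when $I$ is the trivial category, $\colim_I\co \AbelianGroups^I \to \AbelianGroups$ is the identity functor, hence exact, so $\LL_p\colim_I = 0$ for $p > 0$ and the spectral sequence collapses to yield $H_0(\hocolim_I F) = A$ and $H_q(\hocolim_I F) = 0$ for $q > 0$, in agreement with the direct computation.
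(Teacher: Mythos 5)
Your argument is correct and is the natural one; the paper simply states this proposition without proof as a standard fact. The key observations are sound: the simplicial replacement of $F$ is the constant simplicial chain complex at $A$, and because $C_*$ denotes \emph{normalized} chains, the coend identification with the unique surjection $[p] \to [0]$ in $\Delta$ kills the fundamental simplex $\iota_p \in C_p(\Delta^p)$ for $p \geq 1$ (its image in $C_*(\Delta^0)$ is degenerate, hence zero), leaving exactly $A \otimes C_*(\Delta^0) = A$ in degree zero. One small point worth sharpening: the phrase ``degenerate contribution vanishes'' is doing two jobs at once — it is the normalization of $C_*(\Delta^0)$, combined with the coend relations, that forces the higher summands to collapse, not the normalization of the nerve directly. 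Your spectral-sequence check at the end only confirms the homology, not the underlying complex, so it is a consistency check rather than an independent proof of the stated identification, but it is a reasonable sanity check all the same.
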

\begin{proposition}
  Suppose $I$ is a category and we have a natural transformation
  $\phi\co F \to G$ of functors $I \to \Complexes$. Let $P$ denote
  the category $\{* \leftarrow 0 \rightarrow 1\}$, and define a
  functor $C\phi\co P \times I \to \Complexes$ on objects by
  \[
  C\phi(x,y) = \begin{cases}
    0 &\text{if }x = *,\\
    F(y) &\text{if }x = 0,\\
    G(y) &\text{if }x = 1
  \end{cases}
  \]
  with morphisms determined by $F$, $G$, and $\phi$. Then the chain
  complex $\hocolim_{P\times I} (C\phi)$ 
  is quasi-isomorphic to the standard mapping cone of the
  map of chain complexes $\hocolim_I F \to \hocolim_I G$ induced by $\phi$.
\end{proposition}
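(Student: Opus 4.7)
The plan is to first apply the Fubini-type statement to isolate the $P$-direction, and then to recognize the homotopy colimit over $P$ of the resulting reduced span as the mapping cone.

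First, I apply the natural quasi-isomorphism
\[
\hocolim_{p \in P}\bigl(\hocolim_{y \in I} C\phi(p, y)\bigr) \to \hocolim_{P\times I}(C\phi)
\]
coming from the Alexander--Whitney pairing listed among the properties of homotopy colimits above. For each object $p \in P$, the functor $C\phi(p,-)\co I \to \Complexes$ is one of $0$, $F$, or $G$, so the inner homotopy colimit produces a functor $\hat{C}\phi\co P \to \Complexes$ with $\hat C\phi(*) = 0$, $\hat C\phi(0) = \hocolim_I F$, and $\hat C\phi(1) = \hocolim_I G$. Functoriality of $\hocolim_I$ identifies the two structure maps of $\hat C\phi$ as the zero map $\hocolim_I F \to 0$ and the map $\hocolim_I \phi\co \hocolim_I F \to \hocolim_I G$ induced by $\phi$.

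Second, I identify $\hocolim_P \hat C\phi$ with the standard mapping cone of $\hocolim_I\phi$. Writing $A = \hocolim_I F$ and $B = \hocolim_I G$, the only non-degenerate composable chains in $P$ are the three objects themselves and the two non-identity morphisms $0 \to *$ and $0 \to 1$. Unwinding the Bousfield--Kan formula therefore presents $\hocolim_P \hat C\phi$ as $0 \oplus A \oplus B \oplus (A \otimes C_*(\Delta^1)) \oplus (A \otimes C_*(\Delta^1))$, modulo the face identifications that attach one copy of $A \otimes C_*(\Delta^1)$ between $A$ and $0$ and the other between $A$ and $B$. Contracting the acyclic mapping-cylinder piece attached to $A \to 0$ leaves a complex visibly isomorphic to the standard cone on $\hocolim_I\phi$; equivalently, one recognizes $\hocolim_P$ of any span $A' \leftarrow A \to B$ as the homotopy pushout $A' \cup^{h}_{A} B$, which for $A' = 0$ is the mapping cone of $A \to B$.

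The main, and minor, obstacle will be tracking sign and shift conventions to produce an explicit chain-level quasi-isomorphism to the mapping cone of $\hocolim_I\phi$; every other step is a formal consequence of the properties of homotopy colimits already established.
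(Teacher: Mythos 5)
The paper states this proposition without an explicit proof (it is one of several background results on homotopy colimits in Section~2.4 recorded without individual arguments), so there is no "paper proof" to compare against. Your argument is correct and is the natural route: apply the Alexander--Whitney Fubini quasi-isomorphism from Proposition~\ref{prop:hocolim-props} with $P$ on the outside, observe that the constant zero functor on $I$ has zero homotopy colimit, so the inner colimits produce the span $0 \leftarrow \hocolim_I F \to \hocolim_I G$, and then recognize $\hocolim_P$ of a span as a homotopy pushout, which for the span $0 \leftarrow A \xrightarrow{\phi} B$ is precisely the mapping cone of $\phi$.

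Two small points worth making explicit when you write this up. First, Proposition~\ref{prop:hocolim-props} is phrased with one fixed order of iteration, and you are using the other (colimit over $P$ outside); since the statement applies to an arbitrary pair of diagram categories, instantiating it with the roles of $I$ and $J$ swapped is legitimate, but it is worth a sentence to say so because the pairing is noted to be noncommutative. Second, when you say ``contracting the acyclic mapping-cylinder piece attached to $A \to 0$,'' the cleanest way to make this rigorous in the normalized Bousfield--Kan double complex is a change of basis: replace the copy of $A$ over the nondegenerate $1$-simplex $0 \to 1$ by its difference with the copy over $0 \to *$. After this change of basis the total complex splits as a direct sum of the contractible two-term complex $A \xrightarrow{\pm\mathrm{id}} A$ (coming from $0 \to *$) and the standard mapping cone $A[1] \oplus B$ of $\hocolim_I \phi$ (coming from the remaining generators), giving the asserted quasi-isomorphism on the nose up to the sign convention used for mapping cones.
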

Using the previous two propositions to iterate a mapping cone
construction gives the following result for cube-shaped diagrams.
\begin{corollary}\label{cor:totalization}
  Let $P$ denote the category $\{* \leftarrow 0 \to 1\}$ and $\CCat{}$
  denote the subcategory $\{0 \to 1\}$. Given a functor $F\co \CCat{n}
  \to \AbelianGroups$, its totalization is defined to be the
  chain complex
  \begin{equation}\label{eq:totalization}
  \bigoplus_{\substack{v\in\CCat{n}\\|v|=0}}F(v)\to\bigoplus_{\substack{v\in\CCat{n}\\|v|=1}}F(v)\to \dots\to\bigoplus_{\substack{v\in\CCat{n}\\|v|=n}}F(v),
  \end{equation}
  graded so that $\bigoplus_{\substack{v\in\CCat{n}\\|v|=i}}F(v)$ is
  in grading $n-i$ (where $|v|$ denotes the number of $1$'s in $v$),
  and the differential counts the sum of the edge maps of $F$ with
  standard signs.  Let $\widetilde F\from P^n\to\AbelianGroups$ be the
  extended functor given by
  \[
  \widetilde F(v) = \begin{cases}
    F(v)&\text{if }v \in \CCat{n},\\
    0&\text{otherwise.}
  \end{cases}
  \]
  Then the complex $\hocolim_{P^n} \tilde F$
  is quasi-isomorphic to the totalization of $F$.
\end{corollary}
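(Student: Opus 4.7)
My plan is to induct on $n$, after first strengthening the statement to allow $F$ to take values in $\Complexes$ rather than only in $\AbelianGroups$; the totalization~\eqref{eq:totalization} still makes sense with its standard Koszul sign convention (now the direct sum is a direct sum of complexes and the differential is modified by the Koszul rule), and the mapping cone proposition above applies verbatim to chain-complex-valued functors.

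For the base case $n=1$, I would apply the mapping cone proposition with $I$ the trivial category on one object, take the two functors to be $F(0)$ and $F(1)$, and let $\phi$ be the unique edge map of the $1$-cube. Then $C\phi$ coincides with $\widetilde F$, so the proposition identifies $\hocolim_P \widetilde F$ with $\Cone(F(0)\to F(1))$, which is the totalization of $F$.

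For the inductive step, I would decompose $P^n = P\times P^{n-1}$ and use the iterated-homotopy-colimit result from Proposition~\ref{prop:hocolim-props} to obtain
\[
\hocolim_{P^n}\widetilde F \simeq \hocolim_{P^{n-1}} G,
\]
where $G(\vec v) := \hocolim_P \widetilde F(-,\vec v)$. If any coordinate of $\vec v$ equals $*$ then $\widetilde F(-,\vec v)$ is identically zero and $G(\vec v) \simeq 0$; otherwise $\vec v \in \CCat{n-1}$ and by the base case $G(\vec v)$ is quasi-isomorphic to $\Cone(F(0,\vec v)\to F(1,\vec v))$. Thus $G$ is, up to quasi-isomorphism, the zero-extension to $P^{n-1}$ of a cube of complexes on $\CCat{n-1}$, to which I would apply the strengthened inductive hypothesis. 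A direct inspection then identifies the resulting iterated mapping cone with the totalization of the original $n$-cube $F$.

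The main obstacle I anticipate is the bookkeeping of signs: verifying that the Koszul signs produced by iterating the mapping cone along the $n$ coordinates agree with the standard signs appearing in~\eqref{eq:totalization}. This should follow from the associativity (but not commutativity) of the Alexander--Whitney pairing invoked in Proposition~\ref{prop:hocolim-props}, together with a direct check on a two-dimensional face that each edge map contributes with the prescribed sign.
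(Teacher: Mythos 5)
Your proposal is correct and is essentially the argument the paper sketches: induct on $n$, iterating the mapping-cone proposition (strengthened to $\Complexes$-valued functors, as you rightly note is needed), and checking signs at the end. A minor streamlining worth noting: rather than routing through the Fubini step and then upgrading $G$ from ``quasi-isomorphic to a zero-extension'' to an honest zero-extension, you can apply the mapping-cone proposition directly with $I=P^{n-1}$, since $\widetilde F$ vanishes identically over $\{*\}\times P^{n-1}$ and is thus already of the form $C\phi$ for $\phi\co\widetilde F(0,-)\to\widetilde F(1,-)$; this gives $\hocolim_{P^n}\widetilde F\simeq \Cone\bigl(\hocolim_{P^{n-1}}\widetilde F(0,-)\to\hocolim_{P^{n-1}}\widetilde F(1,-)\bigr)$ in one stroke, after which the inductive hypothesis applies verbatim.
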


\subsection{Classical spectra}\label{sec:spectra}

In this section we will review some of the models for the category of
spectra and some of the properties we will need.

For us, a \emph{classical spectrum} $X$ (sometimes called a \emph{sequential
spectrum}) is a sequence of based simplicial sets $X_n$, together with
structure maps $\sigma_n\co X_n \smas S^1 \to X_{n+1}$. A map $X \to
Y$ is a sequence of based maps $f_n\co X_n \to Y_n$ such that the
diagrams
\begin{equation}\label{eq:smash-square}
\mathcenter{\xymatrix{
X_n \smas S^1 \ar[d]_{f_n \smas \Id} \ar[r]^-{\sigma_n^X} &
X_{n+1} \ar[d]^{f_{n+1}} \\
Y_n \smas S^1 \ar[r]_-{\sigma_n^Y} & Y_{n+1}
}}
\end{equation}
all commute. The structure maps produce natural homomorphisms on homotopy groups
$\pi_k(X_n) \to \pi_{k+1}(X_{n+1})$ and (reduced) homology groups $\widetilde
H_k(X_n) \to \widetilde H_{k+1}(X_{n+1})$, allowing us to define
homotopy and homology groups
\begin{align*}
\pi_k(X) &= \colim_n \pi_{k+n} X_n  & H_k(X) &= \colim_n \widetilde H_{k+n} X_n
\end{align*}
for all $k \in \ZZ$ that are functorial in $X$. A map of classical
spectra $X \to Y$ is defined to be a \emph{weak equivalence} if it induces an
isomorphism $\pi_* X \to \pi_* Y$, and the \emph{stable homotopy category} is
obtained from the category of classical spectra by inverting the weak
equivalences. The functors $\pi_*$ and $H_*$ both factor through the
stable homotopy category. (This description is due to Bousfield and
Friedlander \cite{BF-top-spectra}, and they show that it gives a
stable homotopy category equivalent to the one defined by Adams
\cite{Ada-top-stablehomotopy}. It has the advantage that maps of
spectra are easier to describe, but the disadvantage that maps $X \to
Y$ in the stable homotopy category are not defined as
homotopy classes of maps $X \to Y$.)

Classical spectra $X$ and $Y$ have a \emph{handicrafted smash product}
given by
\[
(X \smas Y)_n =
\begin{cases}
  X_k \smas Y_k &\text{if }n=2k,\\
  (X_k \smas Y_k)\smas S^1 &\text{if }n=2k+1.
\end{cases}
\]
The structure map $(X \smas Y)_n \smas S^1 \to (X \smas Y)_{n+1}$ is
the canonical isomorphism when $n$ is even and is obtained from the
structure maps of $X$ and $Y$ when $n$ is odd. This smash product is
not associative or unital, but it induces a smash product functor that
makes the stable homotopy category symmetric monoidal. There is a
K\"unneth formula for homology: there is a multiplication pairing
$H_p(X) \otimes H_q(Y) \to H_{p+q}(X \smas Y)$ that is part of a
natural exact sequence
\[
0 \to \bigoplus_{p+q=n} H_p(X) \otimes H_q(Y) \to H_n(X \smas Y)
\to \bigoplus_{p+q=n-1} \Tor_1^{\ZZ}(H_p(X), H_q(Y)) \to 0
\]
that can be obtained by applying colimits to the ordinary K\"unneth
formula. In particular, this multiplication pairing is an isomorphism
if the groups $H_*(X)$ or $H_*(Y)$ are all flat over $\ZZ$.

Given a functor
$F$ from $I$ to the category of classical spectra, there is a homotopy
colimit $\hocolim_I F$ obtained by applying homotopy colimits
levelwise. Homotopy colimits preserve weak equivalences, and the
handicrafted smash product preserves homotopy colimits in each
variable. There is also a derived functor spectral sequence
\[
\LL_p \colim_I(H_q \circ F) \Rightarrow H_{p+q}(\hocolim_I F)
\]
for calculating the homology of a homotopy colimit. (In fact, this
spectral sequence exists for stable homotopy groups $\pi_*$ as well.)

The Hurewicz theorem for spaces translates into a Hurewicz theorem for
spectra:
\begin{definition}
  For an integer $n$, an object $X$ in the stable homotopy category is
  \emph{$n$-connected} if $\pi_k X = 0$ for $k \leq n$. If $n = -1$, we
  simply say that $X$ is \emph{connective}.
\end{definition}
\begin{citethm}
  There is a natural Hurewicz map $\pi_n(X) \to H_n(X)$, which is an
  isomorphism if $X$ is $(n-1)$-connected.
\end{citethm}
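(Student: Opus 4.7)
The proof splits naturally into constructing the map and then proving the isomorphism.

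\textbf{Construction of the Hurewicz map.} For each level $m$, the classical space-level Hurewicz homomorphism produces a map $h_m\co \pi_{n+m}(X_m) \to \widetilde{H}_{n+m}(X_m)$. The space-level Hurewicz map is natural and, by its explicit description as $[f]\mapsto f_*[S^{n+m}]$, commutes with the suspension isomorphisms $\pi_k(Y)\cong \pi_{k+1}(Y\smas S^1)$ and $\widetilde{H}_k(Y)\cong \widetilde{H}_{k+1}(Y\smas S^1)$. Combined with naturality under the structure map $\sigma_m\co X_m\smas S^1\to X_{m+1}$, this shows the $h_m$ form a compatible system with respect to the stabilization maps defining $\pi_n(X)$ and $H_n(X)$ as colimits. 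The Hurewicz map is defined to be the colimit of the $h_m$; functoriality in $X$ is immediate, so the map descends to the stable homotopy category.

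\textbf{The isomorphism when $X$ is $(n-1)$-connected.} The plan is to reduce to the space-level Hurewicz theorem by replacing $X$ with a weakly equivalent spectrum whose levels are highly connected. Explicitly, I would use CW approximation in the Bousfield--Friedlander category: by attaching cells to kill homotopy classes one dimension at a time and passing to the resulting sequential spectrum, one produces a weak equivalence $X'\to X$ where $X'$ is a CW spectrum whose cells all lie in dimensions $\geq n$. For such a CW spectrum, each level $X'_m$ (for $m$ sufficiently large) is a based CW complex with no cells below dimension $n+m$, hence an $(n+m-1)$-connected space. The classical Hurewicz theorem for spaces then asserts that $\pi_{n+m}(X'_m)\to \widetilde{H}_{n+m}(X'_m)$ is an isomorphism for each such $m$. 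Filtered colimits of abelian groups preserve isomorphisms, so the induced map $\pi_n(X')\to H_n(X')$ is an isomorphism, and naturality plus weak-equivalence invariance of $\pi_*$ and $H_*$ transfer this conclusion to $X$.

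\textbf{Main obstacle.} The delicate step is the CW replacement itself for sequential spectra: one must attach cells and stabilize compatibly so that the structure maps remain well-behaved (ideally cofibrations) and that the cell dimensions are preserved after passing to levels. An alternative route that sidesteps this is to replace $X$ by a weakly equivalent $\Omega$-spectrum $X''$, in which $X''_m=\Omega X''_{m+1}$ forces $\pi_{n+m}(X''_m)=\pi_n(X)$ for all $m\geq 0$; if $X$ is $(n-1)$-connected then each $X''_m$ is $(n+m-1)$-connected, and applying the classical Hurewicz theorem levelwise and taking colimits gives the result directly. Either route reduces the statement to the classical Hurewicz theorem together with exactness of filtered colimits of abelian groups.
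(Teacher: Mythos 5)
The paper states this as a \texttt{citethm}, i.e., as a standard fact from the literature, and supplies no proof of its own, so there is no argument in the paper to compare against. Your proof is correct and is the standard one; of the two routes you sketch, the $\Omega$-spectrum replacement is the cleaner, since fibrant replacement in the Bousfield--Friedlander model structure does exactly the job and avoids setting up CW approximation for sequential spectra from scratch. One small inaccuracy in the construction paragraph: $\pi_k(Y) \to \pi_{k+1}(Y \smas S^1)$ is the Freudenthal suspension homomorphism, which is not in general an isomorphism (unlike the corresponding map on reduced homology). What you actually need, and have, is that the square formed by this suspension map, the homology suspension isomorphism, and the two levelwise Hurewicz maps commutes, so the $h_m$ assemble into a morphism of directed systems indexed by $m$; the colimit of that morphism is the Hurewicz map. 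This correction does not affect the isomorphism argument, since there you compare the two colimit groups via levelwise isomorphisms, and a filtered colimit of isomorphisms of directed systems is an isomorphism regardless of whether the transition maps in either system are themselves isomorphisms.
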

This induces a homology Whitehead theorem:
\begin{citethm}\label{citethm:homologywhitehead}
  If $f\co X \to Y$ is a map of spectra that induces an isomorphism
  $H_* (X) \to H_* (Y)$ and both $X$ and $Y$ are $n$-connected for some
  $n$, then $f$ is an equivalence.
\end{citethm}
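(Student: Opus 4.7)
The plan is to reduce the statement to the Hurewicz theorem by considering the cofiber of $f$.

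First, I would form the mapping cone $C = Y \cup_f CX$ of $f$, which fits into a cofiber sequence $X \to Y \to C$. As in the classical setting, this cofiber sequence induces long exact sequences in both $\pi_*$ and $H_*$. From the long exact sequence in homology,
\[
\cdots \to H_k(X) \to H_k(Y) \to H_k(C) \to H_{k-1}(X) \to \cdots,
\]
together with the assumption that $f_*\co H_*(X) \to H_*(Y)$ is an isomorphism, I would conclude that $H_*(C) = 0$.

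Next I would extract connectivity of $C$. From the long exact sequence in homotopy,
\[
\cdots \to \pi_k(Y) \to \pi_k(C) \to \pi_{k-1}(X) \to \pi_{k-1}(Y) \to \cdots,
\]
and the assumption that $X$ and $Y$ are $n$-connected, it follows that $\pi_k(C) = 0$ for $k \leq n$; that is, $C$ is $n$-connected.

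Now I would run an induction, feeding the Hurewicz theorem into itself. Since $C$ is $n$-connected, the Hurewicz map $\pi_{n+1}(C) \to H_{n+1}(C)$ is an isomorphism; since $H_{n+1}(C) = 0$, we get $\pi_{n+1}(C) = 0$, so $C$ is $(n+1)$-connected. Iterating, $\pi_k(C) = 0$ for all $k$, so $C$ is weakly contractible. Using the long exact sequence in homotopy groups one more time then shows that $f_*\co \pi_*(X) \to \pi_*(Y)$ is an isomorphism, i.e., $f$ is a weak equivalence in the sense defined above.

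The key ingredients are all available: the Hurewicz theorem is stated just above, and the long exact sequences follow from the handicrafted smash product and cofiber constructions on classical spectra. The only mildly delicate point is verifying the connectivity bound for $C$ from the long exact sequence (being careful with the shift), but this is routine; there is no real obstacle beyond setting up the cofiber sequence correctly in the classical (Bousfield--Friedlander) model of spectra.
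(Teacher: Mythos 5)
Your proof is correct and matches exactly the route the paper gestures at: it prefaces this result with ``This induces a homology Whitehead theorem,'' where ``this'' is the Hurewicz theorem stated immediately before, and leaves the (standard) deduction via the cofiber and an inductive Hurewicz argument implicit. The paper treats the statement as a cited fact with no written proof, so your reconstruction supplies the argument the authors intended.
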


Spectra have suspensions and desuspensions:
\begin{definition}\label{def:suspension}
  For a spectrum $X$, there are \emph{suspension} and \emph{loop functors}, as well
  as formal \emph{shift functors}, as follows:
  \begin{align*}
    (S^1 \smas X)_n &= S^1 \smas (X_n) &
    (\Omega X)_n &= \Omega(X_n)\\
    \sh(X)_n &= X_{n+1} &
    \sh^{-1}(X)_n &= \begin{cases}
      X_{n-1}&\text{if }n > 0\\
      \ast&\text{if }n=0
    \end{cases}
  \end{align*}
\end{definition}

\begin{proposition}\label{prop:sh-adjoints}
  The pairs $(S^1 \smas (-), \Omega)$ and $(\sh^{-1}, \sh)$ are adjoint
  pairs, and all unit and counit maps are weak equivalences.

  In the stable homotopy category, there are isomorphisms
  \begin{align*}
    S^1 \smas X &\cong \sh(X) & \Omega X &\cong \sh^{-1}(X)
  \end{align*}
  In particular, the suspension functor and desuspension (i.e., loop)
  functor are inverse to each other.
\end{proposition}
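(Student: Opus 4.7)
The proposition packages several claims, and the plan is to treat the two adjunctions separately since they have rather different characters, then to deduce the stable isomorphisms formally.

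For the shift adjunction $(\sh^{-1},\sh)$, I would unpack the definitions: the natural bijection $\Hom(\sh^{-1}X,Y)\cong \Hom(X,\sh Y)$ is essentially by inspection, because a map $\sh^{-1}X\to Y$ is a sequence of basepoint-preserving maps $(\sh^{-1}X)_n\to Y_n$ (trivial at level $0$) compatible with structure maps, and the reindexing $(\sh^{-1}X)_{n+1}=X_n$ and $(\sh Y)_n=Y_{n+1}$ puts this in exact correspondence with maps $X\to \sh Y$. The unit $X\to \sh\sh^{-1} X$ and counit $\sh^{-1}\sh X\to X$ are identities in positive levels and differ from the identity only at level zero; since $\pi_k(X)=\colim_n\pi_{k+n}X_n$ is unaffected by a change at a single level, both are weak equivalences.

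For the loop-suspension adjunction, the bijection $\Hom(S^1\smas X,Y)\cong\Hom(X,\Omega Y)$ is built levelwise from the standard space-level adjunction; one just checks that the natural transformation converting maps $S^1\smas X_n\to Y_n$ to maps $X_n\to \Omega Y_n$ commutes with structure maps, which is formal from naturality of the space-level adjunction in each variable. The main work is showing the unit $\eta\co X\to \Omega(S^1\smas X)$ (and counit) induces an isomorphism on $\pi_*$. Using the colimit formula, $\eta$ at level $\pi_k$ becomes $\colim_n\pi_{k+n}(X_n)\to \colim_n\pi_{k+n}(\Omega(S^1\smas X_n))=\colim_n\pi_{k+n+1}(S^1\smas X_n)$, which is the stabilization of the Freudenthal suspension map; this is an isomorphism by the Freudenthal suspension theorem, and an identical argument with the roles reversed handles the counit.

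For the stable isomorphisms, the point is that both $S^1\smas(-)$ and $\sh$ act as autoequivalences of the stable homotopy category (by the adjunctions just established, whose units and counits are weak equivalences, hence invertible after localizing). The levelwise structure maps $\sigma_n\co S^1\smas X_n\to X_{n+1}$ of $X$, composed with the symmetry swap if desired, assemble into a morphism of spectra $\vartheta\co S^1\smas X\to \sh X$; its compatibility with structure maps is a direct check using $\sigma_{n+1}\circ(\sigma_n\smas\Id)$ on both sides. To see $\vartheta$ is a stable equivalence, compute $\pi_k$ via the colimit: on the level-$n$ term the induced map $\pi_{k+n}(S^1\smas X_n)\to\pi_{k+n}(X_{n+1})$ is precisely the one appearing in the colimit system for $\pi_k(X)$, so after passing to the colimit the map becomes an identification. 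The adjoint of $\vartheta$ yields the corresponding stable equivalence $\sh^{-1}X\cong \Omega X$, and together with the two adjunctions this implies suspension and desuspension are inverse autoequivalences of the stable homotopy category.

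The only genuinely nontrivial input is the Freudenthal suspension theorem used in the second paragraph; everything else is formal manipulation of the colimit definition of $\pi_*$ and the levelwise definitions of $S^1\smas(-)$, $\Omega$, and $\sh^{\pm 1}$.
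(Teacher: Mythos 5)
Your first two paragraphs (the two adjunctions, the weak-equivalence claims for unit and counit) are reasonable and in the spirit of what one should do; the paper gives no proof of this background proposition. But your third paragraph contains a genuine error, and it is exactly the one the paper flags in the sentence immediately following the proposition: ``Although it looks like there are natural maps $S^1 \smas X \to \sh(X)$ and $\Omega X\to \sh^{-1}(X)$ that implement these equivalences, there are not.''

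Concretely, your proposed map $\vartheta\colon S^1\smas X\to \sh X$, with $\vartheta_n$ given by $\sigma_n$ precomposed with the swap $S^1\smas X_n\cong X_n\smas S^1$, is \emph{not} a map of classical spectra. Chase the required commutativity square: starting from $(s,x,t)\in S^1\smas X_n\smas S^1$, the composite through the structure map of $S^1\smas X$ followed by $\vartheta_{n+1}$ gives $\sigma_{n+1}(\sigma_n(x,t),s)$, while $\vartheta_n\smas\Id$ followed by the structure map of $\sh X$ gives $\sigma_{n+1}(\sigma_n(x,s),t)$. These differ by the transposition of the two $S^1$ coordinates, and for a general classical (i.e., non-symmetric) spectrum there is nothing to correct this: the square simply fails to commute, and stably the discrepancy is a sign, not the identity. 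This is precisely why, a few paragraphs later, the paper \emph{does} exhibit such a map for symmetric spectra but has to insert a block permutation $\sigma\in\symGrp_{n+1}$ at the end to make the diagram commute --- a fix unavailable in the sequential setting. Since your deduction of $\sh^{-1}X\cong\Omega X$ and of the stable identifications is driven entirely by $\vartheta$ being an honest spectrum map, that part of the argument collapses; the isomorphisms hold in the stable homotopy category, but must be produced by a less naive route (e.g., a zig-zag, or by identifying $S^1\smas(-)$ and $\sh$ as left adjoints of functors that can be compared, rather than by a single natural transformation at the point-set level).
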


Although it may look like there are natural maps $S^1 \smas X \to \sh(X)$ and $\Omega X\to \sh^{-1}(X)$
that implement these equivalences, there are not: the apparent maps do
not make Diagram~\eqref{eq:smash-square} commute.

\subsection{Symmetric spectra}
Many of our constructions make use of Elmendorf-Mandell's paper~\cite{EM-top-machine},
which uses Hovey-Shipley-Smith's more structured category of symmetric
spectra~\cite{HSS-top-symmetric}. In this section we review some
details about symmetric spectra and their relationship to classical
spectra.

A \emph{symmetric spectrum} (which, in this paper, we may simply call a
\emph{spectrum}) is a sequence of based simplicial sets $X_n$, together with
actions of the symmetric group $\symGrp_n$ on $X_n$, and
structure maps $\sigma_n\co X_n \smas S^1 \to X_{n+1}$. These are
required to satisfy the following additional constraint. For any $n$
and $m$, the iterated structure map
\[
X_n \smas S^m \cong X_n \smas (S^1 \smas S^1 \smas \dots \smas S^1)
\to X_{n+m}
\]
has actions of $\symGrp_n \times \symGrp_m$ on the source and target:
via the actions on the two factors for the source, and via the
standard inclusion $\symGrp_n \times \symGrp_m \to \symGrp_{n+m}$ in
the target. The structure maps are required to intertwine these two
actions. A map of symmetric spectra consists of a sequence of based,
$\symGrp_n$-equivariant maps $f_n\co X_n \to Y_n$ commuting with the structure
maps. We write $\Spectra$ for the category of symmetric spectra.

A symmetric spectrum can also be described as the following equivalent
data. To a finite set $S$, a symmetric spectrum assigns a simplicial
set $X(S)$, and this is functorial in isomorphisms of finite sets. To
a pair of finite sets $S$ and $T$, there is a structure map $X(S) \smas
\left(\bigwedge_{t \in T}S^1\right) \to X(S \coprod T)$, and this is compatible with
isomorphisms in $S$ and $T$ as well as satisfying an associativity
axiom in $T$. We recover the original definition by setting $X_n =
X(\{1,2,\dots,n\})$.

Symmetric spectra also have a more rigid monoidal structure $\smas$,
characterized by the property that a map $X \smas Y \to Z$ is
equivalent to a natural family of maps $X(S) \smas Y(T) \to Z(S
\coprod T)$ compatible with the structure maps in both variables. This
makes the category of symmetric spectra symmetric monoidal
closed.

Again, the constructions of homotopy colimits are compatible enough
that they extend to symmetric spectra. Given a functor $F$ from $I$ to
the category of symmetric spectra, there is a homotopy colimit
$\hocolim_I F$ obtained by applying homotopy colimits
levelwise. Homotopy colimits preserve weak equivalences.  The smash
product also behaves well with respect to homotopy colimits, as
follows.
\begin{proposition}
  The smash product of symmetric spectra preserves homotopy colimits
  in each variable.
\end{proposition}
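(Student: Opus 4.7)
The plan is to exploit two facts: first, homotopy colimits of symmetric spectra are computed levelwise; and second, at each level, the smash product $(X \smas Y)_n$ is built from pointed simplicial sets $X(S) \smas Y(T)$ with $|S|+|T|=n$ via operations (wedges, free orbits for finite group actions) that commute with homotopy colimits of pointed simplicial sets.

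First I would fix a symmetric spectrum $Y$ and a functor $F\co I \to \Spectra$, and produce the natural comparison map
\[
\hocolim_{i \in I} \bigl(F(i) \smas Y\bigr) \to \bigl(\hocolim_{i \in I} F(i)\bigr) \smas Y
\]
using the universal property of the smash product together with the canonical structure maps of the homotopy colimit. Since weak equivalences of symmetric spectra are detected levelwise and $\hocolim$ in $\Spectra$ is formed levelwise, it suffices to show that for every $n$ the induced map on level $n$ is a weak equivalence of pointed simplicial sets.

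Next I would unpack the level-$n$ piece of the smash product. By the characterization of $\smas$ in terms of maps out (natural families $X(S) \smas Y(T) \to Z(S \coprod T)$), one writes $(X \smas Y)_n$ as a colimit built from wedges of $X(S) \smas Y(T)$ for $|S|+|T|=n$, modulo the diagonal action of $\symGrp_{|S|} \times \symGrp_{|T|}$ and the inclusion $\symGrp_{|S|} \times \symGrp_{|T|} \hookrightarrow \symGrp_n$. The crucial observation is that these colimits are free-orbit constructions (wedges and quotients by free actions of finite groups on based simplicial sets), which are well known to coincide with their homotopy colimits.

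Finally I would apply the part of \Proposition{hocolim-props} stating that $\smas$ of pointed simplicial sets preserves homotopy colimits in each variable. This gives a weak equivalence
\[
\hocolim_{i \in I}\bigl(F(i)(S) \smas Y(T)\bigr) \xrightarrow{\;\simeq\;} \bigl(\hocolim_{i \in I} F(i)(S)\bigr) \smas Y(T)
\]
for each pair $(S,T)$. Combining this with the fact that the remaining wedge-and-free-orbit constructions commute with $\hocolim$, and that $\hocolim$ of pointed simplicial sets preserves weak equivalences, assembles into the desired equivalence at level $n$. The main technical obstacle is the bookkeeping for the symmetric group actions: verifying that quotienting by the $\symGrp_n$-action in the Day convolution defining $\smas$ is a genuinely homotopical operation (i.e., agrees with its derived version) on the pointed simplicial sets appearing, which ultimately reduces to the standard fact that homotopy orbits and strict orbits agree for free actions.
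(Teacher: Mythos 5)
Your framing has a genuine gap. The formula you write for $(X \smas Y)_n$ --- wedges of $X(S)\smas Y(T)$ modulo the $\symGrp_{|S|}\times\symGrp_{|T|}$-action induced up to $\symGrp_n$ --- describes the tensor product of the underlying symmetric \emph{sequences}; the smash product of symmetric \emph{spectra} is a further coequalizer of this by the two ways the sphere spectrum can act, via the structure maps of $X$ versus those of $Y$. That coequalizer is a strict colimit, not a homotopy coequalizer, so the claim that ``these colimits are free-orbit constructions\ldots which coincide with their homotopy colimits'' does not cover the full construction, and the step where you conclude a weak equivalence at level $n$ would not go through as written. A smaller slip: weak equivalences of symmetric spectra are \emph{not} detected levelwise (level equivalences imply stable equivalences, not conversely). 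It happens to be harmless here, since exhibiting a level equivalence would suffice, but it signals that the levelwise-weak-equivalence lens is not what is really at issue.

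Both problems dissolve once you notice that the statement is purely categorical and the comparison map is an \emph{isomorphism}, not merely a weak equivalence. Symmetric spectra are closed symmetric monoidal, so $X\smas(-)$ is a left adjoint and preserves all colimits. The Bousfield--Kan homotopy colimit used here is itself an ordinary colimit --- a quotient of $\coprod_{p}\coprod_{i_0\to\cdots\to i_p} F(i_0)\smas(\Delta^p)_+$ --- assembled from coproducts, coequalizers, and tensors with based simplicial sets, all of which $X\smas(-)$ respects strictly. Hence $X\smas\hocolim_I F\cong\hocolim_I\bigl(X\smas F(-)\bigr)$ on the nose, with no levelwise unwinding, no symmetric-group bookkeeping, and no appeal to free-versus-homotopy orbits. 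Your levelwise plan could be repaired by proving each levelwise comparison is an isomorphism via commutation of strict colimits, but the adjunction argument is shorter and avoids the pitfalls. The genuinely homotopy-theoretic subtlety --- whether $\hocolim_I(X\smas F)$ has the \emph{correct} derived homotopy type --- requires cofibrancy hypotheses (cf.\ Remark~\ref{remark:cofib-smash}) and is a different question from the one this proposition addresses.
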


The category of symmetric spectra has an internal
notion of weak equivalence, and a homotopy category of symmetric
spectra. Both symmetric spectra and classical spectra have model
structures \cite{HSS-top-symmetric,BF-top-spectra}, and we have the
following results.
\begin{citethm}[{\cite[4.2.5]{HSS-top-symmetric}}]
  The forgetful functor $U$ from symmetric spectra to classical
  spectra has a left adjoint $V$, and this pair of adjoint functors is
  a Quillen equivalence between these model categories.
\end{citethm}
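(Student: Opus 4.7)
The plan is to construct the left adjoint $V$ explicitly, verify the adjunction combinatorially, then check first the Quillen adjunction property and finally the Quillen equivalence.

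First, I would build $V$. Viewing a classical spectrum $X=(X_n,\sigma_n)$ as a functor on the category whose objects are non-negative integers and with only identity maps plus structure data, and viewing a symmetric spectrum as an analogous functor on the category of finite sets and isomorphisms (with structure maps as in the alternative definition recalled in the excerpt), the forgetful functor $U$ is restriction along the inclusion of subcategories. Hence $V$ is the corresponding left Kan extension. Concretely, $V(X)_n=\bigvee_{0\le k\le n}(\symGrp_n)_+\smas_{\symGrp_k\times\symGrp_{n-k}}(X_k\smas S^{n-k})$, quotiented by the relations imposed by the structure maps of $X$. The adjunction $\Hom_{\Spectra}(V(X),Y)\cong\Hom(X,U(Y))$ is then immediate from the universal property of the Kan extension: a map out of $V(X)$ is determined by its restriction to the ``$\symGrp_k$-free'' summand for each $k$, and $\symGrp_n$-equivariance forces that restriction to come from an underlying sequence of (non-equivariant) maps $X_n\to Y_n$ commuting with the $\sigma_n$.

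Next, I would verify that $(V,U)$ is a Quillen adjunction. Both model structures have levelwise Kan fibrations and levelwise weak equivalences of Kan complexes as the (positive) fibrations and trivial fibrations respectively (after fibrant replacement); in the symmetric case the underlying set-level fibrations and trivial fibrations are still detected levelwise, so $U$ preserves fibrations and trivial fibrations by inspection. Equivalently, $V$ sends the generating (trivial) cofibrations of classical spectra to (trivial) cofibrations of symmetric spectra, which can be checked against the explicit generators $F_n\partial\Delta^k_+\to F_n\Delta^k_+$ and their trivial analogs.

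The hard part will be showing this is a Quillen \emph{equivalence}, because weak equivalences of symmetric spectra are \emph{not} detected by the naively defined homotopy groups $\pi_*$ used in the excerpt: they are the stable equivalences in the sense of Hovey--Shipley--Smith, obtained by Bousfield-localizing the levelwise model structure. I would proceed in two steps. First, using the detection functor $\Theta^\infty$ (iterated shift and loop), I would identify the derived unit $X\to U\mathbb{R}V(X)$ with the classical spectrification of $X$, which is a $\pi_*$-isomorphism. Second, I would show that on a fibrant symmetric spectrum $Y$, the derived counit $\mathbb{L}V(U(Y))\to Y$ is a stable equivalence; the key ingredient is that fibrant $Y$ are $\Omega$-spectra in the symmetric sense, so in particular they are \emph{semistable} (their naive homotopy groups agree with their ``true'' stable homotopy groups), and hence a $\pi_*$-isomorphism into a semistable spectrum is an HSS stable equivalence. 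The main obstacle, i.e.\ the entire subtlety of symmetric spectra, is packaged into this semistability argument; once it is in hand, the two-out-of-three property for the derived adjunction maps on cofibrant/fibrant objects finishes the proof.
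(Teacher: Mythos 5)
The paper states this as a \texttt{citethm}, i.e., it invokes Hovey--Shipley--Smith~\cite[4.2.5]{HSS-top-symmetric} without reproving it, so there is no in-paper argument to compare against; what you have written is a blind reconstruction of a background result. Your sketch follows essentially the HSS route (prolongation as left Kan extension, Quillen adjunction via generators, and the $\pi_*$-iso vs.\ stable-equivalence dichotomy settled by semistability), so the overall architecture is sound. A few things should be tightened.

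The formula for $V$ has a spurious $\symGrp_k$: a classical spectrum carries no $\symGrp_k$-action on $X_k$, so the induction in $(VX)_n$ should be only over $1\times\symGrp_{n-k}$ acting on $S^{n-k}$, i.e.\ terms of the form $(\symGrp_n)_+\smas_{\symGrp_{n-k}}(X_k\smas S^{n-k})$ before imposing the coequalizer relations. Next, the claim that the stable fibrations in both model categories are ``levelwise Kan fibrations'' is not accurate: in both the Bousfield--Friedlander and the HSS stable model structures, the stable fibrations are the levelwise fibrations satisfying an additional homotopy-pullback condition. What \emph{is} levelwise in both cases is the class of trivial fibrations, so the ``levelwise inspection'' argument correctly shows $U$ preserves trivial fibrations (equivalently, $V$ preserves cofibrations), but showing $V$ preserves \emph{trivial} cofibrations requires a separate check, which HSS carry out explicitly.

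Finally, the part that is the actual content of the theorem — that the derived unit $X\to U\mathbb{R}V(X)$ and the derived counit $\mathbb{L}V(UY)\to Y$ are $\pi_*$-isomorphisms on cofibrant/fibrant objects — is asserted (``I would identify\ldots'', ``I would show\ldots'') rather than argued. Everything you have written reduces the theorem to this claim, and it is here that the homotopy type of $V$ applied to (cofibrant replacements of) classical spectra has to be pinned down, e.g.\ via HSS's analysis of the free symmetric spectra $F_nK$ and the detection functor. Also a small point of formulation: the lemma you want to quote at the end is either ``every $\pi_*$-isomorphism of symmetric spectra is a stable equivalence'' (\cite[3.1.11]{HSS-top-symmetric}, which is what actually gets used) or ``a $\pi_*$-isomorphism between semistable symmetric spectra is a stable equivalence''; the hybrid ``$\pi_*$-iso into a semistable spectrum'' is a consequence of the first and so is true, but the semistability of the target is doing no work and reads like a misremembered hypothesis.
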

\begin{corollary}
  The homotopy category of symmetric spectra is equivalent to the
  stable homotopy category.
\end{corollary}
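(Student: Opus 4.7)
The plan is to deduce this immediately from the Quillen equivalence stated in the preceding theorem, invoking the standard fact that a Quillen equivalence between model categories descends to an equivalence of their homotopy categories (obtained by inverting the weak equivalences, or equivalently by passing to bifibrant objects modulo homotopy). No tangle-theoretic content is involved here; the work has been done in citing \cite{HSS-top-symmetric,BF-top-spectra}.

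More explicitly, I would first recall that the classical model structure on sequential spectra of Bousfield–Friedlander \cite{BF-top-spectra} has as its homotopy category precisely the stable homotopy category defined earlier in the section (i.e., sequential spectra localized at weak equivalences, where a weak equivalence is a map inducing an isomorphism on $\pi_*$). This identification is part of the reason for recalling the Bousfield–Friedlander model in the previous subsection. Next, the cited Quillen equivalence $(V,U)$ between symmetric spectra and sequential spectra implies that the total derived functors
\[
\mathbb{L}V \co \mathrm{Ho}(\Spectra) \rightleftarrows \mathrm{Ho}(\text{classical spectra}) \co \mathbb{R}U
\]
are mutually inverse equivalences of categories.

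The only point worth checking is compatibility of the two notions of weak equivalence: a weak equivalence of symmetric spectra in the Hovey–Shipley–Smith sense need not be detected by naive homotopy groups of the underlying sequential spectrum, but by construction of the model structure the functor $U$ sends Hovey–Shipley–Smith weak equivalences between fibrant objects to Bousfield–Friedlander weak equivalences, and this suffices for the derived adjunction to be an equivalence. Since the statement is essentially a restatement of the cited Quillen equivalence at the level of homotopy categories, there is no genuine obstacle; the proof is a one-line appeal to the formal machinery of model categories together with \cite[4.2.5]{HSS-top-symmetric}.
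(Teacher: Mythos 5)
Your proof is correct and takes the approach the paper clearly intends: the corollary is a formal consequence of the cited Quillen equivalence together with the standard fact (also recorded earlier in the paper, via Bousfield–Friedlander and Adams) that the homotopy category of the sequential model is the stable homotopy category. The paper gives no separate argument, so there is nothing to compare beyond that.
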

\begin{corollary}
  The equivalence between symmetric spectra and classical spectra
  preserves homotopy colimits.
\end{corollary}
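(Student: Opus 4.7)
The plan is to use the strict levelwise compatibility of the forgetful functor $U$ with the homotopy-colimit construction, together with the Quillen equivalence cited above.

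First, observe that for a diagram $F\co I \to \Spectra$, the homotopy colimit $\hocolim_I F$ is computed levelwise: at level $n$ one applies the based simplicial-set homotopy colimit construction, and the $\symGrp_n$-actions on the $F_n$ assemble to give a $\symGrp_n$-action on $\hocolim_I F_n$. The forgetful functor $U$ merely discards these symmetric-group actions, so one has a strict equality
\[
U(\hocolim_I F) = \hocolim_I (U\circ F)
\]
of classical spectra. Because homotopy colimits preserve weak equivalences in both categories (by the earlier properties established for hocolim), this equality descends to the homotopy categories, so the total derived functor $\LL U$ commutes with homotopy colimits.

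Next, I would transport this to the inverse equivalence. By the cited Quillen equivalence $(V,U)$, the derived functors $\LL U$ and $\LL V$ are mutually inverse equivalences of homotopy categories. For a diagram $G\co I \to $ (classical spectra) there is a natural comparison map
\[
\hocolim_I (\LL V \circ G) \to \LL V(\hocolim_I G)
\]
in $\mathrm{Ho}(\Spectra)$, induced by the universal property of $\hocolim_I$. Applying $\LL U$, the left side becomes $\hocolim_I (\LL U \LL V \circ G) \simeq \hocolim_I G$, using the levelwise commutation just established together with the counit $\LL U \LL V \simeq \mathrm{Id}$, while the right side becomes $\LL U \LL V(\hocolim_I G) \simeq \hocolim_I G$; an unwinding shows the resulting map is the identity up to equivalence. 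Since $\LL U$ is an equivalence and hence conservative, the original comparison map was already an equivalence. Thus both halves of the equivalence preserve homotopy colimits.

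The only mild obstacle is confirming that the uniform levelwise formula for $\hocolim_I$ truly computes the derived homotopy colimit in each model category without any cofibrant replacement; this is exactly the preservation-of-weak-equivalences statement already invoked for both $\Spectra$ and classical spectra, so no further check is required, and the strict equality for $U$ then automatically descends to the derived setting.
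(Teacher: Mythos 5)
There is a genuine gap, and it is precisely the one the paper flags in the paragraph immediately following this corollary. Your strict point-set identity
\[
U(\hocolim_I F) = \hocolim_I (U\circ F)
\]
is correct (homotopy colimits of spectra are computed levelwise and $U$ just forgets the $\symGrp_n$-actions), but the next step --- ``this equality descends to the homotopy categories'' because hocolim preserves weak equivalences --- does not follow. The forgetful functor $U$ is the \emph{right} adjoint in the Quillen pair, hence a right Quillen functor, and it does \emph{not} preserve weak equivalences of symmetric spectra in general; it only does so between semistable (e.g.\ fibrant) objects. So a point-set identity between the non-derived composites $U\circ\hocolim$ and $\hocolim\circ U$ tells you nothing about the derived functor $\mathbb{R}U$ (which you mislabel as $\LL U$ --- $U$ has a right, not a left, derived functor). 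Concretely, $\mathbb{R}U(\hocolim F)$ means ``fibrantly replace $\hocolim F$, then apply $U$,'' and this need not agree with $\hocolim(U\circ F)$ unless the $F(i)$ were already semistable; this is exactly what the paper warns about: ``it is not the case that $U(\hocolim F)\simeq\hocolim(U\circ F)$ unless $F$ is a diagram of semistable symmetric spectra.''

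The correct and much shorter argument is the one the paper leaves implicit: the left adjoint $V$ is a left Quillen functor, so $\LL V$ preserves homotopy colimits; since $(\LL V,\mathbb{R}U)$ is an adjoint equivalence of homotopy categories, the inverse equivalence $\mathbb{R}U$ also preserves homotopy colimits (the inverse of a colimit-preserving equivalence preserves colimits). Your second paragraph actually gestures at exactly this structure, but it leans on the faulty first step as an input; if you simply drop the strict-commutation claim and argue purely from ``$\LL V$ is left Quillen'' plus ``equivalences preserve colimits'' you recover the intended proof. Alternatively, one can argue as the paper's warning paragraph suggests: replace $F$ by a levelwise-equivalent semistable diagram $F'$, for which the strict commutation \emph{does} represent the derived statement, and then use homotopy invariance of hocolim in both model categories.
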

Note that the forgetful functor $U$ does
not preserve weak equivalences except between certain symmetric
spectra, the so-called \emph{semistable} ones \cite[Section
5.6]{HSS-top-symmetric}. Any fibrant symmetric spectrum is semistable,
and any symmetric spectrum is weakly equivalent to a semistable one.
\begin{citethm}[{\cite[0.3]{MMSS-diagram-spectra}}]
  The equivalence between the homotopy category of symmetric spectra
  and the stable homotopy category preserves smash products.
\end{citethm}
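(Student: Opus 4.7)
The plan is to exploit the fact that both sides have the structure of a symmetric monoidal stable homotopy category and to reduce the comparison to generators and homotopy colimits. Specifically, I would show that the derived smash product on symmetric spectra, transported via the Quillen equivalence $(V,U)$, agrees with the handicrafted smash product on classical spectra up to a natural zigzag of weak equivalences.

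First, I would verify that both smash products preserve weak equivalences between cofibrant objects and preserve homotopy colimits in each variable; for symmetric spectra this is the proposition just cited, and for classical spectra it follows because the handicrafted smash product is defined levelwise by smash of based simplicial sets, which commutes with homotopy colimits. Second, I would check agreement on the generating class of suspension spectra: for based simplicial sets $K, L$ there are natural weak equivalences
\[
\Sigma^\infty_{\mathrm{sym}} K \smas \Sigma^\infty_{\mathrm{sym}} L \simeq \Sigma^\infty_{\mathrm{sym}}(K \smas L),
\qquad
\Sigma^\infty_{\mathrm{cl}} K \smas \Sigma^\infty_{\mathrm{cl}} L \simeq \Sigma^\infty_{\mathrm{cl}}(K \smas L),
\]
the first being built into the symmetric monoidal structure of $\Spectra$ and the second obtained by unwinding the handicrafted definition at the even and odd levels. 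Third, using that $V\Sigma^\infty_{\mathrm{cl}} K$ is naturally weakly equivalent to $\Sigma^\infty_{\mathrm{sym}} K$ and that $U$ preserves this on fibrant-cofibrant objects, I would deduce a natural transformation
\[
V(X) \smas V(Y) \to V(X \smas Y)
\]
in the stable homotopy category which is an isomorphism on suspension spectra.

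Fourth, I would extend the comparison from suspension spectra of finite complexes to all spectra. Every symmetric (resp.\ classical) spectrum is weakly equivalent to a homotopy colimit of shifted suspension spectra of finite based simplicial sets, via its cellular filtration. Since both sides of the comparison commute with homotopy colimits in each variable, and the natural transformation above is an equivalence on the generating objects, it must be an equivalence in general. One also has to check compatibility with the shift functors $\sh^{\pm 1}$ used to handle non-connective spectra, but this is straightforward because shifts correspond to smashing with $\sh(\SphereS)$ on both sides.

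The main obstacle is bookkeeping of coherence: the handicrafted smash product on classical spectra is neither associative nor unital on the nose, so the assertion that it agrees with the strictly symmetric monoidal smash product on $\Spectra$ requires specifying the associativity and symmetry isomorphisms used on each side and checking the MacLane coherence diagrams. The cleanest way to handle this in practice, and the one taken in \cite{MMSS-diagram-spectra}, is to factor the comparison through an intermediate category of diagram spectra (for instance orthogonal spectra) in which both structures admit strict, compatible models, so that coherence is automatic rather than checked by hand.
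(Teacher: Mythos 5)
This statement is cited in the paper from Mandell--May--Schwede--Shipley~\cite[Theorem 0.3]{MMSS-diagram-spectra} rather than proved, so there is no internal proof to compare against; your sketch is a standalone argument. Its outline---agreement on suspension spectra, compatibility with homotopy colimits and shifts, extension by cell induction---is a reasonable reduction, but two steps are shakier than you present them. First, the Quillen adjunction $(V,U)$ between classical and symmetric spectra is not a monoidal adjunction at the point-set level: Bousfield--Friedlander spectra carry no associative smash product, only the handicrafted one, so the comparison map $V(X)\smas V(Y)\to V(X\smas Y)$ you posit can only be built in the homotopy category after suitable cofibrant/fibrant replacement, and its naturality and compatibility with the suspension-spectrum generators need an argument, not just the adjunction. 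Second, and more fundamentally, a generate-and-extend argument can identify the bifunctors up to natural isomorphism, but the \emph{coherence} of the resulting symmetric monoidal equivalence (associativity, symmetry, unit constraints) cannot be checked cell by cell; this is exactly why MMSS do not argue this way but instead produce strong symmetric monoidal Quillen equivalences between strictly monoidal diagram-spectrum categories, so that coherence is inherited for free. Your last paragraph correctly identifies this as the real content; the earlier steps are better read as a plausibility check than as a proof---which is a fine posture here, since the paper itself treats the result as a black box.
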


\begin{remark}\label{remark:cofib-smash}
  In order for $X\smas Y$ to have the correct homotopy type, $X$ and $Y$
  should both be cofibrant symmetric spectra.
\end{remark}

These results allow us to define homotopy and homology groups for a
symmetric spectrum $X$ as a composite: take the image of $X$ in the
homotopy category of symmetric spectra; apply the (right) derived
functor of $U$ to get an element in the homotopy category of classical
spectra; and then apply homotopy or homology
groups. The homology groups of
symmetric spectra therefore inherit the following properties from
classical spectra.
\begin{proposition}
  For symmetric spectra $X$ and $Y$, there is a natural K\"unneth
  exact sequence
\[
0 \to \bigoplus_{p+q=n} H_p(X) \otimes H_q(Y) \to H_n(X \smas Y)
\to \bigoplus_{p+q=n-1} \Tor_1^{\ZZ}(H_p(X), H_q(Y)) \to 0.
\]
\end{proposition}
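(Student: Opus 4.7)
The plan is to deduce the symmetric-spectrum Künneth sequence from the classical-spectrum Künneth sequence (stated just above) by transporting along the Quillen equivalence between the two categories of spectra.

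First, recall from the excerpt that homology of a symmetric spectrum $X$ is defined as a composite: pass to the homotopy category of symmetric spectra, apply the right derived forgetful functor $\mathbb{R}U$ to land in the stable homotopy category, and then take the classical homology. Thus for any symmetric spectra $X,Y$ we tautologically have $H_*(X)\cong H_*(\mathbb{R}U X)$ and $H_*(Y)\cong H_*(\mathbb{R}U Y)$. The main step is to identify $H_*(X\smas Y)$ with the homology of the classical smash of $\mathbb{R}U X$ and $\mathbb{R}U Y$. This is exactly the content of the quoted theorem of Mandell--May--Schwede--Shipley that the equivalence between the homotopy category of symmetric spectra and the stable homotopy category preserves smash products: so
\[
\mathbb{R}U(X\smas Y)\simeq \mathbb{R}U(X)\smas \mathbb{R}U(Y)
\]
in the stable homotopy category, and hence $H_n(X\smas Y)\cong H_n(\mathbb{R}U X\smas \mathbb{R}U Y)$.

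Now feed $\mathbb{R}U X$ and $\mathbb{R}U Y$ into the Künneth exact sequence for classical spectra:
\[
0\to \bigoplus_{p+q=n} H_p(\mathbb{R}U X)\otimes H_q(\mathbb{R}U Y) \to H_n(\mathbb{R}U X\smas \mathbb{R}U Y) \to \bigoplus_{p+q=n-1}\Tor_1^{\ZZ}(H_p(\mathbb{R}U X),H_q(\mathbb{R}U Y))\to 0.
\]
Translating each term via the identifications above yields the asserted sequence. Naturality in $X$ and $Y$ follows from naturality of $\mathbb{R}U$, naturality of the monoidal equivalence, and naturality of the classical Künneth sequence.

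The subtlety to watch is the one already flagged in Remark~\ref{remark:cofib-smash}: for $X\smas Y$ to compute the derived smash product one needs cofibrant representatives, so strictly speaking one first replaces $X$ and $Y$ by cofibrant (or at least semistable) models before applying the forgetful functor. Once that is done, the comparison map $\mathbb{R}U(X\smas Y)\to \mathbb{R}U(X)\smas \mathbb{R}U(Y)$ is an equivalence, and the left-hand map of the Künneth sequence is induced by the multiplication pairing that was constructed on classical spectra and then transported through the monoidal equivalence. The remainder of the argument is purely formal.
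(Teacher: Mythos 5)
Your proof is correct and matches the paper's intended argument: the paper simply asserts that ``the homology groups of symmetric spectra therefore inherit the following properties from classical spectra,'' relying on the definition of $H_*$ via the derived forgetful functor and the cited MMSS theorem that the equivalence of homotopy categories is monoidal, and you have spelled out exactly that deduction. The cofibrancy caveat you flag (Remark~\ref{remark:cofib-smash}) is the right thing to worry about and is handled correctly.
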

\begin{proposition}
  For a diagram $F\co I \to \Spectra$ of symmetric spectra, there is a
  convergent derived functor spectral sequence
\[
\LL_p \colim_I(H_q \circ F) \Rightarrow H_{p+q}(\hocolim_I F).
\]
\end{proposition}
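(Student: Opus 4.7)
The plan is to reduce to the derived functor spectral sequence for classical spectra that was stated earlier in \S\ref{sec:spectra}. The subtle point is that the homology of a symmetric spectrum is defined by applying the \emph{derived} forgetful functor $U$ to classical spectra and then taking homology there, so one cannot naively apply $U$ to $\hocolim_I F$ and quote the classical result.

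First I would replace $F\co I\to\Spectra$ by a levelwise weakly equivalent diagram $F'$ in which each $F'(i)$ is semistable (e.g., fibrant). Such a replacement exists because every symmetric spectrum is weakly equivalent to a semistable one. Since homotopy colimits preserve levelwise weak equivalences, we have $\hocolim_I F\simeq \hocolim_I F'$ in the homotopy category, hence $H_*(\hocolim_I F)\cong H_*(\hocolim_I F')$ and $H_q(F(i))\cong H_q(F'(i))$ for all $i,q$.

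Next, both the symmetric and classical homotopy colimits are constructed by applying the simplicial-set homotopy colimit levelwise, and $U$ is simply the levelwise forgetful functor on symmetric group actions, so
\[
U(\hocolim_I F')=\hocolim_I(U\circ F').
\]
Because each $F'(i)$ is semistable, $H_*(F'(i))\cong H_*(U F'(i))$ naturally in $i$, and the same identification holds after applying $\hocolim_I$. Applying the classical-spectra spectral sequence to $U\circ F'$ yields
\[
\LL_p\colim_I(H_q\circ U\circ F')\Rightarrow H_{p+q}(\hocolim_I(U\circ F')),
\]
and the identifications above rewrite this as the desired spectral sequence. The main nuisance is ensuring that $U$ computes the correct homology, which is precisely why one must pass through semistable representatives; once this replacement is in place, the rest is immediate from the classical case.
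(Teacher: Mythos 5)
Your argument is essentially the one the paper intends (the paper does not write out a proof of this proposition; it says the properties of homology are ``inherited from classical spectra'' and then, a few lines later, spells out exactly your reduction in the paragraph beginning ``A similar warning holds for homotopy colimits''). The structure---replace $F$ by a levelwise semistable diagram $F'$, use that $U$ is applied levelwise so $U(\hocolim F') = \hocolim(U\circ F')$ on the nose, match $E_2$-terms via $H_q(F'(i))\cong H_q(UF'(i))$, and quote the classical spectral sequence---is precisely the paper's implicit proof.

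One point you state a bit quickly and that the paper also elides: for the abutment you need $H_{p+q}(\hocolim_I(U\circ F'))$ (the \emph{naive} homology of the symmetric spectrum $\hocolim_I F'$) to agree with the \emph{true} homology $H_{p+q}(\hocolim_I F')$. Levelwise semistability of $F'$ does not \emph{a priori} imply semistability of the homotopy colimit, so this is a genuine, if standard, extra assertion. The paper handles this by simply claiming that the replacement $F'$ can be chosen so that $U(\hocolim F')$ represents the derived underlying spectrum of $\hocolim F'$; it would strengthen your write-up to add a sentence to the same effect (e.g., pass to a Reedy-fibrant semistable replacement, or cite that the homotopy colimit of semistable spectra built from wedges, cofiber sequences, and filtered colimits along cofibrations remains semistable). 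With that caveat flagged, your proposal is correct and coincides with the paper's route.
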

It will be convenient for us to have a lift of these homology groups
to a chain functor. Let $L$ denote the reduced chain complex
$\widetilde C_*(S^1)$ of the simplicial set $S^1$. This is a complex
with value $\ZZ$ in degree $1$ and zero elsewhere. For complexes $C$
and $D$, let $\HomComplex(C,D)$ be the function complex. 
\begin{definition}
  Fix a symmetric spectrum $X$.  For an inclusion of finite sets
  $T \subset U$, there is a natural map
  \[
    \HomComplex(L^{\otimes T}, \widetilde C_*X(T)) \stackrel{\sim}{\too}
    \HomComplex(L^{\otimes T} \otimes L^{\otimes U\setminus T},
    \widetilde C_* X(T) \otimes
    L^{\otimes U \setminus T}) \to \HomComplex(L^{\otimes U}, \widetilde C_*X(U)).
  \]
  
  Now, given any set $S$ (infinite or not), these maps make the
  complexes $\HomComplex(L^{\otimes T}, \widetilde C_*X(T))$ into a
  directed system indexed by finite subsets $T\subset S$.  Define the
  chain complex
  \[
    \widehat C_k(X)_S = \colim_{T \subset S\text{ finite}} \HomComplex(L^{\otimes
      T}, \widetilde C_kX(T)).
  \]
\end{definition}
If $S$ is finite of size $n$, $\widehat C_k(X)_S$ is isomorphic to the
shift $\wt{C}_k X_n[-n]$.  More generally, these structure maps
naturally make the system of chain groups and homology groups
$\{H_{n+k}(X_n)\}$ into a functor from the category of finite sets and
injections to the category of abelian groups (i.e., an $FI$-module in the
language of~\cite{CEF-top-FImodules}).
 
There is a natural pairing
\[
\widehat C_*(X)_S \otimes \widehat C_*(Y)_T \to \widehat C_*(X \smas
Y)_{S \coprod T}.
\]
The construction of $\widehat C_*$ is also natural in injections $S
\to S'$.

\begin{definition}
  Let $\mathcal{M}$ be the category whose objects are the
  sets
  $\coprod^k \NN$ for $k \geq 1$, and whose morphisms
  are monomorphisms of sets. For a symmetric spectrum $X$, we define
  \[
    C_*(X) = \hocolim_{S \in \mathcal{M}} (\widehat C_*(X)_{S}).
  \]
\end{definition}
Let $M$ be the monoid of monomorphisms $\NN \to \NN$. Since all
objects in the category $\mathcal{M}$ are isomorphic to $\NN$, this
homotopy colimit is quasi-isomorphic to the homotopy colimit over this
one-object subcategory, which can be re-expressed as the derived tensor
product $\ZZ \otimes^{\LL}_{\ZZ[M]} \widehat
C_*(X)_\NN$. See \cite{Sch-top-htpygrp} and \cite[Exercise
E.II.13]{S-top-symmetricspectra} for a discussion of this functor.

\begin{proposition}
  The chain functor $C_*\co \Spectra \to \Complexes$ satisfies the
  following properties.
  \begin{itemize}
  \item The homology groups of $C_* X$ are the classical homology
    groups of the image of $X$ in the stable homotopy category.
  \item The associative disjoint union operation $\mathcal{M} \times
    \mathcal{M} \to \mathcal{M}$ gives rise to a natural
    quasi-isomorphism $\bigotimes C_*(X_i) \to C_*(\bigwedge X_i)$,
    which respects the associativity isomorphisms for $\smas$ and
    $\otimes$.
  \item The functor $C_*$ preserves homotopy colimits: for a diagram
    $F\co I \to \Spectra$, there is a natural quasi-isomorphism
    $\hocolim (C_* \circ F) \to C_*(\hocolim F)$.
  \end{itemize}
\end{proposition}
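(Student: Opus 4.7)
The plan is to verify each of the three bullets in turn, reducing everything to the formal properties of homotopy colimits from \Proposition{hocolim-props} and \Proposition{hocolim-sseq-1}. The key structural observation is that $C_*(X)$ is the derived stabilization of the family $\{\widehat C_*(X)_T\}$, which, as recorded in the paper, may be re-expressed as the derived tensor product $\ZZ \otimes^{\LL}_{\ZZ[M]} \widehat C_*(X)_\NN$ along the monoid $M$ of monomorphisms $\NN \to \NN$.

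First, I would check that $C_*$ sends stable equivalences of symmetric spectra to quasi-isomorphisms: $\widetilde C_*$ preserves weak equivalences of based simplicial sets, the functor $\HomComplex(L^{\otimes T}, -)$ preserves quasi-isomorphisms (since $L^{\otimes T}$ is free and concentrated in a single degree), filtered colimits of chain complexes preserve quasi-isomorphisms, and $\hocolim$ preserves equivalences. This reduces the first bullet to the semistable case, where the $M$-action on $H_k(\widehat C_*(X)_\NN) \cong \colim_n \widetilde H_{k+n}(X_n)$ factors through the identity (the defining property of semistability in~\cite[Section 5.6]{HSS-top-symmetric}). In that case the derived tensor product collapses to the plain colimit, recovering the classical homology of the underlying sequential spectrum.

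For the second bullet, I would build the map from the explicit pairing $\widehat C_*(X)_S \otimes \widehat C_*(Y)_T \to \widehat C_*(X\smas Y)_{S\coprod T}$ together with the disjoint union functor $\mathcal{M}\times\mathcal{M}\to\mathcal{M}$, invoking the compatibility of $\otimes$ with $\hocolim$ from \Proposition{hocolim-props} to obtain the natural transformation $\bigotimes C_*(X_i) \to C_*(\bigwedge X_i)$. To show it is a quasi-isomorphism, I would run the derived-functor spectral sequence of \Proposition{hocolim-sseq-1} on both sides; the $E^2$ pages are identified via the K\"unneth exact sequence for symmetric spectra recalled just above the statement, and the pairing respects the filtration. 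Associativity is then a diagrammatic consequence of associativity of the Eilenberg-Zilber shuffle and of disjoint union on $\mathcal{M}$.

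For the third bullet I would chain together the preservation properties from \Proposition{hocolim-props}: $\widetilde C_*$ preserves $\hocolim$ levelwise, $\HomComplex(L^{\otimes T},-)$ preserves quasi-isomorphisms, filtered colimits of chain complexes preserve quasi-isomorphisms, and iterated homotopy colimits commute via the Alexander-Whitney pairing. The hardest step is the second bullet: ensuring that the map on derived-functor spectral sequences really is the classical K\"unneth map, rather than some twisted or permuted variant, requires careful bookkeeping of the $\symGrp$-actions and of the associativity isomorphisms coming from disjoint union in $\mathcal{M}$, which is precisely why the construction is set up with monomorphisms and the non-commutative Alexander-Whitney pairing throughout.
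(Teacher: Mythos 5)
The paper itself states this proposition without proof, deferring entirely to Schwede (\cite{Sch-top-htpygrp} and \cite[Exercise E.II.13]{S-top-symmetricspectra}), so your sketch is being evaluated on its own merits rather than against an in-paper argument. Your treatment of the third bullet (levelwise preservation of $\hocolim$ by $\widetilde C_*$, $\HomComplex(L^{\otimes T},-)$ preserving quasi-isomorphisms, filtered colimits and iterated $\hocolim$ commuting) is correct, and your plan for the second bullet (the pairing on $\widehat C_*$, compatibility of $\otimes$ with $\hocolim$, then a spectral-sequence comparison through the K\"unneth exact sequence) is a reasonable route, modulo the sign/permutation bookkeeping you rightly flag.

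The gap is in the first bullet. After reducing to the semistable case you assert that the derived tensor product $\ZZ\otimes^{\LL}_{\ZZ[M]}\widehat C_*(X)_\NN$ "collapses to the plain colimit." That does not follow merely from the $M$-action on $H_*(\widehat C_*(X)_\NN)$ being trivial: you still have to kill the higher derived functors $\LL_p\colim_M\cong\Tor_p^{\ZZ[M]}(\ZZ,-)$ for $p>0$, and these do not vanish for a general $\ZZ[M]$-module even with trivial action (otherwise you would be claiming, without argument, that the monoid $M$ of injections $\NN\to\NN$ has vanishing higher homology with trivial coefficients). What Schwede actually proves, and what the paper is leaning on, is that $\widehat H_*(X)_\NN$ is always a \emph{tame} $M$-module --- a filtered union of submodules on which the "tail" of $M$ acts by the identity --- and that the coinvariants functor $(-)_M$ is exact on tame $M$-modules, so the derived-functor spectral sequence is concentrated in filtration zero. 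Your sketch replaces this theorem with an assertion. To close the argument you need to either cite that tameness-plus-exactness result explicitly or reproduce its proof; appealing to "the defining property of semistability" does not suffice (and semistability is usually phrased as the map $\widehat\pi_*X\to\pi_*X$ being an isomorphism, not as literal triviality of the $M$-action, so even your reduction step needs a slightly more careful statement).
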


Therefore, if $\mSpectra$ denotes the associated multicategory of
symmetric spectra, $C_*$ induces a multifunctor
$\mSpectra\to\mComplexes$. To a multimorphism in symmetric spectra realized by a map 
$X_1\smas\cdots\smas X_n\to Y$, $C_*$ associates the chain map $C_*(X_1)\otimes\dots\otimes
C_*(X_n)\to C_*(X_1\smas\cdots\smas X_n)\to C_*(Y)$. This definition of $C_*$ respects
multi-composition. (The multifunctor $C_*$ is not compatible with the symmetries interchanging factors, if we regard $\mSpectra$ and $\mComplexes$ as symmetric multicategories.)\setcounter{OutlineCompletion}{20}

If we defined homotopy and homology groups
\begin{align*}
  \widehat\pi_k(X) &= \colim \pi_{k+n}(X_n) & \widehat H_k(X) &=
  \colim H_{k+n}(X_n)
\end{align*}
using the same formula as for classical spectra, we obtain ``na\"ive''
homotopy and homology groups of a symmetric spectrum $X$ which are not
preserved under weak equivalence. If we tensor with the sign
representation of $\symGrp_n$ and take $\colim H_{n+k}(X_n) \otimes
sgn$, the result is isomorphic to $\widehat H_k(X)_{\NN}$ with its
action of the monoid $M$ of injections $\NN \to
\NN$~\cite{Sch-top-htpygrp}. The natural map $\widehat H_k(X)
\to H_k(X)$ to the true homology groups factors through the
quotient by $M$. A similar action and factorization hold
relating the na\"ive homotopy groups $\widehat\pi_k(X)$ to the true
homotopy groups $\pi_k(X)$.

A similar warning holds for homotopy colimits. If $F$ is a diagram of
symmetric spectra, it is not the case that $U (\hocolim F) \simeq
\hocolim(U \circ F)$ unless $F$ is a diagram of semistable symmetric
spectra. However, it is always possible to replace $F$ with a weakly
equivalent diagram $F'$ of semistable symmetric spectra so that
$\hocolim F \simeq \hocolim F'$, and then $U (\hocolim F') \simeq
\hocolim(U \circ F')$.

Symmetric spectra have suspension and desuspension (i.e., loop) functors.
\begin{definition}
  For a symmetric spectrum $X$, there are suspension and loop
  functors, as well as formal shift functors, as follows:
  \begin{align*}
    (S^1 \smas X)_n &= S^1 \smas (X_n) &
    (\Omega X)_n &= \Omega(X_n)\\
    \sh(X)_n &= X_{1+n} &
    \sh^{-1}(X)_n &= \begin{cases}
      (\symGrp_{1+m})_+ \smas_{\symGrp_m} X_m&\text{if }n = 1+m\\
      \ast&\text{if }n=0
    \end{cases}
  \end{align*}
\end{definition}
The notation $1+n$ in the shift functor $sh$ indicates that the
$\symGrp_n$-action on $X_{1+n}$ is via the inclusion $\symGrp_1
\times \symGrp_{n} \to \symGrp_{1+n}$.

\begin{proposition}
  The pairs $(S^1 \smas (-), \Omega)$ and $(\sh^{-1}, \sh)$ are adjoint
  pairs, and all unit and counit maps are weak equivalences. 

  There are natural weak equivalences of symmetric spectra $S^1 \smas
  X \to \sh(X)$ and $\sh^{-1} X \to \Omega X$. These become equivalent
  to the standard shift functors in the stable homotopy category.
\end{proposition}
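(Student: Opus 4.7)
The plan is to unpack the definitions of each adjunction, use levelwise classical adjunctions to build the symmetric-spectrum adjunctions, and then appeal to the fact that on the stable homotopy category suspension/loops and shift are autoequivalences to conclude that all the relevant unit/counit maps are weak equivalences.

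First I would verify the adjunction $(S^1\smas(-),\Omega)$ at the level of symmetric spectra. At each level $n$ the functors are just $S^1\smas(-)$ and $\Omega$ on based simplicial sets, which are $\symGrp_n$-equivariantly adjoint. The structure maps of $S^1\smas X$ and $\Omega Y$ are obtained from those of $X$ and $Y$ by smashing or looping with $S^1$, so the unit $X_n\to\Omega(S^1\smas X_n)$ and counit $S^1\smas\Omega Y_n\to Y_n$ commute with the structure maps by naturality; this is a direct diagram chase. For the adjunction $(\sh^{-1},\sh)$, I would use Frobenius reciprocity: a $\symGrp_{1+n}$-equivariant map $(\symGrp_{1+n})_+\smas_{\symGrp_n}X_n\to Y_{1+n}$ is the same as a $\symGrp_n$-equivariant map $X_n\to Y_{1+n}=\sh(Y)_n$. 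Compatibility with structure maps translates between the two descriptions levelwise.

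Second, to produce the natural weak equivalences $S^1\smas X\to \sh X$ and $\sh^{-1}X\to\Omega X$, I would use the structure maps directly. The map $S^1\smas X_n\to X_{1+n}=\sh(X)_n$ is defined using the symmetry $S^1\smas X_n\cong X_n\smas S^1$ followed by $\sigma_n$; one checks $\symGrp_n$-equivariance from the constraint that $\sigma_n$ intertwines the $\symGrp_n\times\symGrp_1$-actions on source and target, after noting that the target acquires its $\symGrp_n$-action through the inclusion $\symGrp_1\times\symGrp_n\hookrightarrow\symGrp_{1+n}$. Compatibility with structure maps is the associativity condition on iterated structure maps. The map $\sh^{-1}X\to\Omega X$ is adjoint to $S^1\smas\sh^{-1}X\to X$, which in turn is produced by the universal property of $\sh^{-1}$ applied to the identity on $\sh X$ together with the existing structure-map data.

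Third, for the claim that the units and counits are weak equivalences, I would argue as follows. By the Hovey--Shipley--Smith model structure on symmetric spectra and the Quillen equivalence $(V,U)$ with classical spectra cited just above, it suffices to check that after applying $U$ to a fibrant replacement, the maps induce isomorphisms on stable homotopy groups. For $(S^1\smas(-),\Omega)$ this reduces to the classical fact that suspension and loops are inverse equivalences on the Bousfield--Friedlander stable homotopy category, while for $(\sh^{-1},\sh)$ it reduces to the corresponding statement for the shift functor. The weak equivalences $S^1\smas X\to\sh X$ and $\sh^{-1}X\to\Omega X$ are checked similarly: on stable homotopy groups both sides compute $\pi_{*-1}(X)$, and the map induced by the structure maps realizes the tautological identification.

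The main obstacle I expect is bookkeeping the symmetric-group actions: verifying that the formulas for $\sh^{-1}$ and for the natural map $S^1\smas X\to\sh X$ really do produce maps of symmetric spectra (equivariance plus structure-map compatibility). Once these rigidifications are in place, the equivalence statements are formal consequences of the corresponding results in the stable homotopy category together with the Quillen equivalence between symmetric and classical spectra, and the final assertion that these agree with the standard shift in the homotopy category is just the observation that $\sh$ and $S^1\smas(-)$ are identified via the weak equivalence we have produced.
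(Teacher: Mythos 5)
The paper does not actually prove this proposition (it is stated as background), but the remark immediately following it is pointed: ``For example, the map $S^1 \smas X \to \sh(X)$ is the composite $S^1 \smas X_n \cong X_n \smas S^1 \to X_{n+1} \xrightarrow{\sigma} X_{1+n}$, where the final map $\sigma$ is a block permutation in $\symGrp_{n+1}$: this is necessary to ensure that this commutes with the structure maps.'' Your construction of the natural weak equivalence $S^1 \smas X \to \sh X$ reads ``the symmetry $S^1\smas X_n\cong X_n\smas S^1$ followed by $\sigma_n$'' — that is, you stop at the structure map $X_n \smas S^1 \to X_{n+1}$ and do not apply the block permutation. This is a genuine gap, not mere bookkeeping. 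The structure map $\sigma_n^X$ intertwines the $\symGrp_n\times\symGrp_1$-action on the source with the $\symGrp_n\times\symGrp_1 \hookrightarrow \symGrp_{n+1}$-action on $X_{n+1}$, whereas $\sh(X)_n$ carries the $\symGrp_n$-action restricted along $\symGrp_1\times\symGrp_n\hookrightarrow\symGrp_{1+n}$ — a \emph{different} embedding. Without post-composing with the block permutation $\chi_{n,1}\in\symGrp_{n+1}$ that conjugates one embedding to the other, the proposed map is not $\symGrp_n$-equivariant and does not commute with the structure maps of $S^1\smas X$ and $\sh X$; it is not a morphism of symmetric spectra at all. You acknowledge the two different embeddings in the same sentence, and flag ``bookkeeping the symmetric-group actions'' as the expected obstacle, but you never supply the fix, so the asserted ``one checks $\symGrp_n$-equivariance'' does not actually go through as written.

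The rest of the strategy is sound and matches the ambient framework of the paper: verifying the adjunctions levelwise (using Frobenius reciprocity for $(\sh^{-1},\sh)$ with the correct last-$m$-coordinates embedding), and reducing the weak-equivalence assertions to the Bousfield--Friedlander stable homotopy category via the Hovey--Shipley--Smith Quillen equivalence — taking fibrant replacements first is exactly the right precaution, since $U$ only preserves weak equivalences between semistable spectra. Once the block permutation is inserted, the structure-map compatibility check you describe (via associativity of iterated structure maps) is also more delicate than you indicate, since one has to track where the block permutations land after smashing once more with $S^1$; but this is a finite computation and the idea is right.
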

For example, the map $S^1 \smas X \to \sh(X)$ is the composite
\[
S^1 \smas X_n \cong X_n \smas S^1 \to X_{n+1} \stackrel{\sigma}{\to} X_{1+n},
\]
where the final map $\sigma$ is a block permutation in
$\symGrp_{n+1}$: this is necessary to ensure that this commutes
with the structure maps.

\begin{proposition}\label{prop:shift-fun}
  The suspension functor $S^1 \smas (-)$ and the formal shift functors
  preserve homotopy colimits. They also preserve smash products:
  there are natural isomorphisms
  \begin{align*}
    \sh(X) \smas Y &\to \sh(X \smas Y)
    & X \smas \sh(Y) &\to X \smas \sh(Y)\\
    (S^1 \smas X) \smas Y &\to S^1 \smas (X \smas Y) & 
    X \smas (S^1 \smas Y)&\to S^1 \smas (X \smas Y).
  \end{align*}
\end{proposition}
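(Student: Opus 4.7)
The plan is to handle the two parts of the proposition separately, since each reduces to an essentially levelwise check, with the bulk of the work being bookkeeping for the symmetric group actions.

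For preservation of homotopy colimits, I would note that all three functors are (up to canonical isomorphism) given by levelwise operations on the underlying sequences of pointed simplicial sets: $(S^1 \smas X)_n = S^1 \smas X_n$ and $\sh(X)_n = X_{1+n}$ on the nose, while $\sh^{-1}(X)_n = (\symGrp_{1+m})_+ \smas_{\symGrp_m} X_m$ for $n = 1+m$. Since homotopy colimits of symmetric spectra are computed levelwise, it suffices to check each of these operations preserves homotopy colimits of based simplicial sets. Reindexing does so trivially; smashing with the fixed cofibrant pointed simplicial set $S^1$ preserves homotopy colimits by the last bullet of Proposition~\ref{prop:hocolim-props}; and the free extension $(\symGrp_{1+m})_+ \smas_{\symGrp_m} (-)$ is a coproduct of copies of an orbit-type functor, which commutes with all colimits and whose derived functor agrees because the group actions on relevant sources are free on distinguished basepoints.

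For the smash product isomorphisms, I would use the defining universal property of the smash product of symmetric spectra: a map $X \smas Y \to Z$ is equivalent to a natural family of $\symGrp_S \times \symGrp_T$-equivariant maps $X(S) \smas Y(T) \to Z(S \coprod T)$ compatible with the two-sided structure maps. For $\sh(X) \smas Y \to \sh(X \smas Y)$, the natural family is
\[
    \sh(X)(S) \smas Y(T) \;=\; X(\{\ast\} \coprod S) \smas Y(T) \too (X \smas Y)(\{\ast\} \coprod S \coprod T) \;=\; \sh(X \smas Y)(S \coprod T),
\]
where the middle arrow is the defining family for $X \smas Y$ applied to the decomposition $(\{\ast\} \coprod S) \coprod T$. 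That this assembles to an isomorphism, rather than just a map, follows from the explicit description of $X \smas Y$ at a finite set $U$ as a quotient of $\bigvee_{S \coprod T = U} X(S) \smas Y(T)$: under the symmetric group action on $\{\ast\} \coprod U$, every decomposition $S' \coprod T' = \{\ast\} \coprod U$ is equivalent to one with $\{\ast\} \in S'$, so the map is identified with a symmetric-group-induced isomorphism of quotients. The isomorphism $X \smas \sh(Y) \to \sh(X \smas Y)$ is treated symmetrically (inserting $\{\ast\}$ into $T$).

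Finally, the isomorphisms $(S^1 \smas X) \smas Y \to S^1 \smas (X \smas Y)$ and its mirror follow from the same universal property combined with associativity and commutativity of the smash product of based simplicial sets: the natural family
\[
    (S^1 \smas X)(S) \smas Y(T) \;=\; S^1 \smas X(S) \smas Y(T) \too S^1 \smas (X \smas Y)(S \coprod T) \;=\; (S^1 \smas (X \smas Y))(S \coprod T)
\]
is patently an isomorphism at each level. The only nontrivial point in any of these verifications is compatibility with the structure maps $\sigma_n$ and the symmetric group actions, which amounts to checking that the block-permutation conventions in the definition of $\sh$ match the standard inclusions $\symGrp_n \times \symGrp_m \into \symGrp_{n+m}$; this is the step I would expect to write out most carefully, although it does not introduce any genuine obstacle.
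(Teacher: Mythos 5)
Your levelwise argument for preservation of homotopy colimits is correct, as is the treatment of $S^1\smas(-)$ in the smash-product identities: the $S^1$ appears as an external smash factor in every term of the coequalizer defining $(-)\smas Y$ at each level, so it can be pulled outside. The gap is in the argument that $\sh(X)\smas Y\to\sh(X\smas Y)$ is an isomorphism of symmetric spectra. The appeal to the $\symGrp_{\{\ast\}\coprod U}$-action on decompositions of $\{\ast\}\coprod U$ does not close the loop: the image of $(\sh(X)\smas Y)(U)$ inside $(X\smas Y)(\{\ast\}\coprod U)$ is only $\symGrp_U$-equivariant, not $\symGrp_{\{\ast\}\coprod U}$-equivariant, so the fact that every decomposition is conjugate to one with $\ast\in S'$ does not show that the map is surjective. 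The identifications in the coequalizer defining $\smas$ are induced by the $\SphereS$-bimodule structure maps rather than by the symmetric group, so they glue the ``$\ast\in T'$'' summands onto the image of $\sh(X)\smas Y$ only along the (generally non-surjective) images of the structure maps $\sigma^Y$, leaving genuinely new material.

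In fact the claimed isomorphism is false. Take $X=\SphereS$ and $Y=F_1 S^0$, the free symmetric spectrum on a point in level one (so $Y_0=\ast$ and $Y_1=S^0$). Since $(A\smas B)_0 = A_0\smas B_0$ for any symmetric spectra,
\[
  (\sh(X)\smas Y)_0 \;=\; X_1\smas Y_0 \;=\; S^1\smas\ast \;=\; \ast,
  \qquad
  \sh(X\smas Y)_0 \;=\; (X\smas Y)_1 \;=\; (F_1 S^0)_1 \;=\; S^0,
\]
so the two sides disagree already at level zero. The structural reason is that $\sh$ is a right adjoint and the representing object $F_1 S^0$ is not strongly dualizable in symmetric spectra, so $\sh$ is not a smashing functor; the natural map is a stable equivalence (for cofibrant inputs) but not an isomorphism. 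By contrast, the other formal shift genuinely commutes with $\smas$ on the nose, via the natural isomorphism $\sh^{-1}(X)\cong F_1 S^0\smas X$, which gives $\sh^{-1}(X)\smas Y\cong F_1 S^0\smas X\smas Y\cong\sh^{-1}(X\smas Y)$. The smash-product half of the proposition should therefore be restricted to $S^1\smas(-)$ and $\sh^{-1}$, or the $\sh$ claim weakened to a natural stable equivalence; your proof strategy for $\sh$ cannot be repaired because the statement it targets does not hold.
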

As with chain complexes, order matters in these identities. For
example, the two isomorphisms for $(S^1 \smas X) \smas (S^1 \smas Y)$
do not commute with each other, but differ by a transposition of $(S^1
\smas S^1)$; the two isomorphisms of $(\sh X) \smas (\sh Y)$ with
$\sh(\sh(X \smas Y))$ differ by a transposition in $\symGrp_{2+n}$.
\begin{proposition}
  There are natural isomorphisms $\HomComplex(L,C_*(\sh(X)) \to
  C_*(X)$ and $C_*(\sh^{-1}(X)) \to \HomComplex(L,C_*(X))$, as well as
  natural quasi-isomorphisms $C_*(S^1 \smas X) \to C_*(\sh(X))$.
\end{proposition}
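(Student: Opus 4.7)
The plan is to unpack the definition of $C_*(X)$ as a homotopy colimit of the functors $\widehat C_*(-)_S$, and identify the effect of each operation on the integrand. The key observation is that, in the finite-set description of symmetric spectra, the shift functors simply add or remove an element from the indexing set.

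For the first isomorphism, note that $\sh(X)(T) \cong X(\{*\} \sqcup T)$, where $\{*\}$ is a freshly adjoined point. Then, since $\HomComplex(L, -)$ commutes with the filtered colimit defining $\widehat C_*$,
\[
\HomComplex(L, \widehat C_*(\sh X)_S) = \colim_{T \subset S \text{ finite}} \HomComplex\bigl(L \otimes L^{\otimes T}, \widetilde C_* X(\{*\} \sqcup T)\bigr),
\]
and this filtered system is cofinal in the system defining $\widehat C_*(X)_{\{*\}\sqcup S}$ (the finite subsets of $\{*\} \sqcup S$ containing $*$ are cofinal). Hence we get an isomorphism $\HomComplex(L, \widehat C_*(\sh X)_S) \cong \widehat C_*(X)_{\{*\} \sqcup S}$. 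Applying $\hocolim_{\mathcal{M}}$ and using that $S \mapsto \{*\} \sqcup S$ is an endofunctor of $\mathcal{M}$ naturally isomorphic to the identity (each object $\coprod^k \NN$ absorbs a single point), together with the fact that $\HomComplex(L,-)$ commutes with homotopy colimits of complexes, yields the asserted isomorphism $\HomComplex(L, C_*(\sh X)) \cong C_*(X)$.

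For the second isomorphism, use the formula $\sh^{-1}(X)(T) = (\symGrp_{|T|})_+ \smas_{\symGrp_{|T|-1}} X(T')$ where $|T'| = |T|-1$. At the chain level this induction splits as a direct sum over cosets:
\[
\widetilde C_* \sh^{-1}(X)(T) \cong \bigoplus_{t \in T} \widetilde C_* X(T \setminus \{t\}).
\]
Then
\[
\HomComplex(L^{\otimes T}, \widetilde C_* \sh^{-1}(X)(T)) \cong \bigoplus_{t \in T} \HomComplex\bigl(L, \HomComplex(L^{\otimes T \setminus \{t\}}, \widetilde C_* X(T \setminus \{t\}))\bigr),
\]
and a cofinality argument (the $(T,t)$ with $t$ varying over $T$ range over all finite subsets of $S$, each with multiplicity controlled by the filtered nature of the colimit) collapses the filtered colimit of direct sums to $\HomComplex(L, \widehat C_*(X)_S)$. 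Applying $\hocolim_{\mathcal{M}}$ and pulling $\HomComplex(L,-)$ outside the homotopy colimit gives $C_*(\sh^{-1} X) \cong \HomComplex(L, C_*(X))$.

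For the third claim, the natural map $S^1 \smas X \to \sh(X)$ of Proposition~\ref{prop:shift-fun} is a weak equivalence of symmetric spectra. Since the homology of $C_*(-)$ computes the true (i.e., derived) homology of the underlying spectrum, applying $C_*$ to this weak equivalence produces a quasi-isomorphism $C_*(S^1 \smas X) \to C_*(\sh X)$. The main obstacle will be the bookkeeping in the $\sh^{-1}$ case: one must verify that the chain-level identification of the induced $\symGrp$-representation with a direct sum over positions is compatible with the injection monoid $M$ action used in the derived tensor product defining $\hocolim_\mathcal{M}$, and that the resulting map is natural in $X$ rather than merely well-defined up to the $M$-action.
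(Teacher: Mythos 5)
The overall strategy — unwind $C_*$ as a homotopy colimit over $\mathcal{M}$ and track the effect of shifting on $\widehat C_*(-)_S$ — is sound, and your identification $\HomComplex(L, \widehat C_*(\sh X)_S) \cong \widehat C_*(X)_{\{*\}\sqcup S}$ via cofinality of the finite subsets containing $*$ is correct. But the next step is not: the endofunctor $j(S) = \{*\}\sqcup S$ of $\mathcal{M}$ is \emph{not} naturally isomorphic to the identity. If $\eta_S\co S \to \{*\}\sqcup S$ were such a natural isomorphism with $\eta_{\NN}(s_0)=*$, then naturality against every injection $f\co\NN\to\NN$ would force $\eta_{\NN}(f(s_0))=(\Id_{\{*\}}\sqcup f)(*)=*$, i.e.\ $f(s_0)=s_0$ for every $f$, which is absurd. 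The objects of $\mathcal{M}$ are abstractly isomorphic, but the isomorphism cannot be chosen naturally against the non-invertible injections that make up the morphisms of $\mathcal{M}$. You still owe an argument that the canonical comparison $\hocolim_{\mathcal{M}}(\widehat C_*(X)_{(-)}\circ j)\to\hocolim_{\mathcal{M}}\widehat C_*(X)_{(-)}$ is an isomorphism — for instance a homotopy cofinality argument for $j$, or a direct computation with the $\ZZ[M]$-module model of $\hocolim_{\mathcal{M}}$ that the surrounding text supplies.

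The $\sh^{-1}$ case has a more serious problem. You claim the filtered colimit of the direct sums $\bigoplus_{t\in T}\HomComplex\bigl(L,\HomComplex(L^{\otimes T\setminus\{t\}},\widetilde C_*X(T\setminus\{t\}))\bigr)$ ``collapses'' to $\HomComplex(L,\widehat C_*(X)_S)$. It does not: the transition maps for $T\subset T'$ preserve the index $t$, sending the $t$-summand at stage $T$ into the $t$-summand at stage $T'$ via $T\setminus\{t\}\subset T'\setminus\{t\}$, so the colimit is the \emph{infinite} direct sum $\bigoplus_{t\in S}\HomComplex(L,\widehat C_*(X)_{S\setminus\{t\}})$, not a single copy. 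This is exactly what the remark immediately following the proposition is warning about — the isomorphism for $\sh^{-1}$ is \emph{not} true before passing to the homotopy colimit over $\mathcal{M}$. Any collapse must come from the derived $M$-coinvariants implicit in $\hocolim_{\mathcal{M}}$, where the monoid of injections $\NN\to\NN$ identifies the $t$-summands; your closing caveat about $M$-equivariance gestures at this, but the main body of the argument asserts an identification at the $\widehat C$-level that is false. The third claim, that $C_*(S^1\smas X)\to C_*(\sh X)$ is a quasi-isomorphism because the underlying map of symmetric spectra is a weak equivalence and $H_*(C_*(-))$ computes the true (derived) homology of the spectrum, is fine.
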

In more standard notation, this implies that $C_*(\sh(X)) \cong
C_*(X)[1]$ and $C_*(\sh^{-1}(X)) \cong C_*(X)[-1]$. The isomorphism for
$\sh(X)$ is true before taking homotopy colimits for $\mathcal{M}$,
but the isomorphism for $\sh^{-1}$ is not.

\subsection{The Elmendorf-Mandell machine}\label{sec:EM-background}
A permutative category is a category $\Cat$ together with a
$0$-object, a strictly associative operation
$\oplus\co\Cat\times\Cat\to\Cat$, and a natural isomorphism $\gamma\co
a\oplus b\to b\oplus a$ satisfying certain coherence conditions
(see~\cite[Definition 3.1]{EM-top-machine}).  An example is the
category $\Sets/X$ of finite sets over $X$, with:
\begin{itemize}
\item Objects pairs $(Y,f\co Y\to X)$ of a finite set $Y$ and a map from $Y$ to $X$,
\item Morphisms $\Hom((Y,f),(Z,g))=\{h\co Y\to Z\mid f=g\circ h\}$,
\item Zero object the pair $(\emptyset, \iota)$ (where $\iota$ is the
  unique map $\emptyset\to X$), and
\item Sum $\oplus$ given by disjoint union.
\end{itemize}
The category $\Sets/X$ can be made small by requiring that all sets
$Y$ are elements of some chosen, large set. For instance, the objects
of $\Sets/X$ could be pairs $(n,S)$ where $n\in\NN$ and $S$ is a
finite subset of $\RR^n$ mapping to $X$. Moreover, the disjoint union
operation may be made strictly associative by declaring objects to be
finite sequences of such pairs $(n_1,S_1),\dots,(n_k,S_k)$ (which
morally represents their disjoint union), the actual sum $\oplus$
given by concatenation of sequences, and the morphisms from
$(n_1,S_1),\dots,(n_k,S_k)$ to $(m_1,T_1),\dots,(m_\ell,T_\ell)$ are
given by maps $\amalg_{i=1}^k S_i\to\amalg_{i=1}^\ell T_i$ (for any
standard definition of disjoint union), respecting the maps to $X$. We
will elide these points, but see~\cite{Isbell-other-coherent} for a
more detailed account.

Given a finite correspondence $A\co X\to Y$, i.e., a finite set $A$
and a map $(\pi_X,\pi_Y)\co A\to X\times Y$, there is a corresponding
functor of permutative categories
\begin{align*}
  F_A&\co \Sets/X\to \Sets/Y\\
  F_A&(Z,f)=(A\times_X Z,\pi_Y)=\bigl(\{(a,z)\in A\times Z\mid \pi_X(a)=f(z)\},(a,z)\mapsto \pi_Y(a)\bigr).
\end{align*}

The collection of all (small) permutative categories forms a
simplicial multicategory $\PermuCat$
\cite[Definition 3.2]{EM-top-machine}. The
category $\mSpectra$ of symmetric spectra also forms a simplicial multicategory,
and Elmendorf-Mandell construct an enriched multifunctor, \emph{$K$-theory}, 
\[
  K\co \PermuCat\to\mSpectra.
\]
Their
functor $K$ takes the category $\Sets/X$ to $\bigvee_{x\in X}\SphereS$, a
wedge of copies of the sphere spectrum. Further, given a
correspondence $A$ from $X$ to $Y$, the induced map $K(A)\co K(X)\to
K(Y)$ sends $\SphereS_x$ to $\SphereS_y$ (for $x\in X$, $y\in Y$) by a
map of degree $\#\bigl(\pi_X^{-1}(x)\cap \pi_Y^{-1}(y)\bigr)$. (This
special case can be understood concretely, using the Pontrjagin-Thom
construction; see, for example,~\cite[Section
5]{LLS-khovanov-product}.)

We note that that $K$ is invariant under equivalence in the following
sense. Because $K$ respects the enrichments of $\PermuCat$ and
$\mSpectra$ in simplicial sets, it takes natural isomorphisms
between functors of permutative categories to homotopies between maps
of $K$-theory spectra. Therefore, equivalent permutative categories give
homotopy equivalent answers.

This concludes our general introduction to Elmendorf-Mandell's $K$-theory machine.
In the rest of this section, we discuss a precise sense in
which multifunctors from different multicategories can be
equivalent. This will be used in Section~\ref{sec:build-spec-bim} to
replace multifunctors from floppy multicategories (enriched in
groupoids) with multifunctors from more rigid (unenriched)
multicategories.

\begin{definition}[cf. {\cite[2.0.0.1]{Lurie-top-HA}}]
  Suppose $I$ is a multicategory. The \emph{associated monoidal
    category} $I^\otimes$ is the category defined as follows. An
  object of $I^\otimes$ is a (possibly empty) tuple $(i_1,\dots,i_n)$
  of objects of $I$. The maps $(i_1,\dots,i_n) \to (j_1,\dots,j_m)$
  are given by
  \[
  \coprod_{\alpha\co \{1,\dots,n\} \to \{1,\dots,m\}}
  \prod_{k=1}^m\Hom_I(\alpha^{-1}(j_k); j_k).
  \]
  The monoidal structure on $I^\otimes$ is given by concatenation of
  tuples, with unit given by the empty tuple.
\end{definition}

\begin{definition}
  Given multicategories $I$ and $J$ and multifunctors $f\co I\to J$ and
  $G\co I\to \mSpectra$ there is a map $f_*G\co J\to\mSpectra$, the
  \emph{left Kan extension of $G$}, defined on objects by
  \begin{equation}\label{eq:colimit}
    (f_*G)(j)=\colim_{[(f(i_1),\dots,f(i_n)\to j]\in I^\otimes\downarrow j}G(i_1)\wedge \cdots\wedge G(i_n).
  \end{equation}
  (Here $I^\otimes\downarrow j$ denotes the overcategory of $j$.)
  Left Kan extension is functorial in $G$, i.e., gives a functor of
  diagram categories $f_*\co \mSpectra^I\to\mSpectra^J$.

  There is also a restriction map $f^*\co \mSpectra^J\to\mSpectra^I$, and
  $f_*$ is left adjoint to $f^*$.
\end{definition}

Following Elmendorf-Mandell~\cite[Definition 12.1]{EM-top-machine}, a
map $f\co\cM\to\cN$ between simplicial multicategories
is a \emph{(weak)
  equivalence} if the induced map on the strictifications
$f^0\co\cM^0\to \cN^0$ is an equivalence of (ordinary) categories and
for any $x_1,\dots,x_n,y\in\Ob(\cM)$, the map
$\Hom_{\cM}(x_1,\dots,x_n;y)\to\Hom_{\cN}(f(x_1),\dots,f(x_n);f(y))$
is a weak equivalence of simplicial sets.

A key technical result of Elmendorf-Mandell's is:
\begin{citethm}[{\cite[Theorems 1.3 and 1.4]{EM-top-machine}}]\label{citethm:EM-top-machine34}
  Let $\cM$ be a simplicial multicategory. Then the functor
  categories $\mSpectra^\cM$ and $\mSpectra^{\cM^0}$ are simplicial
  model categories with weak equivalences (respectively fibrations)
  the maps which are objectwise weak equivalences (respectively
  fibrations).

  Further, suppose $\cN$ is another simplicial multicategory and
  \[
    f\co \cM\to \cN
  \]
  is an equivalence. Then there are Quillen equivalences
  \[
    \xymatrix{
      \mSpectra^{\cM}\ar@/^/[r]^{f_*} & \mSpectra^{\cN},\ar@/^/[l]^{f^*}
    }
  \]
  where $f_*$ is left Kan extension and $f^*$ is restriction.
\end{citethm}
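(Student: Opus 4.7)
The plan is to prove the two halves of the theorem separately: first establish the model structure on $\mSpectra^{\cM}$ (the case of $\cM^0$ is the unenriched special case), and then use standard arguments to promote an equivalence $f$ to a Quillen equivalence.

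For the model structure, I would transfer the projective model structure on $\mSpectra$ across a family of free/forgetful adjunctions. For each object $x$ of $\cM$, evaluation at $x$ gives a right adjoint $\mathrm{ev}_x\co \mSpectra^{\cM}\to \mSpectra$, and its left adjoint $F_x$ sends $A\in\mSpectra$ to the representable diagram $y\mapsto \Hom_{\cM}(x;y)_+ \smas A$ (with the obvious extension to multimorphisms). Taking $I$ and $J$ to be the standard generating cofibrations and trivial cofibrations for $\Spectra$, the proposed generating sets for $\mSpectra^{\cM}$ are $\{F_x\cdot i\mid i\in I,\ x\in\Ob(\cM)\}$ and $\{F_x\cdot j\mid j\in J,\ x\in\Ob(\cM)\}$. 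I would verify the hypotheses of Kan's lifting theorem: the small object argument goes through because $\Spectra$ is combinatorial and $F_x$ preserves colimits; and relative cell complexes built from $\{F_x\cdot j\}$ are objectwise weak equivalences because $F_x$ takes a trivial cofibration to a levelwise trivial cofibration (this uses that each $\Hom_{\cM}(x;y)$ is a simplicial set, so the resulting smash-products preserve weak equivalences). The simplicial structure is inherited from $\mSpectra$ levelwise.

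For the Quillen equivalence, observe first that $(f_*,f^*)$ is always a Quillen pair: $f^*$ preserves objectwise (trivial) fibrations since it merely reindexes, so its left adjoint $f_*$ preserves (trivial) cofibrations. To upgrade this to a Quillen equivalence when $f$ is an equivalence, I would show that for every fibrant $G\in\mSpectra^{\cN}$ the derived counit $\mathbb{L}f_*(f^*G)\to G$ is an objectwise weak equivalence. Evaluating at $j\in\Ob(\cN)$, \eqref{eq:colimit} gives
\[
  (\mathbb{L}f_*(f^*G))(j) \;\simeq\; \hocolim_{(f(i^{\otimes})\to j)\in \cM^{\otimes}\downarrow j} G(f(i^{\otimes})),
\]
where the slice is taken simplicially. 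The canonical comparison map to $G(j)$ factors through the subdiagram on the identity map $\Id_j\in \Hom_{\cN}(j;j)$, which is a final (homotopy-terminal) object: finality follows because (i) $f^0\co \cM^0\to \cN^0$ is an equivalence of ordinary multicategories, so every object of the slice is connected to $(j,\Id_j)$ through zigzags, and (ii) each multimorphism simplicial set of $\cM$ maps by a weak equivalence to the corresponding one of $\cN$, so each connected component of the slice has contractible nerve. With the slice homotopy-terminal at $\Id_j$, the $\hocolim$ collapses to $G(j)$, proving the derived counit is an objectwise equivalence.

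The main technical obstacle is the finality/contractibility argument in the last paragraph: slice categories of multicategories are more delicate than slice categories of ordinary categories, because morphisms in $\cM^{\otimes}\downarrow j$ must track the way multimorphisms partition their inputs. I would handle this by filtering $\cM^{\otimes}\downarrow j$ by the number of inputs and using the bar construction (thickened slice) in place of the strict slice to ensure homotopical invariance; the equivalence hypothesis on $f$ then feeds into a levelwise Reedy-type argument on this filtration. Once this finality statement is in place, the Quillen equivalence follows by the standard observation that a Quillen adjunction whose derived counit (on fibrant objects) is an equivalence is itself a Quillen equivalence, since the functor categories are both right-proper.
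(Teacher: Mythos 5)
This is a \emph{cited} theorem: the paper does not prove it, but imports it wholesale from Elmendorf--Mandell (their Theorems 1.3 and 1.4). So there is no ``paper's own proof'' to compare against; what you have written is a sketch of what would have to go into the source, not a reconstruction of an argument appearing in this paper.

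On its own terms, your sketch has a real gap in the model-structure half. You write that evaluation $\mathrm{ev}_x$ has left adjoint $F_x(A)\colon y\mapsto \Hom_\cM(x;y)_+\smas A$ ``(with the obvious extension to multimorphisms).'' That formula is the correct left adjoint only when $\cM$ is an ordinary category. For a multicategory there is no ``obvious extension'': a multifunctor $F\colon\cM\to\mSpectra$ must also carry the data of maps $F(x)^{\smas n}\to F(y)$ for each multimorphism in $\Hom_\cM(x,\dots,x;y)$, compatibly with composition, so $F_x(A)$ is a free-algebra construction involving all $n$ at once. In the one-object example where every $\Hom_\cM(x,\dots,x;x)$ is a point, $\mSpectra^\cM$ is the category of (commutative) monoids in $\mSpectra$ and the left adjoint to $\mathrm{ev}_x$ is $A\mapsto\bigvee_{n\ge 0}A^{\smas n}/\Sigma_n$, not $\Hom(x;x)_+\smas A$. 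With the correct formula, the transfer is also not immediate: ``$F_x$ takes a trivial cofibration to a levelwise trivial cofibration'' is precisely the delicate point for operadic algebras in symmetric spectra (this is where positivity-type hypotheses or explicit pushout analyses enter in the literature), and waving it through as you do misses the hardest part of the Elmendorf--Mandell argument. The Quillen-equivalence half is heuristically reasonable --- showing the derived counit is an objectwise equivalence via a homotopy-finality statement about the (thickened) slice $\cM^\otimes\downarrow j$, and you correctly flag that the strict slice will not do --- but it remains a sketch of a plan rather than an argument; making the ``filter by number of inputs and do a Reedy-type induction'' step precise is itself the content of a lemma.

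If you want to ground this in the paper rather than reprove Elmendorf--Mandell, the more useful observation is that the paper deliberately treats Theorem~\ref{citethm:EM-top-machine34} as a black box and only uses two consequences of the model structure: the existence of Chorny's functorial cofibrant replacement (\Proposition{Chorny}) and the fact that evaluation is a left Quillen functor (proved in \Lemma{cofib-implies-levelwise} by identifying the \emph{right} adjoint of $\mathrm{ev}_x$ as $X\mapsto(y\mapsto\prod_{n\ge 0}X^{\Hom_\cM(x,\dots,x;y)})$ --- note the multicategorical correction there, which is exactly the phenomenon your $F_x$ formula overlooks on the left-adjoint side).
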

For instance, in Theorem~\ref{citethm:EM-top-machine34}, $\cM$ might
be (the nerve of) a multicategory enriched in groupoids whose every
component is contractible, and $\cN$ might be (the nerve of) its strictification
$\cM^0$.

We will need some additional cofibrancy for the rectification results
we apply (see Section~\ref{sec:rect-background}). In particular,
Elmendorf-Mandell also show that $\mSpectra^{\cM}$ is cofibrantly
generated \cite[Section 11]{EM-top-machine} and it is combinatorial in
the sense of \cite[Definition A.2.6.1]{Lurie-top-htt}.  Using a small
object argument, Chorny~\cite{Chorny-top-small} constructs functorial
cofibrant factorizations that apply, in particular, to combinatorial
model categories such as $\mSpectra$. So, his construction gives a cofibrant
replacement functor
\[
  Q^\cM\co \mSpectra^{\cM}\to\mSpectra^{\cM}.
\]
His construction satisfies the following property:
\begin{proposition}\label{prop:Chorny}
  Suppose $j\co \cN\hookrightarrow \cM$ is a full subcategory such
  that $\Hom(m_1,\dots,m_k;n)=\emptyset$ if $n\in \cN$ and
  $m_i\not\in \cN$ for some $i$. (That is, there are no arrows into
  $\cN$; we call such a full subcategory $\cN$ \emph{blockaded}.) Then
  the small object argument is preserved by restriction: there
  is a natural isomorphism
  \[
    j^*Q^{\cM}\stackrel{\simeq}{\longrightarrow}Q^{\cN}j^*.
  \]
\end{proposition}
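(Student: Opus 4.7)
The plan is to unwind Chorny's construction stage by stage and show that the blockaded hypothesis forces it to be compatible with restriction. Recall that Chorny's functorial cofibrant replacement proceeds by a transfinite small object argument: given $F \in \mSpectra^\cM$, one produces a tower $0 = F^\cM_0 \to F^\cM_1 \to \cdots$ converging to $Q^\cM F$, where the successor stage $F^\cM_{\alpha+1}$ is obtained from $F^\cM_\alpha$ by pushing out along a coproduct of generating cofibrations, indexed over the set of all lifting problems against the map $F^\cM_\alpha \to F$. The generating cofibrations have the form $F^\cM_m(i) \co F^\cM_m(A) \to F^\cM_m(B)$, where $F^\cM_m$ is left adjoint to evaluation at $m \in \Ob(\cM)$ and $i$ ranges over a generating set for $\mSpectra$.

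First, observe that $j^*$ preserves all colimits, since it admits a right adjoint (right Kan extension along $j$). Next, the blockaded hypothesis controls the restriction of the free generators: for $m \in \cN$, the full-subcategory property together with the fact that every multimorphism into $n \in \cN$ has all its inputs in $\cN$ identifies $j^* F^\cM_m(X) = F^\cN_m(X)$; for $m \notin \cN$, every multimorphism $(m, m_2, \dots, m_k; n)$ with $n \in \cN$ is empty, so $j^* F^\cM_m(X) = 0$. Consequently the generating cofibrations of $\mSpectra^\cM$ restrict either to generating cofibrations of $\mSpectra^\cN$ or to the identity on the zero object.

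With these two inputs, I would prove by transfinite induction that the canonical comparison $j^* F^\cM_\alpha \to F^\cN_\alpha(j^*F)$ is an isomorphism for every $\alpha$. Stage zero is immediate since $j^*$ preserves the initial object. At a successor stage, adjunction identifies a lifting problem for $F^\cM_m(i)$ against $F^\cM_\alpha \to F$ with a map of spectra at $m$; when $m \in \cN$ this corresponds bijectively to a lifting problem for $F^\cN_m(i)$ against $j^* F^\cM_\alpha \to j^* F$, while when $m \notin \cN$ the associated pushout is trivial after applying $j^*$ by the computation of the previous paragraph. Since $j^*$ preserves pushouts, the two constructions match at stage $\alpha + 1$. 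Limit stages follow because $j^*$ preserves filtered colimits. Taking $\alpha$ to its terminal value yields $j^* Q^\cM \cong Q^\cN j^*$, naturally in $F$.

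The main obstacle is verifying that Chorny's specific indexing set of lifting problems used at each stage restricts exactly, as opposed to merely surjects or injects onto, the corresponding indexing set for $\cN$. This bookkeeping is exactly where the blockaded condition is essential: without it, extensions of an $\cN$-diagram to $\cM$-diagrams could introduce lifting problems at objects outside $\cN$ whose associated cells would, after restriction, impose nontrivial conditions on $\cN$ and thus spoil the identification.
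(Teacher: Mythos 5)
The paper states this proposition without giving a proof (implicitly deferring to an unwinding of Chorny's construction), so there is no in-text argument to compare against; your proposal is therefore a genuine contribution rather than a parallel route.

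Your strategy is the right one, and the argument essentially works: unwind the transfinite small object argument, use that $j^*$ preserves colimits because it has a right adjoint, check that the free generators restrict correctly under the blockaded hypothesis, and run a transfinite induction over the stages of the tower. Your explicit identification of the delicate point—that the indexing sets of lifting problems must match on the nose, not merely up to surjection or injection—is exactly the place where the blockaded hypothesis does its work. I would, however, flag one technical inaccuracy. You write that the tower starts at $F^{\cM}_0 = 0$ and that $j^* F^{\cM}_m(X) = 0$ for $m \notin \cN$. In the present setting the relevant multicategories have zero-input multimorphisms (the stumps $\emptyset \to (a,a)$), so a multifunctor $G\co \cM \to \mSpectra$ must send each such operation to a map $\SphereS \to G(y)$, and the initial object of $\mSpectra^{\cM}$ is consequently not the constant zero functor. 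The free functor $F^{\cM}_m$ preserves initial objects, so $F^{\cM}_m(0)$ is this nontrivial initial multifunctor. For $m \notin \cN$ the blockaded hypothesis kills every term in $(F^{\cM}_m X)(y)$ that depends on $X$ (any tree feeding into $y \in \cN$ has all of its nodes in $\cN$, hence never touches $m$), but the zero-input contribution survives, so $j^* F^{\cM}_m(X)$ is the initial object of $\mSpectra^{\cN}$, not $0$. The fix is painless and your conclusion is unaffected: what matters is that the restricted generating cofibration $j^* F^{\cM}_m(A) \to j^* F^{\cM}_m(B)$ is an isomorphism for $m \notin \cN$ (both sides being the initial object of $\mSpectra^{\cN}$ and the map the identity), so the corresponding pushout leaves the restricted tower unchanged. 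Likewise the base of the induction should read that $j^*$ applied to the initial object of $\mSpectra^{\cM}$ gives the initial object of $\mSpectra^{\cN}$—which again follows from the blockaded hypothesis, since the generating trees of the initial multifunctor at any $y \in \cN$ live entirely inside $\cN$.
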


We note that various operations preserve cofibrancy.

\begin{lemma}\label{lem:cofib-behaves}
  If $X$ is a cofibrant symmetric spectrum then $\sh(X)$ and
  $\sh^{-1}(X)$ are also cofibrant. Further, if $F\co I \to \Spectra$
  is a diagram of symmetric spectra which is pointwise cofibrant
  (i.e., $F(x)$ is cofibrant for all $x\in\Ob(I)$) then $\hocolim F$
  is cofibrant.
\end{lemma}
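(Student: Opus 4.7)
The plan is to verify each claim by appealing to the cell structure of cofibrant symmetric spectra in the Hovey-Shipley-Smith model structure. Recall that the cofibrations in $\Spectra$ are generated by maps $F_n(A)\to F_n(B)$ obtained from cofibrations $A\to B$ of pointed simplicial sets via the left adjoints $F_n$ to the evaluation functors. A cofibrant symmetric spectrum is therefore a retract of a cell object built by transfinite composition of pushouts of such generating cofibrations; it suffices for any endofunctor $\Phi$ of $\Spectra$ to check that $\Phi$ preserves colimits and sends every generating cofibration to a cofibration in order to conclude that $\Phi$ preserves cofibrancy.

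For the shift functors, I would argue as follows. The functor $\sh^{-1}$ is left adjoint to $\sh$ in the adjoint pair stated earlier, hence preserves all colimits; and a short computation on generators shows that $\sh^{-1} F_n(A)$ is isomorphic to $F_{n+1}(A)$ (with the appropriate free $\symGrp$-action), so $\sh^{-1}$ sends generating cofibrations to generating cofibrations. Thus $\sh^{-1}$ preserves cofibrancy. The functor $\sh$ also has a left adjoint — given by the ``untwisted'' free spectrum construction — so it too commutes with colimits, and one checks on generators that $\sh F_n(A)$ is either the cofibrant object $F_{n-1}(A)$ (for $n\geq 1$) or a based simplicial set wedged into level zero (for $n=0$), which is again cofibrant. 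Preservation of colimits plus preservation of generating cofibrations propagates via transfinite composition and pushout to preservation of all cofibrations, and hence of cofibrant objects.

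For the homotopy colimit statement, I would use the standard bar-construction model from Section~\ref{sec:hocolim}. The homotopy colimit is the realization of the simplicial symmetric spectrum whose $p$-simplices are
\[
\coprod_{i_0\to i_1\to\cdots\to i_p}F(i_0)\smas(\Delta^p)_+.
\]
Each summand is the smash of the cofibrant spectrum $F(i_0)$ with the cofibrant pointed simplicial set $(\Delta^p)_+$, hence cofibrant; and the realization is built as a transfinite composition of pushouts along the latching-map inclusions, which are cofibrations since every simplicial object whose entries are cofibrant coproducts of this form is Reedy cofibrant. Since transfinite compositions of pushouts of cofibrations starting at the zero object yield cofibrant objects, $\hocolim F$ is cofibrant.

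The main obstacle is the claim for $\sh$: confirming, at the level of free symmetric spectra, that shifting preserves cofibrancy despite the reindexing of symmetric-group actions; this requires an honest computation of $\sh F_n(A)$ as an object of $\Spectra$ (not merely as a sequential spectrum) to verify that the $\symGrp$-actions that appear are free in the appropriate sense. Once this is in hand, both remaining assertions are formal consequences of the closure properties of cofibrations in the cofibrantly generated simplicial model category $\Spectra$.
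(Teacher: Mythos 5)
Your overall strategy---reducing to generating cofibrations and checking that each endofunctor preserves colimits---is exactly the paper's approach; its one-sentence proof (``shifts of the generating cofibrations are cofibrations'') is a compressed version of your plan. The $\sh^{-1}$ step is correct: $\sh^{-1}$ is a left adjoint, hence preserves colimits, and the adjunction identity
\[
\Hom(\sh^{-1}F_n A, Y) \cong \Hom(F_n A, \sh Y) \cong \Hom(A, Y_{1+n}) \cong \Hom(F_{1+n}A, Y)
\]
gives $\sh^{-1}F_n(A) \cong F_{1+n}(A)$, another free spectrum. Your Reedy-cofibrancy argument for the homotopy colimit is also sound, since the latching map of the simplicial bar object is the inclusion of a coproduct summand whose complement is a coproduct of the pointwise-cofibrant values $F(i_0)$, hence a cofibration.

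However, the identification $\sh F_n(A)\cong F_{n-1}(A)$ for $n\ge 1$ is false, and this is a genuine error rather than merely an unverified step. At spectrum level $m$ one has $(\sh F_n A)_m=(\symGrp_{1+m})_+\smas_{\symGrp_{1+m-n}}(A\smas S^{1+m-n})$, while $(F_{n-1}A)_m=(\symGrp_m)_+\smas_{\symGrp_{m-n+1}}(A\smas S^{m-n+1})$; the former carries $(1+m)$ times as many wedge summands. Concretely, $(\sh F_1 S^0)_k\cong\bigvee_{k+1}S^k$ whereas $(F_0 S^0)_k=\SphereS_k=S^k$. The correct statement is that $\sh F_n(A)$ splits as a wedge of free and semifree pieces---one summand being $F_n(A\smas S^1)$, the remainder coming from the injections that hit the newly adjoined coordinate---each of which is cofibrant, so the conclusion you need (that $\sh$ carries generating cofibrations to cofibrations) does hold, just not by the isomorphism you wrote. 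Your closing paragraph correctly flags this as ``the main obstacle,'' but what remains is not to confirm your formula, it is to replace it: either carry out the wedge decomposition carefully or cite the preservation-of-cofibrations statement in HSS Section~3.4 directly.
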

\begin{proof}
  This is mechanical to verify from the definitions in
  \cite[Section 3.4]{HSS-top-symmetric}, because shifts of the
  generating cofibrations are cofibrations.
\end{proof}

\begin{lemma}\label{lem:cofib-implies-levelwise}
  If $\cM$ is a multicategory and $F\co \cM\to\mSpectra$ is cofibrant
  then for each object $x\in\Ob(\cM)$, $F(x)$ is a cofibrant spectrum.
\end{lemma}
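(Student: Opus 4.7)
The plan is to exploit the fact, recorded in the paper's discussion of Elmendorf--Mandell's Theorem~\ref{citethm:EM-top-machine34}, that $\mSpectra^\cM$ carries a cofibrantly generated projective model structure, whose cofibrations are built from ``free'' generators associated to the evaluation functors at each object.

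First I would recall the standard description of the generating cofibrations. For each $x\in\Ob(\cM)$ the evaluation functor $\mathrm{ev}_x\co\mSpectra^\cM\to\mSpectra$ has a left adjoint $F_x$, the \emph{free diagram at $x$}, which satisfies $\mathrm{ev}_y F_x(A)=\Hom_\cM(x;y)_+\smas A$ (or the appropriate multicategorical refinement thereof); by Elmendorf--Mandell the generating (trivial) cofibrations of $\mSpectra^\cM$ are precisely $\{F_x(i)\}$ as $x$ ranges over $\Ob(\cM)$ and $i$ over the generating (trivial) cofibrations of $\mSpectra$. Next, I would observe that $\mathrm{ev}_y$ also admits a right adjoint (the ``cofree'' construction, built out of products over multimorphism spaces), so $\mathrm{ev}_y$ preserves all colimits and, in particular, pushouts and transfinite compositions.

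The proof is then formal. By definition, a cofibrant object $F\in\mSpectra^\cM$ is a retract of a transfinite composition of pushouts of maps of the form $F_x(i)$ with $i$ a generating cofibration in $\mSpectra$. Applying $\mathrm{ev}_y$ and using the preservation of colimits, $\mathrm{ev}_y F=F(y)$ is exhibited as a retract of a transfinite composition of pushouts, starting from the initial spectrum, of the maps $\mathrm{ev}_y F_x(i)=\Hom_\cM(x;y)_+\smas i$. Because the generating cofibrations of $\mSpectra$ are closed under smashing with discrete simplicial sets (the maps $\Hom_\cM(x;y)_+\smas i$ are coproducts of copies of $i$, hence cofibrations of symmetric spectra), this presents $F(y)$ as a cofibrant symmetric spectrum.

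The main (minor) obstacle is simply to confirm that the free/cofree adjoints exist in the multicategorical setting of Elmendorf--Mandell and to identify $\mathrm{ev}_y F_x(A)$ explicitly; once that is in hand, the rest is a routine verification that cofibrations in a projective model structure on an enriched functor category are pointwise cofibrations. For this argument I do not need Proposition~\ref{prop:Chorny} or the specifics of Chorny's construction; any cofibrant replacement in $\mSpectra^\cM$ will automatically be pointwise cofibrant by the argument above.
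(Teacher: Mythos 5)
Your argument is correct in outline but takes the dual route to the paper's proof, and there is a gap in the explicit formula you rely on. The paper works with the \emph{right} adjoint to $\mathrm{ev}_x$: it records the right Kan extension $R_x(X)(y) = \prod_{n\geq 0} X^{\Hom_\cM(x,\dots,x;y)}$, observes that a formula built from products and cotensors sends (trivial) fibrations to (trivial) fibrations in the projective model structure, concludes that $\mathrm{ev}_x$ is left Quillen, and is done. You instead work with the \emph{left} adjoint (the free diagram $F_x$), identify the generating cofibrations of $\mSpectra^\cM$ as $\{F_x(i)\}$, use preservation of colimits by $\mathrm{ev}_y$ (which indeed follows from the existence of a right adjoint, as you note) to push the cell structure of a cofibrant $F$ through $\mathrm{ev}_y$, and then try to check directly that the resulting attaching maps are cofibrations of spectra. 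These two arguments are genuinely different in execution even though both ultimately establish that $\mathrm{ev}_x$ is a left Quillen functor.

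The gap is in the formula $\mathrm{ev}_y F_x(A) = \Hom_\cM(x;y)_+\smas A$. That is the formula for an ordinary functor category, but $\cM$ is a multicategory and $F$ is a \emph{multi}functor: the free multifunctor at $x$ must account for multimorphisms with arbitrarily many inputs, so $F_x(A)(y)$ involves a coproduct over $n$ of terms built from $\Hom_\cM(\underbrace{x,\dots,x}_n;y)$ and the smash power $A^{\smas n}$ (this is visible in the paper's dual formula, which has a product over $n$). Consequently, $\mathrm{ev}_y F_x(i)$ is not simply a coproduct of copies of $i$; checking that it is a cofibration requires the pushout-product axiom for symmetric spectra together with cofibrancy of the domains of the generating cofibrations to handle $A^{\smas n}\to B^{\smas n}$. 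This is certainly fixable, but it is more than the ``minor obstacle'' you flag, and it is exactly the complication that the paper's right-adjoint argument was designed to avoid: the right Kan extension uses only products and cotensors, whose preservation of (trivial) fibrations is immediate, with no smash powers in sight.
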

\begin{proof}
  The functor $ev_x\co \mSpectra^\cM \to \mSpectra$, given by $F
  \mapsto F(x)$, has a right adjoint given by right Kan
  extension. Given a symmetric spectrum $X$, the value of this right
  Kan extension on an object $y$ is
  \[
    \prod_{n \geq 0} X^{\Hom_{\cM}(x,x,\dots,x;y)}.
  \]
  In particular, any fibration $X \to Y$ becomes a fibration on
  applying right Kan extension. Therefore, $ev_x$ is a left Quillen
  functor and so preserves cofibrations and cofibrant objects.
\end{proof}

\subsection{Rectification}\label{sec:rect-background}
In the process of defining the arc algebras and tangle invariants, we
will construct a number of cobordisms which are not equal but are
canonically isotopic. The lax nature of the construction will be
encoded by defining multifunctors from multicategories in which the
$\Hom$ sets are groupoids in which each component is contractible: the
objects in the groupoids are mapped to the cobordisms while the
morphisms in the groupoids are mapped to the isotopies, and
contractibility of the groupoids encodes the fact that these isotopies
are canonical. We then use the Khovanov-Burnside functor and the
Elmendorf-Mandell machine to produce functors from these
multicategories to spectra. At that point, we want to collapse the
enriched multicategories to ordinary multicategories, to obtain
simpler invariants. This collapsing is called rectification,
and is accomplished as follows.

\begin{definition}\label{def:rectification}
  Let $\cM$ be a simplicial multicategory (e.g., the nerve of a
  multicategory enriched in groupoids), $\cM^0$ the strictified
  (discrete) multicategory, and $f\co\cM\to\cM^0$ the
  projection. Given a functor $G\co \cM\to \mSpectra$, the
  \emph{rectification of $G$} is the composite
  \[
    f_*Q^\cM G\co \cM^0\to\mSpectra.
  \]
\end{definition}

\begin{lemma}\label{lem:rect-equiv}
  If the projection map $\cM\to\cM^0$ is an equivalence then
  rectification is part of a Quillen equivalence. In particular, if the
  projection is an equivalence then for any $G\co \cM\to\mSpectra$, the
  functors $G$ and $f^*f_*Q^\cM G\co \cM\to\mSpectra$ are naturally
  equivalent.
\end{lemma}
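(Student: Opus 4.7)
The plan is to derive both conclusions essentially formally from Theorem~\ref{citethm:EM-top-machine34}. First I would apply that theorem directly to the map $f\co \cM \to \cM^0$: since by hypothesis this projection is an equivalence of simplicial multicategories (viewing $\cM^0$ as discretely enriched), the theorem produces a Quillen equivalence
\[
  \xymatrix{
    \mSpectra^{\cM} \ar@/^/[r]^{f_*} & \mSpectra^{\cM^0}. \ar@/^/[l]^{f^*}
  }
\]
Since the rectification $f_* Q^{\cM} G$ is obtained by precomposing the left Quillen functor $f_*$ with the cofibrant replacement $Q^{\cM}$, it computes the total left derived functor $\mathbb{L}f_*$ and hence is part of the Quillen equivalence. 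This establishes the first claim.

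For the second claim I would assemble a zigzag
\[
  G \xleftarrow{\,\sim\,} Q^{\cM} G \xrightarrow{\,\sim\,} f^* f_* Q^{\cM} G.
\]
The left-pointing arrow is a weak equivalence by the defining property of the cofibrant replacement $Q^{\cM}$. For the right-pointing arrow, I would use the characterization of Quillen equivalences: for any cofibrant object $X$ in the source, the derived unit $X \to R L(X)^{\mathrm{fib}}$ is a weak equivalence. Here one further observation removes the need for a fibrant replacement: weak equivalences in both $\mSpectra^{\cM}$ and $\mSpectra^{\cM^0}$ are detected objectwise (Theorem~\ref{citethm:EM-top-machine34}), so the restriction functor $f^*$ preserves all weak equivalences, not merely weak equivalences between fibrant objects. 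Consequently the genuine unit $Q^{\cM} G \to f^* f_* Q^{\cM} G$ is already a weak equivalence, giving the zigzag and hence the desired natural equivalence.

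The only mildly delicate point — and the one I would write out carefully — is the last step: that when $R$ in a Quillen equivalence $(L,R)$ preserves all weak equivalences, the undecorated unit on a cofibrant object is itself a weak equivalence. This is standard, but worth flagging because it is where the objectwise nature of weak equivalences in the Elmendorf--Mandell model structures is used. Everything else is formal bookkeeping once Theorem~\ref{citethm:EM-top-machine34} is in hand; there is no interaction with the specific geometry of $\cM$ beyond the assumption that $f$ is an equivalence.
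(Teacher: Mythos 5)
Your proof is correct and takes essentially the same approach as the paper. The paper's proof is a spelled-out version of the general principle you flag at the end: it exhibits the factorization of the derived unit through $f^*$ of a fibrant replacement, notes that $f^*$ applied to the objectwise equivalence (fibrant replacement) is still an objectwise equivalence, and closes with 2-out-of-3 — which is precisely the content of the standard fact you cite about a right Quillen functor that preserves all weak equivalences.
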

\begin{proof}
  By definition of cofibrant replacement, the natural transformation
  $Q^{\cM} G \to G$ is an equivalence of diagrams: for every object in
  $x \in \cM$ the map $(Q^{\cM} G)(x) \to G(x)$ is an
  equivalence. Thus it suffices to show that the unit map from
  $Q^{\cM} G$ to $f^* f_* Q^\cM G$ is an equivalence.
  
  By Theorem~\ref{citethm:EM-top-machine34}, the adjoint pair $f^*$
  and $f_*$ form a Quillen equivalence. This implies that
  for any fibrant replacement $f_* Q^{\cM} G \to (f_* Q^{\cM} G)_{\mathit{fib}}$ in
  $\mSpectra^{\cM^0}$, the composite
  \[
    Q^{\cM} G \to f^* f_* Q^{\cM} G \to f^* (f_* Q^{\cM} G)_{\mathit{fib}}
  \]
  is an equivalence. For every object $x \in \cM$ the composite
  \[
    (Q^{\cM} G)(x) \to (f_* Q^{\cM} G)(f(x)) \to (f_* Q^{\cM} G)_{\mathit{fib}}(f(x))
  \]
  is therefore an equivalence. However, by definition of fibrant
  replacement the map
  \[(f_* Q^{\cM} G)(y) \to (f_* Q^{\cM} G)_{\mathit{fib}}(y)\]
  is an equivalence
  for any $y \in \cM^0$, and hence $Q^{\cM}(G) \to f^* f_* Q^{\cM} G$
  is also an equivalence by the 2-out-of-3 property. 
\end{proof}

\begin{lemma}\label{lem:rectify-restrict}
  Suppose that $j\co \cN\into\cM$ is a blockaded subcategory and let
  $j^0\co\cN^0\to\cM^0$ denote the strictification.  For any functor
  $G\co \cM\to\mSpectra$, there is a natural isomorphism of
  rectifications
  \[
    f^\cN_*Q^\cN j^*G\cong (j^0)^*f^{\cM}_*Q^\cM G.
  \]
\end{lemma}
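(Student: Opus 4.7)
The plan is to combine two compatibilities: one between restriction along $j$ and cofibrant replacement, and one between restriction along $j^0$ and left Kan extension along $f^\cM$. The first is given to us by \Proposition{Chorny}, so the main work is the second, a Beck--Chevalley-type statement for the square
\[
\xymatrix{
\cN\ar[r]^j\ar[d]_{f^\cN} & \cM\ar[d]^{f^\cM}\\
\cN^0\ar[r]^{j^0} & \cM^0.
}
\]
That square commutes on the nose (both compositions send an object of $\cN$ to the same object of $\cM^0$), so the plan is to show that for \emph{any} $H\co \cM\to\mSpectra$, there is a natural isomorphism $(j^0)^*f^\cM_*H\cong f^\cN_*j^*H$, and then apply this with $H=Q^\cM G$ and combine with \Proposition{Chorny}.

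First I will check that the blockade condition passes to strictifications: if $\Hom_\cM(m_1,\dots,m_k;n)$ is empty whenever $n\in\cN$ and some $m_i\notin\cN$, then the same holds in $\cM^0$, since $\Hom_{\cM^0}(m_1,\dots,m_k;n)$ is the set of path components of the empty groupoid, hence empty. Since $j$ is a full subcategory, $j^0\co \cN^0\into\cM^0$ is also a full (blockaded) subcategory with the same objects as $\cN$.

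Next I will unwind \Equation{colimit}: on an object $n\in\cN^0$,
\[
((j^0)^*f^\cM_*H)(n)=\colim_{(f^\cM(x^\otimes)\to j^0(n))\in\cM^\otimes\downarrow j^0(n)}H(x^\otimes),\qquad
(f^\cN_*j^*H)(n)=\colim_{(f^\cN(y^\otimes)\to n)\in\cN^\otimes\downarrow n}H(j(y^\otimes)).
\]
By the blockade condition in $\cM^0$, every object $(x^\otimes,\alpha)$ of $\cM^\otimes\downarrow j^0(n)$ must have every entry of $x^\otimes$ in $\Ob(\cN)$, and then $\alpha$ is a multimorphism in $\cM^0$ between objects of $\cN^0$, which, by fullness, already lies in $\cN^0$. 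This gives an isomorphism of overcategories $\cN^\otimes\downarrow n\stackrel{\cong}{\longrightarrow}\cM^\otimes\downarrow j^0(n)$ induced by $j^\otimes$, and since $H(j(y^\otimes))=(j^*H)(y^\otimes)$, the colimits are canonically identified. Naturality in $n\in\cN^0$ and in $H$ is immediate because the identification is induced by a morphism of indexing data.

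Finally I combine the pieces. By \Proposition{Chorny}, $j^*Q^\cM G\cong Q^\cN j^*G$ naturally in $G$, so
\[
f^\cN_*Q^\cN j^*G\cong f^\cN_*j^*Q^\cM G\cong (j^0)^*f^\cM_*Q^\cM G,
\]
using the previous paragraph for the second isomorphism with $H=Q^\cM G$. The main subtlety is making sure that the blockadedness really does descend to $\cM^0$ so that the indexing categories for the two Kan extensions coincide; once that is in place, the Beck--Chevalley isomorphism is essentially formal.
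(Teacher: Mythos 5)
Your proposal is correct and takes essentially the same approach as the paper: the paper also observes that blockadedness makes the colimit in \Equation{colimit} only see objects of $\cN$, so the Beck--Chevalley mate $f^\cN_*j^*G \to (j^0)^*f^\cM_*G$ is an isomorphism, and then applies \Proposition{Chorny}. Your write-up simply spells out the overcategory comparison and the descent of the blockade condition to $\cM^0$ in more detail than the paper does.
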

\begin{proof}
  There is a natural transformation
  $f^\cN_*j^*G \to (j^0)^*f^\cM_*G$, the \emph{mate}.  Note that if
  $K\subset I$ is blockaded and $j\in K$ then the colimit in
  Equation~\eqref{eq:colimit} only sees the objects of $K$. Thus, the
  mate is a natural isomorphism
  \[
    f^\cN_*j^*G \cong (j^0)^*f^\cM_*G
  \]
  (i.e., satisfies the Beck-Chevalley condition). So, the result
  follows from Proposition~\ref{prop:Chorny}.
\end{proof}

\subsection{Khovanov invariants of tangles}\label{sec:review}
\begin{convention}\label{convention:vertical}
  All embedded cobordisms will be assumed to be the same as the product
  cobordism in some neighborhood of the boundary.
\end{convention}
\begin{definition}
  Let $\Diff^1$ denote the group of orientation-preserving
  diffeomorphisms $\phi\co [0,1]\to[0,1]$ so that there is some
  $\epsilon=\epsilon(\phi)>0$ so that
  $\phi|_{[0,\epsilon)\cup(1-\epsilon,1]}=\Id$. This restriction that
  $\phi$ be the identity near the boundary is similar to
  Convention~\ref{convention:vertical}.
\end{definition}
\begin{definition}
Let $\Diff^2$ denote the group of orientation-preserving
diffeomorphisms $\phi\from [0,1]^2\to[0,1]^2$ so that there is some
$\epsilon=\epsilon(\phi)>0$ and some $\psi_0,\psi_1\in\Diff^1$ so that
$\phi|_{[0,1]\times([0,\epsilon)\cup(1-\epsilon,1])}=\Id$, and
$\phi(p,q)=(p,\psi_0(q))$ for all $p\in[0,\epsilon)$, and
$\phi(p,q)=(p,\psi_1(q))$ for all $p\in(1-\epsilon,1]$.
\end{definition}
See Figure~\ref{fig:diff-act} for examples of the actions of elements
in $\Diff^1$ and $\Diff^2$.

\begin{figure}
  \centering
  \includegraphics{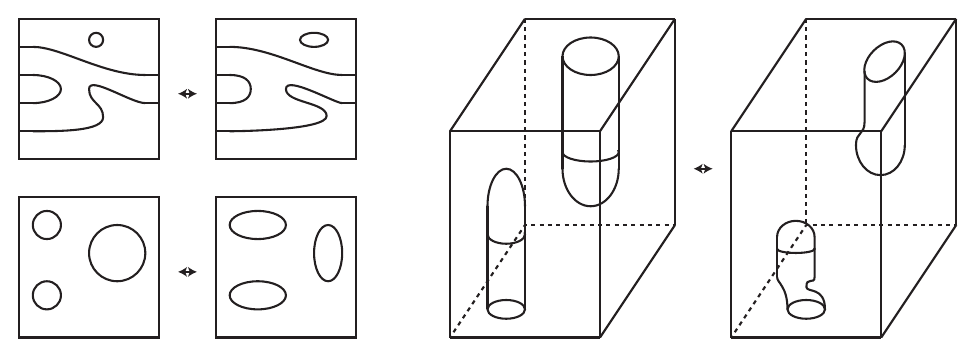}
  \caption{\textbf{The actions of $\Diff^1$ and $\Diff^2$.} Left: two
    flat $(4,2)$-tangles related by the action of $\Diff^1\times\Id$ and two
    flat $(0,0)$-tangles related by the action of $\Diff^1\times\Id$. Right:
    two flat $(0,0)$-tangle cobordisms related by the action of
    $\Diff^2\times\Id$.}
  \label{fig:diff-act}
\end{figure}

By the \emph{$2n$ standard points in $(0,1)$} we mean
$[2n]_\std=\{1/(2n+1),\dots,2n/(2n+1)\}$.  A \emph{flat
  $(2m,2n)$-tangle} is an embedded cobordism in $[0,1]\times(0,1)$
from $\{0\}\times[2m]_{\std}$ to $\{1\}\times[2n]_{\std}$. More
generally, a \emph{$(2m,2n)$-tangle} is an embedded cobordism in
$\RR\times[0,1]\times(0,1)$ from $\{0\}\times\{0\}\times[2m]_{\std}$
to $\{0\}\times\{1\}\times[2n]_{\std}$. We call flat tangles $T$ and
$T'$ \emph{equivalent} if there is a $\phi\in \Diff^1$ so that
$T'=(\phi\times\Id_{(0,1)})(T)$. Similarly, tangles $T$ and $T'$ are
\emph{equivalent} if there is a $\phi\in \Diff^1$ so that
$T'=(\Id_{\RR}\times\phi\times\Id_{(0,1)})(T)$.

\begin{convention}
  From now on, by \emph{tangle} (respectively \emph{flat tangle)} we
  mean an equivalence class of tangles (respectively flat tangles).
\end{convention}

\begin{remark}
  We are writing tangles horizontally, while
  Khovanov~\cite{Kho-kh-tangles} (and many others) writes tangles
  vertically.
\end{remark}

Khovanov~\cite{Kho-kh-tangles} associated an algebra $H^n$ to each
integer $n$; an $(H^m,H^n)$-bimodule $\KTfunc(T)$ to a
flat $(2m,2n)$-tangle T; and more generally a chain complex of
$(H^m,H^n)$-bimodules to any $(2m,2n)$-tangle.
We will review Khovanov's construction briefly. Because we reserve
$H^n$ for singular cohomology, we will use the notation $\KTalg{n}$
for Khovanov's algebra $H^n$.

The constructions start from Khovanov's Frobenius algebra
$V=H^*(S^2)=\ZZ[X]/(X^2)$ with comultiplication $1\mapsto 1\otimes X +
X\otimes 1,X\mapsto X\otimes X$ and counit $1\mapsto 0,X\mapsto
1$.

Let $\FlatTangles{m}{n}$ denote the collection of flat
$(2m,2n)$-tangles. Composition of flat tangles, followed by scaling
$[0,2]\times(0,1)\to[0,1]\times(0,1)$, is a map
$\FlatTangles{m}{n}\times\FlatTangles{n}{p}\to\FlatTangles{m}{p}$,
which we will write $(a,b)\mapsto ab$. (This map is associative and
has strict identities because we quotiented by $\Diff^1$.)
Reflection is a map $\FlatTangles{m}{n}\to \FlatTangles{n}{m}$, which
we will write $a\mapsto\Wmirror{a}$.

The isotopy classes of $\FlatTangles{0}{n}$ with no closed components
are called \emph{crossingless matchings}. For each crossingless
matching $a$, we choose a namesake representative
$a\subset [0,1]\times(0,1)$ in $\FlatTangles{0}{n}$ so that the
projection $a\to[0,1]$ to the $x$-coordinate is Morse with exactly $n$
critical points with distinct critical values; therefore, we may view
the set of crossingless matchings, $\Crossingless{n}$, as a subset of
$\FlatTangles{0}{n}$.

\begin{figure}
  \centering
  \includegraphics{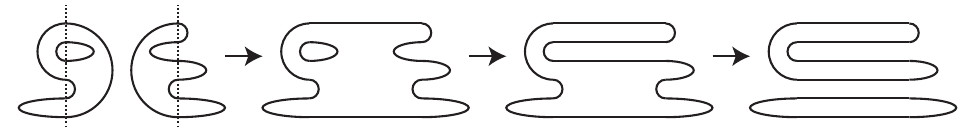}
  \caption{\textbf{Flat tangles and the multiplication on $\KTalg{n}$.}}
  \label{fig:flatTangles}
\end{figure}

Given a collection of disjoint, embedded circles $Z$ in the plane, let
$V(Z)=\bigotimes_{C\in\pi_0(Z)}V$.  As a $\ZZ$-module, the ring
$\KTalg{n}$ is given by
\[
\KTalg{n}=\bigoplus_{a,b\in \Crossingless{n}}V(a\Wmirror{b}).
\]
The product on $\KTalg{n}$ satisfies $xy=0$ if $x\in V(a\Wmirror{b})$
and $y\in V(c\Wmirror{d})$ with $b\neq c$. To define the product
$V(a\Wmirror{b})\otimes V(b\Wmirror{c})\to V(a\Wmirror{c})$, consider
the representative $b\subset [0,1]\times(0,1)$ and let
$\mu_1,\dots,\mu_n$ be the critical points of the projection
$b\to[0,1]$, ordered according to the critical values. Define a
sequence of $(2n,2n)$-tangles $\gamma_i$, $i=0,\dots, n$, inductively
by setting $\gamma_0=\Wmirror{b}b$ and obtaining $\gamma_{i+1}$ by
performing embedded surgery on $\gamma_i$ along an arc connecting
$\Wmirror{\mu_{i+1}}$ and $\mu_{i+1}$. (See Figure~\ref{fig:flatTangles}.)
Observe that $\gamma_n$ is canonically isotopic to the identity tangle
on $2n$ strands.  The Frobenius structure on $V$ induces a map
$V(a\gamma_i\Wmirror{c})\to V(a\gamma_{i+1}\Wmirror{c})$; define the
product $V(a\Wmirror{b})\otimes V(b\Wmirror{c})\to V(a\Wmirror{c})$ to
be the composition
\[
V(a\Wmirror{b})\otimes V(b\Wmirror{c})\cong V(a\gamma_0\Wmirror{c})\to V(a\gamma_1\Wmirror{c})\to\cdots\to V(a\gamma_n\Wmirror{c})\cong V(a\Wmirror{c}).
\]

\begin{lemma}[{\cite[Proposition 1]{Kho-kh-tangles}}]\label{lem:Kh-alg-defined}
  The multiplication just defined is associative and unital, and is
  independent of the choice of the representative in
  $\FlatTangles{0}{n}$ of the $b\in\Crossingless{n}$.
\end{lemma}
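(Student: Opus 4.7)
The plan is to reinterpret the surgery construction as a $2$-dimensional TQFT applied to a cobordism, and then reduce all three claims (well-definedness, associativity, unitality) to statements about diffeomorphism classes of surfaces. Since $V = H^*(S^2) \cong \ZZ[X]/(X^2)$ with the stated comultiplication and counit is a commutative Frobenius algebra, it determines a $2$-dimensional TQFT $Z_V$, whose state space on a disjoint union of circles $Z$ is precisely $V(Z) = \bigotimes_{C \in \pi_0(Z)} V$, and which sends a saddle cobordism to multiplication or comultiplication (with identity on unchanged components) depending on orientation.

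First, I would fix $b \in \Crossingless{n}$ and, for the chosen Morse representative, observe that the sequence of surgeries starting from $\gamma_0 = \Wmirror{b} b$ defines a compact cobordism $\Sigma_b \subset [0,1] \times (0,1) \times [0,1]$ from $a\Wmirror{b}b\Wmirror{c}$ to $a\Wmirror{c}$, obtained by attaching $n$ pairwise-disjoint $1$-handles to the product of $a\Wmirror{b}b\Wmirror{c}$ with $[0,1]$. By construction, $Z_V(\Sigma_b)$ is the composite described in the statement. Topologically, $\Sigma_b$ is the disjoint union of a product cobordism on the circles of $a\Wmirror{c}$ and a surface with boundary $\Wmirror{b}b$ on top and $\emptyset$ on bottom whose components are discs --- indeed, $\Wmirror{b}b$ is built of $n$ pairs of arcs paired by a Morse cancellation pattern, and the surgeries pair these up to yield $n$ disjoint discs.

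For independence of the representative of $b$, I would argue that any two Morse representatives (inside the fixed isotopy class $b \in \Crossingless{n}$, with distinct critical values and the standard boundary behavior) are connected by a generic $1$-parameter family of functions, which crosses only births/deaths of critical points and swaps of critical values (Cerf theory on a surface rel boundary). Each such transition either isotopes the surgery arcs or introduces a canceling pair of $1$-handles, so the resulting cobordism $\Sigma_b$ is unchanged up to diffeomorphism rel boundary. Since $Z_V$ depends only on the diffeomorphism class, $Z_V(\Sigma_b)$ and hence the multiplication is well-defined.

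For associativity, both $(xy)z$ and $x(yz)$ with $x \in V(a\Wmirror{b})$, $y \in V(b\Wmirror{c})$, $z \in V(c\Wmirror{d})$ correspond to applying $Z_V$ to a cobordism from $a\Wmirror{b}b\Wmirror{c}c\Wmirror{d}$ to $a\Wmirror{d}$ obtained by performing surgeries along $\Wmirror{b}b$ and along $\Wmirror{c}c$. The two associations differ only by the order in which these two \emph{disjoint} families of surgeries are performed, so the resulting surfaces are isotopic rel boundary, and $Z_V$ assigns them the same map. For unitality, the unit $\mathbf{1}_a \in V(a\Wmirror{a}) \subset \KTalg{n}$ is the element such that $Z_V$ of the disjoint union of discs capping off the $n$ circles of $a\Wmirror{a}$ sends $1$ to $\mathbf{1}_a$; concretely this labels each circle by $X$. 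Left multiplication by $\mathbf{1}_a$ on $V(a\Wmirror{c})$ is then $Z_V$ of the surface obtained by gluing these cap discs to the surgery cobordism, which is diffeomorphic rel boundary to the product cobordism on $a\Wmirror{c}$, and hence induces the identity; right unitality is analogous.

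The main obstacle is the Cerf-theoretic step in the independence argument: verifying that all $1$-parameter families of generic functions can be decomposed into moves each of which preserves the diffeomorphism type of $\Sigma_b$. However, since the ambient space is $[0,1]\times(0,1)$ and the surgery arcs are small embedded arcs connecting paired boundary points, the needed moves reduce to standard handle slides and cancellations on a surface with boundary, which are well understood.
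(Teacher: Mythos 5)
Your proof takes essentially the same approach as the paper's sketch: both observe that $V$ is a commutative Frobenius algebra, hence defines a $(1+1)$-dimensional TQFT, and then reduce well-definedness, associativity, and unitality to statements about diffeomorphism classes rel boundary of the underlying merge/cap cobordisms (the paper leaves the Cerf-theoretic bookkeeping implicit, which you carry out). One small slip: the idempotent $\mathbf{1}_a \in V(a\Wmirror{a})$ is the image of $1 \in \ZZ$ under the cap disc, which is the unit of the Frobenius algebra, i.e., it labels each circle by $1$, not by $X$; with $X$ the unitality argument via capping would not work (the caps would insert multiplications by $X$), but with $1$ your cobordism argument goes through as stated.
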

\begin{proof}[Sketch of proof]
  The key point is that a Frobenius algebra is the same as a
  $(1+1)$-dimensional topological field theory. Multiplication is
  induced by certain collections of saddle cobordisms, described more
  explicitly and called multi-merge cobordisms
  in Section~\ref{sec:cabinet}.
  Up to homeomorphism these cobordisms are independent of the
  choices of ordering of the saddles, and a composition of these multi-merge
  cobordisms is another multi-merge cobordism. (Units are also induced by canonical cup cobordisms.)
\end{proof}

Given a flat $(2m,2n)$-tangle $T\in\FlatTangles{m}{n}$, the bimodule
$\KTfunc(T)$ is given additively by
\[
\KTfunc(T)=\bigoplus_{(a,b)\in\Crossingless{m}\times\Crossingless{n}}V(aT\Wmirror{b}).
\]
The left action of $\KTalg{m}$ (respectively, the right action of
$\KTalg{n}$) is defined similarly to the multiplication on
$\KTalg{n}$: multiplication sends $V(a\Wmirror{b})\otimes
V(cT\Wmirror{d})$ to $0$ unless $b=c$ (respectively, sends
$V(cT\Wmirror{d})\otimes V(e\Wmirror{f})$ to $0$ unless $d=e$), and
the product $V(a\Wmirror{b})\otimes V(bT\Wmirror{c})\to
V(aT\Wmirror{c})$ (respectively, $V(bT\Wmirror{c})\otimes
V(c\Wmirror{d})\to V(bT\Wmirror{d})$) is defined by a sequence of
merge and split maps, turning the tangle $\Wmirror{b}b$
(respectively, $\Wmirror{c}c$) into the identity tangle.

\begin{lemma}[{\cite[Section 2.7]{Kho-kh-tangles}}]
  The bimodule structure on $\KTfunc(T)$ is independent of the
  choices in its construction and defines an associative, unital action.
\end{lemma}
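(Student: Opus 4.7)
The plan is to extend the TQFT argument sketched for Lemma~\ref{lem:Kh-alg-defined} by exploiting the locality of the Frobenius operations in disjoint regions of the tangle. Everything to be checked reduces to the statement that the $(1+1)$-dimensional TQFT determined by $V$ is well-defined on isotopy classes of cobordisms rel.\ boundary, together with locality under disjoint union.

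First, I would argue that the left action $\KTalg{m}\otimes\KTfunc(T)\to\KTfunc(T)$ is independent of the choices in its definition. Fix $a\in\Crossingless{m}$, $b,c\in\Crossingless{m}$, and $d\in\Crossingless{n}$. The map $V(a\Wmirror{b})\otimes V(bT\Wmirror{d})\to V(aT\Wmirror{d})$ is induced by a composition of elementary surgeries on $a\Wmirror{b}bT\Wmirror{d}$ along arcs joining paired critical points of $\Wmirror{b}b$. The resulting abstract cobordism $\Sigma$ from $a\Wmirror{b}\sqcup bT\Wmirror{d}$ to $aT\Wmirror{d}$ depends, up to homeomorphism rel.\ boundary, only on the underlying flat tangles and not on the ordering of the surgeries or the choice of namesake representative for $b$; this is exactly the observation used in the sketch of Lemma~\ref{lem:Kh-alg-defined}. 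Since $V$ is a $(1+1)$-dimensional TQFT, the induced map $V(a\Wmirror{b})\otimes V(bT\Wmirror{d})\to V(aT\Wmirror{d})$ depends only on this homeomorphism class. The argument for the right action is identical.

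Second, associativity of the left action. Given $x\in V(a\Wmirror{b})$, $y\in V(b\Wmirror{c})$, and $z\in V(cT\Wmirror{d})$, both $(xy)z$ and $x(yz)$ are computed by TQFT maps associated to cobordisms obtained by performing sequences of merges on $a\Wmirror{b}b\Wmirror{c}cT\Wmirror{d}$. In either parenthesization one arrives at a cobordism to $aT\Wmirror{d}$ whose underlying topology is determined by which pairs of arcs are glued together, and in both cases this pairing matches the $\Wmirror{b}$--$b$ and $\Wmirror{c}$--$c$ critical points. The two cobordisms are therefore homeomorphic rel.\ boundary, so induce the same map. The same argument handles associativity of the right action. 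For the bimodule compatibility $(xz)w=x(zw)$ with $x\in\KTalg{m}$, $z\in\KTfunc(T)$, $w\in\KTalg{n}$, one observes that the saddles implementing the left action lie in the $a$--$\Wmirror{b}$--$b$ region to the left of $T$, while those implementing the right action lie in the $\Wmirror{d}$--$d$ region to its right. These regions are disjoint, so the two sequences of surgeries can be performed in either order, producing homeomorphic cobordisms and hence the same TQFT map.

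Third, unitality. The unit of $\KTalg{n}$ is $\sum_{b\in\Crossingless{n}}\bm{1}_{b\Wmirror{b}}$ where $\bm{1}_{b\Wmirror{b}}\in V(b\Wmirror{b})$ assigns $1\in V$ to each component of $b\Wmirror{b}$. Acting by the summand $\bm{1}_{b\Wmirror{b}}$ on the right of $V(aT\Wmirror{b})$ corresponds to a cobordism from $aT\Wmirror{b}\sqcup b\Wmirror{b}$ to $aT\Wmirror{b}$ built out of the canonical cup-and-saddle cobordism that pairs $\Wmirror{b}$ with $b$. Up to homeomorphism rel.\ boundary this cobordism is a disjoint union of identity cylinders on the components of $aT\Wmirror{b}$ and $2$-spheres (one per circle of $b\Wmirror{b}$); using the counit/unit relations $\epsilon(1)=0$, $\epsilon(X)=1$ together with $1\mapsto 1$, the induced TQFT map is the identity on $V(aT\Wmirror{b})$. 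The analogous argument works for the left unit.

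The main obstacle, if any, is keeping the bookkeeping of critical values straight so that the cobordisms produced by the algorithmic definition are actually the homeomorphism classes claimed; but once one passes to the TQFT picture this is routine, as all the remaining content has already been packaged into the proof of Lemma~\ref{lem:Kh-alg-defined}.
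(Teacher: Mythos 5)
Your overall strategy matches the paper's proof, which is a one-line reduction to the proof of Lemma~\ref{lem:Kh-alg-defined}: all the maps are induced by $(1+1)$-dimensional cobordisms, so associativity, unitality, and independence of choices come for free from the fact that the underlying cobordisms, up to homeomorphism rel boundary, already satisfy these relations. Your elaboration of the well-definedness, associativity, and bimodule-compatibility steps is accurate, and the observation that left- and right-action saddles are supported in disjoint regions is exactly the far-commutation argument the paper alludes to.

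However, the unitality paragraph contains a genuine error. You assert that the cobordism from $aT\Wmirror{b}\sqcup b\Wmirror{b}$ to $aT\Wmirror{b}$ induced by acting with $\bm{1}_{b\Wmirror{b}}$ is, up to homeomorphism rel boundary, a disjoint union of identity cylinders and $2$-spheres, and then invoke the counit relations $\epsilon(1)=0,\ \epsilon(X)=1$. This is not the correct topology: that cobordism has no closed components at all (each circle of $b\Wmirror{b}$ is a boundary circle, and is absorbed by a saddle into a component of $aT\Wmirror{b}$), so it is a pure merge cobordism. If one instead considers the composite of the cup cobordism creating the $1$-labeled $b\Wmirror{b}$ with that merge, the composite is a cylinder on $aT\Wmirror{b}$, still with no sphere components. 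Moreover, if there really were a sphere component, the TQFT would assign it the value $\epsilon(\eta(1))=\epsilon(1)=0$, and the whole action of the unit would vanish, contradicting the conclusion you want. The correct reason for unitality is simply the Frobenius unit axiom: the merge map $V\otimes V\to V$ sends $1\otimes v\mapsto v$, so labeling each circle of $b\Wmirror{b}$ by $1$ makes the merge act as the identity. The counit relations play no role here.
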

\begin{proof}[Sketch of proof]
  Like Lemma~\ref{lem:Kh-alg-defined}, this follows from the fact that
  these operations are induced by cobordisms which, up to
  homeomorphism, themselves satisfy the associativity and unitality
  axioms.
\end{proof}

\begin{figure}
  \centering
  \begin{overpic}[scale=.7, tics=10]{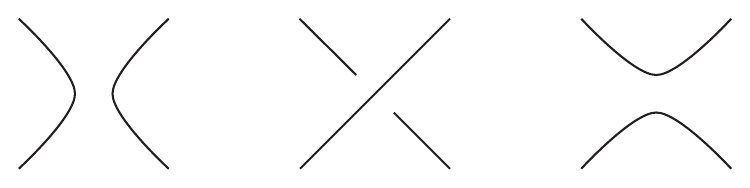}
    \put(11.5,0) {$0$}
    \put(86,0) {$1$}
  \end{overpic}
  \caption{\textbf{Resolutions of a crossing.}}
  \label{fig:resolutions}
\end{figure}

Now let $\Tangles{m}{n}$ denote the collection of all
$(2m,2n)$-tangles in $\RR\times[0,1]\times(0,1)$, with each component
oriented. Call such a tangle \emph{generic} if its
projection to $[0,1]\times(0,1)$ has no cusps, tangencies, or
triple points. A \emph{tangle diagram} is a generic tangle along with
a total ordering of its crossings (double points of the projection to
$[0,1]\times(0,1)$). Let $\TangleDiags{m}{n}$ be the set of
all $(2m,2n)$-tangle diagrams. (Forgetting the ordering of the
crossings, followed by an inclusion, gives a map
$\TangleDiags{m}{n}\to\Tangles{m}{n}$.)

Given a $(2m,2n)$-tangle diagram $T\in\TangleDiags{m}{n}$ with $N$
(totally ordered) crossings, and any crossingless matchings
$a\in\Crossingless{m}$ and $b\in\Crossingless{n}$, there is a
corresponding link $aT\Wmirror{b}$, which has an associated Khovanov
complex $\KhCx(aT\Wmirror{b})$. Additively, $\KhCx(aT\Wmirror{b})$ is
a direct sum over the complete resolutions $T_v$, $v\in\{0,1\}^N$,
of $V(aT_v\Wmirror{b})$. (Our conventions for resolutions are
shown in Figure~\ref{fig:resolutions}.) Thus,
\[
\KTfunc(T)\coloneqq \bigoplus_{\substack{(a,b)\in \Crossingless{m}\times\Crossingless{n}\\v\in\{0,1\}^N}}V(aT_v\Wmirror{b})
\]
inherits the structure of a chain complex, as a direct sum over the
$a$ and $b$ of $\KhCx(aT\Wmirror{b})$, and of a bimodule over $\KTalg{m}$ and
$\KTalg{n}$, as a direct sum over $v$ of $\KTfunc(T_v)$.

\begin{lemma}[{\cite[Section 3.4]{Kho-kh-tangles}}]
  The differential and bimodule structures on $\KTfunc(T)$ commute,
  making $\KTfunc(T)$ into a chain complex of bimodules.
\end{lemma}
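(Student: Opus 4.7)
The plan is to exhibit both the differential and the left/right actions on $\KTfunc(T)$ as maps induced by applying the $(1{+}1)$-dimensional TQFT associated with $V$ to certain embedded cobordisms, and then to observe that the cobordisms for the differential and the cobordisms for the action have disjoint support, hence commute up to isotopy rel boundary. Functoriality of the TQFT then yields the desired commutation of differential and action.

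First I would describe, on each summand $V(aT_v\Wmirror{b})$, the two structures in cobordism terms. For $v<v'$ in $\{0,1\}^N$ differing only in coordinate $i$, the corresponding component of the Khovanov differential is (up to the usual cube sign) the TQFT map associated to the saddle cobordism $aT_v\Wmirror{b}\to aT_{v'}\Wmirror{b}$ that resolves the $i\th$ crossing of $T$; this saddle is supported in a small disk around the $i\th$ crossing. By construction (following $\Lemma{Kh-alg-defined}$ and its bimodule analogue), the left action of an element of $V(a'\Wmirror{a})\subset \KTalg{m}$ on $V(aT_v\Wmirror{b})$ is the composite of TQFT maps for a sequence of saddles that merge the $\Wmirror{a}a$ strand pattern into the identity flat tangle; these saddles are supported inside the $\Wmirror{a}a$ subinterval of $[0,1]\times(0,1)$, which by our normalization lies to the left of, and disjoint from, the portion where $T_v$ sits. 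The right $\KTalg{n}$-action is described symmetrically, with all saddles supported to the right of $T_v$.

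Next I would note that for any edge $v < v'$ of the cube and any choice of action element, the saddle for $d$ and the saddles for the action occur in disjoint regions of $[0,1]\times(0,1)$. Two cobordisms built from saddles performed in disjoint regions are homeomorphic rel boundary independent of the order in which the saddles are listed, so the two composites (``act then differentiate'' versus ``differentiate then act'') are given by homeomorphic cobordisms. Since $V$ is a $(1{+}1)$-dimensional TQFT, it assigns identical maps to these cobordisms. The cube signs attached to the differential are scalars external to the surface, so they pass freely through the action and do not obstruct commutation. Summing over edges gives $d\circ(h\otimes -)=(h\otimes -)\circ d$ for every $h\in \KTalg{m}$, and similarly on the right.

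The only step that requires a little care is the claim that the choices entering the action (an ordering of the saddle points in $\Wmirror{a}a$, representatives of the crossingless matchings) do not interfere with the edge maps of the cube. This is exactly the content already harnessed in the proofs of $\Lemma{Kh-alg-defined}$ and its bimodule version: different choices produce homeomorphic cobordisms, so the TQFT assigns the same linear map. Given these, the disjoint-support argument is the only genuine input, and no additional obstruction arises.
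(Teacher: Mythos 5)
Your argument is correct and matches the paper's own sketch: both identify the differential and the module actions as TQFT maps of cobordisms supported in disjoint regions (the ``far-commutation'' the paper alludes to), and conclude by functoriality of the TQFT. The extra remarks on cube signs and independence of choices are reasonable elaborations but do not change the approach.
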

\begin{proof}[Sketch of proof]
  Again, this follows from the fact that both the differential and
  multiplication are induced by Khovanov's TQFT, and the cobordisms
  inducing the differential and the multiplication commute up to
  homeomorphism. Indeed, this is a kind of far-commutation: the
  non-identity portions of the cobordisms inducing multiplication and
  differentials are supported over different regions of the diagram.
\end{proof}

These chain complexes of bimodules have the following TQFT property:
\begin{proposition}[{\cite[Proposition 13]{Kho-kh-tangles}}]\label{prop:Kh-pairing}
  If $T_1\in\TangleDiags{m}{n}$ is a $(2m,2n)$-tangle diagram and
  $T_2\in\TangleDiags{n}{p}$ is a $(2n,2p)$-tangle diagram, then the
  complexes of $(\KTalg{m},\KTalg{p})$-bimodules $\KTfunc(T_1T_2)$ and
  $\KTfunc(T_1)\otimes_{\KTalg{n}}\KTfunc(T_2)$ are isomorphic.
\end{proposition}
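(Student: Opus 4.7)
The plan is to first establish the isomorphism for flat tangles (i.e.\ the case of zero crossings), and then deduce the general statement by summing over the cube of complete resolutions.

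For the flat case, fix $T_1' \in \FlatTangles{m}{n}$ and $T_2' \in \FlatTangles{n}{p}$. I would define an explicit isomorphism of $(\KTalg{m},\KTalg{p})$-bimodules
\[
    \Phi\co \KTfunc(T_1') \otimes_{\KTalg{n}} \KTfunc(T_2') \isomorphicto \KTfunc(T_1' T_2')
\]
by first using the orthogonal idempotents $\Id_c \in \KTalg{n}$ (one for each $c \in \Crossingless{n}$) to reduce the left side to $\bigoplus_{a,c,b} V(aT_1'\Wmirror{c}) \otimes V(cT_2'\Wmirror{b})$. On each such summand, $\Phi$ is the sequence of merge maps induced by the critical points of $\Wmirror{c}c$, exactly as in the definition of multiplication in $\KTalg{n}$; the image lies in $V(aT_1'T_2'\Wmirror{b})$. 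Independence of choices and the verification that $\Phi$ is well-defined on the balanced tensor product (i.e.\ that $(xh)\otimes y \mapsto x\otimes(hy)$ for $h \in \KTalg{n}$) follow from the same principle as Lemma~\ref{lem:Kh-alg-defined}: two cobordisms that are homeomorphic rel boundary induce equal maps under the $(1+1)$-dimensional TQFT. Likewise, $\Phi$ commutes with the left $\KTalg{m}$- and right $\KTalg{p}$-actions because those are implemented by cobordisms supported near the left boundary (for $\KTalg{m}$) and the right boundary (for $\KTalg{p}$), which are disjoint from, hence commute up to homeomorphism with, the merge cobordisms implementing $\Phi$ in the middle region.

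For the general case, let $N_1,N_2$ be the numbers of crossings in $T_1,T_2$, so $T_1T_2$ has $N_1+N_2$ crossings (with the natural induced ordering). Then both sides decompose additively as a direct sum indexed by $(v_1,v_2)\in\{0,1\}^{N_1}\times\{0,1\}^{N_2}$, and on each summand the flat-case isomorphism gives
\[
    \KTfunc((T_1)_{v_1}) \otimes_{\KTalg{n}} \KTfunc((T_2)_{v_2}) \isomorphicto \KTfunc((T_1)_{v_1}(T_2)_{v_2}),
\]
yielding an additive identification. The main obstacle is showing this identification intertwines the differentials. The differential on the left is the tensor-product differential built from edge maps of the cubes for $T_1$ and $T_2$ separately, while the differential on the right is a single edge map for each crossing of $T_1T_2$. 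Each such edge map is a saddle cobordism supported in a small disk around a single crossing; since crossings of $T_1$ lie in $[0,\tfrac12]\times(0,1)$ and crossings of $T_2$ in $[\tfrac12,1]\times(0,1)$, and the merge cobordisms defining $\Phi$ are supported in a collar around the dividing line, the saddle cobordisms commute (up to homeomorphism rel boundary) with the merge cobordisms and with each other. Invoking the TQFT property of $V$ once more, the differentials agree under $\Phi$, so $\Phi$ assembles into an isomorphism of chain complexes of $(\KTalg{m},\KTalg{p})$-bimodules.

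The hard part is purely bookkeeping rather than any nontrivial algebra: one must exhibit, for each pair of sequences of saddle/merge cobordisms appearing on the two sides, an explicit homeomorphism of surfaces rel boundary. All such homeomorphisms exist by the usual ``far-commutation'' argument for disjointly supported elementary cobordisms, and the proof is then concluded by the fact that $V$ defines a TQFT.
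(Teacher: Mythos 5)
Your proposal is correct and follows essentially the same route as the paper's sketch: decompose both sides over the cube $\{0,1\}^{N_1}\times\{0,1\}^{N_2}\cong\{0,1\}^{N_1+N_2}$, use the flat-tangle gluing map (induced by the merge/multi-saddle cobordism along the middle crossingless matching) on each summand, and invoke far-commutation of disjointly supported elementary cobordisms to check compatibility with the differentials and the bimodule actions. The only difference is that where the paper defers the flat case to Khovanov's Theorem~1, you spell out the construction of $\Phi$, its well-definedness on the balanced tensor product, and its compatibility with the outer actions—all correctly, by the same TQFT argument that appears in Lemma~\ref{lem:Kh-alg-defined}.
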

\begin{proof}[Sketch of proof]
  Suppose $T_1$ has $N_1$ crossings and $T_2$ has $N_2$ crossings. Then the isomorphism 
  \[
    \KTfunc(T_1)\otimes_{\KTalg{n}}\KTfunc(T_2)\stackrel{\cong}{\longrightarrow}
    \KTfunc(T_1T_2)
  \]
  identifies the summand of
  $\KTfunc(T_1)\otimes_{\KTalg{n}}\KTfunc(T_2)$ over the vertices
  $v\in \{0,1\}^{N_1}$ and $w\in\{0,1\}^{N_2}$ with the summand of
  $\KTfunc(T_1T_2)$ over $(v,w)\in\{0,1\}^{N_1+N_2}$. For these flat
  tangles $T_{1,v}$, $T_{2,w}$, and $(T_1T_2)_{(v,w)}$, the gluing map
  \[
    \KTfunc(T_{1,v})\otimes_{\KTalg{n}}\KTfunc(T_{2,w})\to \KTfunc((T_1T_2)_{(v,w)})
  \]
  is induced by the multi-saddle cobordism (cf.~Section~\ref{sec:cabinet}) map
  \[
    \KTfunc(aT_{1,v}\Wmirror{b})\otimes_{\ZZ}\KTfunc(bT_{2,w}\Wmirror{c})\to \KTfunc(a(T_1T_2)_{(v,w)}\Wmirror{c})
  \]
  \cite[Theorem 1]{Kho-kh-tangles}. 
\end{proof}

\begin{proposition}[{\cite[Theorem 2]{Kho-kh-tangles}}]\label{prop:Kh-invariance}
  For any tangle diagram $T\in\TangleDiags{m}{n}$, the chain homotopy
  type of the chain complex of bimodules $\KTfunc(T)$ is an invariant
  of the isotopy class of $T$ viewed as a tangle in $\Tangles{m}{n}$.
\end{proposition}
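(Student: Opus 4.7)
The plan is to reduce invariance to checking the three Reidemeister moves, together with invariance under reordering of crossings, exactly as Khovanov does for links, while additionally verifying that the chain homotopy equivalences respect the bimodule actions of $\KTalg{m}$ and $\KTalg{n}$. Two tangle diagrams in $\TangleDiags{m}{n}$ representing isotopic tangles in $\Tangles{m}{n}$ are related by a finite sequence of planar isotopies, changes of crossing order, and Reidemeister moves R1, R2, R3 performed in the interior of $[0,1]\times(0,1)$ (away from the boundary); so invariance on each of these moves is enough.

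First I would handle planar isotopy and reordering of the crossings. Planar isotopy (through generic diagrams) has no effect because the flat resolutions $T_v$ depend only on the projection up to isotopy, and the cobordisms building the differential depend only on the abstract homeomorphism type of the saddle moves. Reordering the crossings produces an isomorphic complex via the standard signed permutation isomorphism on $\KhCx$, exactly as for link diagrams; this isomorphism is visibly the identity on every summand $V(aT_v\Wmirror{b})$ and so commutes with the multi-saddle maps giving the $\KTalg{m}$- and $\KTalg{n}$-actions.

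Next I would check R1, R2, R3. For each move, the local pictures on either side give local chain complexes (of flat tangles with specified boundary) and Khovanov's proof for links produces explicit chain homotopy equivalences between these local complexes built from the cap, cup, and saddle maps together with the unit/counit/multiplication/comultiplication of $V$. Because the move is supported in a small open disk $D\subset [0,1]\times(0,1)$ disjoint from $\{0,1\}\times(0,1)$, we can pre- and post-compose with any flat tangles $a\in\Crossingless{m}$ and $\Wmirror{b}$ with $b\in\Crossingless{n}$ without interacting with $D$. This yields a chain homotopy equivalence of $\KhCx(aT\Wmirror{b})$'s for each pair $(a,b)$ which is natural in $(a,b)$ with respect to the merge/split cobordisms defining the bimodule structure. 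Summing over $(a,b)$ gives a chain homotopy equivalence $\KTfunc(T)\simeq\KTfunc(T')$ of the total complexes.

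The main obstacle, and the step I would write out carefully, is verifying that these Reidemeister homotopy equivalences are maps of $(\KTalg{m},\KTalg{n})$-bimodules up to chain homotopy. This is a ``far-commutation'' argument in the spirit of the proof sketch of the preceding lemma: the Reidemeister cobordism $\Sigma_R\subset D\times[0,1]$ is supported in a small disk in the interior of the tangle, while the bimodule-action cobordisms $\Sigma_{\mathit{act}}$ are supported in a neighborhood of $\{0,1\}\times(0,1)\times[0,1]$. So the composite cobordisms $\Sigma_R\cup\Sigma_{\mathit{act}}$ and $\Sigma_{\mathit{act}}\cup\Sigma_R$ are homeomorphic rel boundary, and hence induce equal maps under the Frobenius TQFT on $V$. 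The same argument shows that the chain homotopies implementing the Reidemeister equivalences (also given by local cobordism pieces in $D$) commute with the action cobordisms on the nose up to homeomorphism. Combined with the previously established well-definedness of the bimodule structure, this shows the Reidemeister equivalences descend to chain homotopy equivalences of complexes of $(\KTalg{m},\KTalg{n})$-bimodules, completing the proof.
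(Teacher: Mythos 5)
The paper itself offers no proof of this proposition---it is cited directly as Khovanov's Theorem 2---so there is no ``paper's own proof'' to compare against, and your sketch correctly reconstructs Khovanov's argument: reduce to Reidemeister moves, reuse the local chain homotopy equivalences from the link case, and verify bimodule-equivariance by a far-commutation argument (the Reidemeister cobordisms are supported in a small interior disk $D$, while the action cobordisms are supported near the tangle endpoints). The same structural idea reappears in the paper's own invariance proof for the refined invariant (Theorem~\ref{thm:comb-inv}), where the bimodule compatibility is phrased via \emph{insular subfunctors}: there, instead of arguing that the Reidemeister maps and homotopies commute with the action, one exhibits acyclic sub/quotient functors defined by conditions on circle labelings inside $D$ or on the cube coordinate of the new crossing, and observes that these conditions are preserved by the algebra action because the action cobordisms never touch $D$. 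This is essentially the same disjoint-support observation, packaged more rigidly so that it lifts to the Burnside and spectral settings.

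One small caution for your far-commutation step: the Reidemeister chain homotopy equivalences are not all literally of the form $V(\Sigma_R)$ for a single cobordism $\Sigma_R$. Some components are inclusions or projections onto summands singled out by the label of a small circle in $D$ (e.g., ``the circle born at the R1 kink is labeled $X$''), which are not cobordism maps. These still commute with the bimodule action, but the reason is slightly different from ``homeomorphic composite cobordisms'': it is that the action cobordisms, being disjoint from $D$, preserve the label of any circle contained in $D$, and hence preserve the summands these projections are defined through. This is exactly what the insularity condition \eqref{eq:introvert} in Definition~\ref{def:insular-subfunctor} encodes, and it is worth making explicit when writing this out carefully.
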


For comparison with our constructions later, note that each of the
$1$-manifolds $a\Wmirror{b}$ in the construction of $\KTalg{n}$ lies
in $(0,1)^2\subset[0,1]\times(0,1)$; and so does each of the
$1$-manifolds $aT\Wmirror{b}$ in the construction of $\KTfunc(T)$ for
a flat tangle $T$. There is a disjoint union operation on embedded
$1$-manifolds in $(0,1)^2$ induced by the map
\[
  (0,1)^2\amalg (0,1)^2\to(0,1)^2
\]
which identifies the first copy of $(0,1)^2$ with $(0,1/2)\times(0,1)$
and the second copy of $(0,1)^2$ with $(1/2,1)\times(0,1)$, by affine
transformations.  Since we have quotiented by the action of
$\Diff^1$ on the first $(0,1)$-factor, this disjoint union
operation is strictly associative.  Further, we can view the maps
inducing the multiplication on $\KTalg{n}$, the actions on
$\KTfunc(T)$, and the differential on $\KTfunc(T)$ when $T$ is
non-flat as induced by cobordisms embedded in
$[0,1]\times(0,1)^2$. For instance, the multiplication
$V(a\Wmirror{b})\otimes V(b\Wmirror{c})\to V(a\Wmirror{c})$ is induced
by a cobordism in $[0,1]\times(0,1)^2$ from
$\{0\}\times (a\Wmirror{b}\amalg b\Wmirror{c})$ to
$\{1\}\times(a\Wmirror{c})$. For this section, only the abstract (not
embedded) cobordisms are relevant; but for the stable homotopy refinement we
will need the embedded cobordisms.

\subsubsection{Gradings}
Khovanov homology has both a quantum (internal) and homological
grading. 

We start with the quantum grading. We grade $V$ so that $\intgr(1)=-1$
and $\intgr(X)=1$. Then the grading of $\KTalg{n}$ is obtained by
shifting the grading on each $V(a\Wmirror{b})$ up by $n$. In
particular, the elements of lowest degree in $\KTalg{n}$ are the
idempotents in $V(a\Wmirror{a})$, in which each of the $n$ circles is
labeled by $1$, and these generators lie in quantum grading $0$. All
homogeneous, non-idempotent elements lie in positive quantum
grading. Similarly, for the invariants of flat tangles, if
$T\in\FlatTangles{m}{n}$ then the quantum grading on
$V(aT\Wmirror{b})$ is shifted up by $n$. Given a tangle diagram $T$
with $N$ crossings and a vertex $v\in\{0,1\}^N$, we shift the grading
of $\KTfunc(T_v)$, the part of $\KTfunc(T)$ lying over the vertex $v$,
down by an additional $|v|$. (Here, $|v|$ denotes the number of $1$'s
in $v$.) The grading on the whole cube is then shifted down by
$N_+-2N_-$, where $N_+$, respectively $N_-$, is the number of
positive, respectively negative, crossings in $T$; this is where the
orientation of $T$ is used. In other words, for $T$ a $(2m,2n)$-tangle
diagram, the quantum grading on $V(aT_v\Wmirror{b})\subset \KTfunc(T)$
is shifted up by $n-|v|-N_++2N_-$.

For the homological gradings, all of $\KTalg{n}$ lies in grading
$0$. The homological grading on $\KTfunc(T_v)\subset \KTfunc(T)$ is
given by $N_--|v|$.  The differential on $\KTfunc(T)$ preserves the
quantum grading and decreases the homological grading by $1$. The
isomorphism of Proposition~\ref{prop:Kh-pairing} and the chain
homotopy equivalences of Proposition~\ref{prop:Kh-invariance} respect
both gradings.

\begin{remark}\label{rem:grading-convention}
  Khovanov's first paper on $\mathfrak{sl}_2$ knot
  homology~\cite{Kho-kh-categorification} and his paper on its
  extension to tangles~\cite{Kho-kh-tangles} use different conventions
  for the quantum grading: in the first paper, $\intgr(X)=\intgr(1)-2$
  while in the second $\intgr(X)=\intgr(1)+2$.
  Our first papers on Khovanov homotopy
  type~\cite{RS-khovanov,LLS-khovanov-product} follow
  Khovanov's original convention
  from~\cite{Kho-kh-categorification}. In this
  paper we switch to Khovanov's newer quantum grading convention
  of~\cite{Kho-kh-tangles}.

  Khovanov's homological grading conventions are the same in all of
  his papers, but our homological gradings also differ from his by a
  sign. This is because we treat the Khovanov complex as a chain
  complex, not a cochain complex; see our conventions from
  Section~\ref{sec:grading-convention}.
\end{remark}

\subsection{The Khovanov-Burnside 2-functor}\label{sec:CobE-to-Burn}
\begin{definition}\label{def:burnsidecat}
  Informally, the \emph{Burnside category} $\BurnsideCat$ is the
  bicategory with objects finite sets $X$, $\Hom(X,Y)$ the class of
  finite correspondences $A\co X\to Y$, i.e., diagrams of sets
  \[
    \xymatrix{
      & A\ar[dl]_s\ar[dr]^t & \\
      X & & Y,
    }
  \]
  and $\twoHom(A,B)$ the set of isomorphisms of correspondences from $A$
  to $B$, i.e., commutative diagrams
  \[
    \xymatrix{
      & A\ar[dl]\ar[drr] \ar[r]^-\cong& B\ar[dll]\ar[dr] & \\
      X & & & Y.
    }
  \]
  Composition of correspondences is fiber product: given $A\co X\to Y$
  and $B\co Y\to Z$, $B\circ A=A\times_YB$.  Note that one can think
  of a correspondence $A\co X\to Y$ as a $(Y\times X)$-matrix of
  sets, i.e., for each $(y,x)\in Y\times X$ a set
  $A_{y,x}=s^{-1}(x)\cap t^{-1}(y)$. Composition of correspondences
  then corresponds to multiplication of matrices, using the Cartesian
  product and disjoint union to multiply and add sets.

  Note that, with this definition, composition is not strictly
  associative since $(A\times_Y B)\times_Z C$ is in canonical
  bijection with, but not equal to, $A\times_Y(B\times_Z
  C)$. Composition also
  lacks strict identities since $A\times_X X$ is in canonical
  bijection with, but not equal to, $A$. There are many ways to
  rectify this; here is one.

  Instead of correspondences, let $\Hom(X,Y)$ denote the set of pairs
  of an integer $n$ and a $(Y\times X)$-matrix
  $(A_{y,x})_{x\in X,\ y\in Y}$ of finite subsets $A_{y,x}$ of
  $\RR^n$, with the following property:
  \begin{itemize}
  \item[(D)] $A_{y,x}\cap A_{y',x}=\emptyset$ if $y\neq y'$ and
    $A_{y,x}\cap A_{y,x'}=\emptyset$ if $x\neq x'$.
  \end{itemize}
  (A $(Y\times X)$-matrix of subsets of $\RR^n$ is a function
  $Y\times X\to 2^{\RR^n}$.) Given subsets $A\subset\RR^n$ and
  $B\subset\RR^m$, $A\times B$ is a subset of $\RR^{n+m}$. Composition
  is defined by
  \[
    (A_{z,y})_{y\in Y,\ z\in Z}\circ 
    (A_{y,x})_{x\in X,\ y\in Y}=\left(\bigcup_{y\in Y}A_{z,y}\times A_{y,x}\right)_{x\in X,\ z\in Z}.
  \]
  The condition that $A_{y,x}\cap A_{y',x}=\emptyset$ whenever
  $y\neq y'$ implies that the sets in the union are disjoint. Given
  $x\neq x'$, $(A_{z,y}\times A_{y,x})\cap (A_{z,y'}\times A_{y',x'})$
  is empty unless $y=y'$ (by looking at the first factor), and thus
  is empty unless $x=x'$ (by looking at the second factor). Similarly,
  $(A_{z,y}\times A_{y,x})\cap (A_{z',y'}\times A_{y',x})=\emptyset$
  if $z\neq z'$. Thus, the composition has Property~(D). Composition
  is clearly strictly associative. The (strict) identity element of
  $X$ is the $(X\times X$)-diagonal matrix with diagonal entries the
  $1$-element subset of $\RR^0$. A $2$-morphism of correspondences
  $\phi\co (A_{y,x})_{x\in X,\ y\in Y}\to (B_{y,x})_{x\in X,\ y\in Y}$
  is a collection of bijections
  $(\phi_{y,x}\co A_{y,x}\stackrel{\cong}{\longrightarrow}
  B_{y,x})_{x\in X,\ y\in Y}$; note that $2$-morphisms ignore the
  embedding information.

  Throughout, when we talk about the Burnside category we mean this
  latter, strict version of the category. Typically, however, the
  embedding data can be chosen arbitrarily, and in those cases we will
  not specify it.
\end{definition}

The free abelian group construction gives a functor
$\BurnsideCat\to \AbelianGroups$, by
\begin{align*}
  \Ob(\BurnsideCat)\ni X&\mapsto \bigoplus_{x\in X}\ZZ\\
  (A_{y,x})_{x\in X,\ y\in Y}&\mapsto (|A_{y,x}|)_{x\in X,\ y\in Y}
\end{align*}
where $|A_{y,x}|$ denotes the number of elements of $A_{y,x}$; the
right-hand side is a $(Y\times X)$-matrix of non-negative integers,
specifying a homomorphism
$\ZZ\langle X\rangle\to\ZZ\langle
Y\rangle$.\setcounter{OutlineCompletion}{26}

\begin{definition}
  The \emph{embedded cobordism category} of $1$-manifolds in $(0,1)^2$,
  $\CobE=\CobE^{1+1}((0,1)^2)$, has:
  \begin{itemize}
  \item Objects equivalence classes of smooth, closed, one-dimensional
    submanifolds $Z\subset (0,1)^2$ (i.e., finite collections of
    disjoint, embedded circles in the open square). Here, we view $Z$
    and $Z'$ as equivalent if there is a diffeomorphism
    $\phi\in\Diff^1$ so that $(\phi\times\Id_{(0,1)})(Z)=Z'$.
  \item Morphisms $\Hom(Z,W)$ equivalence classes of proper
    cobordisms embedded in $[0,1]\times(0,1)^2$ from $\{0\}\times Z$
    to $\{1\}\times W$, which intersect $[0,\epsilon]\times(0,1)^2$
    and $[1-\epsilon,1]\times(0,1)^2$ as $[0,\epsilon]\times Z$ and
    $[1-\epsilon,1]\times W$, respectively, for some $\epsilon>0$
    (which may depend on the cobordism; compare
    Convention~\ref{convention:vertical}), and so that each component
    of the cobordism intersects $\{1\}\times(0,1)^2$. Here, we view
    two cobordisms $\Sigma$, $\Sigma'$ as equivalent if there is a
    diffeomorphism $\phi\in\Diff^2$ so that
    $(\phi\times\Id_{(0,1)})(\Sigma)=\Sigma'$.
  \item Two-morphisms the set of isotopy classes of isotopies of
    cobordisms.
  \end{itemize}
  Note the morphisms are well-defined, because if an embedded
  one-manifold $Z$, respectively $W$, is equivalent (related by
  $\Diff^1$) to $Z'$, respectively $W'$, and if $\Sigma$ is any
  embedded cobordism from $Z$ to $W$, then there is an embedded
  cobordism $\Sigma'$ from $Z'$ to $W'$ which is equivalent (related
  by $\Diff^2$) to $\Sigma$. Note that composition maps and identity
  maps are strict, because we quotiented by the action of
  diffeomorphisms of $[0,1]$ (the first factor in
  $[0,1]\times(0,1)^2$). There is also a disjoint union operation on
  objects and morphisms induced by $(0,1)\coprod(0,1)\to
  (0,1/2)\coprod(1/2,1)\hookrightarrow (0,1)$, where $(0,1)$ is the
  first factor in $(0,1)^2$. This operation is strictly associative
  because we quotiented by the action of diffeomorphisms on this
  factor. Finally note that we have explicitly disallowed closed
  surfaces in morphisms; see
  Remark~\ref{rem:no-closed-surface-in-cobe}.
\end{definition}

There is a forgetful map from the embedded cobordism
category $\CobE=\CobE^{1+1}((0,1)^2)$ to the abstract
$(1+1)$-dimensional cobordism category $\Cob^{1+1}$. So, any Frobenius
algebra induces a functor $\CobE\to\AbelianGroups$ by composing the
corresponding abstract $(1+1)$-dimensional TQFT with the forgetful
functor. (Here, we view the monoidal category $\AbelianGroups$ of
abelian groups as a monoidal bicategory with only identity
$2$-morphisms.) In particular, the Khovanov Frobenius algebra
$V=H^*(S^2)$ induces such a functor.\setcounter{OutlineCompletion}{28}

Hu-Kriz-Kriz~\cite{HKK-Kh-htpy} observed that the Khovanov functor
$V\co \Cob\to \AbelianGroups$ lifts to a lax $2$-functor
$V_{\HKKa}\co \CobE\to \BurnsideCat$:
\begin{equation}\label{eq:forget-HKK}
  \vcenter{\hbox{\xymatrix{
    \CobE\ar[d]\ar[r]^{V_{\HKKa}} & \BurnsideCat\ar[d]\\
    \Cob\ar[r]_{V} & \AbelianGroups.
  }}}
\end{equation}
In this section, we will describe this functor
$V_{\HKKa}\co \CobE\to \BurnsideCat$, following the treatment in our
earlier paper~\cite[Section 8.1]{LLS-khovanov-product}.

\begin{remark}\label{rem:hom-not-cohom}
  The functor $\CobE\to\BurnsideCat$
  from~\cite{HKK-Kh-htpy,LLS-khovanov-product} actually did not lift
  the Khovanov functor $V$, but rather its opposite.
  That ensured that the
  \emph{cohomology} of the space constructed
  in~\cite{RS-khovanov,HKK-Kh-htpy,LLS-khovanov-product} was
  isomorphic to the Khovanov homology.

  However, in this paper we wish to construct a stable homotopy refinement
  of Khovanov's arc algebras (among other things). If we stick to
  cohomology, we would either have to construct a co-ring spectrum
  whose cohomology is the Khovanov arc algebra, or define a Khovanov
  arc co-algebra first, and then construct a ring spectrum whose
  cohomology is the newly defined Khovanov arc co-algebra. Not fancying
  either route, in this paper instead construct stable homotopy
  refinements whose \emph{homologies} are Khovanov homology; that is, their
  cohomology is the Khovanov homology of the mirror knot
  (cf.~\cite[Proposition~32]{Kho-kh-categorification}).
  Therefore, below we define a functor
  ${V_{\HKKa}}\from\CobE\to\BurnsideCat$ that actually lifts the
  Khovanov functor $V\from\Cob\to\AbelianGroups$, and not its
  opposite; in particular, it is not the functor described
  in~\cite{HKK-Kh-htpy,LLS-khovanov-product}, but rather, its
  opposite.
\end{remark}

\begin{remark}\label{rem:no-closed-surface-in-cobe}
  In \cite{HKK-Kh-htpy,LLS-khovanov-product}, the functor to
  $\BurnsideCat$ was actually constructed from a larger category,
  where the additional restriction that each component of the
  cobordism intersects $\{1\}\times(0,1)^2$ was not imposed. However,
  in this paper we wish to make the embedded cobordism category
  strictly monoidal and strictly associative, and therefore we have
  quotiented out the objects and morphisms by $\Diff^1$ and $\Diff^2$,
  respectively. Unfortunately, $\Diff^2$ can interchange some closed
  components of a cobordism, and therefore, we work with the
  subcategory where each component of the cobordism must intersect
  $\{1\}\times(0,1)^2$, ruling out closed components.
\end{remark}

On objects, for $C\in\Ob(\CobE)$ a disjoint union of circles,
$V_{\HKKa}(C)$ is the set of labelings of the circles in $C$ by $1$ or
$X$, i.e., functions $\pi_0(C)\to \{1,X\}$. Note that $\Diff^1$ cannot
interchange the components of $C$, so $C$, despite being a
$\Diff^1$-equivalence class, still has a notion of components.

To define $V_{\HKKa}$ on morphisms, fix an embedded cobordism $\Sigma$
from $C_0$ to $C_1$. Fix also a checkerboard coloring (2-coloring) of
the complement of $\Sigma$; for definiteness, choose the coloring in
which the region at $\infty$ (the region whose closure in
$[0,1]\times(0,1)^2$ is non-compact) is colored white.

The value of $V_{\HKKa}(\Sigma)$ is the product over the components
$\Sigma'$ of $\Sigma$ of $V_{\HKKa}(\Sigma')$ (with respect to the
checkerboard coloring of the complement of $\Sigma'$ that is induced
from the checkerboard coloring of the complement of $\Sigma$ by
declaring that the two colorings agree in a neighborhood of
$\Sigma'$), and the source and target maps respect this
decomposition. (Once again, since $\Sigma$ has no closed components,
$\Diff^2$ cannot interchange components, and so the notion of
components descends to equivalence classes.)

So, to define $V_{\HKKa}(\Sigma)$ we may assume $\Sigma$ is connected,
but the checkerboard coloring is now arbitrary (that is, the region at
$\infty$ need not be colored white). Fix $x\in V_{\HKKa}(C_0)$ and
$y\in V_{\HKKa}(C_1)$. If $\Sigma$ has genus $>1$ then
$V_{\HKKa}(\Sigma)=\emptyset$. If $\Sigma$ has genus $0$ then we
declare that $s^{-1}(x)\cap t^{-1}(y)\subset V_{\HKKa}(\Sigma)$ has
$0$ or $1$ elements, and so $V_{\HKKa}(\Sigma)$ is determined by
Formula~\eqref{eq:forget-HKK}. If $\Sigma$ has genus $1$ then
$s^{-1}(x)\cap t^{-1}(y)\subset V_{\HKKa}(\Sigma)$ is empty unless $x$
labels each circle in $C_0$ by $1$ and $y$ labels each circle in $C_1$
by $X$.

In the remaining genus $1$ case, $V_{\HKKa}(\Sigma)$ has two elements,
which we describe as follows. Let $S^2$ denote the one-point
compactification of $(0,1)^2$. Let $B(([0,1]\times S^2)\setminus
\Sigma)$ denote the black region in the checkerboard coloring
(possibly extended to the new points at infinity).  Let
$B((\{0,1\}\times S^2)\setminus \Sigma)= (\{0,1\}\times S^2)\cap
B(([0,1]\times S^2)\setminus \Sigma)$.  Then $V_{\HKKa}(\Sigma)$ is
the set of generators of
\[
  H_1(B(([0,1]\times S^2)\setminus \Sigma))/H_1(B((\{0,1\}\times S^2)\setminus \Sigma))\cong\ZZ.
\]

To define $V_{\HKKa}$ on $2$-morphisms, note that the definitions above
are natural with respect to isotopies of the surface $\Sigma$.

The composition $2$-isomorphism is obvious except when composing two
genus $0$ components $\Sigma_0$, $\Sigma_1$ to obtain a genus $1$
component $\Sigma$. In this non-obvious case, it again suffices to
assume $\Sigma$ is connected. For any curve $C$ on $\Sigma$, let $C_b$
and $C_w$ be its push-offs into $B((\{1/2\}\times S^2)\setminus
\Sigma)$ (the black region) and $(\{1/2\}\times S^2\setminus
\Sigma)\setminus B((\{1/2\}\times S^2)\setminus \Sigma)$ (the white
region), respectively. Now consider the a unique component $C$ of
$(\bdy\Sigma_0)\cap(\bdy\Sigma_1)$ that is non-separating in $\Sigma$
and is labeled $1$, and orient it as the boundary of $B((\{1/2\}\times
S^2)\setminus \Sigma)$. One of the two push-offs $C_b$ and $C_w$ is a
generator of $H_1(([0,1]\times S^2)\setminus\Sigma)/H_1((\{0,1\}\times
S^2)\setminus\bdy\Sigma)\cong \ZZ^2$ and the other one is zero.  If
$C_b$ is the generator, label $\Sigma$ by $[C]$.  If $C_w$ is the
generator, let $D$ be a curve on $\Sigma$, oriented so that the
algebraic intersection number $D\cdot C=1$ (with $\Sigma$ being
oriented as the boundary of the black region); and label $\Sigma$ by
$[D_b]$.\setcounter{OutlineCompletion}{30}

\section{Combinatorial tangle invariants}

\subsection{A decoration with divides}\label{sec:CobD}
The embedded cobordism category $\CobE$ has $2$-morphisms which give
nontrivial endomorphisms of $V_{\HKKa}(\Sigma)$. For example, if
$\Sigma$ consists of the connected sum of a cylinder and a torus then
rotating the torus by $\pi$ around the connect sum point exchanges the
two elements of $V_{\HKKa}(\Sigma)$. To define the tangle invariants,
it is more convenient to be able to work with a multicategory where
each $2$-morphism space is empty or has a single element, so
$V_{\HKKa}$ takes each $2$-endomorphism to the identity map: this will
save use from having to check many compatibility conditions.

So, let $\CobD$ be the following $2$-category.
\begin{enumerate}
\item An object of $\CobD$ is an equivalence class of the following data:
  \begin{itemize}
  \item A smooth, closed $1$-manifold $Z$
    embedded in $(0,1)^2$.
  \item A compact $1$-dimensional submanifold-with-boundary $A\subset Z$ satisfying the following:  If
    $I$ denotes the closure of $Z\setminus A$, then each of $A$ and
    $I$ is a disjoint union of closed intervals.
    We call components of $A$ \emph{active arcs} and components of $I$
    \emph{inactive arcs}.
  \end{itemize}
  We call $(Z,A)$ a \emph{divided $1$-manifold}. Two divided
  $1$-manifolds $(Z,A)$ and $(Z',A)$ are equivalent if
  there is an orientation-preserving diffeomorphism
  $\phi\in\Diff^1$ so that
  $(\phi\times \Id_{(0,1)})(Z,A)=(Z',A')$.

  We may sometimes suppress $A$ from the notation.
  
  See Figure~\ref{fig:flat-active} for some examples of divided
  $1$-manifolds.  
\item A morphism from $(Z,A)$ to $(Z',A')$ is an equivalence class of
  pairs $(\Sigma,\Gamma)$ where
  \begin{itemize}
  \item $\Sigma$ is a smoothly embedded cobordism in $[0,1]\times
    (0,1)^2$ from $Z$ to $Z'$ (satisfying
    Convention~\ref{convention:vertical}).
  \item $\Gamma\subset \Sigma$ is a collection of properly embedded
    arcs in $\Sigma$ (also satisfying
    Convention~\ref{convention:vertical}), with $(\bdy A\cup \bdy A')=
    \bdy\Gamma$, and so that every component of $\Sigma\setminus
    \Gamma$ has one of the following forms:
    \begin{enumerate}[label=(\Roman*)]
    \item\label{item:type1} A rectangle, with two sides components of $\Gamma$
      and two sides components of $A\cup A'$. 
    \item\label{item:type2} A $(2n+2)$-gon, with $(n+1)$ sides components of $\Gamma$,
      one side a component of $I'$, and the other $n$ sides components of
      $I$. (The integer $n$ is allowed to be zero.) 
    \end{enumerate}
    We call the components of $\Gamma$ \emph{divides}.
  \end{itemize}
  The pairs $(\Sigma,\Gamma)$ and $(\Sigma',\Gamma')$ are equivalent
  if there is a diffeomorphism
  $\phi\in\Diff^2$ so that
  $(\phi\times\Id_{(0,1)})(\Sigma)=\Sigma'$ and
  $(\phi\times\Id_{(0,1)})(\Gamma)=\Gamma'$.

  We will call a morphism in $\CobD$ a \emph{divided
    cobordism}. Again, we will sometimes suppress $\Gamma$ from the
  notation.

  See Figure~\ref{fig:divided-cob} for some examples of divided cobordisms.
\item There is a unique $2$-morphism from $(\Sigma,\Gamma)$ to
  $(\Sigma',\Gamma')$ whenever (some representative of the equivalence
  class of) $(\Sigma,\Gamma)$ is isotopic to (some representative of the
  equivalence class of) $(\Sigma',\Gamma')$ rel boundary.
\item Composition of divided cobordisms is defined as follows. Given
  $(\Sigma,\Gamma)\co (Z,A)\to(Z',A')$ and $(\Sigma',\Gamma')\co
  (Z',A')\to(Z'',A'')$, choose a representative of the equivalence
  class of $(Z',A')$ and representatives of the equivalence classes
  $(\Sigma,\Gamma)$ and $(\Sigma',\Gamma')$ which end / start at this
  representative of $(Z',A')$.  Define
  $(\Sigma',\Gamma')\circ(\Sigma,\Gamma)$ to be
  $(\Sigma'\circ\Sigma,\Gamma'\circ\Gamma)$.
  
  It is not too hard to check that composition of divided cobordisms
  is indeed is a divided cobordism. To wit, Type~\ref{item:type2}
  regions compose to produce Type~\ref{item:type2} regions; in
  particular, since each divide has a Type~\ref{item:type2} region on
  one side, we do not get any closed components in the set of divides
  after composing. While composing Type~\ref{item:type1} rectangles,
  we glue them along their active boundaries to get new
  Type~\ref{item:type1} rectangles. We do not get any annuli by gluing
  together such rectangles since that would produce closed divides.

  It is also clear that the composition map extends uniquely to
  $2$-morphisms.
\end{enumerate}\setcounter{OutlineCompletion}{34}

Forgetting the divides does not immediately give a functor from the
$2$-category $\CobD$ to the $2$-category $\CobE$. While we do get
maps on the objects and the $1$-morphisms, there are no immediate
maps on the $2$-morphisms. However:

\begin{lemma}\label{lem:div-cob-loop}
  If $(\Sigma_t,\Gamma_t)$ is a loop of divided cobordisms (rel
  boundary) then the induced map $\Sigma_0\to
  \Sigma_1=\Sigma_0$ is isotopic to the identity map.
\end{lemma}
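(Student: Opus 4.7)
\emph{The plan} is to exploit the fact that the divides $\Gamma_0$ cut $\Sigma_0$ into disks and reduce the statement to iterated applications of Alexander's trick. First, realize the loop $(\Sigma_t,\Gamma_t)$, by isotopy extension, as an ambient isotopy of $[0,1]\times(0,1)^2$ that is supported away from $\bdy\Sigma_0$; its time-$1$ map restricts to a self-diffeomorphism $\phi\co\Sigma_0\to\Sigma_0$ with $\phi|_{\bdy\Sigma_0}=\Id$ and $\phi(\Gamma_0)=\Gamma_0$. Because the individual divides in $\Gamma_t$ are connected components that vary continuously in $t$, and because each divide has its two (distinct) endpoints pinned to $\bdy A\cup\bdy A'\subset\bdy\Sigma_0$ throughout the loop, $\phi$ in fact preserves each component of $\Gamma_0$ setwise and fixes each of its endpoints pointwise. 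In particular no divide is permuted with another, and $\phi$ restricts on each divide to an orientation-preserving self-diffeomorphism of an interval fixing both endpoints.

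I would then construct the isotopy from $\phi$ to $\Id_{\Sigma_0}$ in two steps. First, on each divide $\gamma\subset\Gamma_0$ the restriction $\phi|_\gamma$ is isotopic to $\Id_\gamma$ rel $\bdy\gamma$. Choose pairwise disjoint tubular neighborhoods of the (disjoint, properly embedded) divides and extend these one-dimensional isotopies by the identity outside; this isotopes $\phi$, rel $\bdy\Sigma_0$, to a diffeomorphism $\phi'$ that fixes $\bdy\Sigma_0\cup\Gamma_0$ pointwise. Second, each component of $\Sigma_0\setminus(\bdy\Sigma_0\cup\Gamma_0)$ is, by the definition of a divided cobordism, the interior of either a Type~\ref{item:type1} rectangle or a Type~\ref{item:type2} polygon, so its closure is a closed disk. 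The restriction of $\phi'$ to each such disk is a self-diffeomorphism fixing the boundary pointwise, and is therefore isotopic to the identity rel boundary by Alexander's trick. Assembling these disk-by-disk isotopies (which are automatically compatible because each is rel boundary of its disk) yields an isotopy from $\phi'$ to $\Id_{\Sigma_0}$; composing with the first isotopy gives the desired isotopy from $\phi$ to $\Id_{\Sigma_0}$.

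No step looks genuinely difficult; the only mildly subtle point is verifying that divides are preserved individually rather than permuted, which is forced by the continuity of $t\mapsto\Gamma_t$ together with the fact that the divide endpoints lie on the fixed set $\bdy A\cup\bdy A'$. Everything else is a standard application of Alexander's trick in dimensions one and two.
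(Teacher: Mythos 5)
Your proposal is correct and follows essentially the same strategy as the paper's proof: fix $\bdy\Sigma_0$ and the divides $\Gamma$ setwise, isotope to fix the divides pointwise (using that divides are arcs, not circles), and then use that each complementary region is a disk, whose mapping class group rel boundary is trivial. You supply a bit more explicit detail (isotopy extension, the observation that divides aren't permuted since their endpoints lie in the pointwise-fixed locus, the tubular-neighborhood step, and naming Alexander's trick), but the underlying argument is the one in the paper.
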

\begin{proof}
Since the loop is constant on the boundary, the induced map
$\Sigma_0\to \Sigma_0$ must take each connected component $C$ of
$\Sigma_0\setminus \Gamma$ to itself. The map fixes $\bdy\Sigma_0$
pointwise and the divides $\Gamma$ setwise; but since there are no
closed divides, it is isotopic to a map that fixes $\Gamma$
pointwise. However, since $C$ is a planar region (for both
Types~\ref{item:type1} and~\ref{item:type2}), the mapping class group
of $C$ fixing the boundary is trivial.
\end{proof}

\begin{proposition}\label{prop:CobD-to-Burnside}
  The lax $2$-functor $V_{\HKKa}\co\CobE\to \BurnsideCat$ induces a
  lax $2$-functor $\CobD\to \BurnsideCat$.
\end{proposition}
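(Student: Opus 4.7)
The plan is to factor the desired functor as the composition
\[
  \CobD \longrightarrow \CobE \xrightarrow{\ V_{\HKKa}\ } \BurnsideCat,
\]
where the first arrow is a strict $2$-functor obtained by forgetting the divides. On objects send $(Z,A)$ to $Z$, and on $1$-morphisms send $(\Sigma,\Gamma)$ to $\Sigma$. Both assignments descend to $\Diff^1$- and $\Diff^2$-equivalence classes since forgetting $A$ (resp.\ $\Gamma$) commutes with the group actions. Composition and identities in $\CobD$ are defined to project strictly to composition and identities in $\CobE$, so the only nontrivial issue is the behavior on $2$-morphisms.

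For $2$-morphisms, given divided cobordisms $(\Sigma,\Gamma)$ and $(\Sigma',\Gamma')$ between which there is a (necessarily unique) $2$-morphism in $\CobD$, pick an isotopy rel boundary from one to the other that preserves the divide structure. Forgetting the divides gives an isotopy of underlying embedded cobordisms, hence a $2$-morphism in $\CobE$. To see this is independent of the chosen witness, concatenate any two such witnessing isotopies (one reversed) to obtain a loop of divided cobordisms. By \Lemma{div-cob-loop}, the resulting loop of underlying embedded cobordisms is isotopic to the constant loop rel boundary, so the two induced $2$-morphisms in $\CobE$ agree. This assignment is automatically compatible with both vertical and horizontal composition of $2$-morphisms because $2$-morphisms in $\CobD$ are unique when they exist, and projecting concatenations of isotopies gives concatenations of their projections.

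Finally, since the forgetful arrow $\CobD \to \CobE$ is a strict $2$-functor, composing with the lax $2$-functor $V_{\HKKa}$ produces a lax $2$-functor whose laxators and unitors are simply those of $V_{\HKKa}$ pulled back along the forgetful map. The coherence axioms for this composite reduce immediately to the coherence axioms for $V_{\HKKa}$ itself, which are known by the discussion of \Section{CobE-to-Burn}.

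The only real obstacle is the well-definedness on $2$-morphisms, and that obstacle is already neutralized by \Lemma{div-cob-loop}; everything else is a direct consequence of the fact that forgetting divides is a strict projection of $2$-categories.
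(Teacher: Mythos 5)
There is a genuine gap in the step where you assert that forgetting divides defines a $2$-functor $\CobD \to \CobE$. You justify well-definedness on $2$-morphisms by claiming that, by \Lemma{div-cob-loop}, the loop of underlying embedded cobordisms obtained by concatenating two witness isotopies "is isotopic to the constant loop rel boundary." But \Lemma{div-cob-loop} asserts something strictly weaker: only that the \emph{monodromy} of the loop --- the induced self-diffeomorphism $\Sigma_0 \to \Sigma_0$ --- is isotopic to the identity. A loop in the space of embedded cobordisms can have trivial monodromy on $\Sigma$ without being nullhomotopic, so you have not shown that the two witness isotopies determine the same $2$-morphism of $\CobE$. Indeed, the paper deliberately avoids claiming any forgetful $2$-functor $\CobD\to\CobE$; it only constructs $\Pi_{\CobE}\co\wh{\CobD}\to\CobE$ from the thickened version $\wh{\CobD}$ whose $2$-morphisms still remember isotopy classes of isotopies.

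What the paper proves instead is the weaker (and sufficient) statement that the composite $V_{\HKKa}\circ\Pi_{\CobE}$ descends to $\CobD$. The only nontrivial case is a genus-$1$ component $\Sigma$, where $V_{\HKKa}$ applied to a $2$-morphism is entirely determined by the induced automorphism of $H_1\bigl(B(([0,1]\times S^2)\setminus\Sigma)\bigr)$. From \Lemma{div-cob-loop} one gets the identity on $H_1(\Sigma)$, and a Mayer--Vietoris argument shows $H_1(\Sigma)\to H_1\bigl(B(([0,1]\times S^2)\setminus\Sigma)\bigr)$ is surjective and commutes with the ambient automorphism, forcing the identity on the target. That gives $V_{\HKKa}(\phi)=\Id$ without ever needing the loop itself to be trivial in $\CobE$. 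To salvage your factorization you would need an independent argument that loops of divided cobordisms are nullhomotopic as loops of embedded cobordisms, which is a stronger geometric statement than anything the paper establishes or uses.
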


More precisely, there is an analogue $\wh{\CobD}$ of $\CobD$ in which
the set of $2$-morphisms from $\Sigma_0$ to $\Sigma_1$ is the set of
isotopy classes of isotopies of divided cobordisms from $\Sigma_0$ to
$\Sigma_1$. There are forgetful maps
$\Pi_{\CobD}\co \wh{\CobD}\to \CobD$ (collapsing the $2$-morphism
sets) and $\Pi_{\CobE}\co \wh{\CobD}\to \CobE$ (forgetting the
divides). Proposition~\ref{prop:CobD-to-Burnside} asserts that the map
$V_{\HKKa}\circ\Pi_{\CobE}$ descends to a functor
$\CobD\to \BurnsideCat$, so that the following diagram
commutes:
\[
  \xymatrix{
    \wh{\CobD}\ar[r]^{\Pi_{\CobE}}\ar[d]_{\Pi_{\CobE}} & \CobE\ar[d]^{V_{\HKKa}}\\
    \CobD\ar@{-->}[r] & \BurnsideCat.
  }
\]

\begin{proof}[Proof of Proposition~\ref{prop:CobD-to-Burnside}]
  We must check that if $\phi$ is an isotopy from $(\Sigma,\Gamma)$ to
  itself then the induced map $V_{\HKKa}(\Sigma)\to V_{\HKKa}(\Sigma)$ is the
  identity map. The only interesting case, of course, is a genus $1$
  component of $\Sigma$. By Lemma~\ref{lem:div-cob-loop}, a loop induces
  the identity map on $H_1(\Sigma)$. The Mayer-Vietoris theorem implies
  that the map $H_1(\Sigma)\to H_1(\overline{B(([0,1]\times S^2)\setminus
  \Sigma)})\cong H_1(B(([0,1]\times S^2)\setminus \Sigma))$ is surjective, so the map
  on $H_1(B(([0,1]\times S^2)\setminus \Sigma))$ induced by $\phi$ is also the identity map.
\end{proof}
By a slight abuse of notation, we will let $V_{\HKKa}$ denote the
induced functor $\CobD\to\BurnsideCat$ as well.\setcounter{OutlineCompletion}{40}

\begin{remark}
  It is interesting to compare $\CobD$ with Zarev's \emph{divided surfaces}~\cite[Definition 3.1]{Zarev-hf-BS}.
\end{remark}
\subsection{A meeting of multicategories}

\subsubsection{The Burnside multicategory}\label{sec:mBurnside}

We may treat the Burnside category $\BurnsideCat$ as a monoidal category
with Cartesian product as the monoidal operation on objects. However,
this operation is not strictly associative. We can make the monoidal structure strict by
embedding the objects of $\BurnsideCat$ in standard Euclidean spaces,
similarly to what we did for morphisms in
Definition~\ref{def:burnsidecat}, and then define a multicategory
$\mBurnside$ induced from the monoidal structure.

More directly, define $\mBurnside$ as the multicategory enriched in
groupoids with:
\begin{itemize}
\item Objects pairs $(k,X)$ where $k\in\NN$ and $X$ is a finite subset of $\RR^k$. We will always suppress $k$ from the notation.
\item $\Hom_{\mBurnside}(X_1,\dots,X_n;Y)=\Hom_{\BurnsideCat}(X_1\times\dots\times X_n,Y)$, 
  the groupoid of maps in the Burnside category from
  $X_1\times\dots\times X_n$ to $Y$. (Note that since each $X_i$ is a
  subset of $\RR^{k_i}$,
  $(X_i\times X_{i+1})\times X_{i+2}=X_{i}\times(X_{i+1}\times
  X_{i+2})$ identically.)
\end{itemize}
Multi-composition is defined in the obvious way.  The special case
$n=0$ of the multimorphism sets seems worth spelling out. Let
$1=(0,\{0\})$ be the object in $\mBurnside$ consisting of a single
point embedded in $\RR^0$. Note that for any object $X$ in
$\mBurnside$, $1\times X=X$. We declare that the empty product in the
Burnside category is the object $1$. So, for any object
$X\in\Ob(\mBurnside)$, $\Hom_{\mBurnside}(\emptyset;X)=\Hom_{\BurnsideCat}(1,X).$ In particular,
an element of the set $X$ gives a multimorphism $\emptyset\to X$.\setcounter{OutlineCompletion}{50}

Recall that we have a multicategory of abelian groups
$\mAbelianGroups$ by defining $\Hom(V_1,V_2,\dots,V_n;V)$ to be the
set of multilinear maps $V_1,\dots,V_n\to V$ (or equivalently, the set
of maps $V_1\otimes\cdots\otimes V_n\to V$).  We can view
$\mAbelianGroups$ as trivially enriched in groupoids. The forgetful functor
$\BurnsideCat\to\AbelianGroups$ from Section~\ref{sec:CobE-to-Burn}
respects the monoidal structure on both $\BurnsideCat$ and
$\AbelianGroups$, and therefore induces a forgetful functor
$\Forget\from\mBurnside\to\mAbelianGroups$.

\subsubsection{Shape multicategories}\label{sec:shape}
Recall from Section~\ref{sec:review} that $\Crossingless{n}$ denotes the
set of crossingless matchings on $2n$ points. Define $\mHshape{n}^0$
to be the shape multicategory associated to $\Crossingless{n}$
(Definition~\ref{def:shape-multicat-set}).  Specifically, the
multicategory $\mHshape{n}^0$ has one object for each pair $(a,b)$ of
crossingless matchings of $2n$ points, and a unique multimorphism
\[
(a_1,a_2),(a_2,a_3),\dots,(a_{k-1},a_k)\to (a_1,a_k).
\]
We will sometimes denote the unique morphism in
$\Hom((a_1,a_2),(a_2,a_3),\dots,(a_{k-1},a_k);(a_1,a_k))$ by
$f_{a_1,\dots,a_k}$. In particular, the special case $k=1$ of the
zero-input multimorphism $\emptyset\to (a_1,a_1)$ is denoted
$f_{a_1}$.

Similarly, define $\mTshapeStct{m}{n}^0$ to be the shape multicategory
associated to the sequence of sets
$(\Crossingless{m},\Crossingless{n})$
(Definition~\ref{def:shape-multicat-set-seq}). Specifically, the
multicategory $\mTshapeStct{m}{n}^0$ has three kinds of objects:
\begin{enumerate}
\item\label{item:ob-1} Objects $(a,b)$ where $a,b$ are crossingless matchings on $2m$ points,
\item\label{item:ob-2} Objects $(a,b)$ where $a,b$ are crossingless matchings on $2n$
  points, and
\item\label{item:ob-3} Objects $(a,b)$ where $a$ is a crossingless matching of $2m$
  points and $b$ is a crossingless matching of $2n$ points. For
  clarity we will write such objects instead as $(a,T,b)$ where $T$ is
  just a notational placeholder.
\end{enumerate}
There is a unique multimorphism
\[
(a_1,a_2),(a_2,a_3),\dots,(a_{k-1},a_k)\to(a_1,a_k)
\]
if $a_1,\dots,a_k$ are crossingless matchings on $2m$ points.  There
is a unique multimorphism
\[
(b_1,b_2),(b_2,b_3),\dots,(b_{\ell-1},b_\ell)\to(b_1,b_\ell)
\]
if $b_1,\dots,b_\ell$ are crossingless matchings on $2n$ points. There is
a unique multimorphism
\[
(a_1,a_2),\dots,(a_{k-1},a_k),(a_k,T,b_1),(b_1,b_2),\dots,(b_{\ell-1},b_\ell)\to (a_1,T,b_\ell)
\]
if $a_1,\dots,a_k$ are crossingless matchings on $2m$ points and
$b_1,\dots,b_\ell$ are crossingless matchings on $2n$ points. (The
special cases $k=1$ and $\ell=1$ are allowed.)

Note that $\mHshape{m}^0$ and $\mHshape{n}^0$ are full
sub-multicategories of $\mTshapeStct{m}{n}^0$. Extending the notation $f_{a_1,\dots,a_k}$ from $\mHshape{m}^0$, we will sometimes denote the unique morphism in 
\[
  \Hom_{\mTshapeStct{m}{n}^0}\bigl((a_1,a_2),\dots,(a_{k-1},a_k),(a_k,T,b_1),(b_1,b_2),\dots,(b_{\ell-1},b_\ell);(a_1,T,b_\ell)\bigr)
\]
by $f_{a_1,\dots,a_k,T,b_1,\dots,b_\ell}$.

Let $\mHshape{n}$ (respectively $\mTshape{m}{n}$) be the canonical
groupoid enrichment of $\mHshape{n}^0$ (respectively
$\mTshapeStct{m}{n}^0$) from Section~\ref{sec:enrich}. See in particular
Example~\ref{exam:enrichment} for some of the multimorphisms that
appear in the groupoid enriched categories.\setcounter{OutlineCompletion}{55}

Now recall, from Section~\ref{sec:review}, Khovanov's arc algebra
$\KTalg{n}$, and Khovanov's tangle invariant $\KTfunc(T)$, which is
a \dg $(\KTalg{m},\KTalg{n})$-bimodule. The
algebra $\KTalg{n}$ is equipped with an orthogonal set of idempotents
(Definition~\ref{def:idempotented}), one for each crossingless
matching $a\in\Crossingless{n}$, with the idempotent corresponding to
$a$ being the element of $V(a\Wmirror{a})$ that labels each of the $n$
circles by $1\in V$. Therefore, via the equivalences from
Section~\ref{sec:multi-vec}, we have the following:

\begin{principle}\label{prin:multi-is-bim}
  The Khovanov arc algebra $\KTalg{n}$ may be viewed as a multifunctor
  $\mHshape{n}^0\to\mAbelianGroups$. Composing with the inclusion
  $\mAbelianGroups\to\mComplexes$ (which views an abelian group as a
  chain complex concentrated in grading $0$), we can also view the
  Khovanov arc algebra as a multifunctor from $\mHshape{n}^0$ to chain
  complexes. Similarly, Khovanov's tangle invariant $\KTfunc(T)$ may
  be viewed as a multifunctor $\mTshapeStct{m}{n}^0\to\mComplexes$ which
  restricts to $\mHshape{m}^0$ and $\mHshape{n}^0$ as the arc algebra
  multifunctors.
\end{principle}
\setcounter{OutlineCompletion}{60}

\subsubsection{The divided cobordism multicategory}\label{sec:mCobD}
Next we turn to the multicategory $\mCobD$ of divided cobordisms.  The
divided cobordism category $\CobD$ from Section~\ref{sec:CobD} can be
endowed with a disjoint union bifunctor $\amalg$ induced by
concatenation in the first $(0,1)$-factor. Disjoint union is a
strictly associative (non-symmetric) monoidal structure on $\CobD$,
since we have quotiented out objects by $\Diff^1$ and morphisms by
$\Diff^2$. Therefore, we get an associated multicategory
$\underline{\CobD}$. The groupoid enriched multicategory $\mCobD$ is
the canonical groupoid enrichment of $\underline{\CobD}$.

Fleshing out the definition, the objects of $\mCobD$ are the same as
the objects of $\CobD$, i.e., $\Diff^1$-equivalence classes of
smooth, closed, embedded $1$-manifolds in $(0,1)^2$ which are
decomposed as unions of \emph{active arcs} and \emph{inactive arcs}.

A \emph{basic multimorphism from $(Z_1,\dots,Z_n)$ to $Z$} is an
element of $\Hom_{\CobD}(Z_1\amalg \dots\amalg Z_n,Z)$. Now, an object
of $\Hom_{\mCobD}(Z_1,\dots,Z_n;Z)$ consists of:
\begin{itemize}
\item a tree $\aTree$;
\item a labeling of each edge of $\aTree$ by an object of $\CobD$, so that
  the input edges are labeled $Z_1,\dots,Z_n$ and the output edge is
  labeled $Z$; and
\item a labeling of each internal vertex $v$ of $\aTree$ with input edges
  labeled $Z'_1,\dots,Z'_k$ and output edge labeled $Z'$ by a basic
  multimorphism from $(Z'_1,\dots,Z'_k)$ to $Z'$ (i.e., an object in
  $\Hom_{\CobD}(Z'_1\amalg\dots\amalg Z'_k,Z')$).
\end{itemize}
Composition of multimorphisms is induced by composition of trees;
being a canonical thickening, this is automatically strictly
associative and has strict units (the $0$ internal vertex trees).

Given a multimorphism $f$ in $\Hom_{\mCobD}(Z_1,\dots,Z_n;Z)$, the
\emph{collapsing} $f^\circ$ of $f$ is the result of composing the
cobordisms associated to the vertices of the tree according to the
edges of the tree, in some order compatible with the
tree. Associativity of composition in $\mCobD$ implies that the
collapsing $f^\circ$ of $f$ is well-defined, i.e., independent of the
order that one composes vertices in the tree.
Given multimorphisms $f,g\in\Hom_{\mCobD}(Z_1,\dots,Z_n;Z)$ there is a
unique morphism from $f$ to $g$ if and only if $f^\circ$ is isotopic
to $g^\circ$.  It is clear that if $f\circ(g_1,\dots,g_n)$ is defined
and there is a morphism from $f$ to $f'$ and from $g_i$ to $g'_i$ for
$i=1,\dots,n$ then there is a morphism from $f\circ(g_1,\dots,g_n)$ to
$f'\circ(g'_1,\dots,g'_n)$.

Putting these observations together, we have proved:
\begin{lemma}
  These definitions make $\mCobD$ into a multicategory.
\end{lemma}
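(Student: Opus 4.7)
The plan is to verify the multicategory axioms directly from the construction, which is essentially the canonical groupoid thickening of $\underline{\CobD}$ with the sole modification that morphisms in the multimorphism groupoids are detected by isotopy of collapsings rather than equality. The bookkeeping parallels the proofs immediately following Section~\ref{sec:enrich}, and the paragraphs preceding the lemma already contain most of the key observations; the proof will amount to assembling them in order.

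First I would check that each $\Hom_{\mCobD}(Z_1,\dots,Z_n;Z)$ is a groupoid. Its objects are the labeled trees described in the definition, and between any two objects $f$ and $g$ there is, by construction, at most one morphism, existing iff the collapsings $f^\circ$ and $g^\circ$ agree in $\CobD$ (i.e.\ are related by an isotopy rel boundary of representative divided cobordisms). Since isotopy rel boundary is an equivalence relation and the morphism set is a subsingleton, it is automatically a groupoid, with the unique endomorphism of each object serving as the identity.

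Next I would verify that composition of multimorphisms is well-defined both on objects and on morphisms. On objects, composition is grafting of labeled trees: given $f\in\Hom(y_1,\dots,y_n;z)$ and $g_i\in\Hom(x_{i,1},\dots,x_{i,m_i};y_i)$, one attaches the output edge of each $g_i$ to the $i$\th\ input of $f$. This is strictly associative and admits as strict units the one-edge trees with no internal vertex. On morphisms, one must check that if $f\sim f'$ and each $g_i\sim g'_i$ (in the sense that their collapsings are isotopic) then $\bigl(f\circ(g_1,\dots,g_n)\bigr)^\circ$ is isotopic to $\bigl(f'\circ(g'_1,\dots,g'_n)\bigr)^\circ$. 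This is the only substantive check, and it follows from the analogous property of $\CobD$ as a $2$-category: isotopies of divided cobordisms can be horizontally and vertically concatenated, because the active/inactive decorations on the boundary and the divides are fixed throughout, and composition in $\CobD$ is strictly associative thanks to the quotient by $\Diff^2$.

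Finally, associativity and unitality of multi-composition hold at the level of tree objects by the corresponding properties of grafting, and at the level of morphisms they hold trivially because each multimorphism groupoid has at most one morphism between any ordered pair of objects. The mild obstacle is the middle step, namely the compatibility of composition with isotopy, but this is immediate once one observes that the representatives of the composed divided cobordisms can be glued along boundary collars in which the isotopies are constant (by Convention~\ref{convention:vertical}), yielding an isotopy of the composed collapsings.
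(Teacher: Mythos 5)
Your proof is correct and takes essentially the same approach as the paper's, which consists of the three observations immediately preceding the lemma: tree grafting is strictly associative with strict units (as for any canonical thickening), morphisms in the multimorphism groupoids exist precisely when collapsings are isotopic, and composition respects these morphisms because isotopies of divided cobordisms glue along the collars guaranteed by Convention~\ref{convention:vertical}. Your identification of the compatibility of composition with isotopy as the one substantive check is exactly where the paper also places the weight.
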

\setcounter{OutlineCompletion}{70}

\subsubsection{Cubes}\label{sec:multi-cube}
To a non-flat tangle we will associate a cube of flat tangles, and
hence, roughly, a cube of multifunctors between groupoid-enriched
multicategories. In this section we make sense of this notion in
enough generality for our applications.

\begin{definition}
  Let $\CCat{N}_0$, the \emph{cube category}, be the category with
  objects $\{0,1\}^N$ and a unique morphism
  $v=(v_1,\dots,v_N)\to w=(w_1,\dots,w_N)$ whenever
  $v_i\leq w_i$ for all $1\leq i\leq N$. 
\end{definition}

\begin{remark}
  In our previous papers, we defined cube categories to be the
  opposite category of the above. However, since in this paper we are
  taking homology instead of cohomology (cf.
  Remark~\ref{rem:hom-not-cohom}) we need the morphisms in the cube to
  go from $0$ to $1$.
\end{remark}

We will define a groupoid-enriched multicategory
$\CCat{N}\ttimes\mTshape{m}{n}$, a kind of product of the cube $\CCat{N}$
and $\mTshape{m}{n}$. We first define its strictification
$\strictify{\CCat{N}\ttimes\mTshape{m}{n}}$
(Definition~\ref{def:strictify}).
\begin{itemize}
\item Objects of $\strictify{\CCat{N}\ttimes\mTshape{m}{n}}$ are pairs
  $(a,b)\in \Ob(\mHshape{m})\cup\Ob(\mHshape{n})$ or quadruples
  $(v,a,T,b)$ where $v\in \{0,1\}^N$ and
  $(a,T,b)\in \Ob(\mTshape{m}{n})$. 
\item For any objects $a_i\in\Ob(\mHshape{m})$,
  $b_j\in\Ob(\mHshape{n})$, and morphism $v\to w$ in $\CCat{n}$, there
  are unique multimorphisms
  \begin{align*}
    (a_1,a_2),\dots,(a_{k-1},a_k)&\to(a_1,a_k)\\
    (a_1,a_2),\dots,(a_{k-1},a_k),(v,a_k,T,b_1),(b_1,b_2),\dots,(b_{\ell-1},b_\ell)&\to (w,a_1,T,b_\ell)\\
    (b_1,b_2),\dots,(b_{\ell-1},b_\ell)&\to (b_1,b_\ell)
  \end{align*}
  in $\strictify{\CCat{N}\ttimes\mTshape{m}{n}}$, and no other
  multimorphisms.
\end{itemize}\setcounter{OutlineCompletion}{75}

Next define the \emph{thick $N$-cube category of $\mTshapeStct{m}{n}^0$},
$\CCat{N}\ttimes\mTshape{m}{n}$, as the following multicategory enriched in
groupoids:
\begin{itemize}
\item The objects are he same as $\Ob(\strictify{\CCat{N}\ttimes\mTshape{m}{n}})$.
\item A \emph{basic multimorphism} is one of:
  \begin{itemize}
  \item A multimorphism in $\mHshape{m}$ or $\mHshape{n}$, or
  \item A multimorphism of the form
    \[
      (a_1,a_2),\dots,(a_{k-1},a_k),(v,a_k,T,b_1),(b_1,b_2),\dots,(b_{\ell-1},b_\ell)\to (v,a_1,T,b_\ell)
    \]
    in $\strictify{\CCat{N}\ttimes\mTshape{m}{n}}$, or
  \item A morphism of the form $(v,a,T,b)\to (w,a,T,b)$ in
    $\strictify{\CCat{N}\ttimes\mTshape{m}{n}}$.
  \end{itemize}
\item An object of a multimorphism groupoid in
  $\CCat{N}\ttimes\mTshape{m}{n}$ is a 
  tree with $p$ inputs, together with a labeling of:
  \begin{itemize}
  \item each edge by an object of $\CCat{N}\ttimes\mTshape{m}{n}$ and
  \item each vertex by a basic multimorphism from the inputs of the
    vertex to the output of the vertex.
  \end{itemize}
\item Given a multimorphism in $\CCat{N}\ttimes\mTshape{m}{n}$, there
  is a corresponding multimorphism in
  $\strictify{\CCat{N}\ttimes\mTshape{m}{n}}$ by composing the basic
  multimorphisms according to the tree. Define the multimorphism
  groupoid to have a unique morphism $\aTree\to \aTree'$ if the
  corresponding multimorphisms in
  $\strictify{\CCat{N}\ttimes\mTshape{m}{n}}$ are the
  same. Equivalently, there is a unique morphism $\aTree\to\aTree'$ if
  and only if $\aTree$ and $\aTree'$ have the same source and target.
\end{itemize}

The above definition ensures that
$\strictify{\CCat{N}\ttimes\mTshape{m}{n}}$ is indeed the
strictification of $\CCat{N}\ttimes\mTshape{m}{n}$.
\begin{lemma}\label{lem:strictify-mTshape}
  The projection
  $\CCat{N}\ttimes\mTshape{m}{n}\to\strictify{\CCat{N}\ttimes\mTshape{m}{n}}$,
  which is the identity on objects and sends a tree with inputs
  $x_1,\dots,x_n$ and output $y$ to the unique multimorphism
  $x_1,\dots,x_n\to y$, is a weak equivalence.
\end{lemma}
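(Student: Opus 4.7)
The plan is to mimic the proof of \Lemma{unenrich}, which established the analogous statement for the canonical thickening of an unenriched multicategory. Recall that a weak equivalence of simplicial multicategories requires (i) that the induced map on strictifications is an equivalence of ordinary categories, and (ii) that for every tuple $(x_1,\dots,x_n;y)$ of objects, the induced map on nerves of multimorphism groupoids is a weak equivalence of simplicial sets. Both conditions will follow essentially by construction.

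For (i), I would observe that the definition of $\CCat{N}\ttimes\mTshape{m}{n}$ is explicitly arranged so that $\strictify{\CCat{N}\ttimes\mTshape{m}{n}}$ is the strictification: the multimorphism set in the strictification is, by definition, the set of path components of the multimorphism groupoid of $\CCat{N}\ttimes\mTshape{m}{n}$, and these path components are exactly indexed by the underlying multimorphisms in $\strictify{\CCat{N}\ttimes\mTshape{m}{n}}$ (two trees are identified iff they collapse to the same multimorphism after composing the basic multimorphisms labeling the vertices). The projection is the identity on objects and the obvious quotient on multimorphisms, so its strictification is the identity functor.

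For (ii), the target groupoid $\Hom_{\strictify{\CCat{N}\ttimes\mTshape{m}{n}}}(x_1,\dots,x_n;y)$ is treated as trivially enriched, so its nerve is just a discrete set (one point per multimorphism). The source groupoid $\Hom_{\CCat{N}\ttimes\mTshape{m}{n}}(x_1,\dots,x_n;y)$ has, by construction, at most one morphism between any two objects, and a morphism exists between two trees precisely when they have the same source and target in the strictification. Thus each path component of this groupoid is a connected groupoid in which every pair of objects admits a unique isomorphism; equivalently, every object is both initial and terminal in its component, so the nerve of each component is contractible. The nerve is therefore weakly equivalent to its set of path components, which matches the target under the projection.

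The main (and only) potential subtlety is to double-check that the multimorphism groupoids of $\CCat{N}\ttimes\mTshape{m}{n}$ are genuinely groupoids in the sense relevant to \Theorem{EM-top-machine34}, i.e., that the composition of trees interacts correctly with the unique-morphism-per-component structure. This is immediate: if $f\to f'$ and $g_i\to g_i'$ exist in the respective multimorphism groupoids, then $f\circ(g_1,\dots,g_n)$ and $f'\circ(g_1',\dots,g_n')$ have identical collapsings in $\strictify{\CCat{N}\ttimes\mTshape{m}{n}}$ and hence lie in the same component, so a unique morphism between them exists. Combining (i) and (ii) yields the weak equivalence.
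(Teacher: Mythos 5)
Your proof is correct and takes essentially the same approach as the paper: the paper's proof simply notes that "the proof is essentially the same as the proof of Lemma~\ref{lem:unenrich}," which verifies the same two conditions you do (equivalence of strictifications and contractibility of each component of each multimorphism groupoid, using that every object in a component is both initial and terminal). Your additional check of the composition compatibility is fine but not needed since it is built into the definition of $\CCat{N}\ttimes\mTshape{m}{n}$ (the $2$-morphism between $f\circ(g_1,\dots,g_n)$ and $f'\circ(g_1',\dots,g_n')$ exists simply because they have the same source and target).
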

\begin{proof}
  The proof is essentially the same as the proof of Lemma~\ref{lem:unenrich}.
\end{proof}
\setcounter{OutlineCompletion}{80}

The category $\CCat{N+1}\ttimes\mTshape{m}{n}$ has the category
$\CCat{N}\ttimes\mTshape{m}{n}$ as a full subcategory in two distinguished ways:
the full subcategory spanned by objects $(a,b)$ and
$(\{0\}\times v,a,T,b)$, which we denote
$\{0\}\times\CCat{N}\ttimes\mTshape{m}{n}$; and the full subcategory spanned by
objects $(a,b)$ and $(\{1\}\times v,a,T,b)$, which we denote
$\{1\}\times\CCat{N}\ttimes\mTshape{m}{n}$. The strictified product
$\strictify{\CCat{N+1}\ttimes\mTshape{m}{n}}$ has corresponding subcategories
$\strictify{\{0\}\times \CCat{N}\ttimes\mTshape{m}{n}}$ and
$\strictify{\{1\}\times\CCat{N}\ttimes\mTshape{m}{n}}$, both isomorphic to
$\strictify{\CCat{N}\ttimes\mTshape{m}{n}}$.

\begin{remark}
  The groupoid-enriched
  multicategory $\CCat{N}\ttimes\mTshape{m}{n}$ is related to a
  groupoid-enriched version of the Boardman-Vogt tensor
  product~\cite[Section II.3, Paragraph (2.15)]{BV-other-book}, the
  main difference being that we have not multiplied the objects
  of the form $(a,b)$ in $\mTshape{m}{n}$ by $\CCat{N}$.
\end{remark}

\subsection{A cabinet of cobordisms}\label{sec:cabinet}
In this section we enhance some of the topological objects used to
define the Khovanov arc algebras and modules so that they lie in the
category of divided cobordisms.

\begin{definition}\label{def:pox}
  Given a tangle diagram $T$, $\pi(T)\subset\RR^2$ is a planar,
  4-valent graph. The edges of $\pi(T)$ are the \emph{segments} of
  $T$.
  
  A \emph{poxed tangle} is a tangle diagram $T$ together with a
  collection of points (\emph{pox}) on the segments of
  $\pi(T)\subset \RR^2$ so that for each resolution $T_v$ of $T$,
  there is at least one pox on each closed component of $T_v$.

  A \emph{poxed link} is a poxed $(0,0)$-tangle.
\end{definition}

\begin{construction}\label{const:divided-crossingless}
  Given crossingless matchings $a,b\in\Crossingless{n}$, we make
  $a\Wmirror{b}$ into a divided $1$-manifold as follows. The inactive
  arcs are the connected components of a small neighborhood of
  $\bdy a\subset a\Wmirror{b}$ (so there are $2n$ inactive arcs), while
  the active arcs are the connected components of the complement of the
  inactive arcs (so there are also $2n$ active arcs). See
  Figure~\ref{fig:flat-active}.
\end{construction}

\begin{figure}
  \centering
  \includegraphics[scale=.75]{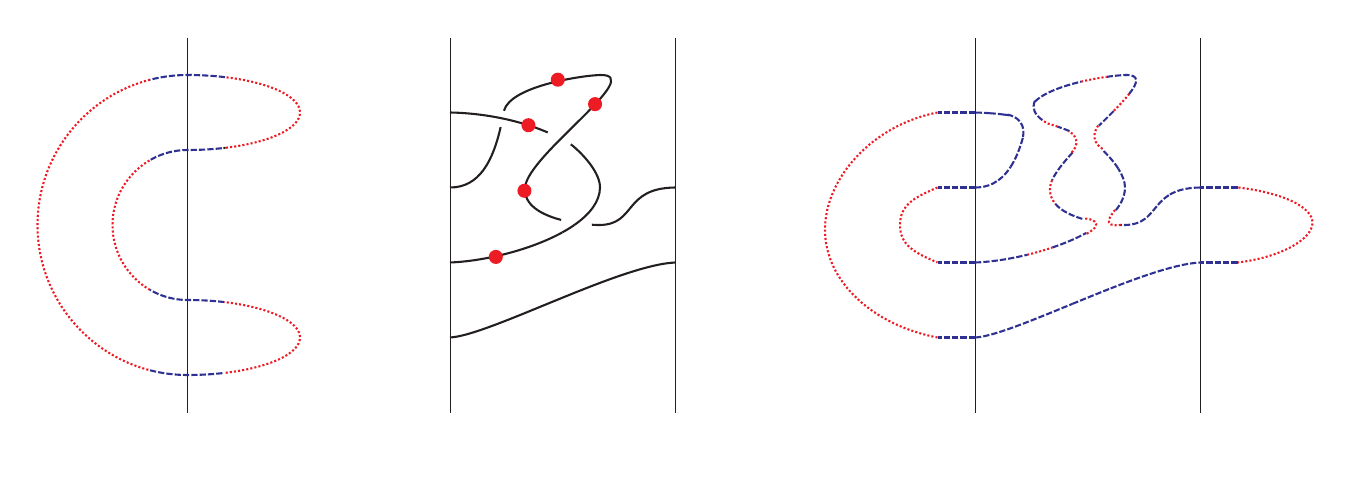}
  \caption{\textbf{Active arcs and flat tangles.} Left: $a\Wmirror{b}$ for
    two crossingless matchings $a,b\in\Crossingless{2}$ and the
    decomposition of $a\Wmirror{b}$ into active (\textcolor{red}{dotted}) and
    inactive (\textcolor{blue}{dashed}) arcs. Center: a poxed $(4,2)$-tangle
    $T$. Right: $aT_v\Wmirror{b}$ where $v=(1,0,0)$ and $a$ is the same
    crossingless matching as on the left.}
  \label{fig:flat-active}
\end{figure}

Given an oriented, poxed link
$K\in\TangleDiags{0}{0}$ with $N$ ordered crossings and a vector
$v\in\{0,1\}^N$, we make the resolution $K_v$ into a divided
$1$-manifold as follows. Let $\pi(K)$ denote the projection of $K$ to
$(0,1)^2$. For each $1\leq i\leq N$, choose a small disk $D_i$ around
the $i\th$ crossing of $\pi(K)$, so that $\bdy D_i$ intersects
$\pi(K)$ transversely in $4$ points, and a small disk $D'_p$ around
each pox $p$ of $K$. Choose the disks $D_i$ and
$D'_p$ small enough that they are all pairwise disjoint. Choose the
resolution $K_v$ so that $\pi(K)\cap \bigl((0,1)^2\setminus(\bigcup_i
D_i)\bigr)=K_v\cap \bigl((0,1)^2\setminus(\bigcup_i D_i)\bigr)$, i.e.,
so that $\pi(K)$ and $K_v$ agree outside the disks $D_i$. The
boundaries of the disks $D_i$ and $D'_p$ divide $K_v$ into arcs. 
Declare the arcs outside the disks $D_i$
and $D'_p$ to be inactive. Define the arcs inside $D_i$ to be active
if $v_i=0$ and inactive if $v_i=1$. Define the arcs inside the $D'_p$
to be active.

Combining the previous two cases:
\begin{construction}\label{const:divided-tangle}
  Given a poxed $(2m,2n)$-tangle $T\in\TangleDiags{m}{n}$ with $N$
  ordered crossings, $a\in\Crossingless{m}$, $b\in\Crossingless{n}$,
  and $v\in\{0,1\}^{N}$ we make $aT_v\Wmirror{b}$ into a divided
  $1$-manifold as follows. Again, choose small disks $D_i$ around the
  crossings of $\pi(T)$, so that outside the disks $D_i$, $T_v$ agrees
  with $\pi(T)$ and small disks $D'_p$ around the pox of $T$. Choose small neighborhoods of the endpoints of $a$
  and $b$.
  Here, small means that all these neighborhoods are disjoint. Then
  the active arcs of $aT_v\Wmirror{b}$ are:
  \begin{itemize}
  \item The arcs inside the $D_i$ with $v_i=0$,
  \item The arcs inside the $D'_p$, and
  \item The arcs in $a$ and $\Wmirror{b}$ in the complement of the
    neighborhoods of the endpoints.
  \end{itemize}
  The remaining arcs of $aT_v\Wmirror{b}$ are inactive. See
  Figure~\ref{fig:flat-active}.
\end{construction}
Next we turn to the divided cobordisms we will use as building
blocks. 

A \emph{trivial cobordism} is a cobordism of the form $[0,1]\times Z$
where $Z$ is a divided $1$-manifold. If $P$ is the set of endpoints of
the active arcs in $Z$ then the divides are given by
$\Gamma=[0,1]\times P$.

Next, fix a divided $1$-manifold $Z$ and a disk $D$ so that $D\cap Z$
consists of exactly two active arcs in $Z$. Call these four endpoints
$a,b,c,d$, so that the arcs join $a\leftrightarrow b$ and
$c\leftrightarrow d$, and $a$ and $d$ are consecutive around $\bdy D$.
Let $Z'$ be a divided $1$-manifold which agrees with $Z$ outside $D$
and consists of two arcs in $Z'\cap D$ connecting $a\leftrightarrow d$
and $b\leftrightarrow c$. Make $Z'$ into a divided $1$-manifold by
declaring that the arcs inside $D$ are inactive, and the other arcs of
$Z'$ are the same as the arcs of $Z$.  A \emph{saddle cobordism} is a
cobordism $\Sigma$ from $Z$ to $Z'$ so that:
\begin{itemize}
\item $\Sigma\cap \bigl([0,1]\times((0,1)^2\setminus D)\bigr)=[0,1]\times (Z\setminus D)$,
\item Inside $[0,1]\times D$, $\Sigma$ consists of a single embedded saddle, and
\item The dividing arcs $\Gamma$ for $\Sigma$ connect
  $a\leftrightarrow d$ and $b\leftrightarrow c$ inside the saddle, and
  agree with $[0,1]\times P$ outside the saddle, where $P$ is the
  collection of endpoints of active arcs of $Z'$.
\end{itemize}
(See Figure~\ref{fig:divided-cob} for the local form of $\Sigma$
in a neighborhood of $D$.) The cobordism $\Sigma$ is well-defined up to unique isomorphism
in $\CobD$. We call $D$ the \emph{support} of the saddle
cobordism. Note that a saddle cobordism $\Sigma\co Z_1\to Z_2$ is
determined by $Z_1$ and the support of $\Sigma$ (up to isotopy rel
$(\{0\}\times Z_1)\cup([0,1]\times (Z_1\setminus D))$).

\definecolor{darkgreen}{cmyk}{.9,.3,.9,.3}
\definecolor{darkblue}{cmyk}{84,53,0,0}
\begin{figure}
  \centering
  \includegraphics[width=\textwidth]{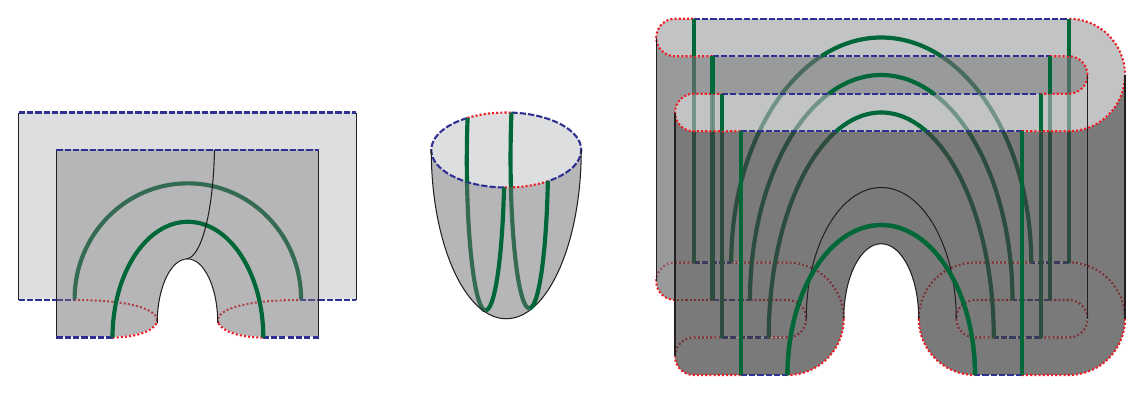}
  \caption{\textbf{Basic divided cobordisms.} Left: a saddle. Center:
    a cup. Curves $\Gamma$ are \textbf{\textcolor{darkgreen}{thick}}, arcs in $I\cup I'$ are \textcolor{darkblue}{dashed}, and arcs in $A\cup A'$ are \textcolor{red}{dotted}. Together with product cobordisms, these are the local
    pieces that the divided cobordisms of interest are built
    from. Right: the divided cobordism associated to a product on the
    Khovanov arc algebra.}
  \label{fig:divided-cob}
\end{figure}

More generally, given a divided $1$-manifold $Z$ and a collection of
disjoint disks $D_i$ so that each $D_i\cap Z$ consists of two active
arcs, a \emph{multi-saddle cobordism} is a divided cobordism $\Sigma$
from $Z$ so that $\Sigma\cap ([0,1]\times D_i)$ is a saddle for each
$i$ and
$\Sigma\setminus\bigl(\bigcup_i [0,1]\times D_i\bigr)
=\bigl([0,1]\times Z\bigr)\setminus\bigcup_i\bigl([0,1]\times D_i\bigr)$ is a product;
and where the dividing arcs on $\Sigma$
\begin{itemize}
\item connect the points in $P\cap \bdy D_i$ in pairs inside the
  saddles, as in Figure~\ref{fig:divided-cob} (i.e., so that points not
  connected in $Z\cap D_i$ are connected by arcs in $\Gamma$) and
\item are of the form $[0,1]\times\{p\}$ for
  $p\in (P\setminus\bdy D_i)$ the ends of active arcs not involved in the
  saddles.
\end{itemize}
We call $\bigcup_i D_i$ the \emph{support} of the multi-saddle
cobordism.

Next, given crossingless matchings $a,b,c\in\Crossingless{n}$, a
\emph{merge cobordism}
$a\Wmirror{b}\amalg b\Wmirror{c}\to a\Wmirror{c}$ is a composition of
saddle cobordisms, one for each arc in $b$.  Again, this merge
cobordism is well-defined up to unique isomorphism in
$\CobD$. Similarly, given $a,b\in\Crossingless{m}$, a flat
$(2m,2n)$-tangle $T\in\FlatTangles{m}{n}$, and
$c,d\in\Crossingless{n}$ there are \emph{merge cobordisms}
$a\Wmirror{b}\amalg bT\Wmirror{c}\to aT\Wmirror{c}$ and
$bT\Wmirror{c}\amalg c\Wmirror{d}\to bT\Wmirror{d}$. As usual, these merge
cobordisms are well-defined up to unique isomorphisms. The
\emph{support} of a merge cobordism is the union of the supports of
the sequence of saddle cobordisms. We will also call the union of a
merge cobordism with a trivial cobordism a merge cobordism. More
generally, a \emph{multi-merge cobordism} is a composition, in
$\CobD$, of merge cobordisms.

A \emph{birth cobordism} is a genus $0$ decorated cobordism from the
empty set to $a\Wmirror{a}$, for some $a\in\Crossingless{n}$.  Birth
cobordisms are unions of \emph{cups}; see
Figure~\ref{fig:divided-cob}. We call the union of the disks bounded
by $a\Wmirror{a}$ the \emph{support} of the birth cobordism. A
\emph{multi-birth cobordism} is the union of finitely many birth
cobordisms with disjoint supports and a trivial cobordism.

We note some commutation relations for cobordisms:
\begin{proposition}\label{prop:saddle-commute}
  Let $\Sigma_1\co Z_1\to Z_2$ and $\Sigma_2\co Z_2\to Z_3$ be saddle
  cobordisms supported on disjoint disks $D_1$ and $D_2$. Let
  $\Sigma'_2\co Z_1\to Z'_2$ and $\Sigma'_1\co Z'_2\to Z_3$ be saddle
  cobordisms supported on $D_2$ and $D_1$, respectively. Then
  $\Sigma_2\circ \Sigma_1$ is isotopic to $\Sigma'_1\circ \Sigma'_2$
  rel boundary.
\end{proposition}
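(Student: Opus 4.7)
The plan is to exhibit an explicit isotopy between $\Sigma_2\circ\Sigma_1$ and $\Sigma'_1\circ\Sigma'_2$ that exploits the disjointness of the supports $D_1$ and $D_2$. The informal idea is that since the two saddles happen in spatially disjoint regions, the order in which we do them in time is irrelevant: we can time-warp one cobordism into the other.

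More precisely, I would first normalize the parametrizations. By a reparametrization of $[0,1]$ (which lies in $\Diff^1$ and hence only affects our representatives within their equivalence classes), I can arrange that $\Sigma_2\circ\Sigma_1$ has its first saddle supported in $[1/4,1/2]\times D_1$ and its second in $[1/2,3/4]\times D_2$, being a trivial product cobordism elsewhere; and that $\Sigma'_1\circ\Sigma'_2$ has its first saddle in $[1/4,1/2]\times D_2$ and its second in $[1/2,3/4]\times D_1$. For $s\in[0,1]$ define a cobordism $\Sigma^s$ which outside $[0,1]\times(D_1\cup D_2)$ is the trivial product cobordism, inside $[0,1]\times D_1$ is the standard saddle supported in a time window of width $1/4$ centered at $(1-s)\cdot 3/8+s\cdot 5/8$, and inside $[0,1]\times D_2$ is the standard saddle supported in a time window of width $1/4$ centered at $(1-s)\cdot 5/8+s\cdot 3/8$. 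Since $D_1\cap D_2=\emptyset$, these two local models glue together along the trivial product to give a smooth embedded cobordism from $Z_1$ to $Z_3$, and the family $\{\Sigma^s\}_{s\in[0,1]}$ is a smooth isotopy rel boundary from $\Sigma_2\circ\Sigma_1$ to $\Sigma'_1\circ\Sigma'_2$ as embedded surfaces.

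Next I would verify that the divides are carried along consistently. Each of $\Sigma_2\circ\Sigma_1$ and $\Sigma'_1\circ\Sigma'_2$ has divides that, by definition of the saddle and trivial pieces, are the vertical translates $[0,1]\times\{p\}$ of the endpoints $p$ of active arcs outside $D_1\cup D_2$, together with the two canonical arcs inside each saddle connecting the appropriate pairs of points on $\bdy D_i$. In the interpolating family $\Sigma^s$, the divides are built from exactly the same local models inside $[0,1]\times D_i$ (merely shifted in time) and are unchanged outside, so they extend to a smooth family $\Gamma^s\subset\Sigma^s$ of divides whose endpoints on $\bdy\Sigma^s$ are constant. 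Thus the isotopy is actually through divided cobordisms, which is stronger than what we need.

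The main (and really only) subtle point is checking that the interpolating $\Sigma^s$ is indeed an embedded smooth cobordism for each $s$ and that the family is smooth in $s$. This is where the disjointness of $D_1$ and $D_2$ is essential: the two local saddle models never interact, so smoothness and embeddedness both reduce to the standard fact that a single saddle can be slid along its time coordinate by an isotopy rel boundary. Granted this, applying the reverse reparametrization in $\Diff^1$ (or rather, its isotopy to the identity) returns us to our original representatives and completes the proof.
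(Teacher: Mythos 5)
Your proof is correct and is exactly the explicit construction that the paper elides when it says ``This is straightforward, and is left to the reader.'' The key mechanism---interpolating by sliding the disjointly supported saddle time windows past each other---is the natural argument, and you correctly note that disjointness of $D_1$ and $D_2$ is what makes the interpolating family $\Sigma^s$ embedded and smooth at every stage, including $s=1/2$ where both saddles occur simultaneously. You also correctly track the divides, which is the part a reader might forget: since the divides in a saddle cobordism are the local saddle-model arcs inside $D_i$ and vertical products outside, they transport coherently along the time-slide, so the isotopy is genuinely an isotopy of \emph{divided} cobordisms rel boundary, which is what a $2$-morphism in $\CobD$ requires.

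One small bookkeeping point: you attribute the initial normalization of time windows to ``a reparametrization of $[0,1]$ (which lies in $\Diff^1$)'' and later to undoing it by a $\Diff^1$-move. In the paper's conventions $\Diff^1$ acts on a \emph{spatial} $(0,1)$-factor and $\Diff^2$ acts on $[0,1]\times(0,1)$, and neither group includes a pure reparametrization of the time coordinate alone (that would violate the requirement of being the identity near $[0,1]\times\{0,1\}$). What you actually need---and what your parenthetical ``(or rather, its isotopy to the identity)'' already gestures at---is that a time-reparametrization of a cobordism is itself isotopic rel boundary to the original, because the reparametrizing diffeomorphism of $[0,1]$ fixing the endpoints is isotopic to the identity. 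Phrased that way, your normalization step and the closing step are both isotopies rel boundary, which concatenate with the sliding family $\Sigma^s$ to give the required $2$-morphism. This is a phrasing issue, not a gap in the mathematics.
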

\begin{proof}
  This is straightforward, and is left to the reader.
\end{proof}

We state a corollary somewhat informally; it can be formalized along
the lines of the statement of Proposition~\ref{prop:saddle-commute},
but the precise version seems more confusing than enlightening:
\begin{corollary}\label{cor:cob-commute}
  Suppose each of $\Sigma_1\co Z_1\to Z_2$ and $\Sigma_2\co Z_2\to Z_3$ is
  a multi-saddle or a multi-merge cobordism, and the supports of
  $\Sigma_1$ and $\Sigma_2$ are disjoint. Then $\Sigma_1$ and
  $\Sigma_2$ commute up to isotopy, in the obvious sense. 
\end{corollary}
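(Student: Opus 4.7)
The plan is to reduce the corollary to repeated application of Proposition~\ref{prop:saddle-commute}. By the definitions in Section~\ref{sec:cabinet}, any multi-saddle cobordism with support $D=\bigsqcup_i D_i$ is, up to isotopy rel boundary, a composition of saddle cobordisms supported on the individual disks $D_i$ (in any chosen order), glued to a trivial cobordism outside $D$. A multi-merge cobordism is by definition a composition of merge cobordisms, each of which is itself a composition of saddles supported on the arcs of some crossingless matching. Hence, whether $\Sigma_1$ and $\Sigma_2$ are multi-saddles or multi-merges, each decomposes (up to isotopy in $\CobD$) as a composition
\[
\Sigma_1 \simeq \sigma_1^{k_1}\circ\cdots\circ\sigma_1^{1},\qquad
\Sigma_2 \simeq \sigma_2^{\ell_2}\circ\cdots\circ\sigma_2^{1},
\]
where each $\sigma_j^i$ is a single saddle cobordism supported on a small disk contained in the support of $\Sigma_j$.

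Because the supports of $\Sigma_1$ and $\Sigma_2$ are disjoint, every disk supporting a saddle in the decomposition of $\Sigma_1$ is disjoint from every disk supporting a saddle in the decomposition of $\Sigma_2$. Thus Proposition~\ref{prop:saddle-commute} applies to any adjacent pair $\sigma_1^i,\sigma_2^j$ in a composite expression. The strategy is then the standard ``bubble sort'': starting from $\Sigma_2\circ\Sigma_1=\sigma_2^{\ell_2}\circ\cdots\circ\sigma_2^1\circ\sigma_1^{k_1}\circ\cdots\circ\sigma_1^1$, repeatedly transpose each $\sigma_2^j$ past each $\sigma_1^i$ using Proposition~\ref{prop:saddle-commute}; after all $k_1\ell_2$ transpositions are performed, one arrives at a composite of the form $\sigma_1^{\prime k_1}\circ\cdots\circ\sigma_1^{\prime 1}\circ\sigma_2^{\prime\ell_2}\circ\cdots\circ\sigma_2^{\prime 1}$, where the primed saddles have the same supports as the original ones but are performed on the intermediate divided $1$-manifolds produced along the new order. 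By construction these are exactly the saddles appearing in decompositions of cobordisms $\Sigma'_1,\Sigma'_2$ with $\Sigma'_2$ performed first; by disjointness of supports, $\Sigma'_1$ (respectively $\Sigma'_2$) has the same support as $\Sigma_1$ (respectively $\Sigma_2$) and the same underlying topological surgeries, so $\Sigma'_1\circ\Sigma'_2$ is the ``obvious'' commuted form of $\Sigma_2\circ\Sigma_1$.

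The only non-routine point is verifying that each intermediate composite is actually a well-defined morphism in $\CobD$ with the correct divide pattern. This is automatic: at each step the two saddles being swapped are supported on disjoint disks $D,D'$, so both orderings yield cobordisms whose restrictions to $[0,1]\times D$, $[0,1]\times D'$, and their complement agree, and whose divides are built from the same local pieces (saddle dividers inside $D$ and $D'$, product dividers elsewhere). Thus the hypothesis of Proposition~\ref{prop:saddle-commute} is met at every stage, and concatenating the resulting rel-boundary isotopies produces the required isotopy from $\Sigma_2\circ\Sigma_1$ to the commuted composite.

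The main obstacle, as flagged in the statement, is merely notational: formalizing ``the obvious sense'' of commutation requires specifying the primed saddles and the new intermediate $1$-manifolds, which is why the informal statement is cleaner than a fully indexed version. Once one fixes orderings on the supports of $\Sigma_1$ and $\Sigma_2$ and writes out the bubble-sort argument, the proof consists entirely of iterating Proposition~\ref{prop:saddle-commute}.
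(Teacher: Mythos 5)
Your proof is correct and takes exactly the route the paper intends: the paper gives this corollary no proof at all, remarking only that it "can be formalized along the lines of the statement of Proposition~\ref{prop:saddle-commute}" and that the precise version "seems more confusing than enlightening." Your decomposition of multi-saddles and multi-merges into individual saddles supported on disjoint disks, followed by the bubble-sort iteration of Proposition~\ref{prop:saddle-commute}, is precisely the omitted argument, and you correctly identify that the only point needing care is that the intermediate composites remain well-defined divided cobordisms.
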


Finally, we note some relations involving births:
\begin{proposition}\label{prop:births} Birth and merge cobordisms
  satisfy the following relations:
  \begin{enumerate}
  \item\label{item:unique-birth} Let $Z_2$ be a divided $1$-manifold
    and $Z_1\subset Z_2$ a subset which is itself a divided
    $1$-manifold. Then all multi-birth cobordisms from $Z_1$ to $Z_2$,
    in which the circles $Z_2\setminus Z_1$ are born, are
    isotopic.
  \item\label{item:birth-commute} If $\Sigma_1$ is a multi-birth,
    multi-merge, or multi-saddle cobordism and $\Sigma_2$ is
    a multi-birth cobordism, and the supports of $\Sigma_1$ and
    $\Sigma_2$ are disjoint then $\Sigma_1$ and $\Sigma_2$ commute up
    to isotopy.
  \item\label{item:birth-merge} If $\Sigma_1\co aT\Wmirror{b}\to a\Wmirror{a}\amalg aT\Wmirror{b}$
    (respectively
    $\Sigma_1\co aT\Wmirror{b}\to aT\Wmirror{b}\amalg b\Wmirror{b}$) is a
    birth cobordism and
    $\Sigma_2\co a\Wmirror{a}\amalg aT\Wmirror{b}\to aT\Wmirror{b}$
    (respectively
    $\Sigma_2\co aT\Wmirror{b}\amalg b\Wmirror{b} \to aT\Wmirror{b}$) is a
    merge cobordism then $\Sigma_2\circ \Sigma_1$ is isotopic to a
    trivial cobordism $aT\Wmirror{b}\to aT\Wmirror{b}$.
  \item\label{item:birth-multi-merge} If
    $\Sigma_1\co aT\Wmirror{b}\amalg bT'\Wmirror{c}\to
    aT\Wmirror{b}\amalg b\Wmirror{b}\amalg bT'\Wmirror{c}$ is a birth
    cobordism and
    $\Sigma_2\co aT\Wmirror{b}\amalg b\Wmirror{b}\amalg
    bT'\Wmirror{c}\to aTT'\Wmirror{c}$ is a multi-merge cobordism then
    $\Sigma_2\circ\Sigma_1$ is isotopic to a merge cobordism
    $aT\Wmirror{b}\amalg bT'\Wmirror{c}\to aTT'\Wmirror{c}$.
  \end{enumerate}
\end{proposition}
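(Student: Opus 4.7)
The plan is to verify each part by explicit topological arguments about surfaces in $[0,1]\times(0,1)^2$, carefully tracking how divides behave under the isotopies. I would handle the four parts in order, since parts (3) and (4) can use the commutativity from (2), which in turn uses the uniqueness from (1).

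For \Part{unique-birth}, the idea is that any multi-birth cobordism from $Z_1$ to $Z_2$ consists of the product cobordism $[0,1]\times Z_1$ together with a disjoint collection of embedded disks in $[0,1]\times(0,1)^2$, one for each circle in $Z_2\setminus Z_1$, with boundary on $\{1\}\times(Z_2\setminus Z_1)$ (satisfying \Convention{vertical}). Since $[0,1]\times(0,1)^2$ is contractible, any two such embedded disks with the same boundary are isotopic rel boundary by a standard general-position argument (they are contractible submanifolds of a simply connected $3$-manifold). The divide structure on each cup disk is then completely determined (up to isotopy rel boundary of the disk) by the active/inactive decomposition of its bounding circle, since the combinatorial pattern of type~\ref{item:type1} rectangles and type~\ref{item:type2} bigons forced by the definition is rigid on a disk.

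For \Part{birth-commute}, the hypothesis gives projections $\pi(\Sigma_1)$ and $\pi(\Sigma_2)$ to $(0,1)^2$ with disjoint closures. I would construct an explicit isotopy that ``slides'' the multi-birth $\Sigma_2$ from happening after $\Sigma_1$ to happening before $\Sigma_1$: since the support of $\Sigma_2$ in $(0,1)^2$ is disjoint from that of $\Sigma_1$, the birthed disks can be placed at any height in $[0,1]$ without obstructing $\Sigma_1$. One then applies \Part{unique-birth} to identify the resulting composition with the claimed order. This is essentially \Corollary{cob-commute} combined with the uniqueness of births.

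For \Part{birth-merge}, the strategy is to recognize the composition $\Sigma_2\circ\Sigma_1$ as a standard $0$-handle/$1$-handle cancellation. Locally, the birth creates a cup (disk) and the subsequent saddle attaches a $1$-handle whose feet are on the newly created circle and on the adjacent arc of $aT\Wmirror{b}$. The union of the cup and the $1$-handle is a compressible tongue homeomorphic to a disk, which can be pushed off onto the product $[0,1]\times aT\Wmirror{b}$ by an isotopy rel boundary. One must check the divides behave correctly: the bigon divides on the cup meet the rectangular divides on the saddle to yield precisely the product divides $[0,1]\times P$ after the isotopy.

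Part~\Part{birth-multi-merge} then follows from~\Part{birth-merge} by performing the merges one at a time: after merging the birthed circle $b\Wmirror{b}$ with the piece $aT\Wmirror{b}$ on its left via the $0$-$1$ cancellation of~\Part{birth-merge}, the remaining saddles in the multi-merge cobordism compose to give the merge cobordism $aT\Wmirror{b}\amalg bT'\Wmirror{c}\to aTT'\Wmirror{c}$, using~\Part{birth-commute} as needed to reorder if the multi-merge did not execute this saddle first. The main obstacle throughout is verifying that the isotopies of underlying surfaces can be chosen to carry divides to divides; this reduces to the observation that the planar regions cut out by divides have trivial boundary-fixing mapping class groups, as in the proof of \Lemma{div-cob-loop}.
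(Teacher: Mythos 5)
Your proof is correct and follows essentially the same approach as the paper's (very terse) proof, which dismisses Parts~\PartCont{unique-birth} and~\PartCont{birth-merge} as straightforward and derives Parts~\PartCont{birth-commute} and~\PartCont{birth-multi-merge} from them; your deriving \PartCont{birth-multi-merge} via \PartCont{birth-commute} rather than directly from \PartCont{unique-birth} and \PartCont{birth-merge} is only a cosmetic rearrangement. One small quibble: in \PartCont{unique-birth} you attribute the uniqueness of cup disks up to isotopy rel boundary to simple connectivity of the ambient $3$-manifold, whereas the correct $3$-dimensional justification is irreducibility (Alexander's theorem / innermost-disk surgery, applied in the complement of the product part of the cobordism)---simple connectivity alone would not suffice in higher dimensions---but the conclusion and the rest of your argument, including the rigidity of the divide pattern on the cup, are sound.
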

\begin{proof}
  Parts~(\ref{item:unique-birth}),~(\ref{item:birth-commute}) and~(\ref{item:birth-merge}) are
  straightforward from the
  definitions. Part~(\ref{item:birth-multi-merge}) follows from
  Parts~(\ref{item:unique-birth}) and~(\ref{item:birth-merge}).
\end{proof}

\subsection{A frenzy of functors}
Section~\ref{sec:CobE-to-Burn} recalls the Khovanov-Burnside functor,
which we can view as a multifunctor $\mV\co \mCobD\to
\mBurnside$:
\begin{lemma}
  There is a strict multifunctor $\mV\co \mCobD\to\mBurnside$ defined
  as follows:
  \begin{itemize}
  \item On objects, $\mV(Z)=V_{\HKKa}(Z)$, the set of labelings of $Z$
    by $\{1,X\}$.
  \item On basic multimorphisms, $\mV\bigl(\Sigma\co (Z_1,\dots,Z_n)\to Z\bigr)$
    is the correspondence 
    \[
    V_{\HKKa}(\Sigma)\co \mV(Z_1)\times\dots\times\mV(Z_n)\to \mV(Z).
    \]
    On general multimorphisms of $\mCobD$ (which are trees with
    vertices labeled by basic multimorphisms), $V_{\HKKa}$ is gotten
    by composing, in some order compatible with the tree, the
    correspondences $V_{\HKKa}(\Sigma_v)$ associated to the vertices
    $v$.

    Given $f\in\Hom_{\mCobD}(Z_1,\dots,Z_n;Z)$, we have two
    correspondences from
    $V_{\HKKa}(Z_1)\times\dots\times V_{\HKKa}(Z_n)$ to
    $V_{\HKKa}(Z)$: the correspondence $\mV(f)$, which is a composition of a sequence of
    correspondences associated to cobordisms, and the correspondence
    $V_{\HKKa}(f^\circ)$, which is the correspondence associated to
    the composition of those cobordisms. The coherence isomorphisms
    for the lax functor $V_{\HKKa}$ give an isomorphism
    $C(f)\co \mV(f)\to V_{\HKKa}(f^\circ)$. Now, given
    $f,g\in \Hom_{\mCobD}(Z_1,\dots,Z_n;Z)$ and
    $\phi\in\Hom(f,g)$, let $\phi^\circ$ be the corresponding morphism in $\CobD$
    from $f^\circ$ to $g^\circ$ and define
    \[
      \mV(\phi)= C(g)^{-1}\circ V_{\HKKa}(\phi^\circ)\circ C(f).
    \]
  \end{itemize}
\end{lemma}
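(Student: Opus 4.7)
The plan is to verify the three pieces of the construction — behavior on objects, on multimorphisms, and on 2-morphisms — and then check that the resulting assignment really is a strict multifunctor.

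First, on objects, well-definedness is immediate from the fact that $V_{\HKKa}\co\CobD\to\BurnsideCat$ of Proposition~\ref{prop:CobD-to-Burnside} is already defined on objects of $\CobD$, which agree with those of $\mCobD$. On basic multimorphisms, a basic multimorphism $\Sigma\co(Z_1,\dots,Z_n)\to Z$ in $\mCobD$ is by definition an element of $\Hom_{\CobD}(Z_1\amalg\cdots\amalg Z_n,Z)$, and the definition of $\mBurnside$ in Section~\ref{sec:mBurnside} was set up precisely so that the correspondence $V_{\HKKa}(\Sigma)\co V_{\HKKa}(Z_1\amalg\cdots\amalg Z_n)=\mV(Z_1)\times\cdots\times\mV(Z_n)\to \mV(Z)$ lives in $\Hom_{\mBurnside}(\mV(Z_1),\dots,\mV(Z_n);\mV(Z))$.

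Next, for a general multimorphism $f\in\Hom_{\mCobD}(Z_1,\dots,Z_n;Z)$ given by a tree $\aTree$ with basic multimorphisms $\Sigma_v$ at its internal vertices, I will define $\mV(f)$ by composing the correspondences $\mV(\Sigma_v)$ in the multicategory $\mBurnside$ according to $\aTree$. The key point is that $\mBurnside$ has strictly associative and unital multi-composition (by the Euclidean embedding construction in Definition~\ref{def:burnsidecat}), so $\mV(f)$ is well-defined independent of any choice of order in which one carries out the multi-composition. The coherence isomorphism $C(f)\co\mV(f)\to V_{\HKKa}(f^\circ)$ is obtained by iterating the lax coherence $2$-isomorphisms of $V_{\HKKa}$ one vertex at a time; its uniqueness is a consequence of the coherence axioms for lax $2$-functors, which guarantee that any two ways of pasting the coherence isomorphisms together agree.

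For the $2$-morphism part, given $\phi\in\Hom(f,g)$ (which exists and is unique exactly when $f^\circ$ is isotopic to $g^\circ$), the formula $\mV(\phi)=C(g)^{-1}\circ V_{\HKKa}(\phi^\circ)\circ C(f)$ visibly produces an isomorphism of correspondences from $\mV(f)$ to $\mV(g)$, and it is independent of the choice of isotopy representative of $\phi^\circ$ because $V_{\HKKa}$ is already well-defined on $2$-morphisms in $\CobD$.

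Finally, I will check strictness of $\mV$. Identity multimorphisms in $\mCobD$ are trees with no internal vertices, whose image is the identity correspondence in $\mBurnside$, matching the strict identity. For composition, the composite $f\circ(g_1,\dots,g_n)$ in $\mCobD$ is, by definition, the concatenation of the trees defining $f$ and the $g_i$; applying $\mV$ then amounts to multi-composing the basic-multimorphism values according to the concatenated tree, which by strict associativity of $\mBurnside$ agrees with $\mV(f)\circ(\mV(g_1),\dots,\mV(g_n))$. Compatibility with $2$-morphisms reduces, via the definition of $\mV(\phi)$, to compatibility of the coherence isomorphisms $C(-)$ with composition of trees, which again follows from the coherence axioms for $V_{\HKKa}$. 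The main technical point is precisely this last compatibility: once one accepts that the lax coherence cells of $V_{\HKKa}$ assemble coherently over the tree combinatorics of $\mCobD$, everything else is formal from the strict-associativity design of $\mBurnside$ and $\mCobD$.
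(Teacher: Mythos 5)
Your proof takes essentially the same approach as the paper's, identifying the strict associativity of $\mBurnside$ and the coherence isomorphisms $C(f)$ as the key ingredients, and invoking the naturality of the lax $2$-functor $V_{\HKKa}$ for the compatibility of $\mV$ with composition of trees and $2$-morphisms. The one elision relative to the paper is the explicit telescoping verification that $\mV$ defines a map of groupoids on each multimorphism set — that $\mV(\psi)\circ\mV(\phi)=C(h)^{-1}\circ V_{\HKKa}(\psi^\circ)\circ V_{\HKKa}(\phi^\circ)\circ C(f)=\mV(\psi\circ\phi)$, using the functoriality of $V_{\HKKa}$ from Proposition~\ref{prop:CobD-to-Burnside} — but this check is routine given the setup you already have.
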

\begin{proof}
  We must check that:
  \begin{enumerate}
  \item\label{item:mV-groupoid} Given $\phi\in\Hom(f,g)$ and $\psi\in\Hom(g,h)$,
    $\mV(\psi\circ\phi)=\mV(\psi)\circ\mV(\phi)$, so that $\mV$
    defines a map of groupoids.
  \item The functor $\mV$ respects the identity maps. This is trivial.
  \item\label{item:mV-functor} The functor $\mV$ respects composition of trees.
  \end{enumerate}
  For Point~(\ref{item:mV-groupoid}), we have
    \[
      \mV(\psi)\circ \mV(\phi)= C(h)^{-1}\circ V_{\HKKa}(\psi^\circ)\circ V_{\HKKa}(\phi^\circ)\circ C(f)
      =C(h)^{-1}\circ V_{\HKKa}(\psi^\circ\circ \phi^\circ)\circ C(f)=\mV(\psi\circ\phi),
    \]
    where the second equality uses functoriality of $V_{\HKKa}$
    (Proposition~\ref{prop:CobD-to-Burnside}). For
    Point~(\ref{item:mV-functor}), at the level of objects of the
    multimorphism groupoids this is immediate from associativity of
    composition in $\mBurnside$. For morphisms in the multimorphism
    groupoids this uses naturality of the coherence maps $C(f)$.
\end{proof}
\setcounter{OutlineCompletion}{85}

\begin{lemma}\label{lem:mHfunc}
  There is a multifunctor $\mHfunc{n}\co \mHshape{n}\to\mCobD$ 
  from the
  multicategory $\mHshape{n}$ to the cobordism
  multicategory $\mCobD$ defined as follows:
  \begin{itemize}
  \item On objects, $\mHfunc{n}((a,b))=a\Wmirror{b}$, which is a divided
    $1$-manifold as described in Section~\ref{sec:cabinet}.
  \item On basic multimorphisms, $\mHfunc{n}$ sends $f_{a_1,\dots,a_k}\co
    (a_1,a_2),\dots,(a_{k-1},a_k)\to (a_1,a_k)$ to some particular, chosen multi-merge cobordism
    \[
      \mHfunc{n}(f_{a_1,\dots,a_k})\co 
      a_1\Wmirror{a_2}\amalg \dots\amalg a_{k-1}\Wmirror{a_k}\to a_1\Wmirror{a_k}
    \]
    if $k>1$ and to the birth cobordism
    \[
    \mHfunc{n}(f_{a_1})\from \emptyset\to a_1\Wmirror{a_1}
    \]
    if $k=1$.  The functor $\mHfunc{n}$ assigns to an object
    in $\Hom_{\mHshape{n}}((a_1,a_2),\dots,(a_{k-1},a_k); (a_1,a_k))$
    with underlying tree $\aTree$  the composition (in
    $\mCobD$), according to $\aTree$, of the multi-merge or birth
    cobordisms chosen for each vertex.
  \end{itemize}
\end{lemma}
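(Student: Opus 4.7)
The plan is to verify that the given rules extend uniquely to a multifunctor, checking well-definedness and functoriality at each level of the groupoid-enriched structure. On objects, $a\Wmirror{b}$ is a divided $1$-manifold by the construction of \Section{cabinet}. For basic multimorphisms, the discussion in \Section{cabinet} shows that a multi-merge cobordism is well-defined up to unique isomorphism in $\CobD$ by its source divided $1$-manifold and the supports of its constituent saddles, and \Proposition{births}\Part{unique-birth} shows the same for birth cobordisms. Hence each basic multimorphism in $\mHshape{n}$ has a canonical image that is a well-defined $1$-morphism in $\CobD$, i.e., a basic multimorphism in $\mCobD$.

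On objects of a general multimorphism groupoid of $\mHshape{n}$---labeled trees $\aTree$---I extend $\mHfunc{n}$ by relabeling each edge with the corresponding divided $1$-manifold and each vertex with the corresponding multi-merge or birth cobordism. This produces an object of the appropriate multimorphism groupoid of $\mCobD$. Since composition in both $\mHshape{n}$ and $\mCobD$ is concatenation of trees, $\mHfunc{n}$ strictly respects composition at the level of objects of multimorphism groupoids.

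The main obstacle is well-definedness on morphisms in the groupoids. Because $\mHshape{n}$ is the canonical thickening of $\mHshape{n}^0$, there is a unique morphism $\aTree \to \aTree'$ in a multimorphism groupoid of $\mHshape{n}$ whenever $\aTree$ and $\aTree'$ have the same source and target. In $\mCobD$, there is a unique morphism $\mHfunc{n}(\aTree) \to \mHfunc{n}(\aTree')$ precisely when their collapses are isotopic rel boundary. So I must verify that if two trees in $\mHshape{n}$ both compute $f_{a_1,\dots,a_k}$, then the collapsed cobordisms in $\CobD$ are isotopic rel boundary. I will do this by showing any such collapse is isotopic to a single canonical representative: the multi-merge cobordism $a_1\Wmirror{a_2}\amalg\cdots\amalg a_{k-1}\Wmirror{a_k}\to a_1\Wmirror{a_k}$ performed in a fixed saddle order. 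There are two kinds of deviations to reconcile. First, different parenthesizations and orderings of sub-merges yield the constituent saddles in different orders; since saddles with disjoint supports commute up to isotopy by \Proposition{saddle-commute} and \Corollary{cob-commute}, any two orderings give isotopic results. Second, the tree may contain stump leaves $f_{a_i}$, each contributing a birth cobordism creating a copy of $a_i\Wmirror{a_i}$; by the source/target constraint the newly-created circle is necessarily absorbed by some downstream merge, and \Proposition{births}\Part{birth-merge} and~\Part{birth-multi-merge} imply that such a birth-then-merge composite is isotopic to the merge without the birth.

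With well-definedness on morphisms established, the remaining multifunctor axioms are immediate: preservation of the identity corresponds to the zero-internal-vertex tree on an object going to the zero-internal-vertex tree on the associated divided $1$-manifold, and preservation of composition on multimorphism groupoids follows from both $\mHshape{n}$ and $\mCobD$ being canonical thickenings whose composition is concatenation of trees.
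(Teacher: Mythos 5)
Your proof is correct and takes essentially the same approach as the paper, which reduces the key issue (well-definedness on $2$-morphisms) to \Corollary{cob-commute} for reorderings of saddles and to \Proposition{births} for stump leaves, with the remaining multifunctor axioms being formal from the canonical-thickening construction. One small gap in the stump argument: \Corollary{cob-commute} is stated only for multi-saddle and multi-merge cobordisms, so to slide a birth past intervening merges until it is adjacent to the merge that absorbs it you also need Part~(\ref{item:birth-commute}) of \Proposition{births}, not just Parts~(\ref{item:birth-merge}) and~(\ref{item:birth-multi-merge}).
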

\begin{proof}
  We must check that $\mHfunc{n}$ extends to the morphisms in the
  multimorphism groupoids (i.e., $2$-morphisms), and that it respects
  multi-compositions. The fact that $\mHfunc{n}$ extends to
  $2$-morphisms follows from Corollary~\ref{cor:cob-commute} and
  Proposition~\ref{prop:births} (the second of which is only relevant
  when stumps are involved).
  The fact that $\mHfunc{n}$ respects composition is purely formal on
  the level of $1$-multimorphisms (from the definition of the
  canonical thickening). At the level of $2$-morphisms, it follows
  from the fact that given multimorphisms $\Sigma,\Sigma'$ in
  $\mCobD$, there is at most one $2$-morphism from $\Sigma$ to
  $\Sigma'$.
\end{proof}
\setcounter{OutlineCompletion}{87}

Given a flat, poxed $(2m,2n)$-tangle $T\in\FlatTangles{m}{n}$ there is a
multifunctor $\mTfunc{T}\co \mTshape{m}{n}\to \mCobD$ defined similarly
to $\mHfunc{n}$. Indeed, on the subcategories
$\mHshape{m},\mHshape{n}\subset \mTshape{m}{n}$ the functor $\mTfunc{T}$
is exactly $\mHfunc{m}$, $\mHfunc{n}$. On objects $(a,T,b)$, let
$\mTfunc{T}((a,T,b))=aT\Wmirror{b}$, which is a divided $1$-manifold as in Construction~\ref{const:divided-tangle}. On the basic multimorphisms
\[
f_{a_1,\dots,a_i,T,b_1,\dots,b_j}\co (a_1,a_2),\dots,(a_{i-1},a_i),(a_i,T,b_1),(b_1,b_2),\dots,(b_{j-1},b_j)\to (a_1,T,b_j)
\]
the functor $\mTfunc{T}(f_{a_1,\dots,a_i,T,b_1,\dots,b_j})$ is some
chosen multi-merge cobordism corresponding to the obvious merging. As
usual, this extends formally to general objects in the multimorphism
groupoids.

\begin{lemma}
  This construction extends uniquely to a multifunctor
  $\mTfunc{T}\co \mTshape{m}{n}\to \mCobD$.
\end{lemma}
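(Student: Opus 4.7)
The plan is to follow the proof of Lemma~\ref{lem:mHfunc} almost verbatim, since the definitions of $\mTfunc{T}$ and $\mHfunc{n}$ are formally parallel: both send a basic multimorphism of the shape multicategory to a chosen multi-merge (or, in the stumped case, birth) cobordism, and extend to trees by composition in $\mCobD$. On the subcategories $\mHshape{m},\mHshape{n}\subset \mTshape{m}{n}$, $\mTfunc{T}$ is declared to coincide with $\mHfunc{m}$ and $\mHfunc{n}$, so the only genuinely new input needed is the analysis of basic multimorphisms involving an object of the form $(a,T,b)$.

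The first task is to check that $\mTfunc{T}$ is well-defined on $2$-morphisms, i.e., that if $\aTree$ and $\aTree'$ are two trees in a multimorphism groupoid of $\mTshape{m}{n}$ with the same source and target, then the divided cobordisms obtained by composing the chosen multi-merges/births along $\aTree$ and $\aTree'$ are isotopic rel boundary in $\mCobD$. I would reduce this to the fact that any two sequences of saddle/birth cobordisms implementing the same underlying merging pattern from $a_1\Wmirror{a_2}\amalg\cdots\amalg a_{i-1}\Wmirror{a_i}\amalg a_iT\Wmirror{b_1}\amalg b_1\Wmirror{b_2}\amalg\cdots\amalg b_{j-1}\Wmirror{b_j}$ to $a_1T\Wmirror{b_j}$ are isotopic. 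This follows from Corollary~\ref{cor:cob-commute}, since saddles with disjoint supports commute, and from Proposition~\ref{prop:births}, which handles the cases involving stump-labeled birth cobordisms (for zero-input basic multimorphisms $f_{a_1}$ or $f_{b_1}$ appearing inside a composite) and guarantees that a birth followed by a merge along the newly-born circle gives the same thing as the trivial cobordism. The crucial observation is that the supports of the saddle cobordisms coming from $a$-side mergings, $b$-side mergings, and the merging of $a_i$ into $T$ and $T$ into $b_1$ are supported over disjoint regions of $(0,1)^2$, so there is no obstruction to commuting them past one another.

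Next, I would verify that $\mTfunc{T}$ respects composition and identities. At the level of $1$-morphisms in the underlying strictification, this is automatic from the definition of the canonical thickening: a composition of trees is simply tree-concatenation, and $\mTfunc{T}$ is defined to compose its values along the tree. Identity multimorphisms correspond to trees with no internal vertices, so they go to trivial cobordisms. At the level of $2$-morphisms, respecting composition is automatic by the uniqueness of $2$-morphisms in $\mCobD$: once we know the source and target cobordisms agree up to isotopy, there is no additional data to check.

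The only real obstacle is the bookkeeping in Step~(a): convincing oneself that, for a general multimorphism in $\mTshape{m}{n}$ involving a $(a,T,b)$-type object surrounded by arbitrarily many $a$- and $b$-type multimorphisms, all permissible orderings of the saddles that build the multi-merge cobordism yield isotopic divided cobordisms. Because the supports of the relevant saddles are disjoint for operations on the $a$-side, on the $b$-side, and across the $T$-region, this reduces cleanly to repeated application of Corollary~\ref{cor:cob-commute} together with Proposition~\ref{prop:births}\PartCont{birth-multi-merge} to absorb any stumps. With these in hand, the same formal argument as in Lemma~\ref{lem:mHfunc} produces the desired multifunctor, and uniqueness follows from the fact that any multifunctor is determined by its values on basic multimorphisms together with the groupoid structure on $\mCobD$.
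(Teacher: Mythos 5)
Your proposal is correct and follows essentially the same route as the paper, which simply states that the proof is the same as that of \Lemma{mHfunc} and leaves it to the reader; you have spelled out precisely that argument, using \Corollary{cob-commute} and \Proposition{births} for well-definedness on $2$-morphisms and the formal properties of the canonical thickening (plus uniqueness of $2$-morphisms in $\mCobD$) for functoriality.
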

\begin{proof}
  The proof is essentially the same as the proof of
  Lemma~\ref{lem:mHfunc}, and is left to the reader.
\end{proof}

Next, fix a poxed $(2m,2n)$-tangle $T\in\TangleDiags{m}{n}$ (see
Definition~\ref{def:pox}) with $N$
ordered crossings. We associate to $T$ a multifunctor
\[
\mTfuncNF{T}\co \CCat{N}\ttimes \mTshape{m}{n}\to \mCobD
\]
as follows. First, choose a collection of disjoint disks $D_i$ around
the crossings of $T$, and for each $v\in\{0,1\}^N$ choose a particular
flat tangle $T_v$ representing the $v$-resolution of $T$, so that
$T_v$ agrees with (the projection of) $T$ outside the disks
$D_i$.

Now, objects of $\CCat{N}\ttimes\mTshape{m}{n}$ are of three kinds:
\begin{itemize}
\item Pairs $(a_1,a_2)$ where 
  $a_1,a_2\in\Crossingless{m}$. In this case we define
  $\mTfuncNF{T}(a_1,a_2)=a_1\Wmirror{a_2}$, which we give the structure of a
  divided $1$-manifold as described in Construction~\ref{const:divided-crossingless}.
\item Pairs $(b_1,b_2)$ where
  $b_1,b_2\in\Crossingless{n}$. In this case we (again) define
  $\mTfuncNF{T}(b_1,b_2)=b_1\Wmirror{b_2}$.
\item Quadruples $(v,a,T,b)$ where $v\in\{0,1\}^N$,
  $a\in\Crossingless{m}$ and $b\in\Crossingless{n}$. In this case, we
  define $\mTfuncNF{T}(v,a,T,b)=aT_v\Wmirror{b}$. We give
  $aT_v\Wmirror{b}$ the structure of a divided $1$-manifold as
  described in Construction~\ref{const:divided-tangle}.
\end{itemize}
As always, defining $\mTfuncNF{T}$ on multimorphism groupoids takes more
work. To define $\mTfuncNF{T}$ on objects of the multimorphism groupoids
it suffices to define $\mTfuncNF{T}$ for the following two elementary
morphisms:
\begin{itemize}
\item A basic multimorphism coming from a morphism in $\CCat{N}$, i.e., a map $f_{\CCat{N}}\co (v,a,T,b)\to(w,a,T,b)$. Define
  $\mTfuncNF{T}(f_{\CCat{N}})$ to be a multi-saddle cobordism from 
  $aT_v\Wmirror{b}$ to $aT_w\Wmirror{b}$ (see Section~\ref{sec:cabinet}).
\item A basic multimorphism coming from a morphism
  $f_{\mTshape{m}{n}}$ in $\mTshapeStct{m}{n}^0$.  In this case define
  $\mTfuncNF{T}(f_{\mTshape{m}{n}})$ to be the cobordism
  $\mTfunc{T_v}(f_{\mTshape{m}{n}})$ (associated to the flat tangle $T_v$).
\end{itemize}
On a general object, $\mTfuncNF{T}$ is defined by composing these
multimorphisms according to the tree. (Since this composition happens
in $\mCobD$, given a multimorphism $f$ in
$\CCat{N}\ttimes\mTshape{m}{n}$ with underlying tree $\aTree$,
$\mTfuncNF{T}(f)$ is the same tree $\aTree$ with vertices labeled by
the divided cobordisms corresponding to the labels in $f$.)

Since there is a unique isomorphism between isotopic divided
cobordisms, to extend $\mTfuncNF{T}$ to morphisms in the multimorphism
groupoids it suffices to show that if two morphisms $\aTree$,
$\aTree'$ in $\CCat{N}\ttimes \mTshape{m}{n}$ have a morphism between
them the divided cobordisms $\mTfuncNF{T}(\aTree)^\circ$ and
$\mTfuncNF{T}(\aTree')^\circ$ are isotopic.

\begin{lemma}\label{lem:cob-weak-commute}
  If $\aTree$ and $\aTree'$ are multimorphisms in
  $\CCat{N}\ttimes \mTshape{m}{n}$ with the same source and target
  then the divided cobordisms $\mTfuncNF{T}(\aTree)^\circ$ and
  $\mTfuncNF{T}(\aTree')^\circ$ are isotopic.  
\end{lemma}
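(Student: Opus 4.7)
The plan is to show that the isotopy class of $\mTfuncNF{T}(\aTree)^\circ$ depends only on the source and target of $\aTree$, by checking that any two compositional orderings yield isotopic divided cobordisms.

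First I would unpack $\mTfuncNF{T}(\aTree)^\circ$: it is the composition, in an order dictated by $\aTree$, of the elementary divided cobordisms that $\mTfuncNF{T}$ assigns to the internal vertices of $\aTree$. These elementary cobordisms fall into three types: (i) multi-saddles supported in the disks $D_i$ around the crossings of $T$, arising from morphisms $f_{\CCat{N}}$ in the cube; (ii) multi-merges supported along the arcs of crossingless matchings that are being glued, which can be chosen to lie outside the disks $D_i$, arising from basic multimorphisms $f_{\mTshape{m}{n}}$ with at least one input; and (iii) birth cobordisms supported on disks bounded by matchings $a\Wmirror{a}$, which can be taken disjoint from the supports of types (i) and (ii), arising from zero-input multimorphisms. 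Thus, up to isotopy, the set of elementary pieces appearing in $\mTfuncNF{T}(\aTree)^\circ$ depends only on the source and target of $\aTree$.

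Next I would reduce to the case where $\aTree$ and $\aTree'$ differ by a single swap of adjacent elementary multimorphisms. Since $\strictify{\CCat{N}\ttimes\mTshape{m}{n}}$ has at most one multimorphism between any given source and target (Lemma~\ref{lem:strictify-mTshape}), any two trees with matching source and target can be connected by a finite sequence of such swaps, together with insertions or deletions of trivial identity edges, none of which change the collapsing. For a single swap of adjacent elementary cobordisms whose supports are disjoint, Corollary~\ref{cor:cob-commute} applies when both are saddles or merges, while \Part{birth-commute} of Proposition~\ref{prop:births} applies when one of them is a birth.

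The main obstacle will be the birth cases in which a newly born component is immediately consumed by a merge, so that the two pieces do not have disjoint supports and an honest ``commutation'' is unavailable. These cases are exactly handled by \Part{birth-merge} and \Part{birth-multi-merge} of Proposition~\ref{prop:births}, which collapse any birth-then-merge sequence to either a trivial cobordism or a single merge, and hence give the required isotopy regardless of the order of composition. Once these cases are enumerated, every elementary swap preserves the isotopy class of the collapsing, and the lemma follows.
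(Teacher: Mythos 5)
Your inventory of ingredients is correct — the classification of elementary pieces into saddles, merges, and births with largely disjoint supports, plus Corollary~\ref{cor:cob-commute} and Proposition~\ref{prop:births} — but the reduction to adjacent swaps has a real gap. Two trees in $\CCat{N}\ttimes\mTshape{m}{n}$ with the same source and target need not differ merely by reorderings of adjacent vertices. For instance, one tree might carry a single vertex labeled by a $3$-ary multi-merge $a\Wmirror{b}\amalg b\Wmirror{c}\amalg c\Wmirror{d}\to a\Wmirror{d}$, while another carries two vertices labeled by iterated binary merges; these are related by contracting an internal edge of the tree, not by a swap of adjacent cobordisms. Handling such moves requires knowing that the image under $\mTfuncNF{T}$ of a $k$-ary multi-merge is isotopic to the iterated composition of the corresponding binary merges — that is, that all multi-merge cobordisms with the same source and target are isotopic. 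Your argument never invokes this fact, and the claim that ``swaps plus insertions/deletions of trivial identity edges'' suffice to connect any two trees with matching source and target is not established. (The parenthetical ``none of which change the collapsing'' is also imprecise: the swaps do change the collapsing as a cobordism, fixing it only up to isotopy.)

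The paper sidesteps a swap-based induction entirely. It first uses Proposition~\ref{prop:births} to strip all stump leaves from $\aTree$ and $\aTree'$, producing stump-free trees $\aTree_0$, $\aTree'_0$ with isotopic collapsings. It then observes directly that $\mTfuncNF{T}(\aTree_0)^\circ$ and $\mTfuncNF{T}(\aTree'_0)^\circ$, being determined by the same source and target, are compositions of saddles at the same crossings and merges of the same crossingless matchings, and concludes by Corollary~\ref{cor:cob-commute} together with the uniqueness up to isotopy of multi-merge cobordisms. If you want to retain your inductive organization, you would need to enlarge the generating set of moves to include tree contractions and justify these using exactly that uniqueness statement.
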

\begin{proof}
  Both $\mTfuncNF{T}(\aTree)^\circ$ and $\mTfuncNF{T}(\aTree')^\circ$
  are compositions of:
  \begin{itemize}
  \item multi-merge cobordisms of crossingless matchings,
  \item saddle cobordisms supported on small disks around certain
    crossings of $T$, which are disjoint from the crossingless
    matchings being merged, and
  \item multi-birth cobordisms, corresponding to stump leaves, each of which is followed by a multi-merge cobordism.
  \end{itemize}
  By Proposition~\ref{prop:births}, if we let $\aTree_0$ (respectively $\aTree'_0$) be the result of removing all stump leaves from $\aTree$ then $\mTfuncNF{T}(\aTree)^\circ$ and $\mTfuncNF{T}(\aTree_0)^\circ$ are isotopic, as are $\mTfuncNF{T}(\aTree')^\circ$ and $\mTfuncNF{T}(\aTree'_0)^\circ$. 
  Now, since the source and target of $\aTree_0$ and $\aTree'_0$ are the same,
  the cobordisms $\mTfuncNF{T}(\aTree_0)^\circ$ and
  $\mTfuncNF{T}(\aTree'_0)^\circ$ have saddles at the same crossings and
  merge the same crossingless matchings. Thus, the result follows from
  Corollary~\ref{cor:cob-commute} and the fact that all multi-merge
  cobordisms with the same source and target are isotopic.
\end{proof}

\begin{proposition}
  The map $\mTfuncNF{T}$ does, indeed, define a multifunctor
  $\CCat{N}\ttimes \mTshape{m}{n}\to \mCobD$.
\end{proposition}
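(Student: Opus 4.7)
The plan is to verify the three axioms of a multifunctor---respecting identities, composition, and the groupoid enrichment---by reducing each to properties already established.

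First, on objects of the multimorphism groupoids the definition is tautologically compatible with composition: a multimorphism in $\CCat{N}\ttimes\mTshape{m}{n}$ is, by construction, a tree whose vertices are labeled by basic multimorphisms, and composition is concatenation of trees; the image under $\mTfuncNF{T}$ is the tree with each vertex relabeled by the corresponding divided cobordism, and composition in $\mCobD$ is again concatenation of such trees. Since both source and target are canonical thickenings, concatenation of labeled trees is strictly associative and has strict units given by the trivial (no internal vertex) trees, which $\mTfuncNF{T}$ sends to identity cobordisms. Thus $\mTfuncNF{T}$ respects composition and identities at the level of objects of the multimorphism groupoids.

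Next, I would extend $\mTfuncNF{T}$ to morphisms in the multimorphism groupoids. Given $\phi\from \aTree\to\aTree'$ in $\Hom_{\CCat{N}\ttimes\mTshape{m}{n}}(x_1,\dots,x_k;y)$, the source and target of $\aTree$ and $\aTree'$ coincide, so Lemma~\ref{lem:cob-weak-commute} produces an isotopy rel boundary from $\mTfuncNF{T}(\aTree)^\circ$ to $\mTfuncNF{T}(\aTree')^\circ$. By the construction of $\mCobD$ as a canonical thickening, there is then a unique $2$-morphism from $\mTfuncNF{T}(\aTree)$ to $\mTfuncNF{T}(\aTree')$; define $\mTfuncNF{T}(\phi)$ to be this $2$-morphism. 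Uniqueness of the $2$-morphism immediately guarantees that this assignment respects composition and identities of $2$-morphisms, and therefore defines a functor on each multimorphism groupoid.

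Finally, compatibility of the extension to $2$-morphisms with multi-composition is automatic: given composable trees of multimorphisms in $\CCat{N}\ttimes\mTshape{m}{n}$ together with $2$-morphisms between them, both ways of composing them in $\mCobD$ produce $2$-morphisms between the same pair of divided cobordisms, and there is at most one such $2$-morphism in $\mCobD$. The same uniqueness argument establishes that $\mTfuncNF{T}$ of the identity $2$-morphism is the identity. The only substantive content in the entire verification is Lemma~\ref{lem:cob-weak-commute}, which has already been proved; everything else is formal bookkeeping of the canonical thickening construction, essentially mirroring the proof of Lemma~\ref{lem:mHfunc}.
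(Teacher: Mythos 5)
Your proposal is correct and follows essentially the same route as the paper: well-definedness on $2$-morphisms is reduced to Lemma~\ref{lem:cob-weak-commute}, compatibility with multi-composition at the level of objects of the multimorphism groupoids is immediate because $\mTfuncNF{T}$ is defined vertexwise on trees, and the remaining checks on $2$-morphisms are vacuous because each $2$-morphism set in $\mCobD$ has at most one element. Your write-up simply spells out the tree-concatenation bookkeeping that the paper dismisses as ``immediate from the definition,'' but the substance is identical.
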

\begin{proof}
  By Lemma~\ref{lem:cob-weak-commute}, the map $\mTfuncNF{T}$ is
  well-defined.  We must check that it respects multi-composition. At
  the level of objects of the multimorphism groupoids, since we
  defined $\mTfuncNF{T}(f)$ by composing the values of $\mTfuncNF{T}$
  on basic multimorphisms, this is immediate from the
  definition. Since each 2-morphism set in $\mCobD$ is empty or has
  $1$ element, at the level of morphisms of the multimorphism
  groupoids there is nothing to check.
\end{proof}
\setcounter{OutlineCompletion}{90}

\subsection{The initial invariant}\label{sec:tang-to-burn}
In this section, we will construct combinatorial tangle invariants as
equivalence classes of multifunctors to the Burnside
multicategory. Explicitly, to the $2n$ points
$[2n]_{\std}\subset(0,1)$, we associate the functor from the
multicategory $\mHshape{n}$ to the
Burnside
multicategory $\mBurnside$ 
\[
\mHinv{n}\coloneqq\mV\circ\mHfunc{n}\from\mHshape{n}\to\mBurnside,
\]
and to a tangle diagram $T\in\TangleDiags{m}{n}$ connecting
$\{0\}\times\{0\}\times[2m]_{\std}$ to
$\{0\}\times\{1\}\times[2n]_{\std}$, we associate the pair
$(\mTinvNF{T},N_+)$, where $\mTinvNF{T}$ is the functor
\[
  \mTinvNF{T}\coloneqq\mV\circ\mTfuncNF{T}\co
  \CCat{N}\ttimes\mTshape{m}{n}\to \mBurnside
\]
and $N_+$ is the number of positive crossings in the oriented tangle
diagram $T$. We will refer to this sort of pairs often, so we give it a name:
\begin{definition}\label{def:stable-func}
  A \emph{stable functor} from $\CCat{N}\ttimes\mTshape{m}{n}$ to
  $\mBurnside$ is a pair
  \[
    (\text{functor }F\co\CCat{N}\ttimes\mTshape{m}{n}\to\mBurnside,\text{integer }S)
  \]
  so that the restriction of $F$ to the subcategory $\mHshape{m}$
  (respectively $\mHshape{n}$) of $\CCat{N}\ttimes\mTshape{m}{n}$ is
  $\mHinv{m}$ (respectively $\mHinv{n}$).
\end{definition}

\subsubsection{Recovering the Khovanov invariants}
Given a functor $F_n\from\mHshape{n}\to\mBurnside$, we can compose
with the forgetful functor $\Forget\co\mBurnside\to\mAbelianGroups$ to
obtain a functor $\Forget\circ F_n\co \mHshape{n}\to\mAbelianGroups$.
Since $\mAbelianGroups$ is trivially enriched, the functor $\Forget\circ F_n$
descends to an un-enriched multifunctor, still denoted $\Forget\circ
F_n$, from the strictification $\mHshape{n}^0$ to $\mAbelianGroups$.

Similarly, given a stable functor
$(F\from\CCat{N}\ttimes\mTshape{m}{n}\to\mBurnside,S)$ 
we get a
functor
$\Forget\circ
F\from\strictify{\CCat{N}\ttimes\mTshape{m}{n}}\to\mAbelianGroups$. We
can associate to the pair $(\Forget\circ F,S)$ a functor
\[
  \Total{\Forget\circ F,S}\co \mTshapeStct{m}{n}^0\to \mComplexes,
\]
which restricts to $\Forget\circ F_m$ and $\Forget\circ F_n$ on the
subcategories $\mHshape{m}$ and $\mHshape{n}$, as follows. Given an
object $(a,b)\in\Ob(\mTshapeStct{m}{n})^0$ we let
\[
  \Total{\Forget\circ F,M}(a,b)=(\Forget\circ F)(a,b),
\]
viewed as a chain complex concentrated in grading $0$. Given an
object $(a,T,b)\in\Ob(\mTshapeStct{m}{n}^0)$ there is an associated
subcategory $\CCat{N}\times(a,T,b)$ of
$\strictify{\CCat{N}\ttimes\mTshape{m}{n}}$ isomorphic to the cube
$\CCat{N}$: it is the full subcategory spanned by objects of the form
$(v,a,T,b)$. Let $\Total{\Forget\circ F,M}(a,T,b)$ be the totalization
of the cube of abelian groups $\Forget\circ
F|_{\CCat{N}\times(a,T,b)}$, cf.~Equation~\eqref{eq:totalization},
followed by a downward grading shift by the integer $S$ (so that the
chain complex is supported in gradings $[-S,N-S]$).

\begin{lemma}\label{lem:is-Kh}
  The Khovanov arc algebra $\KTalg{m}$ (respectively $\KTalg{n}$) is
  the functor $\Forget\circ\mHinv{m}\from\mHshape{m}^0\to\mAbelianGroups$
  (respectively
  $\Forget\circ\mHinv{n}\from\mHshape{n}^0\to\mAbelianGroups$) which is
  the restriction of $\Total{\Forget\circ \mTinvNF{T},N_+}$ to
  $\mHshape{m}^0$ (respectively $\mHshape{n}^0$), and the Khovanov
  tangle invariant $\KTfunc(T)$ is the functor
  $\Total{\Forget\circ \mTinvNF{T},N_+}\from\mTshape{m}{n}\to\mComplexes$,
  reinterpreted per Principle~\ref{prin:multi-is-bim}.
\end{lemma}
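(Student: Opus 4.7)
The plan is to verify the statement first on objects, then on the algebra/bimodule structure, and finally on the cube-level differential.

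On an object $(a,b)$, unwinding definitions gives $\mHinv{n}(a,b) = \mV(a\Wmirror{b}) = V_{\HKKa}(a\Wmirror{b})$, the set of $\{1,X\}$-labelings of the circles in $a\Wmirror{b}$. Applying $\Forget$ yields the free abelian group on this set, which is precisely $V(a\Wmirror{b})$. This matches Khovanov's definition of the $(a,b)$-summand of $\KTalg{n}$, and the analogous identification holds for $(a,T,b)$ where $\mTinvNF{T}(v,a,T,b) = V(aT_v\Wmirror{b})$.

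For the arc algebra multiplication, apply $\mHinv{n}$ to the multimorphism $f_{a,b,c}$. By Lemma~\ref{lem:mHfunc}, $\mHfunc{n}(f_{a,b,c})$ is a multi-merge cobordism $\Sigma\co a\Wmirror{b}\amalg b\Wmirror{c}\to a\Wmirror{c}$, and $\mV(\Sigma) = V_{\HKKa}(\Sigma)$. By the commutative square \Equation{forget-HKK}, applying $\Forget$ returns the underlying Frobenius-algebra TQFT map, which is a composition of the elementary merge/split maps dictated by the saddle decomposition of $\Sigma$. This is exactly Khovanov's definition of the product in $\KTalg{n}$ (see Lemma~\ref{lem:Kh-alg-defined}), up to the well-definedness afforded by the fact that all such multi-merge cobordisms are canonically isotopic (Corollary~\ref{cor:cob-commute}). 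For units, the morphism $f_a\co\emptyset\to(a,a)$ is sent to a birth cobordism $\emptyset\to a\Wmirror{a}$, whose image under $\mV$ picks out the labeling of every circle by $1$; under $\Forget$ this produces the idempotent in $V(a\Wmirror{a})$ associated to $a$. Associativity and two-sided unitality are automatic: both sides are governed by the same underlying $(1+1)$-dimensional TQFT applied to homeomorphic cobordisms. The bimodule structure on $\KTfunc(T)$ for flat $T$ is obtained identically, using $\mTfunc{T}$ in place of $\mHfunc{n}$ on the basic multimorphisms of type $f_{a_1,\dots,a_i,T,b_1,\dots,b_j}$.

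For a general tangle diagram $T$ with $N$ crossings, the functor $\Total{\Forget\circ\mTinvNF{T},N_+}$ on $(a,T,b)$ takes the totalization (with downward shift by $N_+$) of the cube $v\mapsto V(aT_v\Wmirror{b})$. Additively this is $\bigoplus_v V(aT_v\Wmirror{b})$ with grading $N_--|v|$, matching Khovanov's homological convention (Section~\ref{sec:review}). The edge from $v$ to $w=v+e_i$ in $\CCat{N}$ is a basic multimorphism in $\CCat{N}\ttimes\mTshape{m}{n}$, sent by $\mTfuncNF{T}$ to the saddle cobordism at the $i\th$ crossing of $T$; under $\mV\circ\Forget$ this produces Khovanov's elementary differential map $V(aT_v\Wmirror{b})\to V(aT_w\Wmirror{b})$. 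The standard signs in the totalization \Formula{totalization} then recover the signs of Khovanov's cube differential. The commutation of the differential with the bimodule action, which is encoded in $\mTinvNF{T}$ by the compatibility of the saddle edges with the multi-merge cobordisms, reflects the fact (Corollary~\ref{cor:cob-commute}) that the multi-saddle and multi-merge cobordisms have disjoint supports.

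The principal technical point, and the only one that requires care, is matching signs and grading shifts rather than just isomorphism classes: one must check that the conventions in \Formula{totalization}, the downward shift by $N_+$, and our homological conventions of Section~\ref{sec:grading-convention} together reproduce Khovanov's $(2n,2m)$-tangle conventions as in Section~\ref{sec:review} (and Remark~\ref{rem:grading-convention}). Once this bookkeeping is done, the lemma follows, and Principle~\ref{prin:multi-is-bim} lets us reinterpret the multifunctor $\Total{\Forget\circ\mTinvNF{T},N_+}$ as the chain complex of $(\KTalg{m},\KTalg{n})$-bimodules $\KTfunc(T)$.
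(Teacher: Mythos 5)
Your proposal is correct and takes essentially the same approach as the paper: the paper simply declares the lemma to be ``an exercise in unwinding the definitions,'' and your argument carries out that unwinding carefully, matching objects, multiplication/action maps, the cube differential, and the grading conventions against Khovanov's construction from Section~\ref{sec:review}.
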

\begin{proof}
  This is an exercise in unwinding the definitions.
\end{proof}
\setcounter{OutlineCompletion}{100}

\subsubsection{Invariance}\label{sec:initial-invariance}
Next we describe in what sense is the functor
$\mHinv{n}\from\mHshape{n}\to\mBurnside$ an invariant of $2n$ points,
and in what sense is the stable functor
$(\mTinvNF{T}\co \CCat{N}\ttimes\mTshape{m}{n}\to \mBurnside,N_+)$ an
invariant for the underlying tangle. First we consider $\mHinv{n}$.

Superficially, the functor $\mHinv{n}\co \mHshape{n}^0\to \mBurnside$
depended on a number of choices:
\begin{enumerate}[label=(C-\arabic*),ref=C-\arabic*]
\item\label{item:choice-1} The choice of curves representing each
  isotopy class of crossingless matching in $\Crossingless{n}$.
\item\label{item:choice-2} The choice of divided multi-merge
  cobordisms.
\item\label{item:choice-3} The choice of embeddings in the
  definitions of the Burnside multicategory
  (Section~\ref{sec:mBurnside}).
\end{enumerate}
To deal with this, we could make specific once-and-for-all choices; or
we can invoke the following:
\begin{definition}
  A \emph{natural isomorphism} $\eta$ between multifunctors $F,G$ from a
  groupoid enriched multicategory $\Cat$ to $\mBurnside$ is a
  collection of bijections $\eta_x\from F(x)\to G(x)$ for all
  objects $x\in\Ob(\Cat)$, and $\eta_\phi\from F(\phi)\to G(\phi)$
  for all multimorphisms $\phi\in\Hom(x_1,\dots,x_n;y)$ which are
  compatible with the 2-morphisms and the source and the target
  maps in the following sense: for any objects $x_1,\dots,x_n,y\in\Ob(\Cat)$, any
  multimorphisms $\phi,\psi\in\Hom(x_1,\dots,x_n;y)$, any 2-morphism
  $\kappa\from\phi\to\psi$, and any element $w\in F(\phi)$,
  \begin{align*}
    \eta_\psi\bigl(F(\kappa)(w)\bigr)&=G(\kappa)\bigl(\eta_\phi(w)\bigr),&
    (\eta_{x_1},\dots,\eta_{x_n})\bigl(s(w)\bigr)&=s\bigl(\eta_\phi(w)\bigr),&
    \eta_{y}\bigl(t(w)\bigr)&=t\bigl(\eta_\phi(w)\bigr).
  \end{align*}
\end{definition}
\begin{lemma}\label{lem:transitive-system}
  Let $\mHinv{n}^1,\mHinv{n}^2\co \mHshape{n}\to\mBurnside$ be the
  functors associated to two different choices of curves, multi-merge
  cobordisms, and embeddings of associated sets. Then there is a
  natural isomorphism $\eta^{12}\co \mHinv{n}^1\to
  \mHinv{n}^2$. Further, these maps $\eta$ form a transitive system,
  in the sense that $\eta^{11}$ is the identity and if
  $\mHinv{n}^3\co \mHshape{n}\to\mBurnside$ is the functor associated
  to a third collection of choices then
  $\eta^{13}=\eta^{23}\circ\eta^{12}$.
\end{lemma}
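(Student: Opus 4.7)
The plan is to construct $\eta^{12}$ componentwise using the canonical nature of the isotopies between the choices involved, and then check naturality using the fact that $\mCobD$ has at most one $2$-morphism between any pair of multimorphisms.

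First I would construct the object-level bijections $\eta^{12}_{(a,b)} \colon \mHinv{n}^1(a,b) \to \mHinv{n}^2(a,b)$. The two choices of representative curves in $\Crossingless{n}$ give divided $1$-manifolds $(a\Wmirror{b})_1$ and $(a\Wmirror{b})_2$ lying in the same $\Diff^1$-equivalence class. Any such equivalence induces a canonical bijection on path components, hence on labelings $\pi_0(\cdot) \to \{1,X\}$, which is precisely what $V_{\HKKa}$ associates to the object. The choice of embedding (\ref{item:choice-3}) of these sets in $\RR^k$ does not affect the underlying set, so $\eta^{12}_{(a,b)}$ is defined. The resulting bijection is independent of the choice of $\Diff^1$-equivalence because any two such equivalences differ by an isotopy which acts trivially on $\pi_0$.

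Second, I would construct $\eta^{12}_\phi$ for a basic multimorphism $\phi = f_{a_1,\dots,a_k}$. The two choices of multi-merge (or birth, if $k=1$) cobordisms $\mHfunc{n}^1(\phi)$ and $\mHfunc{n}^2(\phi)$ have the same source and target (after identifying via $\eta^{12}$ on objects), so by Corollary~\ref{cor:cob-commute} and Proposition~\ref{prop:births} they are isotopic rel boundary. Hence there is a unique $2$-morphism between them in $\mCobD$, and $\mV$ sends this to a canonical isomorphism of correspondences, which defines $\eta^{12}_\phi$. For a general object $\aTree$ of the multimorphism groupoid, whose underlying tree has basic labels $\phi_v$ at each vertex $v$, I define $\eta^{12}_\aTree$ by composing the $\eta^{12}_{\phi_v}$ according to $\aTree$; equivalently, note that the collapsings $\mHfunc{n}^1(\aTree)^\circ$ and $\mHfunc{n}^2(\aTree)^\circ$ are again isotopic, and let $\eta^{12}_\aTree$ be the image under $\mV$ of the unique $2$-morphism between them.

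Third, I would verify the naturality conditions. Compatibility with source and target maps is automatic from the definitions, because $V_{\HKKa}$ on $2$-morphisms (isotopies) commutes with the source and target maps on correspondences. Compatibility with $2$-morphisms $\kappa \colon \phi \to \psi$ in $\mHshape{n}$ reduces to checking that a certain square of correspondences commutes. Since $\mCobD$ has at most one $2$-morphism between any two parallel cobordisms, the underlying square in $\mCobD$ commutes for free, and applying the functor $\mV$ gives the required commutation of bijections of correspondence sets.

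Finally, transitivity. Given three systems of choices, the isotopies between the corresponding divided $1$-manifolds and cobordisms compose: the composition of a $\Diff^1$-equivalence $(a\Wmirror{b})_1 \to (a\Wmirror{b})_2$ with one $(a\Wmirror{b})_2 \to (a\Wmirror{b})_3$ is a $\Diff^1$-equivalence $(a\Wmirror{b})_1 \to (a\Wmirror{b})_3$, and likewise for isotopies of cobordisms. Applying $\mV$ and using its functoriality on $2$-morphisms gives $\eta^{13} = \eta^{23} \circ \eta^{12}$; the identity case $\eta^{11} = \Id$ is immediate since the identity isotopy induces the identity correspondence. The main subtlety, which the uniqueness of $2$-morphisms in $\mCobD$ handles automatically, is ensuring that no choice of isotopy enters the definition of $\eta^{12}_\phi$; this is the same uniqueness that was exploited in the proof of Proposition~\ref{prop:CobD-to-Burnside}.
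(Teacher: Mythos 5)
Your overall strategy---constructing $\eta^{12}$ componentwise and leaning on the contractibility of the $2$-morphism groupoids in $\mCobD$---matches the paper's, but your treatment of choice~(\ref{item:choice-1}) contains a genuine gap. You assert that the two divided $1$-manifolds $(a\Wmirror{b})_1$ and $(a\Wmirror{b})_2$ arising from different choices of representative curves lie in the same $\Diff^1$-equivalence class, and then extract the object-level bijection from a $\Diff^1$-equivalence between them. This is false: $\Diff^1$ acts on $(0,1)^2$ only via $\phi\times\Id_{(0,1)}$, a reparametrization of the first coordinate, and two isotopic Morse representatives of the same crossingless matching generically differ in the second coordinate (already for $n=1$, two round circles at different heights in the $(0,1)$-direction are isotopic but not $\Diff^1$-equivalent). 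Consequently $\mHfunc{n}^1(a,b)$ and $\mHfunc{n}^2(a,b)$ are genuinely distinct objects of $\CobD$, and no $\Diff^1$-equivalence between them exists. Note also that if they \emph{were} $\Diff^1$-equivalent they would be the same object of $\CobD$, so $\mHinv{n}^1(a,b)$ and $\mHinv{n}^2(a,b)$ would be literally equal and there would be no ``canonical bijection on path components'' to construct in the first place; your argument is internally in tension on this point.

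What the paper uses instead is an essentially cylindrical divided cobordism $(a\Wmirror{b})_1\to(a\Wmirror{b})_2$: a $1$-morphism of $\CobD$ (not an equivalence of objects), itself unique up to the unique $2$-morphism, and $\mV$ of this cobordism supplies the bijection $\eta^{12}_{(a,b)}$. This matters for your second step as well: since $\mHfunc{n}^1(f_{a_1,\dots,a_k})$ and $\mHfunc{n}^2(f_{a_1,\dots,a_k})$ have different sources and targets in $\CobD$, they are not parallel, so ``isotopic rel boundary'' does not apply directly. One must first precompose and postcompose with the cylindrical cobordisms so that the two resulting composites become parallel, and only then invoke uniqueness of $2$-morphisms via Corollary~\ref{cor:cob-commute} and Proposition~\ref{prop:births}. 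With that correction inserted, your handling of choices~(\ref{item:choice-2}) and~(\ref{item:choice-3}), and the naturality and transitivity arguments, are fine and agree with the paper's.
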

\begin{proof}
  Since the $2$-morphisms in the Burnside multicategory pay no
  attention to the embeddings of the correspondences, the identity
  $2$-morphisms give a transitive system of natural isomorphisms
  associated to changing the embeddings of correspondences. Similarly,
  any two choices of divided multi-merge cobordisms are uniquely
  isomorphic (because isotopic divided cobordisms are uniquely
  isomorphic), so different choices of decorated cobordisms give
  naturally isomorphic functors, and these natural isomorphisms are
  transitive. Next, any two choices of representatives of the
  crossingless matchings are related by an obvious
  divided cobordism, the trace of an isotopy between the two representatives, and this divided cobordism is unique up to unique
  isomorphism. Independence from the choice of curves representing the
  crossingless matchings follows. 
  Finally, the maps in these three transitive systems commute with
  each other in an obvious sense, so we can view them all together as
  a single transitive system. This completes the proof.
\end{proof}

Now, consider the groupoid $\Cat$ with:
\begin{itemize}
\item Objects sets of choices~(\ref{item:choice-1})--(\ref{item:choice-3}).
\item A unique morphism between each pair of objects.
\end{itemize}
Lemma~\ref{lem:transitive-system} asserts
that we have a functor $\Cat\to\Fun(\mHshape{n},\mBurnside)$, where
$\Fun(\mHshape{n},\mBurnside)$ is the category of functors from
$\mHshape{n}\to\mBurnside$ with morphisms being natural
isomorphisms. Existence of this functor on the contractible groupoid
$\Cat$ expresses the fact that different choices are canonically
isomorphic.

Following the standard colimit procedure, we can harness the above
fact to construct $\mHinv{n}$ as a functor independent of choices. For
any object $x$ and any multimorphism $\phi$ of $\mHshape{n}$, define
\[
\mHinv{n}(x)=\coprod_{i\in\Ob(C)}\mHinv{n}^i(x)/\sim\text{ and }\mHinv{n}(\phi)=\coprod_{i\in\Ob(C)}\mHinv{n}^i(\phi)/\sim,
\]
where the equivalence relation $\sim$ identifies $u\in\mHinv{n}^i(x)$
(respectively, $w\in \mHinv{n}^i(\phi)$) with
$\eta^{i,j}_x(u)\in\mHinv{n}^j(x)$ (respectively,
$\eta^{i,j}_\phi(w)\in \mHinv{n}^i(\phi)$) for any $i,j\in\Ob(\Cat)$,
with the source, target, and 2-morphism maps defined componentwise.

For the rest of the paper, we will elide the fact that
$\mHinv{n}\co \mHshape{n}\to \mBurnside$ depended on choices, and
expect the reader to either assume we made once-and-for-all choices in
defining $\mHinv{n}$, or insert the discussion above where appropriate.

Next we turn to $\mTinvNF{T}$.
\begin{definition}
  Given multifunctors
  $F,G\co\CCat{N}\ttimes\mTshape{m}{n}\to\mBurnside$, and any integer
  $S$, a \emph{natural transformation} connecting the stable functors
  $(F,S)$ to $(G,S)$ is a multifunctor $H\co
  \CCat{N+1}\ttimes\mTshape{m}{n}\to\mBurnside$ so that
  $H|_{\{0\}\times\CCat{N}\ttimes\mTshape{m}{n}}=F$ and
  $H|_{\{1\}\times\CCat{N}\ttimes\mTshape{m}{n}}=G$. A natural
  transformation from $(F,S)$ to $(G,S)$ induces a homomorphism of \dg modules
  $\Total{\Forget\circ F,S}\to \Total{\Forget\circ G,S}$ in an obvious
  way, where $\Total{\Forget\circ F,S}$ and $\Total{\Forget\circ G,S}$
  are being viewed as \dg bimodules as per
  Section~\ref{sec:multi-vec}. We call $H$ a
  \emph{quasi-isomorphism} if the induced chain map is a
  quasi-isomorphism.
\end{definition}

\begin{proposition}
  Up to quasi-isomorphism, the stable functor $(\mTinvNF{T},N_+)$ is
  independent of the choices of pox, resolutions, and cobordisms in the
  definition of $\mTfuncNF{T}$.
\end{proposition}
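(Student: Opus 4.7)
The plan is to realize two different sets of choices as the two codimension-$1$ faces of a single multifunctor on an enlarged cube, and then show the induced chain map on totalizations is an isomorphism. Fix two collections of choices producing functors $\mTfuncNF{T}^0,\mTfuncNF{T}^1\co \CCat{N}\ttimes\mTshape{m}{n}\to \mCobD$; set $F^i = \mV\circ\mTfuncNF{T}^i$. Since the restrictions of any $\mTfuncNF{T}^i$ to $\mHshape{m}$ and $\mHshape{n}$ are already handled by the transitive system from Lemma~\ref{lem:transitive-system}, we may assume without loss of generality that $\mTfuncNF{T}^0$ and $\mTfuncNF{T}^1$ agree on the subcategories $\mHshape{m}$ and $\mHshape{n}$, so that both $(F^0,N_+)$ and $(F^1,N_+)$ are stable functors in the sense of Definition~\ref{def:stable-func}. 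It suffices to build a multifunctor $\wh H\co \CCat{N+1}\ttimes\mTshape{m}{n}\to \mCobD$ that restricts to $\mTfuncNF{T}^i$ on $\{i\}\times\CCat{N}\ttimes\mTshape{m}{n}$ and then set $H=\mV\circ\wh H$.

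The construction of $\wh H$ proceeds by assigning, to each object $(v,a,T,b)$ in either face, the chosen divided $1$-manifold $aT_v^i\Wmirror{b}$, and to each edge $(0,v,a,T,b)\to(1,v,a,T,b)$ in the newly added cube direction, an essentially cylindrical divided cobordism from $aT_v^0\Wmirror{b}$ to $aT_v^1\Wmirror{b}$. Such a cylinder exists and is well-defined up to unique isomorphism in $\mCobD$ because the two choices of disks around each crossing (and the two chosen flat representatives of each resolution) are related by an ambient isotopy that fixes the crossingless matchings $a$ and $\Wmirror{b}$. On the $\mTshape{m}{n}$ directions, $\wh H$ is determined by the data of $\mTfuncNF{T}^0$ on the $\{0\}$-face, of $\mTfuncNF{T}^1$ on the $\{1\}$-face, and by the chosen cylinders on mixed multimorphisms; all compatibility conditions follow from Corollary~\ref{cor:cob-commute}, Proposition~\ref{prop:births}, and the fact (used in Lemma~\ref{lem:mHfunc} and Lemma~\ref{lem:cob-weak-commute}) that every $2$-morphism set in $\mCobD$ is empty or a singleton, so that the analogue of Lemma~\ref{lem:cob-weak-commute} holds for $\wh H$.

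Finally, the induced map on totalizations is a quasi-isomorphism because the extra cube direction is built entirely from cylindrical cobordisms: applying $\mV$ to a cylinder gives a correspondence that is simply the identity bijection on $\{1,X\}$-labelings, and $\Forget$ then yields the identity isomorphism of abelian groups on each vertex. Thus the chain map induced by the natural transformation $H$ at the level of the $N$-cube totalizations consists of vertexwise isomorphisms commuting with all edge maps, hence an isomorphism of chain complexes (in particular a quasi-isomorphism), completing the proof. The main obstacle is the bookkeeping in the preceding paragraph: ensuring that the cylinders, multi-saddles, and multi-merge cobordisms simultaneously assemble into a single multifunctor $\wh H$ rather than separate coherent choices; but this is forced by the canonical-isomorphism-only structure of $\mCobD$, so no additional coherence need be verified by hand.
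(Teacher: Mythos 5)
Your proposal is correct and follows essentially the same route as the paper: build a multifunctor on $\CCat{N+1}\ttimes\mTshape{m}{n}$ interpolating between the two choices, observe that the new cube direction is carried by essentially cylindrical cobordisms, and conclude the induced map of totalizations is an isomorphism. Two small remarks. First, the appeal to Lemma~\ref{lem:transitive-system} to force agreement on $\mHshape{m}$ and $\mHshape{n}$ is unnecessary: the proposition concerns only the choices of crossing disks, resolutions $T_v$, and multi-saddle cobordisms, none of which enter the definition of $\mTfuncNF{T}$ on the subcategories $\mHshape{m}$ and $\mHshape{n}$, so $\mTfuncNF{T}^0$ and $\mTfuncNF{T}^1$ already coincide there. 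Second, the basic multimorphisms of $\CCat{N+1}\ttimes\mTshape{m}{n}$ crossing the extra cube direction include morphisms $(0,v,a,T,b)\to(1,w,a,T,b)$ with $v<w$, not just the cylindrical edges $(0,v)\to(1,v)$; the paper defines $H$ directly on these by a multi-saddle cobordism from $T^0_v$ to $T^1_w$, whereas you define only the cylinders and lean on the uniqueness of $2$-morphisms in $\mCobD$ to fill in the rest. This is fine in substance (any composite through a two-vertex tree is canonically isomorphic to any single-vertex choice), but making the assignment explicit is cleaner and matches the bookkeeping you correctly flag as the main obstacle.
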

\begin{proof}
  First, since the value of $\mV$ on objects and 1-morphisms is given by the
  functor $V_{\HKKa}\co\CobE\to\BurnsideCat$, which does not depend on the pox,
  adding more pox does not change $\mV$. Thus, $\mTinvNF{T}$ is independent of
  the choice of pox.
  
  Next, fix choices $\mTfuncNF{T}^0$ and $\mTfuncNF{T}^1$ of resolutions and
  cobordisms, with respect to the same pox. We will define a natural
  transformation $H\co \CCat{N+1}\ttimes\mTshape{m}{n}\to\mCobD$ from
  $\mTfuncNF{T}^0$ to $\mTfuncNF{T}^1$ and then compose with $\mV$ to get a
  natural transformation from $\mTinvNF{T}^0$ to $\mTinvNF{T}^1$.

  On the subcategories $\mHshape{m}$ and
  $\mHshape{n}$ of $\CCat{N}\ttimes\mTshape{m}{n}$,
  $\mTfuncNF{T}^0$ and $\mTfuncNF{T}^1$ already agree.
  From the definition of $\ttimes$, to define $H$ on the objects of the
  multimorphism groupoids, it suffices to define $H$ on the maps
  $f_{\CCat{N+1}}\times \Id_{(a,T,b)}$, where $f_{\CCat{N+1}}\co
  (0,v)\to (1,w)$ is a morphism from $\{0\}\times\CCat{N}$ to
  $\{1\}\times\CCat{N}$, since $H$ has already been defined on the
  other type of elementary morphisms. Define $H(f_{\CCat{N+1}}\times
  \Id_{(a,T,b)})$ to be any multi-saddle cobordism from the resolution
  $T_{v}$ with respect to the first set of choices to
  the resolution $T_{w}$ with respect to the second set of choices.
  (This is actually a slight variant of the multi-saddle cobordisms
  from Section~\ref{sec:cabinet}: there, outside certain of the $D_i$ the
  cobordism was a product, while here it is the trace of an isotopy
  between the different choices of resolutions. In particular, if
  $v=w$ the cobordism is a deformed copy of the identity
  cobordism.)
  The extension of
  $H$ to morphisms in the multimorphism groupoids proceeds without
  incident as in the construction of $\mTfuncNF{T}$ using
  Lemma~\ref{lem:cob-weak-commute}.

  The induced diagram of chain complexes $\Total{\Forget\circ
    \mV\circ H,N_+}$ sends the arrows $(0,v)\to (1,v)$ to identity
  maps. Thus, the map
  $\Total{\Forget\circ\mTinvNF{T}^0,N_+}\to\Total{\Forget\circ
    \mTinvNF{T}^1,N_+}$ is the identity map,
  and hence is a quasi-isomorphism (indeed, an isomorphism).
\end{proof}

\begin{convention}
  For the rest of the paper, we will usually suppress the choice of pox,
  resolutions, and cobordisms in the definition of $\mTfuncNF{T}$, and view
  $\mTfuncNF{T}$ as associated to the tangle diagram $T$.
\end{convention}

\begin{definition}
A \emph{face inclusion} is a functor $i\from\CCat{M}\to\CCat{N}$ that
is injective on objects and preserves relative gradings (see
\cite[Definition~5.5]{LLS-cube}). Let $|i|$ be the absolute grading
shift of $i$, given by $|i(v)|-|v|$ for any $v\in\Ob(\CCat{M})$, where
$|\cdot|$ denotes the height (number of $1$'s) in the cube. Given a
stable functor $(F\co\CCat{M}\ttimes\mTshape{m}{n}\to\mBurnside,S)$
and a face inclusion $i\co \CCat{M}\hookrightarrow \CCat{N}$ there is
an induced stable functor
$(i_!F\co \CCat{N}\ttimes\mTshape{m}{n}\to\mBurnside,S+N-M-|i|)$,
where $i_!F$ is defined as follows:
\begin{itemize}
\item On objects of the form $(a,b)$, $(i_!F)(a,b)=F(a,b)$. On objects
  of the form $(v,a,T,b)$,
  \[
    (i_!F)(v,a,T,b)=\begin{cases}F(u,a,T,b)&\text{if $v=i(u)$ is in the image of $i$,}\\
      \emptyset&\text{otherwise.}\end{cases}
  \]
\item On multimorphisms, if all of the input and output leaves of a
  tree $\aTree$ are labeled by elements $(v,a,T,b)$ with $v$ in the
  image of $i$ or by pairs $(a_1,a_2)$ or $(b_1,b_2)$, then the same must be true for all intermediate edges
  and vertices, so there is a tree $\aTree'$ with
  $i(\aTree')=\aTree$ (in the obvious sense), and we define
  $(i_!F)(\aTree)=F(\aTree')$.
  Otherwise, $(i_!F)(\aTree)$ is the
  empty correspondence. (Note that, in the second case, at least one
  of the source or target of $(i_!F)(\aTree)$ is the empty set.)
\end{itemize}
We call $(i_!F,S+N-M-|i|)$ a \emph{stabilization} of $(F,S)$ and
$(F,S)$ a \emph{destabilization} of $(i_!F,S+N-M-|i|)$. The
\dg bimodules $\Total{\Forget\circ F,S}$ and
$\Total{\Forget\circ i_!F,S+N-M-|i|}$ are isomorphic, and the isomorphism is
canonical up to an overall sign.
\end{definition}

Call stable functors
$(F\co \CCat{M}\ttimes\mTshape{m}{n}\to\mBurnside,R)$ and
$(G\co \CCat{N}\ttimes\mTshape{m}{n}\to\mBurnside,S)$ \emph{stably
  equivalent} if $(F,R)$ and $(G,S)$ are related by a sequence of
quasi-isomorphisms, stabilizations, and destabilizations.

There are some convenient ways to produce equivalences:
\begin{definition}\label{def:insular-subfunctor}
  Given a functor $F\co \CCat{N}\ttimes\mTshape{m}{n}\to\mBurnside$,
  an \emph{insular subfunctor} of $F$ is a collection of subsets
  $G(v,a,T,b)\subset F(v,a,T,b)$, such that for any $x_i\in
  F(a_i,a_{i+1})$, $y\in G(u,a_k,T,b_1)$, $z_i\in F(b_i,b_{i+1})$,
  $w\in F(v,a_1,T,b_\ell)\setminus G(v,a_1,T,b_\ell)$, and
  \[
    f\in\Hom((a_1,a_2),\dots,(a_{k-1},a_k),(u,a_k,T,b_1),(b_1,b_2),\dots,(b_{\ell-1},b_\ell);(v,a_1,T,b_\ell)),
  \]
  \begin{equation}
    s^{-1}(x_1,\dots,x_{k-1},y,z_1,\dots,z_{\ell-1})\cap t^{-1}(w)=\emptyset\subset F(f).\label{eq:introvert}
  \end{equation}
  
  Extend $G$ to a functor
  $G\co \CCat{N}\ttimes\mTshape{m}{n}\to\mBurnside$ by defining
  $G(a,b)=F(a,b)$ for $(a,b)\in\Ob(\mHshape{m})\cup\Ob(\mHshape{n})$
  and, for $f\in\Hom(p_1,\dots,p_n;q)$,
  \[
    G(f)=s^{-1}(G(p_1)\times\dots\times G(p_n))\cap t^{-1}(G(q))\subset F(f)
  \]
  with source and target maps induced by $s$ and $t$, and maps of $2$-morphisms induced
  by $F$ in the obvious way. The fact that $G$ respects composition
  follows from Equation~\eqref{eq:introvert}.
  
  Given an insular subfunctor $G$ of $F$ there is a \emph{quotient
    functor} $F/G\co \CCat{N}\ttimes\mTshape{m}{n}\to\mBurnside$
  defined by:
  \begin{itemize}
  \item $(F/G)(a,b)=F(a,b)$,
  \item $(F/G)(v,a,T,b)=F(v,a,T,b)\setminus G(v,a,T,b)$, the
    complement of $G(v,a,T,b)$, 
  \item $(F/G)(f)=s^{-1}((F/G)(p_1)\times\dots\times (F/G)(p_n))\cap t^{-1}((F/G)(q))\subset F(f)$ for $f\in\Hom(p_1,\dots,p_n;q)$, and
  \item the value of $F/G$ on $2$-morphisms is induced by $F$.
  \end{itemize}
  Again, the fact that this defines a functor follows from Equation~\eqref{eq:introvert}.
\end{definition}

Given an insular subfunctor $G$ of $F$, and any integer $S$, there
is an induced short exact sequence of \dg bimodules
\[
  0\to\Total{\Forget\circ G,S}\into \Total{\Forget\circ F,S}\onto \Total{\Forget\circ (F/G),S}\to0.
\]

\begin{lemma}\label{lem:introvert}
  Fix any integer $S$. If $G$ is an insular subfunctor of $F$ then
  there is a natural transformation $\eta$ from $(G,S)$ to $(F,S)$ so that the
  induced map of differential bimodules is the inclusion map defined
  above. There is also a natural transformation $\theta$ from $(F,S)$ to
  $(F/G,S)$ so that the induced map of differential bimodules is the
  quotient map defined above. In particular, if the inclusion
  (respectively quotient) map of chain complexes is a
  quasi-isomorphism then the map $\eta$ (respectively $\theta$) is an
  equivalence.
\end{lemma}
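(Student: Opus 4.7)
The plan is to construct, for each of $\eta$ and $\theta$, an explicit multifunctor $H\co \CCat{N+1}\ttimes\mTshape{m}{n} \to \mBurnside$ whose restrictions to $\{0\}\times\CCat{N}\ttimes\mTshape{m}{n}$ and $\{1\}\times\CCat{N}\ttimes\mTshape{m}{n}$ recover the prescribed source and target of the natural transformation. For $\eta$, I would set $H(0,v,a,T,b) = G(v,a,T,b)$ and $H(1,v,a,T,b) = F(v,a,T,b)$, keeping $H(a,b) = F(a,b)$ on objects coming from $\mHshape{m}$ and $\mHshape{n}$. For $\theta$, set $H(0,v,a,T,b) = F(v,a,T,b)$ and $H(1,v,a,T,b) = (F/G)(v,a,T,b)$.

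On a basic multimorphism $\phi$ of $\CCat{N+1}\ttimes\mTshape{m}{n}$, let $\bar\phi$ denote the corresponding basic multimorphism of $\CCat{N}\ttimes\mTshape{m}{n}$ obtained by forgetting the new cube coordinate. Define $H(\phi)$ as the sub-correspondence of $F(\bar\phi)$ consisting of those $z \in F(\bar\phi)$ whose source lies in the product of the prescribed $H$-values on the input objects and whose target lies in the prescribed $H$-value on the output object. The insular condition~\Equation{introvert} is precisely what makes this well-defined and compatible with the subfunctor structure: starting in $G$, one automatically ends in $G$ under any $F$-correspondence, which forces the restriction at level $0$ for $\eta$ to equal $G$, and which guarantees that the restriction to $F/G$-targets for $\theta$ is closed under the $F$-action.

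The main technical hurdle will be multi-functoriality, i.e., verifying $H(\phi_2\circ\phi_1) = H(\phi_2)\circ H(\phi_1)$. Because each basic multimorphism in $\mTshape{m}{n}$ has at most one input of $T$-type, any composition path touching $T$-type objects gives a single monotone non-decreasing sequence of $\{0,1\}$-levels, and at each intermediate step the insular condition forces elements of composed $F$-correspondences sourced from $G$ to remain in $G$ until a $0 \to 1$ level-shift occurs. So the composition of restricted correspondences equals the restriction of the composed correspondence, as required. Extension to the $2$-morphism sets of the groupoid enrichment is then automatic, since $2$-morphisms in $\mBurnside$ are bijections of correspondences that restrict to sub-correspondences.

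Finally, to identify the induced chain map, splitting $\CCat{N+1}$ along its first coordinate decomposes $\Total{\Forget \circ H, S}$ as a mapping cone of the chain map induced by the edges $(0,v) \to (1,v)$ for $v \in \CCat{N}$. On each such edge the multimorphism $\bar\phi$ is an identity in $\mTshape{m}{n}^0$, so $F(\bar\phi)$ is the diagonal correspondence on $F(v,a,T,b)$, and $H(\phi)$ is its restriction to the diagonal of $G(v,a,T,b)$ for $\eta$, or to the diagonal of $(F/G)(v,a,T,b)$ for $\theta$. After applying $\Forget$, these assemble over the cube to the inclusion $\Total{\Forget\circ G, S}\hookrightarrow \Total{\Forget\circ F, S}$ and the quotient $\Total{\Forget\circ F, S}\twoheadrightarrow \Total{\Forget\circ (F/G), S}$, as required. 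The final statement about equivalences follows immediately from the definition of quasi-isomorphism of natural transformations.
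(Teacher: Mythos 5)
Your proof is correct and takes essentially the same approach as the paper's: define $H$ on a basic multimorphism $\phi$ crossing from level $0$ to level $1$ as the restriction of $F(\bar\phi)$ to the prescribed source and target subsets (which by the insularity condition~\eqref{eq:introvert} coincides with the paper's $\eta(f)=G(\wt{f})$ and $\theta(f)=(F/G)(\wt{f})$), and then check multifunctoriality and identify the induced chain map. The paper states this definition and dismisses the verifications as straightforward; you have merely spelled them out.
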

\begin{proof}
  To define $\eta$ (respectively $\theta$), for
  \[
    f\in \Hom((a_1,a_2),\dots,(a_{k-1},a_k),((0,u),a_k,T,b_1),(b_1,b_2),\dots,(b_{\ell-1},b_\ell);((1,v),a_1,T,b_\ell))
  \]
  a basic multimorphism there is a corresponding basic multimorphism
  \[
    \wt{f}\in \Hom((a_1,a_2),\dots,(a_{k-1},a_k),(u,a_k,T,b_1),(b_1,b_2),\dots,(b_{\ell-1},b_\ell);(v,a_1,T,b_\ell)).
  \]
  Define $\eta(f)=G(\wt{f})$ (respectively
  $\theta(f)=(F/G)(\wt{f})$). Similarly, on 2-morphisms $\eta$
  (respectively $\theta$) is induced by $G$ (respectively $F/G$).  It
  is straightforward to verify that these definitions make $\eta$ and
  $\theta$ into natural transformations with the desired properties.
\end{proof}

\begin{theorem}\label{thm:comb-inv}
  The stable equivalence class of $\mTinvNF{T}$ is invariant under
  Reidemeister moves, and so gives a tangle invariant.  Further, the
  chain map
  \[
    \Total{\Forget\circ\mTinvNF{T_1},N_+(T_1)}\to \Total{\Forget\circ\mTinvNF{T_2},N_+(T_2)}
  \]
  induced by a sequence of Reidemeister moves relating $T_1$ and $T_2$
  agrees, up to a sign and homotopy, with Khovanov's invariance maps~\cite[Section
  4]{Kho-kh-tangles}.
\end{theorem}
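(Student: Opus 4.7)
The plan is to lift Khovanov's proof of invariance~\cite[Section 4]{Kho-kh-tangles} to the level of Burnside multifunctors, using the insular subfunctor machinery of Definition~\ref{def:insular-subfunctor} and Lemma~\ref{lem:introvert}. The underlying strategy is Gaussian elimination: for each Reidemeister move, identify an acyclic direct summand in the local cube of resolutions, and check that the surviving summand is (a stabilization of) the functor for the Reidemeister-moved tangle. The reason this strategy has a chance of working at the Burnside level is that $\mTfuncNF{T}$ is built from local divided cobordisms at each crossing, and $\mV$ is defined componentwise on those cobordisms, so $\mTinvNF{T}$ depends on $T$ only locally near its crossings. In particular, if $T$ and $T'$ differ by a Reidemeister move supported in a disk, their respective cubes agree outside the sub-cube indexed by the crossings involved in the move, and there the change is a purely local Burnside-category calculation.

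I would handle each of the three Reidemeister moves in turn. For Reidemeister I, say a positive kink, the local cube is $\CCat{1}$; the $0$-resolution produces the identity tangle $T'$ with an extra small circle $c$ attached, and the $1$-resolution equals $T'$. Define the insular subfunctor $G$ of $\mTinvNF{T}$ to consist, at each object $(v,a,T,b)$ whose R1-coordinate is $0$, of those elements of $\mV(aT_v\Wmirror{b})$ labeling $c$ by $1\in V$, and to be empty at each $(v,a,T,b)$ whose R1-coordinate is $1$. Then $\mTinvNF{T}/G$ consists at R1-coordinate $0$ of labelings of $c$ by $X$, and at R1-coordinate $1$ of everything; the saddle cobordism at the R1 crossing induces, via $\mV$, an isomorphism of correspondences between these two pieces, so $\mTinvNF{T}/G$ has an acyclic totalization. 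By Lemma~\ref{lem:introvert} the inclusion $G\hookrightarrow\mTinvNF{T}$ is an equivalence of stable functors; and $G$ itself is, by construction, the stabilization $i_!\mTinvNF{T'}$ along the face inclusion $i\co\CCat{N-1}\hookrightarrow\CCat{N}$ that sends the R1-coordinate to $0$, with grading shift matching the change in $N_+$. A negative R1 kink is dual; R2 uses an analogous insular subfunctor on the local $\CCat{2}$ sub-cube chosen so that the quotient is a direct sum of two acyclic pieces; and R3 uses a nested pair of insular subfunctors implementing Khovanov's iterated Gaussian elimination on the local $\CCat{3}$ sub-cube.

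The main obstacle is verifying the insularity condition~\eqref{eq:introvert} in the presence of the outside arcs, i.e., checking that the bimodule actions by $\KTalg{m}$ and $\KTalg{n}$, as well as the cube differentials at non-Reidemeister crossings, neither send elements of $G$ to elements outside $G$ nor vice versa. This holds because the divided cobordisms inducing those maps are multi-merge or multi-saddle cobordisms whose supports are disjoint from the Reidemeister disk, so by Corollary~\ref{cor:cob-commute} they commute with the local saddle at the R-move crossing; the corresponding correspondences therefore factor through a product in which the ``circle $c$'' coordinate (or the analogous transient labels in R2 and R3) is preserved. The hardest bookkeeping step is R3, where the two successive insular subfunctors must be chosen on the middle layers of the $\CCat{3}$ sub-cube so that the final quotient, after the two cancellations, is canonically isomorphic to the corresponding sub-cube in $\mTinvNF{T'}$; this identification proceeds by the same dictionary between crossingless resolutions on the two sides of the move used in~\cite[Section~4.3]{Kho-kh-tangles}.

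Finally, to compare the induced chain map with Khovanov's invariance map, observe that the forgetful functor $\Forget\co\mBurnside\to\mAbelianGroups$ carries the inclusion $G\hookrightarrow\mTinvNF{T}$ and the quotient $\mTinvNF{T}\twoheadrightarrow\mTinvNF{T}/G$ to the corresponding inclusion and quotient of $\Total{\Forget\circ(-),N_+}$; these are precisely the strong deformation retract pieces in Khovanov's explicit invariance chain maps. The sign and homotopy ambiguities come from the canonical-up-to-sign isomorphism of $\dg$ bimodules between a stable functor and its stabilization, and from the fact that Gaussian elimination naturally produces chain maps only up to chain homotopy.
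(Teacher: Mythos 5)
Your overall strategy is the same as the paper's: realize each Reidemeister move via insular subfunctors and Lemma~\ref{lem:introvert}, reducing to Gaussian elimination in the local cube. But your Reidemeister~I argument has a concrete error: the $G$ you define is \emph{not} an insular subfunctor, and the acyclicity claim for $\mTinvNF{T}/G$ is false. Your $G$ is supported only at R1-coordinate $0$ (on the labelings with $c=1$) and is empty at R1-coordinate $1$. The cube edge at the new crossing is the merge saddle, whose Burnside correspondence carries $(d\otimes 1)$ (where $d$ is the label of the circle that $c$ merges into) to $d$; thus an element of $G$ at R1-coordinate $0$ maps to an element at R1-coordinate $1$, which is not in $G$, violating Condition~\eqref{eq:introvert}. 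Moreover, the map on the alleged quotient, $\{c=X\}\to\{\text{everything at }1\}$, is $d\mapsto d\cdot X$, i.e., $1\mapsto X$ and $X\mapsto 0$; this is not an isomorphism, so $\mTinvNF{T}/G$ is \emph{not} acyclic. Finally, even had $G$ been insular, the inclusion $G\hookrightarrow\mTinvNF{T}$ could not be a chain map, because the differential on $\mTinvNF{T}$ at the R1 crossing does not vanish on $G$, whereas it would have to vanish on the corresponding stabilization $i_!\mTinvNF{T'}$.

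The paper chooses the complementary insular subfunctor: $G$ consists of \emph{all} labelings at R1-coordinate $1$ together with the labelings with $c=1$ at R1-coordinate $0$. This $G$ is insular (any correspondence element whose target has $c=X$ at R1-coordinate $0$ must have source with $c=X$ at R1-coordinate $0$, since the algebra actions and the other cube differentials preserve the label of the small circle), and its totalization is acyclic because the saddle restricted to $\{c=1\}$ is an isomorphism onto the $1$-resolution. The surviving part is then the \emph{quotient} $\mTfuncNF{T}/G$, supported at R1-coordinate $0$ with $c=X$, and forgetting $c$ identifies it with the stabilization $i_!\mTfuncNF{T'}$. So the roles of sub and quotient are the reverse of what you wrote. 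This matters for R2 and R3 as well: the paper's R2 argument uses a quotient by an acyclic insular $G_1$ followed by an inclusion of an insular $G_3$ (so one of each), not ``an analogous insular subfunctor''; and you'd need to keep track of the directions carefully in R3. You also omit invariance under reordering of crossings (a stabilization along the permutation face inclusion), and your final paragraph explains where the sign/homotopy ambiguities come from but does not actually establish agreement with Khovanov's maps — the paper argues this via locality and the fact that, up to sign, there is a unique homotopy class of local bimodule equivalences realizing each move.
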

\begin{proof}
  This is essentially a translation of the invariance proof for the
  Khovanov homotopy type~\cite[Section 6]{RS-khovanov} (itself a modest extension
  of invariance proofs for Khovanov homology) to the language of this
  paper.

  \newcommand{\vreflect}[1]{\reflectbox{\rotatebox[origin=c]{180}{#1}}}

  \begin{figure}
    \centering
      \subfloat[RI.]{
    \xymatrix@C=0.1\textwidth{
      \vcenter{\hbox{\includegraphics[height=0.3\textheight]{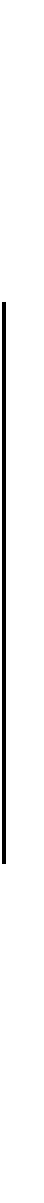}}}\ar@{->}[r]& \vcenter{\hbox{\includegraphics[height=0.3\textheight]{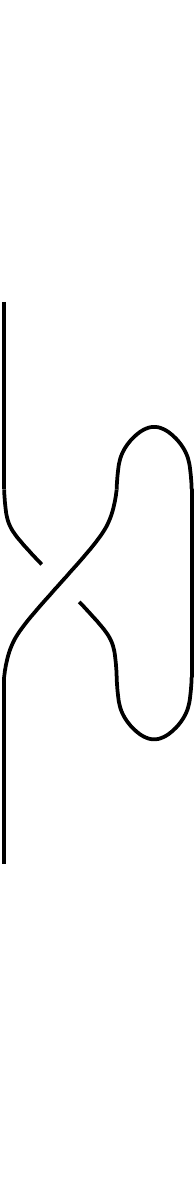}}}
    }
  }
  \hspace{0.03\textwidth}
  \subfloat[RII.]{
    \xymatrix@C=0.1\textwidth{
      \vcenter{\hbox{\includegraphics[height=0.3\textheight]{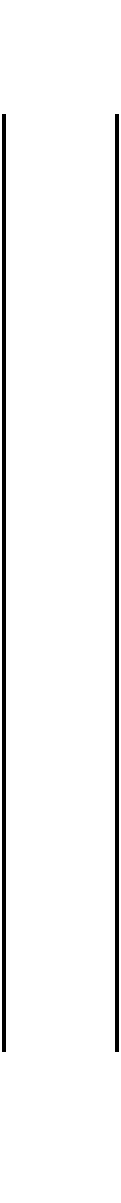}}}\ar@{->}[r]& \vcenter{\hbox{\includegraphics[height=0.3\textheight]{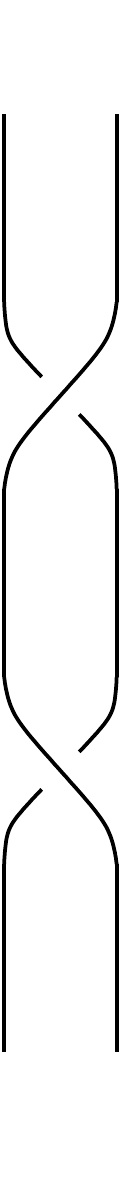}}}
    }
  }
  \hspace{0.03\textwidth}
  \subfloat[RIII.]{
    \xymatrix@C=0.1\textwidth{
      \vcenter{\hbox{\includegraphics[height=0.3\textheight]{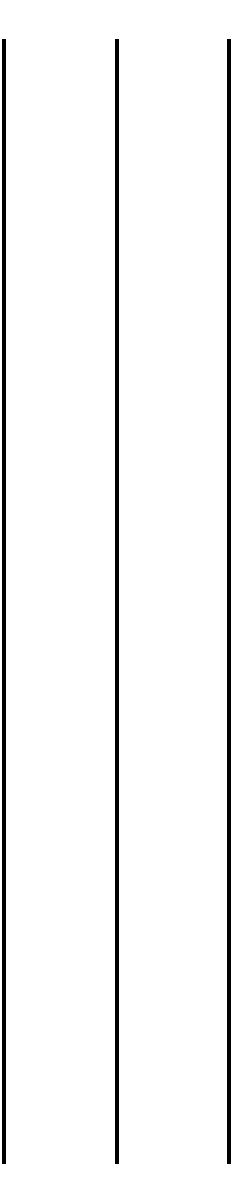}}}\ar@{->}[r]& \vcenter{\hbox{\includegraphics[height=0.3\textheight]{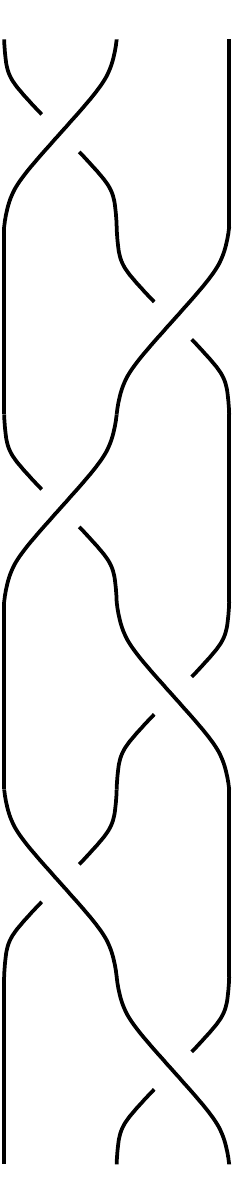}}}
    }
  }
    \caption{\textbf{Reidemeister moves.} The orientations of the strands are arbitrary. This figure originally appeared in~\cite{RS-khovanov}.}
    \label{fig:ReidemeisterMoves}
  \end{figure}

  It suffices to verify invariance under reordering of the crossings
  and the three Reidemeister moves shown in
  Figure~\ref{fig:ReidemeisterMoves}, because this Reidemeister I and
  the Reidemeister II move generate the other Reidemeister I move, and
  the usual Reidemeister III move is generated by this braid-like
  Reidemeister III move and Reidemeister II moves (see \cite[Section
  7.3]{Baldwin-hf-s-seq}).

  If $T\in\TangleDiags{m}{n}$ is a $(2m,2n)$-tangle diagram with $N$
  ordered crossings, and if $T'\in\TangleDiags{m}{n}$ is the same
  tangle diagram, but with its crossings reordered by some permutation
  $\sigma\in\symGrp_N$, then the stable functor $(\mTfuncNF{T'},N_+)$
  is the stabilization $(i_!\mTfuncNF{T},N_+)$, where
  $i\from\CCat{N}\to\CCat{N}$ is the face inclusion
  $(v_1,\dots,v_N)\mapsto (v_{\sigma(1)},\dots,v_{\sigma(N)})$.

  Next we turn to the Reidemeister I move. Let
  $T\in\TangleDiags{m}{n}$ be a $(2m,2n)$-tangle diagram with $N$
  ordered crossings, of which $N_+$ are positive, and $T'$ the result
  of performing a Reidemeister I move to $T$ as in
  Figure~\ref{fig:ReidemeisterMoves}, so $T'$ has one more positive
  crossing $c$ than $T$; assume $c$ is the $(N+1)\st$ crossing of
  $T'$. Note that the $1$-resolution of $c$ gives a tangle isotopic to
  $T$ and the $0$-resolution of $c$ gives the disjoint union of $T$
  and a small circle $C$. For each object $(v,a,T,b)\in
  \Ob(\CCat{N+1}\ttimes\mTshape{m}{n})$ define $G(v,a,T,b)\subset
  \mTfuncNF{T'}(v,a,T,b)$ as
  \[
  G(v,a,T,b)=\begin{cases}
    \mTfuncNF{T'}(v,a,T,b)&\text{if $v_{N+1}=1$}\\
    \{w\in\mTfuncNF{T'}(v,a,T,b)\mid\text{$w$ assigns $1$ to $C$}\}&\text{if $v_{N+1}=0$}.
  \end{cases}
  \]
  (Compare~\cite[Figure 6.2]{RS-khovanov}.) We claim that $G$ is an
  insular subfunctor of $\mTfuncNF{T'}$ and that the chain
  complex associated to $G$ is acyclic. The second statement is
  clear. For the first, note that every element $w\in
  \mTfuncNF{T'}(v,a,T,b)\setminus G(v,a,T,b)$ is supported over the
  $0$-resolution at $c$, and assigns $X$ to the small circle $C$. The
  maps associated to the algebra action respect the labeling of $C$,
  and the edges in the cube go from the $0$-resolution to the
  $1$-resolution, and hence either do not change the crossing $c$ or
  map to a resolution in which $G(v,a,T,b)=\mTfuncNF{T'}(v,a,T,b)$.

  Thus, by Lemma~\ref{lem:introvert}, $(\mTfuncNF{T'},N_++1)$ is
  stably equivalent to $(\mTfuncNF{T'}/G,N_++1)$. If $i\from
  \CCat{N}\to\CCat{N+1}$ is the face inclusion $(v_1,\dots,v_N)\mapsto
  (v_1,\dots,v_N,0)$, forgetting the circle $C$ gives an isomorphism
  from $(\mTfuncNF{T'}/G,N_++1)$ to $(i_!\mTfuncNF{T},N_++1)$, which
  is a stabilization of $(\mTfuncNF{T},N_+)$.

  The proofs of Reidemeister II and III invariance are similar
  adaptations of the proofs from our previous
  paper~\cite[Propositions~6.3 and~6.4]{RS-khovanov}. For Reidemeister
  II invariance, that proof defines a contractible insular
  subfunctor $G_1$ of $\mTfuncNF{T'}$ and an insular subfunctor
  $G_3$ of the quotient $G_2=\mTfuncNF{T'}/G_1$ so that the quotient
  $G_4=G_2/G_3$ is contractible, and $G_3$ is isomorphic to
  $\mTfuncNF{T}$ modulo the correct grading shifts. (See
  particularly~\cite[Figure~6.3]{RS-khovanov}, where circles labeled
  $1$ are denoted $+$ and circles labeled $X$ are denoted $-$.) The
  new point is that all of these subsets are preserved by the algebra
  action; but this is immediate from their definitions, which only
  involve restricting to certain vertices of the cube or restricting
  the labels of certain closed circles. Similarly, for Reidemeister
  III invariance the old proof gives a sequence of insular
  subfunctors inducing equivalences. Further details are left to the
  reader.

  \begin{figure}
    \centering
    \includegraphics{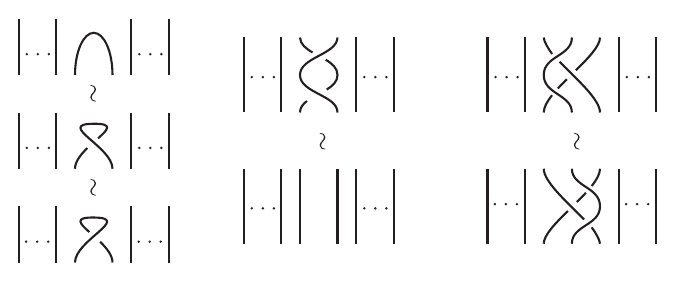}
    \caption{\textbf{Local Reidemeister moves.} Khovanov's invariance proof shows that the bimodules before and after each Reidemeister move are quasi-isomorphic; and in fact there is an essentially unique, up to sign, quasi-isomorphism between them.}
    \label{fig:local-Reid}
  \end{figure}

  The second part of the statement follows from the fact that,
  locally, up to sign there is a unique homotopy class of homotopy equivalences of
  $(\KTalg{n},\KTalg{n-2})$-bimodules (respectively
  $(\KTalg{n},\KTalg{n})$-bimodules) corresponding to a Reidemeister I
  move (respectively II or III move). (See
  Figure~\ref{fig:local-Reid}.) Both the map on the chain complexes induced by
  the construction above and Khovanov's map respect composition of
  tangles and so are induced from local maps. See our previous
  paper~\cite[Proposition 3.4]{RS-s-invariant} for further details.
\end{proof}

\section{From combinatorics to topology}\label{sec:comb-to-top}
\subsection{Construction of the spectral categories and bimodules}\label{sec:build-spec-bim}
We warm up by giving a functor $G\co \mHshape{n}^0\to \mSpectra$ refining
the arc algebras. In Section~\ref{sec:tang-to-burn} we defined a functor
$\mHinv{n}\co \mHshape{n}\to\mBurnside$. The Burnside multicategory
maps to the multicategory of permutative categories $\PermuCat$, by
taking a set $X$ to the category $\Sets/X$ of finite sets over $X$, and a
correspondence $A\co X\to Y$ to the functor $\Sets/X\to\Sets/Y$ given
by fiber product with $A$ (cf.\ Section~\ref{sec:EM-background}).  Elmendorf-Mandell define a multifunctor
$\PermuCat\to \mSpectra$, $K$-theory, where $\mSpectra$ is the
multicategory of symmetric spectra (with multicategory structure
induced by the smash product)~\cite[Theorem
1.1]{EM-top-machine}. (Again, see Section~\ref{sec:EM-background}.)  So, composing with this
functor gives us a functor
\[
  \mHshape{n}\to \mSpectra.
\]
Rectification as in Definition~\ref{def:rectification}
combined with Lemma~\ref{lem:unenrich}, turns this into a functor
\begin{equation}\label{eq:G-on-mHshape}
  G\co \mHshape{n}^0\to \mSpectra.
\end{equation}
 
The story for tangles is similar.  Given a tangle diagram
$T\in\TangleDiags{m}{n}$ (with $N$ ordered crossings, of which $N_+$
are positive), in Section~\ref{sec:tang-to-burn} we defined a stable functor $(\mTinvNF{T}\co
\CCat{N}\ttimes\mTshape{m}{n}\to\mBurnside,N_+)$.  Compose
$\mTinvNF{T}$ with the map $\mBurnside\to\PermuCat$ to get a functor
$\CCat{N}\ttimes\mTshape{m}{n}\to\PermuCat$.
Applying Elmendorf-Mandell's $K$-theory functor~\cite[Theorem
1.1]{EM-top-machine} as before gives us a functor
\[
  \CCat{N}\ttimes\mTshape{m}{n}\to \mSpectra.
\]
Rectification as in Definition~\ref{def:rectification}
turns this into a functor
\[
  F\co \strictify{\CCat{N}\ttimes\mTshape{m}{n}}\to \mSpectra
\]
from the strictified product. Note that
$\mHshape{m}\cup\mHshape{n}$ is a blockaded subcategory of
$\mTshape{m}{n}$, so by Lemma~\ref{lem:rectify-restrict}, on
$\mHshape{m}^0\cup\mHshape{n}^0$ the functor $F$ agrees with
the map $G$ from Equation~\ref{eq:G-on-mHshape}.\setcounter{OutlineCompletion}{110} 

Recall from Section~\ref{sec:tang-to-burn} that for each pair of
crossingless matchings $a\in\Crossingless{m}$ and
$b\in\Crossingless{n}$ we have a cube $\CCat{N}\times(a,T,b)$ in
$\strictify{\CCat{N}\ttimes\mTshape{m}{n}}$. The restriction of $F$ to
$\CCat{N}\times(a,T,b)$ is a functor
$F|_{(a,T,b)}\co \CCat{N}\to\mSpectra$.  Next we take the iterated
mapping cone of $F|_{(a,T,b)}$. That is, adjoin an additional object
$*$ to $\CCat{1}$ with a single morphism $0\to *$, to obtain a larger
category $\CCat{1}_+$. (This category is denoted $P$ in
Corollary~\ref{cor:totalization}.) Let
$\CCat{N}_+=(\CCat{1}_+)^N$. Extend $F|_{(a,T,b)}$ to
$F|_{(a,T,b)}^+\co \CCat{N}_+\to \mSpectra$ by declaring that
$F|_{(a,T,b)}^+(x)=\{\pt\}$, a single point, if
$x\not\in\Ob(\CCat{N})$. Then the iterated mapping cone of
$F|_{(a,T,b)}$ is the homotopy colimit $\hocolim F|_{(a,T,b)}^+$.

Now, define
\[
  G\co \mTshapeStct{m}{n}^0\to \mSpectra
\]
by defining
\begin{align*}
  G(a,b)&=F(a,b) &
  G(a,T,b)&=\sh^{-N_+}\hocolim_{\CCat{N}_+} F|_{(a,T,b)}^+.
\end{align*}
In fact, on the entire subcategory $\mHshape{m}^0\cup\mHshape{n}^0$,
define $G$ to agree with $F$ (and hence also the map $G$ from Equation~\eqref{eq:G-on-mHshape}). The map
\[
G(f_{a_1,\dots,a_k,T,b_1,\dots,b_\ell})\co
G(a_1,a_2)\wedge \cdots\wedge G(a_k,T,b_1)\wedge\cdots\wedge G(b_{\ell-1},b_\ell)\to G(a_1,T,b_\ell)
\]
is the composition
\begin{align*}
  &G(a_1,a_2)\wedge \cdots\wedge G(a_k,T,b_1)\wedge\cdots\wedge G(b_{\ell-1},b_\ell)\\
  &\qquad\qquad\qquad= 
    F(a_1,a_2)\wedge \cdots\wedge \left[\sh^{-N_+}\hocolim_{\CCat{N}_+} F|_{(a_k,T,b_1)}^+\right]\wedge\cdots\wedge F(b_{\ell-1},b_\ell)\\
  &\qquad\qquad\qquad\cong \sh^{-N_+}\hocolim_{\CCat{N}_+}\left[F(a_1,a_2)\wedge \cdots\wedge F|_{(a_k,T,b_1)}^+ \wedge\cdots\wedge F(b_{\ell-1},b_\ell)\right]\\
  &\qquad\qquad\qquad\to \sh^{-N_+}\hocolim_{\CCat{N}_+}F|_{(a_1,T,b_\ell)}^+=G(a_1,T,b_\ell),
\end{align*}
where the last map comes from naturality of the shift functor and
homotopy colimits (see Propositions~\ref{prop:shift-fun}
and~\ref{prop:hocolim-props}) and the fact that $F$ is a
multifunctor.

\begin{lemma}
  This definition makes $G$ into a multifunctor.
\end{lemma}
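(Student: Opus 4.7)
The plan is to verify the two multifunctor axioms for $G$: preservation of identities and of composition. Since $G$ restricted to the blockaded subcategory $\mHshape{m}^0 \cup \mHshape{n}^0$ coincides with the known multifunctor $F$ there (by Lemma~\ref{lem:rectify-restrict}), the only cases requiring new checks are those involving the objects $(a,T,b)$ and multimorphisms of the form $f_{a_1,\ldots,a_k,T,b_1,\ldots,b_\ell}$. For the identity $\Id_{(a,T,b)}$, I observe that it corresponds under the $F$-construction to the identity natural transformation of the diagram $F|_{(a,T,b)}\co \CCat{N}\to\mSpectra$ (extended trivially to $F|_{(a,T,b)}^+$); functoriality of $\hocolim_{\CCat{N}_+}$ (Proposition~\ref{prop:hocolim-props}) and of $\sh^{-N_+}$ (Proposition~\ref{prop:shift-fun}) then give the identity on $G(a,T,b)$.

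For composition, note that in the shape multicategory $\mTshape{m}{n}^0$ every multimorphism set has at most one element, so composites are uniquely determined when they exist. The only non-trivial case is when an outer multimorphism $\phi = f_{a_1,\ldots,a_k,T,b_1,\ldots,b_\ell}$ is composed with exactly one inner mixed multimorphism $\psi_{i_0}$, the remaining $\psi_i$ being purely arc-algebra multimorphisms; the composite $\phi\circ(\psi_1,\ldots,\psi_n)$ is again of the mixed form. I would verify the required equation $G(\phi)\circ(G(\psi_1),\ldots,G(\psi_n)) = G(\phi\circ(\psi_1,\ldots,\psi_n))$ by expanding each side through the defining chain of natural isomorphisms (smash commutes with $\hocolim$ and with $\sh^{-N_+}$, by Propositions~\ref{prop:hocolim-props} and~\ref{prop:shift-fun}) and then applying multifunctoriality of $F$ on the strictified product $\strictify{\CCat{N}\ttimes\mTshape{m}{n}}$.

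The principal obstacle is bookkeeping: the smash product of symmetric spectra is coherently associative but not strictly commutative, and the hocolim/smash isomorphism is controlled by the Alexander-Whitney pairing, which associates but does not commute. Some care is therefore needed to check that the natural isomorphisms used to migrate $\sh^{-N_+}\hocolim_{\CCat{N}_+}$ past a smash product of arc-algebra factors on either side of the single mixed factor agree with the corresponding placements on the composite side. This reduces to the associativity statements recorded in Propositions~\ref{prop:shift-fun} and~\ref{prop:hocolim-props}, so I do not expect any novel difficulty beyond writing out the diagram.
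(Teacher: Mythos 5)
Your argument is correct and matches the paper's (one-line) proof, which simply cites naturality of the shift and homotopy colimit functors together with multifunctoriality of $F$; you have supplied the details behind that citation. Your observation that any composite multimorphism in $\mTshape{m}{n}^0$ involves at most one mixed factor $(a,T,b)$ is exactly what ensures only the associativity clauses of Propositions~\ref{prop:hocolim-props} and~\ref{prop:shift-fun} are needed, and never the problematic interchange of two hocolims or shifts.
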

\begin{proof}
  Again, this follows from naturality of shift functors and homotopy colimits,
  and the fact that $F$ is a multifunctor.
\end{proof}\setcounter{OutlineCompletion}{120}

\begin{proposition}\label{prop:homology-right}
  Composing $G$ and the chain functor $\mSpectra\to\mComplexes$
  gives a map $\mTshapeStct{m}{n}^0\to\mComplexes$ which is
  quasi-isomorphic to the Khovanov tangle invariant (reinterpreted as
  in Section~\ref{sec:multi-vec}).
\end{proposition}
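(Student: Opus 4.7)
The plan is to unwind the definition of $G$ and show that applying $C_*$ produces a multifunctor naturally quasi-isomorphic to $\Total{\Forget\circ\mTinvNF{T},N_+}$, which by Lemma~\ref{lem:is-Kh} is Khovanov's tangle invariant. Three facts drive the argument. First, the composite $\mBurnside\to\PermuCat\xrightarrow{K}\mSpectra$ has the property (Section~\ref{sec:EM-background}) that $K(\Sets/X)\simeq\bigvee_{x\in X}\SphereS$ and a correspondence $A\co X\to Y$ is sent to the spectrum map whose $(y,x)$-entry has degree $|A_{y,x}|$. Hence $C_*K(\Sets/X)$ is quasi-isomorphic to $\bigoplus_{x\in X}\ZZ$ concentrated in degree zero, and $C_*\circ K\circ(\mBurnside\to\PermuCat)$ is naturally quasi-isomorphic, as a multifunctor, to $\Forget\co\mBurnside\to\mAbelianGroups\into\mComplexes$. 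Second, by Lemma~\ref{lem:rect-equiv} rectification is a natural weak equivalence on the underlying enriched diagrams, so this identification descends to the rectified functor $F$. Third, $C_*$ preserves homotopy colimits and shift functors up to natural quasi-isomorphism, and is compatible via the K\"unneth-type pairing with smash products.

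On an object of the form $(a,T,b)$ we compute
\[
C_*G(a,T,b)=C_*\,\sh^{-N_+}\hocolim_{\CCat{N}_+}F|_{(a,T,b)}^+\simeq\bigl(\hocolim_{\CCat{N}_+}C_*F|_{(a,T,b)}^+\bigr)[-N_+].
\]
The first two facts above identify $C_*F|_{(a,T,b)}^+$ with the extension by zero (at the added basepoint of $\CCat{N}_+$) of $\Forget\circ\mTinvNF{T}|_{\CCat{N}\times(a,T,b)}$. Corollary~\ref{cor:totalization} then identifies the $\hocolim_{\CCat{N}_+}$ with the totalization of this cube, and after the grading shift by $-N_+$ we recover exactly $\Total{\Forget\circ\mTinvNF{T},N_+}(a,T,b)$. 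On objects in $\mHshape{m}^0$ and $\mHshape{n}^0$, there is no cube and the identification reduces to the arc-algebra part of Lemma~\ref{lem:is-Kh}.

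For multimorphisms, the structure map of $G$ on a basic multimorphism is defined by pulling $\sh^{-N_+}$ and $\hocolim_{\CCat{N}_+}$ outside a smash product via the natural isomorphisms of Proposition~\ref{prop:shift-fun} and Proposition~\ref{prop:hocolim-props}, and then applying $F$. Applying $C_*$, the K\"unneth pairing $\bigotimes C_*(X_i)\to C_*(\bigwedge X_i)$ together with $C_*$-preservation of homotopy colimits and shifts turns the structure map into the natural transformation $\bigotimes\hocolim\to\hocolim\bigotimes$ on totalizations, followed by $C_*F(\aTree)$. By the previous paragraph this matches exactly the multi-saddle/merge-cobordism maps underlying the bimodule action in Khovanov's construction (cf.~the proof of Proposition~\ref{prop:Kh-pairing}). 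Naturality of all these identifications, and the associativity (but not commutativity) of the relevant K\"unneth pairings, then guarantee that multi-composition is respected, producing a natural quasi-isomorphism $C_*\circ G\simeq\Total{\Forget\circ\mTinvNF{T},N_+}$ of multifunctors $\mTshape{m}{n}^0\to\mComplexes$.

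The main obstacle is bookkeeping: one must verify that the grading shifts from $\sh^{-N_+}$, the re-indexing in Corollary~\ref{cor:totalization}, the chain-versus-cochain conventions of Section~\ref{sec:grading-convention}, and the Eilenberg-Zilber sign conventions all combine consistently with those implicit in $\Total{\Forget\circ\mTinvNF{T},N_+}$. This is conceptually routine but must be tracked carefully through the outline of Figure~\ref{fig:big-outline}; no essentially new input beyond what appears in Sections~\ref{sec:hocolim}--\ref{sec:EM-background} and Section~\ref{sec:tang-to-burn} is needed.
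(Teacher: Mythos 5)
Your strategy matches the paper's: compute $C_*\circ G$ on objects via the shift/hocolim identities and \Corollary{totalization}, reduce to identifying the value on each sphere wedge with the free abelian group on Khovanov generators, and then check that the multimorphism maps match. The one genuine gap is in the sentence beginning ``Hence,'' where you pass from the \emph{pointwise} statement that $C_*K(\Sets/X)$ has homology concentrated in degree $0$ to the claim that $C_*\circ K\circ(\mBurnside\to\PermuCat)$ is ``naturally quasi-isomorphic, as a multifunctor'' to $\Forget$. That promotion is not automatic: there is no natural transformation in either direction between $C_*X$ and $H_0(X)$ as functors on $\mSpectra$, so one cannot simply conclude multifunctor-level naturality from levelwise homology isomorphisms. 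The paper supplies exactly this missing piece in Lemma~\ref{lem:connective-zigzag}, which constructs a zigzag of genuine natural transformations of multifunctors
\[
K \longleftarrow \tau_{\geq 0}\circ K \longrightarrow H_0\circ K
\]
via the connective-cover truncation $\tau_{\geq 0}$, and observes that both arrows are quasi-isomorphisms when $K$ has homology concentrated in degree $0$. Applied to $K = C_*\circ F$, this is what converts the pointwise computation into the natural quasi-isomorphism of multifunctors you need before invoking \Corollary{totalization}. You should either cite that lemma or reconstruct the truncation zigzag; without it the word ``naturally'' is doing unjustified work. The remainder of your argument (the bookkeeping on shifts, the identification of the structure maps with merge/saddle maps, the use of Eilenberg--Zilber for multi-compositions) is in order and is essentially what the paper does.
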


The following result will be useful in proving Proposition~\ref{prop:homology-right}.
\begin{lemma}
  \label{lem:connective-zigzag}
  Let $\MultiCat$ be a multicategory and suppose $K$ is any multifunctor
  $\MultiCat \to \mComplexes$.
  Let $\tau_{\geq 0}$ be the \emph{connective cover} functor on
  $\mComplexes$, sending a complex $C$ to the following subcomplex:
  \[
  (\tau_{\geq 0}C)_k =
  \begin{cases}
    C_k &\text{if }k > 0\\
    \ker(d_0) &\text{if }k = 0\\
    0 &\text{if }k < 0.
  \end{cases}
  \]
  Then there are natural transformations
  \[
  K \leftarrow \tau_{\geq 0} \circ K \to H_0 \circ K
  \]
  of multifunctors $\MultiCat \to \mComplexes$. If, for any
  $x \in \Ob(\MultiCat)$, the complex $K(x)$ has no homology in
  negative (respectively positive, nonzero) degrees, the left-hand map
  (respectively the right-hand map, each of the maps) is a natural
  quasi-isomorphism.
\end{lemma}

\begin{proof}
  The map $\tau_{\geq 0}$ is a multifunctor $\mComplexes \to
  \mComplexes$, and comes with a natural transformation $\tau_{\geq 0}
  \to \Id$ (an inclusion map of complexes), inducing an
  isomorphism on homology in non-negative degrees, and a natural transformation $\tau_{\geq 0} \to H_0$ of
  multifunctors (a quotient map of complexes), inducing an isomorphism on $H_0$.
  Putting these together, for a functor $K$ as described the composite
  maps
  \[
  K \leftarrow \tau_{\geq 0} \circ K \to H_0 \circ K
  \]
  are natural transformations of multifunctors $\MultiCat \to
  \mComplexes$; and the left-hand (resp.~right-hand) arrow is a
  quasi-isomorphism if $K$ has homology groups supported in
  non-negative (resp. non-positive) degrees.
\end{proof}

\begin{proof}[Proof of Proposition~\ref{prop:homology-right}]
  We begin by observing that the functor
  \[
  C_* \circ F\co \strictify{\CCat{N}\ttimes\mTshape{m}{n}}\to
  \mComplexes
  \]
  has homology concentrated in degree zero: the spectra $G(a,b)$ and
  $F|_{(a,T,b)}(v)$ are wedge sums of copies of the sphere spectrum
  $\SphereS$. Therefore, the previous lemma provides us with a
  quasi-isomorphism between the multifunctor $C_* \circ F$ and the
  multifunctor $H_0 \circ F$.

  The identification $H_0(G(a,b))\cong 1_a\KTalg{n}1_b$ is
  obvious: $F(a,b)$ is a wedge
  sum of spheres, one for each Khovanov generator. (See Section~\ref{sec:EM-background}.) Similarly, for each
  vertex $v\in\CCat{N}$, $F|_{(a,T,b)}(v)$ is a wedge sum of copies of
  the sphere spectrum $\SphereS$, one for each element of
  $\mTinvNF{T}(a,T,b)$, so $H_0(F|_{(a,T,b)}(v))\cong
  \Forget(\mTinvNF{T}(v,a,T,b))$.  Further, the map on homology
  associated to each edge $v\to w$ of the cube is the map
  $\Forget(\mTinvNF{T}((v,a,T,b)\to(w,a,T,b))$.

  We must check that the composition maps agree with the
  Khovanov composition maps. For definiteness, consider the map
  $F(a,b)\smas F(b,T,c)\to F(a,T,c)$. There is a corresponding map
  \[
    (H_0 \circ F)|_{(a,b)}\otimes (H_0 \circ F)|_{(b,T,c)}(v)
    \to (H_0 \circ F)|_{(a,T,c)}(v)
  \]
  that is natural in $v \in \CCat{N}$. Tracing through the
  isomorphisms above, this is exactly the Khovanov multiplication
  \[
    1_a\KTalg{n}1_b\otimes 1_b\KTfunc(T_v)1_c\to 1_a\KTfunc(T_v)1_c.
  \]

  Thus, the multifunctor $H_0 \circ F$ represents (up to shift)
  precisely the cubical diagram of bimodules over the arc algebras
  whose totalization is $1_a\KTfunc(T)1_b$. As quasi-isomorphisms
  preserve shifts and homotopy colimits (see Proposition~\ref{prop:hocolim-props}), our
  quasi-isomorphism from $F$ to $H_0 \circ F$ becomes a
  quasi-isomorphism
  \begin{equation}\label{eq:CG-hocolim}
    C_* G(a,T,b) \simeq \hocolim_{\CCat{N}_+} (H_0 \circ F)|_{(a,T,b)}[-N_+].
  \end{equation}
  By Corollary~\ref{cor:totalization}, this homotopy colimit is
  precisely the total complex
  $\Total{\Forget\circ \mTinvNF{T}|_{(a,T,b)}},N_+)$, which is the
  bimodule $1_a\KTfunc(T)1_b$. Since the quasi-isomorphisms respected
  composition and Equation~\eqref{eq:CG-hocolim} is natural, the identification
  $C_* G(a,T,b)\simeq 1_a\KTfunc(T)1_b$ respects multiplication.
  This proves the result.
\end{proof}

We could stop here, and define $G$ to be our stable homotopy refinement of
the Khovanov tangle invariants, but we can make the invariant look a
little closer to Khovanov's invariant by reinterpreting it as a
spectral category. That is, we will refine $\KTalg{n}$ to a category
$\KTSpecCat{n}$ with:
\begin{itemize}
\item Objects crossingless matchings.
\item $\Hom(a,b)$ a symmetric spectrum.
\item Composition a map $\Hom(b,c)\smas\Hom(a,b)\to \Hom(a,c)$.
\item Identity elements which are maps $\SphereS\to \Hom(a,a)$.
\end{itemize}
(This is a spectrum-level analogue of a linear category,
cf.~Section~\ref{sec:multi-vec}. See~\cite{BM-top-spectral} for a more
in-depth review of spectral categories.)  Associated to a
$(2m,2n)$-tangle $T$ we will construct a left-$\KTSpecCat{m}$,
right-$\KTSpecCat{n}$ bimodule $\KTSpecBim{T}$, i.e., a functor
$\KTSpecBim{T}\co(\KTSpecCat{m})^\op\times\KTSpecCat{n}\to \Spectra$.

We construct $\KTSpecCat{n}$ as follows. Let
\[
  \Hom_{\KTSpecCat{n}}(a,b)=G(a,b).
\]
Composition is defined by 
\[
\Hom_{\KTSpecCat{n}}(b,c)\smas\Hom_{\KTSpecCat{n}}(a,b)=G(b,c)\smas G(a,b)\cong G(a,b)\smas G(b,c)
 \stackrel{G(f_{a,b,c})}{\relbar\joinrel\relbar\joinrel\relbar\joinrel\longrightarrow}
  G(a,c)=\Hom_{\KTSpecCat{n}}(a,c).
\]
Identity elements are given by
\[
\SphereS\stackrel{G(f_{a})}{\relbar\joinrel\longrightarrow} G(a,a)=\Hom_{\KTSpecCat{n}}(a,a).
\]

Turning to $\KTSpecBim{T}$, let 
\[
  \KTSpecBim{T}(a,b)=G(a,T,b).
\]
On morphisms, the map is given by
\begin{align*}
  &\Hom_{(\KTSpecCat{m})^\op\times\KTSpecCat{n}}((a,b),(a',b'))\smas \KTSpecBim{T}(a,b)=G(a',a)\smas G(b,b')\smas G(a,T,b)\\
&\qquad\qquad\qquad\cong G(a',a)\smas G(a,T,b)\smas G(b,b')\stackrel{G(f_{a',a,T,b,b'})}{\relbar\joinrel\relbar\joinrel\relbar\joinrel\longrightarrow} G(a',T,b')= \KTSpecBim{T}(a',b').
\end{align*}

\begin{lemma}\label{lem:non-unital}
  These definitions make $\KTSpecCat{n}$ into a spectral category and
  $\KTSpecBim{T}$ into a $(\KTSpecCat{m},\KTSpecCat{n})$-bimodule.
\end{lemma}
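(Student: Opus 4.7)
The plan is to derive each of the spectral category and bimodule axioms by unwinding the definitions to appeals to two facts: (i) $G$ is a multifunctor, so it preserves multi-composition and identity elements; and (ii) $\mTshape{m}{n}^0$ admits at most one multimorphism of each type, so any two multi-compositions with the same source and target are automatically equal.

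Concretely, for associativity of composition in $\KTSpecCat{n}$, both triple composites $(h\circ g)\circ f$ and $h\circ(g\circ f)$ are, after applying the coherent symmetry isomorphisms of $\mSpectra$ to re-order the three smash-product factors, obtained by applying $G$ to one of the two multi-compositions
\[
  f_{a,b,d}\circ\bigl(\mathrm{id}_{(a,b)},f_{b,c,d}\bigr),\qquad f_{a,c,d}\circ\bigl(f_{a,b,c},\mathrm{id}_{(c,d)}\bigr)
\]
in $\mTshape{m}{n}^0$. Both composites equal the unique element $f_{a,b,c,d}\in\Hom\bigl((a,b),(b,c),(c,d);(a,d)\bigr)$, so $G$ sends each to the same map $G(a,b)\smas G(b,c)\smas G(c,d)\to G(a,d)$, and hence the two triple composites agree. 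Unitality is analogous: the unit $\SphereS\to G(a,a)$ is $G(f_a)$ for the $0$-input multimorphism $f_a$, and the multi-composition $f_{a,a,b}\circ(f_a,\mathrm{id}_{(a,b)})=\mathrm{id}_{(a,b)}$ in $\mTshape{m}{n}^0$ translates under $G$ to the left unit law; the right unit law is symmetric.

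For $\KTSpecBim{T}$, the same pattern applies: associativity of the left action, associativity of the right action, unitality, and the commutation of left and right actions all follow from multifunctoriality of $G$ together with the unique-factorization of the mixed multimorphisms $f_{a_1,\dots,a_k,T,b_1,\dots,b_\ell}$. The commutation of left and right actions in particular corresponds to the identity
\[
  f_{a',a,T,b,b'}=f_{a',T,b,b'}\circ\bigl(f_{a',a,T,b},\mathrm{id}_{(b,b')}\bigr)=f_{a',a,T,b'}\circ\bigl(\mathrm{id}_{(a',a)},f_{a,T,b,b'}\bigr),
\]
which holds in $\mTshape{m}{n}^0$ by uniqueness.

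The only obstacle is bookkeeping: the $G(f_{\cdots})$ maps are naturally written with inputs smashed in the order of the multicategory input, while the spectral category composition and the bimodule action conventions use the reverse order. Reconciling these requires several applications of the symmetry isomorphism in $\mSpectra$, but since $\mSpectra$ is symmetric monoidal these twists are coherent and pose no substantive difficulty.
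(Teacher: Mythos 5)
Your proof is correct and takes the same route as the paper's, which simply observes that the axioms are ``immediate from the definitions and the fact that $G$ was a strict multifunctor.'' You have filled in the details the paper leaves implicit: that every required identity reduces to two ways of multi-composing uniquely-determined multimorphisms in the shape multicategory, which must agree by uniqueness, together with coherence and naturality of the symmetry in $\mSpectra$ to reconcile the order of smash factors.
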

\begin{proof}
  We only need to check the associativity and identity axioms, which are immediate
  from the definitions and the fact that $G$ was a multifunctor.
\end{proof}

Note that, in a similar spirit to Section~\ref{sec:multi-vec}, we can
reinterpret $\KTSpecCat{n}$ as a ring spectrum
\[
  \KTSpecRing{n}=\bigvee_{a,b\in\Ob(\KTSpecCat{n})}\Hom_{\KTSpecCat{n}}(a,b)
\]
with multiplication given by composition when defined and trivial when
composition is not defined. 
(Our ordering convention is that the product $a\cdot b$ stands for $b\circ a$.)
Similarly, $\KTSpecBim{T}$ induces an
$(\KTSpecRing{m},\KTSpecRing{n})$-bimodule spectrum
\[
  \KTSpecBimRing{T}=\bigvee_{\substack{a\in\Ob(\KTSpecCat{m})\\b\in\Ob(\KTSpecCat{n})}}\KTSpecBim{T}(a,b).
\]

Finally, we will use the following technical lemma,
to simplify the definition of the derived tensor product and topological Hochschild homology:
\begin{lemma}\label{lem:pointwise-cofibrant}
  The spectral categories $\KTSpecCat{n}$ and spectral bimodules
  $\KTSpecBim{T}$ are pointwise cofibrant. That is,
  $\Hom_{\KTSpecCat{n}}(x,y)$ and $\KTSpecBim{T}(x,y)$ are cofibrant
  symmetric spectra for all pairs of objects $x,y$.
\end{lemma}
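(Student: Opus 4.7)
The plan is to trace through the construction and invoke the cofibrancy preservation results already stated. Both $G(a,b)$ and $G(a,T,b)$ are built from the rectified functor $F\co\strictify{\CCat{N}\ttimes\mTshape{m}{n}}\to\mSpectra$, so the main step is to argue that $F$ is pointwise cofibrant, and then verify that the additional operations (shifts and homotopy colimits) preserve this property.

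First I would establish pointwise cofibrancy of $F$. By construction, $F = f_* Q^{\cM} (K \circ \mTinvNF{T})$ where $\cM = \CCat{N}\ttimes\mTshape{m}{n}$, $f\co \cM\to\cM^0$ is the projection, $K$ is Elmendorf-Mandell $K$-theory, and $Q^\cM$ is Chorny's cofibrant replacement functor in $\mSpectra^{\cM}$. Since $Q^\cM G$ is cofibrant in $\mSpectra^{\cM}$ and left Kan extension $f_*$ is left Quillen (part of the Quillen equivalence in Theorem~\ref{citethm:EM-top-machine34}), the functor $F$ is cofibrant in $\mSpectra^{\cM^0}$. Applying Lemma~\ref{lem:cofib-implies-levelwise} then shows that $F(x)$ is a cofibrant symmetric spectrum for every object $x\in\Ob(\cM^0)$. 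The analogous argument in the simpler setting of $\mHshape{n}$ gives the pointwise cofibrancy of the restriction to the arc-algebra pieces. In particular, $G(a,b) = F(a,b)$ is cofibrant, settling the claim for $\Hom_{\KTSpecCat{n}}(a,b)$.

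Next I would handle $G(a,T,b) = \sh^{-N_+}\hocolim_{\CCat{N}_+} F|_{(a,T,b)}^+$. For each $v\in\CCat{N}$, the spectrum $F|_{(a,T,b)}^+(v) = F(v,a,T,b)$ is cofibrant by the previous paragraph, and for $v = *$ the value is the one-point spectrum, which is cofibrant (as the basepoint). Hence $F|_{(a,T,b)}^+\co\CCat{N}_+\to\mSpectra$ is pointwise cofibrant, so Lemma~\ref{lem:cofib-behaves} gives that $\hocolim_{\CCat{N}_+} F|_{(a,T,b)}^+$ is cofibrant. A second application of Lemma~\ref{lem:cofib-behaves} shows that iterated application of $\sh^{-1}$ preserves cofibrancy, so $G(a,T,b) = \KTSpecBim{T}(a,b)$ is cofibrant as well.

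There is no real obstacle here: the lemma is essentially a bookkeeping corollary of the cofibrant replacement built into the rectification together with Lemma~\ref{lem:cofib-behaves} and Lemma~\ref{lem:cofib-implies-levelwise}. The one point that merits a little care is invoking the Chorny replacement $Q^\cM$ rather than an arbitrary cofibrant replacement, so that the restriction to the subdiagrams on $\mHshape{m}^0$ and $\mHshape{n}^0$ (which are blockaded in $\cM^0$ by Proposition~\ref{prop:Chorny}) really agrees with the cofibrant replacements used to define $G$ on those subcategories; but this compatibility is already encoded in our definition of the rectified functor $F$.
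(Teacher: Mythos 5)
Your proof is correct and follows essentially the same approach as the paper: rectification produces a cofibrant diagram (cofibrant replacement followed by the left Quillen functor $f_*$), Lemma~\ref{lem:cofib-implies-levelwise} gives pointwise cofibrancy, and Lemma~\ref{lem:cofib-behaves} handles the homotopy colimit and shift operations. Your version spells out the Quillen-adjunction reasoning that the paper leaves implicit, which is a reasonable clarification.
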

\begin{proof}
  This is clear since the spectra are produced by rectification from
  Definition~\ref{def:rectification}, which gives a cofibrant diagram
  which is hence pointwise cofibrant
  (Lemma~\ref{lem:cofib-implies-levelwise}), and then taking homotopy
  colimits and shifting, which preserves cofibrancy
  (Lemma~\ref{lem:cofib-behaves}).
\end{proof}

\subsection{Invariance of the bimodule associated to a tangle}\label{sec:bimod-inv}
Before turning to the bimodule, consider invariance of the spectral category $\KTSpecCat{n}$.
Superficially, the functor $G\co \mHshape{n}^0\to \mSpectra$, and hence
the spectral category $\KTSpecCat{n}$, depended on a number of
choices:
\begin{enumerate}
\item The choices~(\ref{item:choice-1})--(\ref{item:choice-3}) from Section~\ref{sec:initial-invariance}.
\item\label{item:new-choice-2} Any choices in the Elmendorf-Mandell machine and the
  rectification procedure.
\end{enumerate}
As noted in Sections~\ref{sec:EM-background}
and~\ref{sec:rect-background}, Choice~(\ref{item:new-choice-2}) is, in
fact, canonical. As discussed in Section~\ref{sec:initial-invariance},
Choices~(\ref{item:choice-1})--(\ref{item:choice-3}) can be made
canonical by a colimit-type construction. So, $\KTSpecCat{n}$ is, in
fact, completely well-defined.

Turning next to $\KTSpecBim{T}$, we will show that this spectral
bimodule is well-defined up to the following equivalence:
\begin{definition}
  Given spectral categories $\Cat$ and $\Dat$ and spectral
  $(\Cat,\Dat)$-bimodules $\SModule$ and $\SNodule$, a
  \emph{homomorphism} $\mathscr{F}\co \SModule\to \SNodule$ is a
  natural transformation from $\SModule$ to $\SNodule$. A homomorphism
  is an \emph{equivalence} if for each $a\in\Ob(\Cat)$ and
  $b\in\Ob(\Dat)$, the map
  \[
    \mathscr{F}(a,b)\co \SModule(a,b)\to\SNodule(a,b)
  \]
  is an equivalence of spectra. The symmetric, transitive closure of
  this notion of equivalence is an equivalence relation; two bimodules
  are \emph{equivalent} if they are related by this equivalence
  relation (i.e., if there is a zig-zag of equivalences between them).
\end{definition}

\begin{proposition}\label{prop:equiv-gives-equiv}
  If $(F_0\co\CCat{N_0}\ttimes\mTshape{m}{n}\to\mBurnside,S_0)$ and
  $(F_1\co \CCat{N_1}\ttimes\mTshape{m}{n}\to\mBurnside,S_1)$ are
  stably equivalent functors then the induced spectral bimodules
  $\mathscr{G}_0$ and $\mathscr{G}_1$ over
  $(\KTSpecCat{m},\KTSpecCat{n})$ are equivalent.
\end{proposition}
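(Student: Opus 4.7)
The plan is to decompose stable equivalence into its elementary moves and show each one preserves the equivalence class of spectral bimodules. Since stable equivalence is generated by quasi-isomorphisms of stable functors, stabilizations, and destabilizations, and destabilization is just the inverse of stabilization, it suffices to treat two cases: (i) a quasi-isomorphism realized by a multifunctor $H\co\CCat{N+1}\ttimes\mTshape{m}{n}\to\mBurnside$ between stable functors $(F_1,S)$ and $(F_2,S)$, and (ii) a stabilization $(i_!F,S+N-M-|i|)$ of $(F,S)$ along a face inclusion $i\co\CCat{M}\hookrightarrow\CCat{N}$.

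For (i), I would apply the spectral-bimodule construction of Section~\ref{sec:build-spec-bim} to $H$ to produce a bimodule $\mathscr{G}_H$, with $\mathscr{G}_H(a,T,b)=\sh^{-S}\hocolim_{\CCat{N+1}_+}F_H|_{(a,T,b)}^+$. Factoring this hocolim as $\hocolim_{\CCat{1}_+}(\hocolim_{\CCat{N}_+}(-))$ using Fubini (Proposition~\ref{prop:hocolim-props}) and commuting the shift past the outer hocolim via Proposition~\ref{prop:shift-fun} identifies
\[
\mathscr{G}_H(a,T,b)\simeq\Cone\bigl(\mathscr{G}_1(a,T,b)\to\mathscr{G}_2(a,T,b)\bigr),
\]
where the map is the map of bimodules induced by the extra $\CCat{1}$ direction of $H$. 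If $H$ is a quasi-isomorphism, then by Proposition~\ref{prop:homology-right} applied to $\mathscr{G}_H$, the complex $C_*\mathscr{G}_H(a,T,b)$ is quasi-isomorphic to the totalization built from $H$, which is acyclic by hypothesis. Since $\mathscr{G}_H(a,T,b)$ is bounded below (being a finite hocolim of shifts of wedges of sphere spectra), the homology Whitehead theorem (Theorem~\ref{citethm:homologywhitehead}) forces it to be contractible, so $\mathscr{G}_1(a,T,b)\to\mathscr{G}_2(a,T,b)$ is an equivalence of spectra; multifunctoriality of $H$ upgrades this to an equivalence of bimodules.

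For (ii), I would compute $\mathscr{G}_{i_!F}(a,T,b)$ by factoring $\CCat{N}_+\cong(\CCat{1}_+)^{N-M}\times\CCat{M}_+$, with the first $N-M$ coordinates indexing directions outside the image of $i$. The inner hocolim over $\CCat{M}_+$ recovers $\sh^{S}\mathscr{G}_F(a,T,b)$ on the face picked out by $i$ and a point elsewhere. Doing the $N-M$ outer $\CCat{1}_+$-hocolims one at a time: a coordinate fixed at $0$ by $i$ gives the diagram $\ast\leftarrow X\to\ast$ and produces a suspension, while a coordinate fixed at $1$ by $i$ gives $\ast\leftarrow\ast\to X$ and produces no suspension. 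The total suspension degree is $N-M-|i|$, which is exactly cancelled by the additional shift built into the stabilized stable functor. Since $i_!F$ differs from $F$ only by insertion of empty correspondences (which become basepoint spectra after $K$-theory), these Fubini/shift manipulations are compatible with the multi-composition, yielding a natural equivalence of bimodules $\mathscr{G}_{i_!F}\simeq\mathscr{G}_F$ (a priori a zig-zag through the intermediate stages).

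The main obstacle is bookkeeping naturality in the bimodule structure: the Fubini reassociations of $\hocolim$, the rearrangements of $\sh$, and the insertion of $+$-directions must all be checked to intertwine the multi-composition maps coming from $\mTshape{m}{n}$. The cleanest way to organize this is, as in case (i) via $\mathscr{G}_H$, to realize each elementary move as an intermediate bimodule built from a multifunctor on a slightly larger shape category and to reduce the entire comparison to pointwise equivalences through that intermediate object.
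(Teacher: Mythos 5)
Your proof is correct and follows essentially the same route as the paper: realize the quasi-isomorphism as an extra cube direction, observe the resulting bimodule $\mathscr{G}_H$ is the pointwise mapping cone of $\mathscr{G}_1\to\mathscr{G}_2$ and is acyclic hence contractible (homology Whitehead), and conclude via the cofiber sequence; and handle stabilization via the iterated mapping cone / shift bookkeeping. The paper states the stabilization case is ``clear from the iterated mapping cone construction'' where you spell out the Fubini factorization and the $N-M-|i|$ suspension count explicitly, and it packages the first case as a cofibration sequence with a long exact sequence of homotopy groups rather than identifying $\mathscr{G}_H$ as a literal cone via Fubini, but the underlying argument is the same.
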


\begin{proof}
  We first consider the case of quasi-isomorphisms. So, assume
  $N_0=N_1=N$, $S_0=S_1=S$, and $F_{01}\from
  \CCat{N+1}\ttimes\mTshape{m}{n}\to\mBurnside$ satisfies
  $F_{01}|_{\{i\}\times\CCat{N}\ttimes\mTshape{m}{n}}=F_i$ for $i=0,1$, and 
  $\Total{\Forget\circ F_{01}}$ is acyclic. Let $\Id_{F_1}$ denote the identity
  quasi-isomorphism from $F_1$ to itself, viewed as a
  multifunctor $\CCat{N+1}\ttimes\mTshape{m}{n}\to\mBurnside$.

  Consider the full subcategory
  $\{01\to 11\leftarrow 10\}\times\CCat{N}\ttimes\mTshape{m}{n}$ of
  $\CCat{2}\times\CCat{N}\ttimes\mTshape{m}{n}$. The multifunctors
  $F_{01}$ and $\Id_{F_1}$ can be patched together to produce a single
  multifunctor
  $F_{\vee}\from \{01\to 11\leftarrow
  10\}\times\CCat{N}\ttimes\mTshape{m}{n}\to\mBurnside$ which agrees
  with $F_{01}$ on $\{01\to 11\}\times\CCat{N}\ttimes\mTshape{m}{n}$
  (with the obvious identification of
  $\{01\to 11\}\times\CCat{N}\ttimes\mTshape{m}{n}$ and
  $\CCat{1}\times\CCat{N}\ttimes\mTshape{m}{n}$) and agrees
  with $\Id_{F_1}$ on
  $\{11\leftarrow 10\}\times\CCat{N}\ttimes\mTshape{m}{n}$.

  We now apply the construction from
  Section~\ref{sec:build-spec-bim} to this functor. Composing with
  the functor $\mBurnside\to\PermuCat\to\mSpectra$ and rectifying, we
  get a functor
  \[
    G_{\vee}\from (\{01\to 11\leftarrow
  10\}\times\CCat{N}\ttimes\mTshapeStct{m}{n})^0\to\mSpectra
  \]
  and since $\{01\}\times\CCat{N}\ttimes\mTshape{m}{n}$ and
  $\{10\}\times\CCat{N}\ttimes\mTshape{m}{n}$ are blockaded
  subcategories, the restrictions of $G_{\vee}$ to
  $(\{01\}\times\CCat{N}\ttimes\mTshapeStct{m}{n})^0$ and
  $(\{10\}\times\CCat{N}\ttimes\mTshapeStct{m}{n})^0$ agree with $G_0$
  and $G_1$, the rectifications of the compositions of $F_0$
  and $F_1$ with the functor $\mBurnside\to\PermuCat\to\mSpectra$.
  
  Let $H_0$ and $H_1$ be the result of applying the mapping
  cone construction from Section~\ref{sec:build-spec-bim} to $G_0$ and
  $G_1$, and let $\mathscr{K}_0$ and $\mathscr{K}_1$ be the spectral
  bimodules obtained from $H_0$ and $H_1$ by shifting by
  $S$. Let $H_\vee$ be the result of applying the same mapping
  cone construction to $G_\vee$, and let $\mathscr{K}_\vee$ be the
  spectral bimodule obtained by shifting by $S+1$. Finally, let
  $H_\to$ and $H_\leftarrow$ be the results of the mapping
  cone construction applied to $G_\vee$ restricted to
  $(\{01\to 11\}\times\CCat{N}\ttimes\mTshapeStct{m}{n})^0$ and
  $(\{11\leftarrow 10\}\times\CCat{N}\ttimes\mTshapeStct{m}{n})^0$,
  respectively.

  It is clear from the mapping cone construction that for each $a,b$,
  there are cofibration sequences
  \begin{align*}
    &\cdots\to H_\leftarrow(a,b)\to H_\vee(a,b)\to\Sigma H_0(a,b)\to\cdots\\
    &\cdots\to H_\to(a,b)\to H_\vee(a,b)\to\Sigma H_1(a,b)\to\cdots
  \end{align*}
  and these maps are natural with respect to morphisms in $(\KTSpecCat{m})^\op\times\KTSpecCat{n}$. Moreover,
  $H_\leftarrow(a,b)$ and $H_\to(a,b)$ are contractible since
  $\Total{\Forget\circ \Id_{F_1}}$ and $\Total{\Forget\circ F_{01}}$
  are acyclic. Therefore, for each $i=0,1$, the map
  $H_\vee\to\Sigma H_i$ is an equivalence of spectral
  bimodules. Shifting by $S+1$, we get that the map
  $\mathscr{K}_\vee\to\sh^{-S-1}\Sigma{H}_i$ is an equivalence as
  well; moreover, we also have an equivalence
  $\sh^{-S-1}\Sigma{H}_i\to \sh^{-S-1}\sh
  H_i\to\sh^{-S}H_i=\mathscr{K}_i$
  (cf.~Proposition~\ref{prop:sh-adjoints}).

  For stabilizations $i_! F$ of
  $(F\from\CCat{N}\ttimes\mTshape{m}{n}\to\mBurnside,S)$, it is enough
  to consider the two face inclusions of $\CCat{N}\into\CCat{N+1}$ as
  $\{0\}\times\CCat{N}$ and as $\{1\}\times\CCat{N}$ (since any
  arbitrary face inclusion is a composition of such face
  inclusions and permutations of the factors of $\CCat{N}$, and
  invariance under permutations is clear). In each case, let $G$ and $i_!G$ be the corresponding
  rectified functors to $\mSpectra$, $H$ and $i_!H$ the results after
  applying the mapping cone constructions, and $\mathscr{K}$ and
  $i_!\mathscr{K}$ the corresponding spectral bimodules after shifting
  by $S$ and $S+1-|i|$, respectively.

  In the first case, $\{0\}\times\CCat{N}\ttimes\mTshape{m}{n}$ is a
  blockaded subcategory, so the restriction of $i_!G$ to
  $(\{0\}\times\CCat{N}\ttimes\mTshapeStct{m}{n})^0$ agrees with $G$, and
  from the mapping cone construction, we have an equivalence
  $i_!H\to\Sigma H$ of spectral bimodules. As before, after shifting
  by $S+1$, we get an equivalence
  $i_!\mathscr{K}\to\sh^{-S-1}\Sigma H\to\mathscr{K}$ as well.

  In the second case, $\{1\}\times\CCat{N}\ttimes\mTshape{m}{n}$ is
  not a blockaded subcategory, so the previous proof does not quite
  work. Nevertheless, we can proceed as in the case of
  quasi-isomorphisms.

  Let $\Id_{F}$ be the identity quasi-isomorphism from $F$ to itself,
  viewed as a multifunctor
  $\CCat{N+1}\ttimes\mTshape{m}{n}\to\mBurnside$.  Consider the full
  subcategory of $\CCat{3}\times\CCat{N}\ttimes\mTshape{m}{n}$ spanned
  by $100,110,101,011,111\in\CCat{3}$. Two copies of the multifunctors
  $\Id_F$ can be patched together to produce a single multifunctor
  $F_{\text{big}}$ from this category to $\mBurnside$ which agrees with
  $\Id_{F}$ on $\{110\to 111\}\times\CCat{N}\ttimes\mTshape{m}{n}$ and
  $\{011\to 111\}\times\CCat{N}\ttimes\mTshape{m}{n}$, and is $0$ on
  the rest; schematically, the functor looks like:
  \[
  \begin{tikzpicture}[yscale=-1]
    \begin{scope}
    \foreach \m/\n/\i/\j in {0/100/0/0,F/110/0/1,0/101/1/1,F/011/2/1,F/111/1/2}{
      \node (\n) at (\i,\j) {\n};
    }

    \draw[->] (100) -- (110);
    \draw[->] (100) -- (101);
    \draw[->] (110) -- (111);
    \draw[->] (101) -- (111);
    \draw[->] (011) -- (111);
    \end{scope}

    \begin{scope}[xshift=2in]
    \foreach \m/\n/\i/\j in {0/100/0/0,F/110/0/1,0/101/1/1,F/011/2/1,F/111/1/2}{
      \node (\n) at (\i,\j) {$\m$};
    }

    \draw[->] (100) -- (110);
    \draw[->] (100) -- (101);
    \draw[->] (110) -- (111) node[midway,anchor=north east] {\tiny $\Id_F$};
    \draw[->] (101) -- (111);
    \draw[->] (011) -- (111) node[midway,anchor=north west] {\tiny $\Id_F$};
    \end{scope}

  \end{tikzpicture}
  \]
  Let $G_{\text{big}}$ be the corresponding rectified functor,
  $H_{\text{big}}$ the result after applying the mapping cone
  construction, and $\mathscr{K}_{\text{big}}$ the result after
  shifting by $S+1$.  Once again, since $\{100\to
  110\}\times\CCat{N}\ttimes\mTshape{m}{n}$ and
  $\{011\}\times\CCat{N}\ttimes\mTshape{m}{n}$ are blockaded
  subcategories, the restrictions of $G_{\text{big}}$ to $(\{100\to
  110\}\times\CCat{N}\ttimes\mTshapeStct{m}{n})^0$ and
  $(\{011\}\times\CCat{N}\ttimes\mTshapeStct{m}{n})^0$ agree with $i_!G$
  and $G$. 

  Let $H_{\vee}$ and $H_{\lozenge}$ be the results of the mapping cone
  construction applied to $G_{\text{big}}$ restricted to the full
  subcategories generated by $101,011,111\in\CCat{3}$ and
  $100,110,101,111\in\CCat{3}$, respectively. As before, there are
  natural cofibration sequences
  \begin{align*}
    &\cdots\to H_{\lozenge}(a,b)\to H_{\text{big}}(a,b)\to\Sigma H(a,b)\to\cdots\\
    &\cdots\to H_{\vee}(a,b)\to H_{\text{big}}(a,b)\to\Sigma i_!H(a,b)\to\cdots
  \end{align*}
  and moreover, $H_{\lozenge}(a,b)$ and $H_\vee(a,b)$ are contractible
  for each $a,b$. Therefore, the maps $H_{\text{big}}\to\Sigma H$ and
  $H_{\text{big}}\to\Sigma i_!H$ are equivalences of spectral
  bimodules. Shifting by $S+1$, the maps
  $\mathscr{K}_{\text{big}}\to\sh^{-S-1}\Sigma{H}\to\mathscr{K}$ and
  $\mathscr{K}_{\text{big}}\to\sh^{-S-1}\Sigma{i_!H}\to i_!\mathscr{K}$
  are equivalences as well.
\end{proof}

\begin{theorem}\label{thm:spec-invariance}
  Up to equivalence of $(\KTSpecCat{m},\KTSpecCat{n})$-bimodules,
  $\KTSpecBim{T}$ is an invariant of the isotopy class of the
  $(2m,2n)$-tangle $T$. Further, the map on homology induced by a
  sequence of Reidemeister moves agree, up to a sign, with Khovanov's
  invariance map~\cite[Section 4]{Kho-kh-tangles}.
\end{theorem}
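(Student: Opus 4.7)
The plan is to combine Theorem~\ref{thm:comb-inv} (combinatorial invariance of the stable functor $(\mTinvNF{T},N_+)$) with Proposition~\ref{prop:equiv-gives-equiv} (stable equivalence implies equivalence of the induced spectral bimodules). Given two tangle diagrams $T_1,T_2$ related by a sequence of Reidemeister moves and reorderings of crossings, Theorem~\ref{thm:comb-inv} produces a zig-zag of quasi-isomorphisms, stabilizations, and destabilizations relating $(\mTinvNF{T_1},N_+(T_1))$ and $(\mTinvNF{T_2},N_+(T_2))$. Feeding this zig-zag into the construction of Section~\ref{sec:build-spec-bim} (smash with the $K$-theory spectrum of $\mBurnside$, rectify, and take the shifted iterated mapping cone) and applying Proposition~\ref{prop:equiv-gives-equiv} to each link in the zig-zag produces a zig-zag of equivalences between $\KTSpecBim{T_1}$ and $\KTSpecBim{T_2}$ as $(\KTSpecCat{m},\KTSpecCat{n})$-bimodules.

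For the second statement, I would first note that Proposition~\ref{prop:homology-right} provides a natural quasi-isomorphism $C_* \circ G \simeq \KTfunc(T)$ compatible with bimodule structures. The cofiber-sequence argument in the proof of Proposition~\ref{prop:equiv-gives-equiv} in fact produces, for each quasi-isomorphism $F_{12}$ in the zig-zag, a canonical map $\mathscr{G}_1\to\mathscr{G}_2$ that fits into a cofiber sequence whose third term is contractible. Since $C_*$ preserves cofiber sequences and the induced $C_*\mathscr{G}_{12}$ is acyclic, the composite map of chain bimodules agrees with the chain map $\Total{\Forget\circ F_1,S_1}\to\Total{\Forget\circ F_2,S_2}$ produced combinatorially (each iterated mapping cone of spectra realizes, via $C_*$, the totalization of the corresponding cube). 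Stabilizations and destabilizations induce the canonical (up to sign) isomorphism on totalizations, which applied to spectra gives an equivalence whose homology is the identity up to sign. By the second part of Theorem~\ref{thm:comb-inv}, the combinatorial chain-level map agrees with Khovanov's invariance map up to sign and homotopy, so the same holds on homology of $\KTSpecBim{T}$.

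The main technical issue I expect is keeping track of naturality: one must verify that the zig-zag of spectral bimodules produced from a Reidemeister move really maps, under $C_*$, to a chain-level zig-zag quasi-isomorphic to Khovanov's specific local bimodule map (rather than some a priori different map in the same homotopy class). This is not automatic because Proposition~\ref{prop:equiv-gives-equiv} gives equivalences via zig-zags rather than a single map, and the cofibrant replacement in the rectification procedure (Definition~\ref{def:rectification}) introduces some indeterminacy. However, this is resolved by the rigidity observation at the end of the proof of Theorem~\ref{thm:comb-inv}: locally, the homotopy category of $(\KTalg{n},\KTalg{n\pm 2})$-bimodule maps associated with each Reidemeister move is one-dimensional (up to sign), so the constructed map must coincide with Khovanov's up to a sign, which is all that is claimed. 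With this rigidity in hand the proof reduces to invoking the results already in place, with only bookkeeping required to assemble the Reidemeister moves into the spectral zig-zag.
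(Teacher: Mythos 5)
Your proof takes essentially the same approach as the paper's, which simply invokes Theorem~\ref{thm:comb-inv} together with Proposition~\ref{prop:equiv-gives-equiv}; your additional discussion of the second statement (via Proposition~\ref{prop:homology-right} and the rigidity of local Reidemeister bimodule maps) makes explicit what the paper leaves implicit in its one-line proof.
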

\begin{proof}
  This is immediate from Theorem~\ref{thm:comb-inv} and
  Proposition~\ref{prop:equiv-gives-equiv}.
\end{proof}

\section{Gluing}\label{sec:gluing}
In this section we prove that gluing tangles corresponds to the
derived tensor product of spectral bimodules
(Theorem~\ref{thm:gluing}). We start by introducing one more shape
multicategory, adapted to studying triples of tangles
$(T_1,T_2,T_1T_2)$. We then recall the tensor product of spectral
bimodules and, with these tools in hand, prove the gluing theorem.

Fix non-negative integers $m,n,p$. 
The \emph{gluing multicategory} $\npstrictify{\mGlue{m}{n}{p}}$,
which is the shape multicategory associated to
$(\Crossingless{m},\Crossingless{n},\Crossingless{p})$
(cf.~Definition~\ref{def:shape-multicat-set-seq}). Explicitly, $\npstrictify{\mGlue{m}{n}{p}}$ has
objects:
\begin{itemize}
\item Pairs $(a_1,a_2)$ of crossingless matchings on $2m$ points.
\item Pairs $(b_1,b_2)$ of crossingless matchings on $2n$ points.
\item Pairs $(c_1,c_2)$ of crossingless matchings on $2p$ points.
\item Triples $(a,T_1,b)$ where $a$ is a crossingless matching of $2m$
  points, $b$ is a crossingless matching of $2n$ points, and $T_1$ is
  a placeholder (a mnemonic for a $(2m,2n)$ tangle).
\item Triples $(b,T_2,c)$ where $b$ is a crossingless matching of $2n$
  points, $c$ is a crossingless matching of $2p$ points, and $T_2$ is
  a placeholder (a mnemonic for a $(2n,2p)$ flat tangle).
\item Triples $(a,T_1T_2,c)$ where $a$ is a crossingless matching of
  $2m$ points, $c$ is a crossingless matching of $2p$ points, and
  $T_1T_2$ is a placeholder (a mnemonic for the composition of $T_1$
  and $T_2$).
\end{itemize}
So, the objects of $\npstrictify{\mTshape{m}{n}}$, $\npstrictify{\mTshape{n}{p}}$, and
$\npstrictify{\mTshape{m}{p}}$ are contained in the gluing multicategory, and in
fact we let these three multicategories be full subcategories of the
gluing multicategory. There is one more kind of multimorphism in the
gluing multicategory: a unique multimorphism
\[
(a_1,a_2),\dots,(a_{i-1},a_i),(a_i,T_1,b_1),(b_1,b_2),\dots,(b_{j-1},b_j),(b_j,T_2,c_1),(c_1,c_2),\dots,(c_{k-1},c_k)\to (a_1,T_1T_2,c_k)
\]
where the $a_\ell$ (respectively $b_\ell$, $c_\ell$) are crossingless
matchings of $2m$ (respectively $2n$, $2p$) points. Let
$\mGlue{m}{n}{p}$ be the canonical groupoid enrichment of
$\npstrictify{\mGlue{m}{n}{p}}$.

Next we define a category
$\CCat{N_1|N_2}\ttimes\mGlue{m}{n}{p}$
similar to (and extending)
$\CCat{N}\ttimes\mTshape{m}{n}$. The objects of
$\CCat{N}\ttimes\mGlue{m}{n}{p}$ are of the following forms:
\begin{itemize}
\item Pairs $(x,y)$ in $\Ob(\mHshape{m})$ or $\Ob(\mHshape{n})$ or
  $\Ob(\mHshape{p})$.
\item Quadruples $(v,a,T_1,b)$ where $v\in\Ob(\CCat{N_1})$,
  $a\in\Crossingless{m}$, and $b\in\Crossingless{n}$.
\item Quadruples $(v,b,T_2,c)$ where $v\in\Ob(\CCat{N_2})$,
  $b\in\Crossingless{n}$, and $c\in\Crossingless{p}$.
\item Quadruples $(v,a,T_1T_2,c)$ where $v\in\Ob(\CCat{N_1+N_2})$,
  $a\in\Crossingless{m}$, and $c\in\Crossingless{p}$.
\end{itemize}
So,
\[
  \Ob(\CCat{N_1|N_2}\ttimes\mGlue{m}{n}{p})=
  \Ob(\CCat{N_1}\ttimes\mTshape{m}{n})\cup 
  \Ob(\CCat{N_2}\ttimes\mTshape{n}{p})\cup
  \Ob(\CCat{N_1+N_2}\ttimes\mTshape{m}{p}).
\]

A \emph{basic multimorphism} for
$\CCat{N_1|N_2}\ttimes\mGlue{m}{n}{p}$ is one of:
\begin{itemize}
\item A basic multimorphism in $\CCat{N_1}\ttimes\mTshape{m}{n}$,
  $\CCat{N_2}\ttimes\mTshape{n}{p}$, or
  $\CCat{N_1+N_2}\ttimes\mTshape{m}{p}$, or
\item A (unique) multimorphism
  \begin{multline*}
    (a_1,a_2),\dots,(a_{j-1},a_{j}),(v,a_j,T_1,b_1),(b_1,b_2),\dots,(b_{k-1},b_k),(w,b_k,T_2,c_1),(c_1,c_2)\dots,(c_{\ell-1},c_\ell)\\
    \to ((v,w),a_1,T_1T_2,c_\ell).
  \end{multline*}
\end{itemize}
The multimorphisms in $\CCat{N_1|N_2}\ttimes\mGlue{m}{n}{p}$ are
planar, rooted trees whose edges are decorated by objects in
$\CCat{N_1|N_2}\ttimes\mGlue{m}{n}{p}$ and whose vertices are
decorated by basic multimorphisms compatible with the decorations on
the edges. If two multimorphisms have the same source and target then
we declare that there is a unique morphism in the corresponding
multimorphism groupoid between them.

Let $\strictify{\CCat{N_1|N_2}\ttimes\mGlue{m}{n}{p}}$ be the
strictification of $\CCat{N_1|N_2}\ttimes\mGlue{m}{n}{p}$. We have the
following analogue of Lemma~\ref{lem:strictify-mTshape}:
\begin{lemma}
  The projection
  $\CCat{N_1|N_2}\ttimes\mGlue{m}{n}{p}\to
  \strictify{\CCat{N_1|N_2}\ttimes\mGlue{m}{n}{p}}$ is a weak
  equivalence.
\end{lemma}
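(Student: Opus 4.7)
The plan is to mimic the proof of Lemma~\ref{lem:strictify-mTshape}, which in turn mirrors the proof of Lemma~\ref{lem:unenrich}. Recall that a map of simplicial multicategories is a weak equivalence if (a) it induces an equivalence on the strictifications (viewed as ordinary categories) and (b) for every tuple of objects $x_1,\dots,x_n,y$ the induced map on nerves
\[
\mathscr{N}\Hom_{\CCat{N_1|N_2}\ttimes\mGlue{m}{n}{p}}(x_1,\dots,x_n;y)\to
\mathscr{N}\Hom_{\strictify{\CCat{N_1|N_2}\ttimes\mGlue{m}{n}{p}}}(x_1,\dots,x_n;y)
\]
is a weak equivalence of simplicial sets. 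So I would verify these two conditions in turn.

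For (a), the projection is the identity on objects, and by definition of strictification the set $\Hom_{\strictify{\CCat{N_1|N_2}\ttimes\mGlue{m}{n}{p}}}(x_1,\dots,x_n;y)$ is the set of path components of the multimorphism groupoid $\Hom_{\CCat{N_1|N_2}\ttimes\mGlue{m}{n}{p}}(x_1,\dots,x_n;y)$. Since this is what the projection computes on multimorphisms, the map on strictifications is an isomorphism. (Here one uses the key observation that, by construction, two tree-shaped multimorphisms with the same source and target are declared to be connected by a unique $2$-morphism, so the induced map $\Hom_{\strictify{\CCat{N_1|N_2}\ttimes\mGlue{m}{n}{p}}}(x_1,\dots,x_n;y)\to\Hom_{\strictify{\strictify{\CCat{N_1|N_2}\ttimes\mGlue{m}{n}{p}}}}(x_1,\dots,x_n;y)$ is the identity.)

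For (b), the target simplicial set is a discrete set (it is the nerve of a set viewed as a category with only identities), so it suffices to show that each connected component of the multimorphism groupoid $\Hom_{\CCat{N_1|N_2}\ttimes\mGlue{m}{n}{p}}(x_1,\dots,x_n;y)$ has a contractible nerve. But by the defining property of the groupoid enrichment, whenever two objects $\aTree$ and $\aTree'$ of this groupoid have the same source and target there is a \emph{unique} morphism $\aTree\to\aTree'$ between them; in particular every object in a component is both initial and terminal, and the nerve of such a groupoid is contractible. This gives (b), completing the verification.

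The argument is essentially formal and presents no real obstacle; the only thing one needs to check is that the groupoid structure defined in the construction of $\CCat{N_1|N_2}\ttimes\mGlue{m}{n}{p}$ (namely, a unique morphism between any two multimorphisms with the same source and target) is of precisely the shape used in Lemmas~\ref{lem:unenrich} and~\ref{lem:strictify-mTshape}, which is immediate from the definition.
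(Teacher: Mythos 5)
Your proof is correct and takes exactly the approach the paper intends: the paper simply states that the argument is ``essentially the same as the proofs of Lemmas~\ref{lem:unenrich} and~\ref{lem:strictify-mTshape},'' and you have spelled out that argument --- isomorphism on strictifications, plus contractibility of each component of the multimorphism groupoids because any two objects with the same source and target are connected by a unique morphism.
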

\begin{proof}
  The proof is essentially the same as the proofs of
  Lemmas~\ref{lem:unenrich} and~\ref{lem:strictify-mTshape}.
\end{proof}

Fix a $(2m,2n)$-tangle $T_1$ with $N_1$ crossings and a $(2n,2p)$-tangle $T_2$
with $N_2$ crossings and let $T_1T_2$ denote the composition of $T_1$ and
$T_2$. Choose enough pox on $T_1$ and $T_2$ so that $T_1T_2$ is a poxed tangle
(Definition~\ref{def:pox}). Then we have multifunctors
$\mTfuncNF{T_1}\co \CCat{N_1}\ttimes \mTshape{m}{n}\to \mCobD$,
$\mTfuncNF{T_2}\co \CCat{N_2}\ttimes \mTshape{n}{p}\to \mCobD$, and
$\mTfuncNF{T_1T_2}\co \CCat{N_1+N_2}\ttimes \mTshape{m}{p}\to \mCobD$.
\begin{lemma}\label{lem:glue-multifunc-exists}
  There is a multifunctor
  $\mGlFunc\co \CCat{N_1|N_2}\ttimes \mGlue{m}{n}{p}\to \mCobD$
  extending $\mTfuncNF{T_1}$, $\mTfuncNF{T_2}$, and
  $\mTfuncNF{T_1T_2}$, and so that for any $a\in\Crossingless{m}$,
  $b\in\Crossingless{n}$, $c\in\Crossingless{p}$, and $(v,w)\in\CCat{N_1|N_2}$,
  $G\bigl((v,a,T_1,b),(w,b,T_2,c)\to((v,w),a,T_1T_2,c)\bigr)$ is a
  multi-merge cobordism (connecting $\Wmirror{b}b$ to the identity).
 \end{lemma}
\begin{proof}
  This is a straightforward adaptation of the construction of
  $\mTfuncNF{T}$, and is left to the reader.
\end{proof}
Composing $\mGlFunc$ with the Khovanov-Burnside functor gives a
functor $\mV\circ \mGlFunc\co \mGlue{m}{n}{p}\to
\mBurnside$. Proceeding as in the construction of the tangle
invariants in Section~\ref{sec:build-spec-bim} we obtain a functor
\[
  \mathit{Gl}\co \strictify{\mGlue{m}{n}{p}}\to\mSpectra.
\]
The functor $\mathit{Gl}$ restricts to $G_{T_1}$ on
$\mTshapeStct{m}{n}^0$ and $G_{T_2}$ on $\mTshapeStct{n}{p}^0$. (This uses the
fact that $\mTshape{m}{n}$ and $\mTshape{n}{p}$ are blockaded
subcategories of $\mGlue{m}{n}{p}$ and
Lemma~\ref{lem:rectify-restrict}.) By Lemma~\ref{lem:rect-equiv}, on
$\mTshapeStct{m}{p}^0$, the functor $\mathit{Gl}$ is naturally equivalent
to $G_{T_1T_2}$, but because of the rectification step, may not agree
with $G_{T_1T_2}$ exactly. Since there are no morphisms out of the
subcategory $\mTshapeStct{m}{p}^0$, we can compose $\mathit{Gl}$ with the
equivalence from $\mathit{Gl}|_{\mTshapeStct{m}{p}^0}$ to $G_{T_1T_2}$ to
obtain a new functor whose restriction to
$\mathit{Gl}|_{\mTshapeStct{m}{p}^0}$ agrees with $G_{T_1T_2}$. Abusing
notation, from now on we use $\mathit{Gl}$ to denote this new
functor.

We recall two notions of tensor product of modules over a spectral
category:
\begin{definition}
  Let $\Cat$, $\Dat$, and $\Eat$ be spectral categories, $\SModule$ a
  $(\Cat,\Dat)$-bimodule and $\SNodule$ a
  $(\Dat,\Eat)$-bimodule. Assume that $\Dat$, $\SModule$ and
  $\SNodule$ are pointwise cofibrant
  (cf.~Lemma~\ref{lem:pointwise-cofibrant}). The
  \emph{tensor product} of $\SModule$ and $\SNodule$ over $\Dat$,
  $\SModule\otimes_{\Dat}\SNodule$, is the $(\Cat,\Eat)$-bimodule $P$ where $P(a,c)$
  is the coequalizer of the diagram
  \[
      \coprod_{b,b'\in\Ob(\Dat)}\SModule(a,b)\smas \Hom_{\Dat}(b,b')\smas \SNodule(b',c)
      \rightrightarrows
      \coprod_{b\in\Ob(\Dat)}\SModule(a,b)\smas \SNodule(b,c).
  \]
  (Here, the two maps correspond to the action of $\Hom(b,b')$ on
  $\SModule(a,b)$ and on $\SNodule(b',c)$, respectively.)

  The \emph{derived tensor product} of $\SModule$ and $\SNodule$ over $\Dat$,
  $\SModule\DTP_{\Dat}\SNodule$, is
  \begin{align*}
    P(a,c)=\hocolim\Bigl(
    \cdots
    \mathrel{\substack{\textstyle\rightarrow\\[-0.5ex]
    \textstyle\rightarrow \\[-0.5ex]
    \textstyle\rightarrow\\[-0.5ex]
    \textstyle\rightarrow}}
      &\coprod_{b,b',b''\in\Ob(\Dat)}\SModule(a,b)\smas \Hom_{\Dat}(b,b')\smas \Hom_{\Dat}(b',b'')\smas \SNodule(b'',c)\\
    \mathrel{\substack{\textstyle\rightarrow\\[-0.5ex]
    \textstyle\rightarrow \\[-0.5ex]
    \textstyle\rightarrow}}
      &\coprod_{b,b'\in\Ob(\Dat)}\SModule(a,b)\smas \Hom_{\Dat}(b,b')\smas \SNodule(b',c)\\
      \rightrightarrows
      &\coprod_{b\in\Ob(\Dat)}\SModule(a,b)\smas \SNodule(b,c)
    \Bigr).
  \end{align*}
  
  There is an evident quotient map
  $\SModule\DTP_{\Dat}\SNodule\to \SModule\otimes_{\Dat} \SNodule$.
\end{definition}

The derived tensor product is functorial and preserves equivalences
in the following sense. Given a map $\Dat \to \Dat'$, modules
$\SModule$ and $\SNodule$ over $\Dat$, modules $\SModule'$ and
$\SNodule'$ over $\Dat'$, and maps $\SModule \to \SModule'$ and
$\SNodule \to \SNodule'$ intertwining the actions of $\Dat$ and $\Dat'$, there is a map
\[
\SModule\DTP_{\Dat}\SNodule \to \SModule'\DTP_{\Dat'}\SNodule'.
\]
If the maps $\Dat \to \Dat'$, $\SModule \to \SModule'$, and $\SNodule \to
\SNodule'$ are equivalences this map of derived tensor products is an
equivalence.

Replacing smash products with tensor products gives the derived tensor product of chain complexes (assuming that the constituent complexes are all flat over $\ZZ$). Again, the derived tensor product is functorial and preserves quasi-isomorphisms of complexes.

Reinterpreting $\mathit{Gl}$, for each triple of crossingless
matchings $a,b,c$ we have a map
\[
  \mathit{Gl}((a,T_1,b),(b,T_2,c)\to(a,T_1T_2,c))\co G(a,T_1,b)\smas G(b,T_2,c)\to G(a,T_1T_2,c).
\]
\begin{lemma}
  The map $\mathit{Gl}$ induces a map of bimodules
  $\KTSpecBim{T_1}\otimes_{\KTSpecCat{n}}\KTSpecBim{T_2}\to \KTSpecBim{T_1T_2}$.
\end{lemma}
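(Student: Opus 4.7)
The plan is to show the collection of maps $\mathit{Gl}((a,T_1,b),(b,T_2,c)\to(a,T_1T_2,c))$ satisfies the coequalizer relation that defines the tensor product $\KTSpecBim{T_1}\otimes_{\KTSpecCat{n}}\KTSpecBim{T_2}$, and that the resulting induced map respects the outer $\KTSpecCat{m}$ and $\KTSpecCat{p}$ actions. Both of these facts will follow by a single invocation of the multifunctor axiom for $\mathit{Gl}$ applied to carefully chosen three-input multimorphisms in $\strictify{\mGlue{m}{n}{p}}$.

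First I would verify the coequalizer relation. For any $a\in\Crossingless{m}$, $b,b'\in\Crossingless{n}$, $c\in\Crossingless{p}$, the gluing multicategory contains a unique multimorphism
\[
  \psi\co (a,T_1,b),(b,b'),(b',T_2,c)\to (a,T_1T_2,c).
\]
Applying $\mathit{Gl}$ produces a single map
\[
  G(a,T_1,b)\smas G(b,b')\smas G(b',T_2,c)\to G(a,T_1T_2,c).
\]
The multimorphism $\psi$ factors in $\strictify{\mGlue{m}{n}{p}}$ in two ways: either as the basic gluing $(a,T_1,b'),(b',T_2,c)\to(a,T_1T_2,c)$ precomposed with the $\mTshape{m}{n}$ multiplication $(a,T_1,b),(b,b')\to(a,T_1,b')$, or as the basic gluing $(a,T_1,b),(b,T_2,c)\to(a,T_1T_2,c)$ precomposed with the $\mTshape{n}{p}$ multiplication $(b,b'),(b',T_2,c)\to(b,T_2,c)$. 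Because $\mathit{Gl}$ is a strict multifunctor, these two factorizations induce the same map on spectra. Translating back, this is exactly the statement that the two maps
\[
  G(a,T_1,b)\smas \Hom_{\KTSpecCat{n}}(b,b')\smas G(b',T_2,c)\rightrightarrows \coprod_{b\in\Crossingless{n}}G(a,T_1,b)\smas G(b,T_2,c)\to G(a,T_1T_2,c)
\]
coincide. Hence the collection of maps descends to the coequalizer, yielding a natural transformation
\[
  \KTSpecBim{T_1}\otimes_{\KTSpecCat{n}}\KTSpecBim{T_2}\to \KTSpecBim{T_1T_2}.
\]

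Next I would check the bimodule structure. For $a,a'\in\Crossingless{m}$ the multicategory contains a unique multimorphism $(a,a'),(a',T_1,b),(b,T_2,c)\to(a,T_1T_2,c)$, which can be factored in two ways: either apply the $\KTSpecCat{m}$ action to $G(a',T_1,b)$ first and then glue, or glue first and then apply the $\KTSpecCat{m}$ action to $G(a',T_1T_2,c)$. Functoriality of $\mathit{Gl}$ again forces these to coincide, which is exactly the compatibility with the left $\KTSpecCat{m}$ action; the argument for the right $\KTSpecCat{p}$ action is symmetric.

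The main obstacle is essentially bookkeeping: one must keep track of the fact that $\mathit{Gl}$ was modified after rectification so that its restriction to $\mTshape{m}{p}^0$ agrees on the nose with $G_{T_1T_2}$, and that its restrictions to $\mTshape{m}{n}^0$ and $\mTshape{n}{p}^0$ agree with $G_{T_1}$ and $G_{T_2}$ by Lemma~\ref{lem:rectify-restrict}. Once these identifications are in place, the construction of the bimodule map is entirely formal, driven by the uniqueness of multimorphisms in the shape multicategory $\mGlue{m}{n}{p}$ and the strict multifunctoriality of $\mathit{Gl}$.
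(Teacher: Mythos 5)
Your proposal is correct and takes essentially the same approach as the paper's proof; the paper simply states that both the compatibility with the coequalizer relation and the module actions are "immediate from the fact that the map $\mathit{Gl}$ is a multifunctor (and the definition of $\mGlue{m}{n}{p}^0$)", and you have unpacked that claim into the explicit three-input multimorphism factorizations that witness it.
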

\begin{proof}
  By definition,
  \[
    \bigl(\KTSpecBim{T_1}\otimes_{\KTSpecCat{n}}\KTSpecBim{T_2}\bigr)(a,c)
    =\coprod_{b\in\Crossingless{n}}G_{T_1}(a,T,b)\smas G_{T_2}(b,T,c)/\sim.
  \]
  The map $\mathit{Gl}$ gives maps
  \[
\xymatrix@C=30ex{
    \coprod_{b\in\Crossingless{n}}G_{T_1}(a,T,b)\smas G_{T_2}(b,T,c)\ar[r]^-{\coprod_b\mathit{Gl}((a,T_1,b),(b,T_2,c)\to(a,T_1T_2,c))}& G_{T_1T_2}(a,T,c).}
  \]
  We must check that these maps respect the equivalence relation
  $\sim$ and the actions of $\KTSpecCat{m}$ and $\KTSpecCat{p}$; but
  both statements are immediate from the fact that the map
  $\mathit{Gl}$ is a multifunctor (and the definition of
  $\mGlue{m}{n}{p}^0$).
\end{proof}

Composing with the quotient map
$\KTSpecBim{T_1}\DTP_{\KTSpecCat{n}}\KTSpecBim{T_2}\to
\KTSpecBim{T_1}\otimes_{\KTSpecCat{n}}\KTSpecBim{T_2}$ gives a map
$\KTSpecBim{T_1}\DTP_{\KTSpecCat{n}}\KTSpecBim{T_2}\to\KTSpecBim{T_1T_2}$.

We recall a fact about the classical Khovanov bimodules:
\begin{lemma}\label{lem:Khovanov-DTP}
  If $T$ is an $(2m,2n)$ flat tangle then the bimodule $\KTfunc(T)$ is
  left-projective and right-projective. So, given a $(2m,2n)$-tangle
  $T_1$ and a $(2n,2p)$-tangle $T_2$ there are quasi-isomorphisms
  \[
    \KTfunc(T_1)\DTP_{\KTalg{n}}\KTfunc(T_2)\simeq
    \KTfunc(T_1)\otimes_{\KTalg{n}}\KTfunc(T_2)\simeq \KTfunc(T_1T_2).
  \]
  Further, the second quasi-isomorphism is induced by the evident multi-merge
  cobordisms.
\end{lemma}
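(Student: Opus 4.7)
The plan is to prove the statement in three steps, corresponding to projectivity, a change from derived to ordinary tensor product, and identification with the composed-tangle invariant.

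\textbf{Step 1 (projectivity for flat tangles).} First I would establish, following Khovanov~\cite[Proposition~3]{Kho-kh-tangles}, that for a flat $(2m,2n)$-tangle $T$ the bimodule $\KTfunc(T)$ is projective as a left $\KTalg{m}$-module and projective as a right $\KTalg{n}$-module. The cleanest route is to reduce to the elementary cup and cap cobordisms by writing $T$ as a composition of such elementary tangles and invoking Proposition~\ref{prop:Kh-pairing} (which for flat tangles gives an actual isomorphism of bimodules rather than a quasi-isomorphism). For each elementary cup or cap bimodule, one exhibits it explicitly as a direct summand of a free bimodule (equivalently, identifies the idempotent projecting to it), which is a finite check in the arc algebra. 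Since tensor products of projective bimodules over a ring with involution between projective left and projective right structures remain biprojective, the general flat case follows by induction on the number of cups/caps.

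\textbf{Step 2 (derived vs.~ordinary tensor product).} Given general tangle diagrams $T_1\in\TangleDiags{m}{n}$ and $T_2\in\TangleDiags{n}{p}$, the complex $\KTfunc(T_i)$ is a finite (cubical) iterated mapping cone whose underlying bimodule at each homological degree is a finite direct sum of bimodules of the form $\KTfunc((T_i)_v)$ for flat resolutions $(T_i)_v$. By Step~1, each such summand is projective (hence flat) as a left and as a right module over the relevant arc algebra; finite direct sums and extensions preserve flatness, so $\KTfunc(T_2)$ is a bounded complex of flat left $\KTalg{n}$-modules. Therefore the canonical comparison map
\[
  \KTfunc(T_1)\DTP_{\KTalg{n}}\KTfunc(T_2)\too
  \KTfunc(T_1)\otimes_{\KTalg{n}}\KTfunc(T_2)
\]
is a quasi-isomorphism (this is the standard fact that tensoring with a bounded complex of flats computes Tor).

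\textbf{Step 3 (identification with $\KTfunc(T_1T_2)$).} By Proposition~\ref{prop:Kh-pairing} there is an isomorphism of complexes of $(\KTalg{m},\KTalg{p})$-bimodules
\[
  \KTfunc(T_1)\otimes_{\KTalg{n}}\KTfunc(T_2)\cong\KTfunc(T_1T_2),
\]
which composes with the comparison of Step~2 to give the desired chain of quasi-isomorphisms.

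\textbf{Main obstacle.} The only nontrivial content is Step~1; everything else is formal once projectivity is in hand. Although the result is classical, its proof amounts to a careful inspection of the cup/cap bimodules and the idempotents of $\KTalg{n}$ corresponding to crossingless matchings. I would handle this by citing Khovanov and recording the explicit splittings only to the extent needed to fix conventions; the remaining work is bookkeeping to ensure that the signs, quantum grading shifts, and the homological grading shift by $N_+$ match on both sides of the final quasi-isomorphism.
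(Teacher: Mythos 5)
Your proposal is correct and takes essentially the same route as the paper: cite Khovanov's sweetness result (Proposition~3 of \cite{Kho-kh-tangles}) for the first half, observe that left/right projectivity lets the ordinary tensor product compute the derived one (since each term of the cubical complex is flat), and then invoke Proposition~\ref{prop:Kh-pairing} for the final identification. The paper simply states this in three sentences without the Step~1 reduction sketch, but since you defer to the citation for that step anyway, the two arguments coincide in substance.
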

\begin{proof}
  Khovanov proved that the bimodules associated to flat tangles are
  left and right projective; he used the word \emph{sweet} for
  finitely-generated bimodules with this property~\cite[Proposition
  3]{Kho-kh-tangles}.  So, the first isomorphism follows from the
  definition of the derived tensor product and sweetness. The second
  isomorphism is Khovanov's gluing theorem (repeated above as
  Proposition~\ref{prop:Kh-pairing}); his proof also shows that it
  comes from the multi-merge cobordisms.
\end{proof}

\begin{lemma}\label{lem:gluing-commutes}
  Given a $(2m,2n)$-tangle $T_1$ and a $(2n,2p)$-tangle $T_2$, there is
  a commutative diagram of isomorphisms in the derived category
  of complexes
  \[
  \xymatrix{
    C_*\bigl(\KTSpecBim{T_1}\DTP_{\KTSpecCat{n}}\KTSpecBim{T_2}\bigr)
    \ar[dr]_{\mathit{Gl}} &
    C_*(\KTSpecBim{T_1})\DTP_{C_*(\KTSpecCat{n})}C_*(\KTSpecBim{T_2})
    \ar[r] \ar[d] \ar[l]&
    \KTfunc(T_1)\DTP_{\KTalg{n}}\KTfunc(T_2) \ar[d] \\
    &C_*\bigl(\KTSpecBim{T_1T_2}\bigr) \ar[r] &
    \KTfunc(T_1T_2),
  }
  \]
  where the right-hand horizontal arrows are induced by the
  quasi-isomorphisms of Proposition~\ref{prop:homology-right} and the
  right-most vertical arrow is the quasi-isomorphism from
  Lemma~\ref{lem:Khovanov-DTP}.
\end{lemma}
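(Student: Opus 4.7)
The plan is to extend Proposition~\ref{prop:homology-right} to the full multifunctor $\mathit{Gl}\co\strictify{\mGlue{m}{n}{p}}\to\mSpectra$ constructed above, and then deduce the commutativity of the diagram by invoking naturality of the Künneth map. The payoff of the extension will be that $C_*\circ\mathit{Gl}$ is naturally quasi-isomorphic, as a multifunctor to $\mComplexes$, to the Khovanov assembly: the algebras $\KTalg{m},\KTalg{n},\KTalg{p}$, the bimodules $\KTfunc(T_1),\KTfunc(T_2),\KTfunc(T_1T_2)$ with their multiplication and action maps, and Khovanov's gluing map $\KTfunc(T_1)\otimes_{\KTalg{n}}\KTfunc(T_2)\to\KTfunc(T_1T_2)$ of Proposition~\ref{prop:Kh-pairing}.

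To establish the extension, I first note that at every object of $\strictify{\mGlue{m}{n}{p}}$ the spectrum $\mathit{Gl}(-)$ is obtained (after an overall grading shift) as a homotopy colimit over a cube of wedges of sphere spectra indexed by Khovanov generators, so $C_*\circ\mathit{Gl}$ has homology concentrated in non-negative degrees. Lemma~\ref{lem:connective-zigzag} then yields a natural zigzag of quasi-isomorphisms
\[
C_*\circ\mathit{Gl}\xleftarrow{\;\simeq\;}\tau_{\geq 0}C_*\circ\mathit{Gl}\xrightarrow{\;\simeq\;}H_0\circ\mathit{Gl}
\]
of multifunctors out of $\strictify{\mGlue{m}{n}{p}}$. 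By the same verification as in the proof of Proposition~\ref{prop:homology-right}, the target $H_0\circ\mathit{Gl}$ is identified with the Khovanov multifunctor described above. The one new item to check, beyond what was done for Proposition~\ref{prop:homology-right}, is that the basic gluing multimorphism
\[
(a,T_1,b),(b,T_2,c)\to(a,T_1T_2,c)
\]
is sent to Khovanov's gluing map $\KTfunc(T_1)(a,b)\otimes\KTfunc(T_2)(b,c)\to\KTfunc(T_1T_2)(a,c)$. This reduces to the observation that $\mGlFunc$ sends this multimorphism to the multi-saddle cobordism $aT_{1,v}\Wmirror{b}\amalg bT_{2,w}\Wmirror{c}\to a(T_1T_2)_{(v,w)}\Wmirror{c}$ (for each pair of resolution vertices $v,w$), which under the Khovanov-Burnside functor and the $K$-theory machine refines, on $H_0$, to the very cobordism map used in the proof of Proposition~\ref{prop:Kh-pairing}.

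With the extension in hand, the diagram assembles formally. The Künneth map
\[
C_*(\KTSpecBim{T_1})\DTP_{C_*(\KTSpecCat{n})}C_*(\KTSpecBim{T_2})\to C_*\bigl(\KTSpecBim{T_1}\DTP_{\KTSpecCat{n}}\KTSpecBim{T_2}\bigr)
\]
is a quasi-isomorphism (using pointwise cofibrancy from Lemma~\ref{lem:pointwise-cofibrant} so that the derived tensor products are computed correctly), and by naturality of the Künneth map with respect to the bimodule map $\mathit{Gl}\co\KTSpecBim{T_1}\DTP_{\KTSpecCat{n}}\KTSpecBim{T_2}\to\KTSpecBim{T_1T_2}$, the left triangle and the middle square commute in the derived category. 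The right square commutes by applying the extension above to the basic gluing multimorphism, and its bottom-right horizontal arrow is the quasi-isomorphism of Proposition~\ref{prop:homology-right}. Finally, by Lemma~\ref{lem:Khovanov-DTP} the rightmost vertical arrow is the Khovanov gluing map, which agrees with the map constructed via the extension.

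The main obstacle is the identification, at the level of $H_0\circ\mathit{Gl}$, of the basic gluing multimorphism with Khovanov's gluing map of Proposition~\ref{prop:Kh-pairing}: this requires tracing $\mGlFunc$ through its construction out of merge and multi-saddle cobordisms (Section~\ref{sec:cabinet}), passing through $\mV$ and the Elmendorf-Mandell $K$-theory functor, and matching the resulting map on $H_0$ with the multi-saddle map of Khovanov's TQFT. Once that identification is pinned down, the remaining pieces of the diagram commute by naturality of the Künneth map and of the Lemma~\ref{lem:connective-zigzag} zigzag, and by Proposition~\ref{prop:homology-right} on the three tangle subcategories.
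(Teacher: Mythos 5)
Your proposal breaks at the step where you apply Lemma~\ref{lem:connective-zigzag} directly to $C_*\circ\mathit{Gl}$ as a multifunctor out of $\strictify{\mGlue{m}{n}{p}}$. On an object of the form $(a,T_1,b)$ one has $\mathit{Gl}(a,T_1,b)=G(a,T_1,b)=\sh^{-N_+}\hocolim F|_{(a,T_1,b)}^+$, whose chain complex has homology spread across the interval $[-N_+,N_-]$, not concentrated in degree zero. Consequently the zigzag $C_*\circ\mathit{Gl}\leftarrow \tau_{\geq 0}C_*\circ\mathit{Gl}\to H_0\circ\mathit{Gl}$ is not a zigzag of quasi-isomorphisms: the left-hand arrow fails whenever $N_+>0$, and more fatally the right-hand arrow collapses $G(a,T_1,b)$ to the single abelian group $H_0(G(a,T_1,b))$, i.e., the zeroth Khovanov \emph{homology} group of $aT_1\Wmirror{b}$ rather than the Khovanov chain complex $\KTfunc(T_1)(a,b)$. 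So $H_0\circ\mathit{Gl}$ is not the ``Khovanov assembly'' you describe, and the identification in your right-hand square has no content.

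The correct level at which to apply the connective zigzag is one step inside, on the rectified cube-indexed functor $F$ out of $\strictify{\CCat{N}\ttimes\mTshape{m}{n}}$, whose value at every object is a wedge of sphere spectra and hence has $C_*$-homology concentrated in degree zero. This is the move made in Proposition~\ref{prop:homology-right}, and the paper's proof of the present lemma repeats it: one applies $\tau_{\geq 0}$ and $H_0$ to $C_*(F|_{(a,T,b)}^+)$ \emph{before} taking $\hocolim_{\CCat{N}_+}(-)[-N_+]$, obtaining a chain of quasi-isomorphisms of $(\tau_{\geq 0}C_*\KTSpecCat{m},\tau_{\geq 0}C_*\KTSpecCat{n})$-bimodules between $C_*\KTSpecBim{T}$ and $\KTfunc(T)$; the intermediate $D_*(T)$ introduced in the paper is exactly the truncated-then-totalized object. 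The rest of your outline --- pointwise cofibrancy, functoriality of the derived tensor product and its compatibility with homotopy colimits, naturality of the K\"unneth comparison giving the left part of the diagram, and the identification of the rightmost square with Khovanov's multi-saddle gluing map --- is sound and parallels the paper, but the bridge between $C_*\KTSpecBim{T}$ and $\KTfunc(T)$ has to be built vertex-by-vertex on the cube, not after the homotopy colimit has been assembled.
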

\begin{proof}
  We begin by applying $C_*$ to the diagram defining the derived
  tensor product $\KTSpecBim{T_1}\DTP_{\KTSpecCat{n}}\KTSpecBim{T_2}$.
  Using both the natural quasi-isomorphism $\hocolim C_* \to C_*
  \hocolim$ and monoidality of $C_*$, we get the quasi-isomorphism
  \[
    C_*(\KTSpecBim{T_1})\DTP_{C_*(\KTSpecCat{n})}C_*(\KTSpecBim{T_2})
    \to C_*\bigl(\KTSpecBim{T_1}\DTP_{\KTSpecCat{n}}\KTSpecBim{T_2}\bigr).
  \]
  Define the map
  $C_*(\KTSpecBim{T_1})\DTP_{C_*(\KTSpecCat{n})}C_*(\KTSpecBim{T_2})
  \to C_*(\KTSpecBim{T_1 T_2})$ to be the composition of this
  quasi-isomorphism and the map on chains induced by the gluing map $\mathit{Gl}$.

  We now address the right-hand square. Recall that
  Lemma~\ref{lem:connective-zigzag} constructs natural transformations of multifunctors $\mSpectra
  \to \mComplexes$
  \[
    C_* \leftarrow \tau_{\geq 0} \circ C_* \to H_0,
  \]
  where the left-hand arrow is always an isomorphism in
  non-negative homology degrees and the right-hand one is always
  an isomorphism in homology degree zero. In particular, this gives us
  natural quasi-isomorphisms of dg-categories
  \[
  C_* \KTSpecCat{n} \leftarrow \tau_{\geq 0} C_* \KTSpecCat{n} \to H_0
  \KTSpecCat{n},
  \]
  where the right-hand term is Khovanov's arc algebra
  $\KTalg{n}$. Similarly, we can apply these truncation transformations
  to the spectral bimodule $\KTSpecBim{T}$, obtaining quasi-isomorphisms
  \begin{align*}
    C_* \KTSpecBim{T} &= C_* \bigl(\sh^{-N_+} \hocolim_{\CCat{N}_+}
    F|_{(a,T,b)}^+\bigr)\\
    &\leftarrow \hocolim_{\CCat{N}_+} C_*(F|_{(a,T,b)}^+)[-N_+]\\
    &\leftarrow \hocolim_{\CCat{N}_+} \bigl(\tau_{\geq 0}
    C_*(F|_{(a,T,b)}^+)\bigr)[-N_+]\\
    &\to \hocolim_{\CCat{N}_+} \bigl(H_0 \circ
    F|_{(a,T,b)}^+ )\bigr)[-N_+]\\
    &= \KTfunc(T).
  \end{align*}
  These maps are compatible with bimodule structures: all terms are
  bimodules over $(\tau_{\geq 0} C_* \KTSpecCat{m},\tau_{\geq 0} C_*
  \KTSpecCat{n})$, and these bimodule structures are compatible with the structure of a
  bimodule over the untruncated chain complex $(C_*\KTSpecCat{m},C_*\KTSpecCat{n})$ on $C_* \KTSpecBim{T}$ and of a
  bimodule over the arc algebras $(\KTalg{m},\KTalg{n})$ on $\KTfunc(T)$.

  Let
  \[
    D_* (T)=\hocolim_{\CCat{N}_+} \bigl(\tau_{\geq 0}
    C_*(F|_{(a,T,b)}^+)\bigr)[-N_+].
  \]
  We now apply derived tensor products and the gluing pairing $\mathit{Gl}$,
  obtaining a diagram
  \[
  \xymatrix{
    C_* \KTSpecBim{T_1} \DTP_{C_* \KTSpecCat{n}} C_* \KTSpecBim{T_2} \ar[d]
    & D_*(T_1) \DTP_{\tau_{\geq 0} C_* \KTSpecCat{n}} D_*(T_2)
    \ar[d] \ar[l] \ar[r]
    & \KTfunc(T_1) \DTP_{\KTalg{n}} \KTfunc(T_2) \ar[d]\\
    C_* \KTSpecBim{T_1 T_2}
    & D_*(T_1 T_2) \ar[l] \ar[r]
    & \KTfunc(T_1 T_2).
  }
  \]
  As we just showed, the bottom horizontal maps are quasi-isomorphisms. Since
  the derived tensor product preserves homotopy colimits, the top
  horizontal maps are also quasi-isomorphisms. It follows from
  compatibility of the maps with the bimodule structures that both
  squares commute, where the right-most arrow is the map induced by
  the evident multi-merge cobordisms.
  Lemma~\ref{lem:Khovanov-DTP} implies that this is exactly Khovanov's gluing
  quasi-isomorphism.
\end{proof}

\begin{theorem}\label{thm:gluing}
  The gluing functor
  $\KTSpecBim{T_1}\DTP_{\KTSpecCat{n}}\KTSpecBim{T_2}\to\KTSpecBim{T_1T_2}$
  is an equivalence of bimodules.
\end{theorem}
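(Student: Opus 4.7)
The plan is to check the equivalence pointwise: a map of $(\KTSpecCat{m},\KTSpecCat{p})$-bimodules is an equivalence precisely when, for each pair $(a,c) \in \Crossingless{m} \times \Crossingless{p}$, the induced map of spectra is a weak equivalence. So fix such $a$ and $c$. My strategy is to verify that this pointwise gluing map is a homology isomorphism between spectra that are bounded below by a common integer, and then invoke the homology Whitehead theorem (Theorem~\ref{citethm:homologywhitehead}).

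For the homology isomorphism, I would apply Lemma~\ref{lem:gluing-commutes}. Chasing its commutative diagram in the derived category, the chain-level map $C_*\bigl(\KTSpecBim{T_1}\DTP_{\KTSpecCat{n}}\KTSpecBim{T_2}\bigr) \to C_*\bigl(\KTSpecBim{T_1T_2}\bigr)$ is identified, via zigzags of quasi-isomorphisms, with the classical Khovanov gluing map $\KTfunc(T_1) \DTP_{\KTalg{n}} \KTfunc(T_2) \to \KTfunc(T_1T_2)$, which is a quasi-isomorphism by Lemma~\ref{lem:Khovanov-DTP} (combining sweetness of the flat-tangle bimodules with Proposition~\ref{prop:Kh-pairing}). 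Since the homology groups of a spectrum are computed by $C_*$, the pointwise gluing map induces an isomorphism on all integral homology groups.

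For the connectivity estimate, recall that $\KTSpecBim{T}(a,b) = \sh^{-N_+(T)}\hocolim_{\CCat{N}_+} F^+|_{(a,T,b)}$, a shifted finite homotopy colimit whose values are either a point or a wedge of copies of the sphere spectrum (by the description of Elmendorf-Mandell's $K$-theory in Section~\ref{sec:EM-background}). Homotopy colimits over finite diagrams of connective spectra remain connective, so $\KTSpecBim{T}(a,b)$ is $(-N_+(T)-1)$-connected. The derived tensor product is the geometric realization of the bar-construction simplicial spectrum whose $q$-simplices are wedges of iterated smash products $\KTSpecBim{T_1}(a,b_0) \smas \Hom_{\KTSpecCat{n}}(b_0,b_1) \smas \cdots \smas \Hom_{\KTSpecCat{n}}(b_{q-1},b_q) \smas \KTSpecBim{T_2}(b_q,c)$; by Lemma~\ref{lem:pointwise-cofibrant} all factors are cofibrant (so the handicrafted smash products compute the correct homotopy type, cf.~Remark~\ref{remark:cofib-smash}) and are bounded below, and smash products of bounded-below cofibrant spectra are bounded below by the sum of the bounds. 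Using the obvious identity $N_+(T_1T_2) = N_+(T_1) + N_+(T_2)$, both sides of the gluing map are $n$-connected for a common integer $n$, so Theorem~\ref{citethm:homologywhitehead} concludes the proof.

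The hard part will be tracking connectivity carefully through the bar construction: although each individual smash-product term is manifestly bounded below, one must use pointwise cofibrancy (Lemma~\ref{lem:pointwise-cofibrant}) to ensure the handicrafted smash products have the correct homotopy type, and then verify that the geometric realization preserves the lower bound. These facts are standard for bounded-below cofibrant spectra but deserve a careful mention. Beyond this, the argument is a direct synthesis of the chain-level gluing theorem (Lemma~\ref{lem:gluing-commutes}) with the homology Whitehead theorem.
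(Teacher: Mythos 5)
Your proposal is correct and takes essentially the same approach as the paper: use Lemma~\ref{lem:gluing-commutes} to identify the chain-level map with Khovanov's gluing quasi-isomorphism, and then apply the homology Whitehead theorem (Theorem~\ref{citethm:homologywhitehead}). The paper dispatches the connectivity hypothesis with the single phrase ``the spectra in question are connective,'' whereas you carefully track the lower bound through the shift $\sh^{-N_+}$ and the bar construction---a worthwhile elaboration, but not a different argument.
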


\begin{proof}
  Lemma~\ref{lem:gluing-commutes} shows that the induced map of chain complexes agrees with
  the map $\KTfunc(T_1) \DTP_{\KTalg{n}} \KTfunc(T_2) \to \KTfunc (T_1
  T_2)$, which is a quasi-isomorphism. As the spectra in question are
  connective, the result follows from the homology Whitehead theorem
  (Theorem~\ref{citethm:homologywhitehead}).
\end{proof}

\section{Quantum gradings}\label{sec:q-gr}
So far, we have suppressed the quantum gradings; in this section we
reintroduce them.

\begin{definition}
  The \emph{grading multicategory} $\GrCat$ has:
  \begin{itemize}
  \item One object for each integer $n$, and
  \item A unique multimorphism $(m_1,\dots,m_k)\to m_1+\dots+m_k$ for
    each $m_1,\dots,m_k\in \ZZ$.
  \end{itemize}
\end{definition}
As usual, we can view the grading multicategory as trivially enriched
in groupoids.

\begin{definition}
  The \emph{naive product} of multicategories $\Cat$ and $\Dat$,
  $\Cat\times\Dat$, has objects pairs
  $(c,d)\in\Ob(\Cat)\times\Ob(\Dat)$, multimorphism sets
  \[
    \Hom_{\Cat\times\Dat}((c_1,d_1),\dots,(c_n,d_n);(c,d))=\Hom_{\Cat}(c_1,\dots,c_n;c)\times\Hom_{\Dat}(d_1,\dots,d_n;d),
  \]
  and the obvious composition and identity maps.
\end{definition}

Given a multicategory $\Cat$ and a multifunctor
$F\co\GrCat\times\Cat\to\mBurnside$ satisfying
  \begin{itemize}
  \item[(F)] for all objects $x\in\Ob(\Cat)$, $F(n,x)$ is empty for
    all but finitely many $n$,
  \end{itemize}
 there is an associated multifunctor
$\sint F\co \Cat\to\Dat$ defined by
\[
  (\sint F)(x)=\coprod_{n\in\ZZ}F(n,x)
\]
and, given $f\in\Hom_{\Cat}(x_1,\dots,x_k;y)$, the correspondence 
\begin{align*}
(\sint F)(f)\co& \left(\coprod_{m_1\in\ZZ}F(m_1,x_1)\right)\times\cdots\times \left(\coprod_{m_k\in\ZZ}F(m_k,x_k)\right)\\
&\qquad\qquad=\coprod_{(m_1,\dots,m_k)\in\ZZ^k}F(m_1,x_1)\times\cdots\times F(m_k,x_k)\longrightarrow 
\coprod_{n\in\ZZ}F(n,y)
\end{align*}
satisfies
\[
  s^{-1}\left(F(m_1,x_1)\times\cdots\times F(m_k,x_k)\right)\cap t^{-1}\left(F(n,y)\right)=
  \begin{cases}
    F(((m_1,\dots,m_k)\to n)\times f) &\text{if $n=m_1+\dots+m_k$}\\
    \emptyset & \text{otherwise}.
  \end{cases}
\]

We will lift the functors
$\mHinv{m}\co \mHshape{m}\to\mBurnside$ and
$\mTinvNF{T}\co \CCat{N}\ttimes\mTshape{m}{n}\to\mBurnside$ to functors
\begin{align*}
  \GmHinv{m}&\co \GrCat\times \mHshape{m}\to\mBurnside\\
  \GmTinvNF{T}&\co \GrCat\times(\CCat{N}\ttimes\mTshape{m}{n})\to\mBurnside.
\end{align*}
By ``lift'' we mean that there are natural isomorphisms
\begin{equation}\label{eq:gr-lift}
  \sint\GmHinv{m}\cong \mHinv{m} \qquad\qquad \sint\GmTinvNF{T}\cong \mTinvNF{T}.
\end{equation}

We start by defining the lifts at the level of objects, by copying
Khovanov's definitions of the quantum gradings on the arc algebras and
modules. Specifically, given an object $(a,b)\in \Ob(\mHshape{m})$ and
an element $x\in \mHinv{m}(a,b)$ which labels $p(x)$ circles by
$1$ and $n(x)$ circles by $X$, we define the \emph{quantum
  grading}
\begin{equation}\label{eq:q-gr-alg}
  \gr_q(x)=n(x)-p(x)+m
\end{equation}
and let
\[
  \GmHinv{m}(k,(a,b))=\{x\in\mHinv{m}(a,b)\mid \gr_q(x)=k\}.
\]
Similarly, for $(v,a,T,b)\in\Ob(\CCat{N}\ttimes\mTshape{m}{n})$ and
$x\in\mTinvNF{T}(v,a,T,b)$ which labels $p(x)$ circles by $1$ and $n(x)$ circles by $X$ we define
\begin{equation}\label{eq:q-gr-tangle}
  \gr_q(x)=n(x)-p(x)+n-|v|,
\end{equation}
where $|v|$ is the number of $1$s in $v$, and let
\[
  \GmTinvNF{T}(k,(v,a,T,b))=\{x\in\mTinvNF{T}(v,a,T,b)\mid \gr_q(x)=k\}.
\]

\begin{example}
  For $(a,a)\in\Ob(\mHshape{m})$, the quantum grading of an element
  $x\in\mHinv{m}(a,b)$ is $2$ times the number of circles labeled
  $X$, and in particular ranges between $0$ and $2m$. The unit
  element, in which all circles are labeled $1$, is in
  quantum grading $0$.
\end{example}

\begin{lemma}
  These definitions of $\GmHinv{m}$ and $\GmTinvNF{T}$
  extend uniquely to the morphism groupoids of $\GmHinv{m}$
  and $\GmTinvNF{T}$ satisfying Equations~\eqref{eq:gr-lift}.
\end{lemma}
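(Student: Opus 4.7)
The proof rests on the observation that the shifts in Equations~\eqref{eq:q-gr-alg} and~\eqref{eq:q-gr-tangle} are chosen precisely so that the Khovanov TQFT correspondences respect the quantum grading, and both existence and uniqueness of the lift fall out of a small local computation.

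For uniqueness, note that by Equation~\eqref{eq:gr-lift}, for each basic multimorphism $f$ in $\mHshape{m}$ or $\CCat{N}\ttimes\mTshape{m}{n}$ the correspondence $\GmHinv{m}\bigl(((\ell_1,\dots,\ell_k)\to\ell)\times f\bigr)$ is forced to be the sub-correspondence of $\mHinv{m}(f)$ cut out by requiring source grading $(\ell_1,\dots,\ell_k)$ and target grading $\ell$, and similarly for $\GmTinvNF{T}$. Since every $2$-morphism in $\mBurnside$ is a unique isomorphism of correspondences, the extension to $2$-morphisms is forced as well.

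For existence, we must verify that this prescription actually assembles into a multifunctor. By the definition of $\sint$, this boils down to checking that for each basic multimorphism $f$ and each $w\in\mHinv{m}(f)$ (respectively $w\in\mTinvNF{T}(f)$), the quantum grading of $t(w)$ equals the sum of the quantum gradings of the components of $s(w)$. The basic multimorphisms break into three types: multi-merge cobordisms, birth cobordisms $f_a\co\emptyset\to(a,a)$, and cube-edge saddles $(v,a,T,b)\to(w,a,T,b)$ with $|w|=|v|+1$. In each case, the identity is a direct computation: every elementary TQFT merge or split map $V\otimes V\to V$ or $V\to V\otimes V$ shifts the unshifted grading by $+1$ (with corresponding changes in the number of circles), while the shift conventions in Equations~\eqref{eq:q-gr-alg} and~\eqref{eq:q-gr-tangle} (the $+n$'s and the $-|v|$ on the cube) compensate exactly; and the unit labels each new circle by $1$, contributing $0$ to the shifted grading. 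These are the same computations that underlie the fact that $\KTalg{n}$ is a graded algebra and $\KTfunc(T)$ is a graded bimodule with grading-preserving differential (\cite[Sections 2.4, 3]{Kho-kh-tangles}). Composition of basic multimorphisms inherits the grading identities additively, so the assignment is consistent with composition of trees. The one piece of bookkeeping worth flagging, which is the only real ``hard part'', is the transition between $\mHshape{n}^0$-objects (shifted by $+n$) and tangle objects (shifted by $+n-|v|$) inside a hybrid multimorphism $f_{a_1,\dots,a_k,T,b_1,\dots,b_\ell}$: here the $-|v|$ term is precisely what cancels the $+1$ grading shift contributed by each cube-edge saddle, so no obstruction arises.
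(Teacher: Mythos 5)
Your proof is correct and takes essentially the same route as the paper, which simply states that uniqueness is clear and existence follows because multiplication on the arc algebras and bimodules respects quantum gradings; you have unpacked both halves into the underlying bookkeeping. One small point worth tidying: in the basic hybrid multimorphism $f_{a_1,\dots,a_k,T,b_1,\dots,b_\ell}$ the cube coordinate $v$ is held fixed (the cube-edge saddle is a separate basic multimorphism $(v,a,T,b)\to(w,a,T,b)$), so in that hybrid case the $-|v|$ shift simply appears identically on source and target and cancels trivially, while the $-|v|$ vs.\ $-|w|$ compensation against the saddle's $+1$ is what happens in the separate cube-edge case — your sentence conflates the two, though the arithmetic you rely on is right in both cases.
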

\begin{proof}
  Uniqueness is clear. Existence follows from the fact that the
  multiplication on the Khovanov arc algebras and bimodules respects
  the quantum gradings.
\end{proof}

Using $\GmHinv{m}$ and $\GmTinvNF{T}$ in place of
$\mHinv{m}$ and $\mTinvNF{T}$ in
Section~\ref{sec:build-spec-bim} gives functors
\[
  \GG\co \GrCat\times\mHshape{n}^0\to \mSpectra\qquad\qquad\text{and}\qquad\qquad
  \GG\co \GrCat\times\mTshapeStct{m}{n}^0\to \mSpectra.
\]
These give a graded spectral category $\GKTSpecCat{n}$ and graded
$(\GKTSpecCat{m},\GKTSpecCat{n})$-bimodule
$\GKTSpecBim{T}$, with same objects, by setting
\begin{align*}
  \Hom_{\GKTSpecCat{n}}(a,b)_k&=\GG(k,(a,b))\\
    \GKTSpecBim{T}(a,b)_k&=\GG(k,(a,T,b))
\end{align*}
(where the subscript $k$ denotes the $k\th$ graded part). These refine
the spectral category and bimodule introduced in
Section~\ref{sec:build-spec-bim} in the sense that
\begin{align*}
  \Hom_{\GKTSpecCat{n}}(a,b)&\simeq\bigvee_k\Hom_{\GKTSpecCat{n}}(a,b)_k\\
    \GKTSpecBim{T}(a,b)&\simeq\bigvee_k\GKTSpecBim{T}(a,b)_k,
\end{align*}
canonically, where the left side is the definition in
Section~\ref{sec:build-spec-bim} and the right side is the definition
in this section. So, the fact that we are using the same notation for
the definitions in this section and in
Section~\ref{sec:build-spec-bim} will not cause confusion.

The proof of invariance (Sections~\ref{sec:initial-invariance}
and~\ref{sec:bimod-inv}) goes through without essential changes. The
graded analogue of the gluing theorem is:
\begin{theorem}\label{thm:graded-gluing}
  The gluing map induces an equivalence of graded spectral bimodules
  \[
  \KTSpecBim{T_1}\DTP_{\KTSpecCat{n}}\KTSpecBim{T_2}\simeq \KTSpecBim{T_1T_2}.
  \]
\end{theorem}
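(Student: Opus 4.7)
The plan is to imitate the proof of Theorem~\ref{thm:gluing} grading by grading. First, I would construct the graded gluing map. Repeating the construction of Section~\ref{sec:gluing} with $\mHinv{\bullet}$ and $\mTinvNF{\bullet}$ replaced by their graded lifts $\GmHinv{\bullet}$ and $\GmTinvNF{\bullet}$ gives a graded gluing multifunctor
\[
  \GG\co \GrCat\times\mGlue{m}{n}{p}^0\to \mSpectra,
\]
which restricts to $\GG_{T_1}$, $\GG_{T_2}$ and $\GG_{T_1T_2}$ on the three tangle subcategories (after precomposing with an equivalence to make this hold on the nose, as in Section~\ref{sec:gluing}). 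The key point, inherited directly from the fact that saddle, merge, and birth cobordisms preserve Khovanov's quantum grading, is that this whole construction respects the $\GrCat$ factor; that is, the gluing pairing
\[
  \GKTSpecBim{T_1}(a,b)_i\smas \GKTSpecBim{T_2}(b,c)_j\to \GKTSpecBim{T_1T_2}(a,c)_{i+j}
\]
lives in the correct summand because the identity in Equation~\eqref{eq:q-gr-tangle} is additive under taking disjoint unions of labelings.

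Next I would verify that the graded derived tensor product
\[
  \bigl(\GKTSpecBim{T_1}\DTP_{\GKTSpecCat{n}}\GKTSpecBim{T_2}\bigr)(a,c)_k
\]
splits off as a summand of the ungraded derived tensor product. This is straightforward: each stage of the bar construction is a wedge of smash products of pieces, each labeled by its quantum grading, and the graded pieces above are obtained by restricting the wedge to the subcollection of tuples $(i_0,j_1,i_1,\dots,j_\ell,i_\ell)$ with $i_0+j_1+i_1+\dots+j_\ell+i_\ell=k$; homotopy colimits respect wedges. So, abbreviating the derived tensor product by $\cM$ and the composed tangle bimodule by $\cN$, we get a decomposition
\[
  \cM \simeq \bigvee_{k\in\ZZ}\cM_k, \qquad \cN\simeq\bigvee_{k\in\ZZ}\cN_k,
\]
and the gluing map $\cM\to\cN$ is the wedge of its graded pieces $\cM_k\to\cN_k$.

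I would then argue that each graded piece is an equivalence. Applying $C_*$ to the diagram of Lemma~\ref{lem:gluing-commutes} in the graded setting, all chain complexes involved split as direct sums over $k\in\ZZ$, and the horizontal quasi-isomorphisms (which come from naturality of $C_*$, truncation, homotopy colimits, and Khovanov's gluing theorem) all preserve the quantum grading, since Khovanov's quasi-isomorphism in Proposition~\ref{prop:Kh-pairing} is homogeneous of quantum degree $0$. Thus the induced chain map $C_*(\cM_k)\to C_*(\cN_k)$ is quasi-isomorphic to the grading-$k$ piece of the classical gluing quasi-isomorphism $\KTfunc(T_1)\DTP_{\KTalg{n}}\KTfunc(T_2)\to \KTfunc(T_1T_2)$, hence is itself a quasi-isomorphism. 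Since the graded pieces $\cM_k$ and $\cN_k$ are connective (they are built from suspensions and homotopy colimits of wedges of spheres, as in Section~\ref{sec:build-spec-bim}), the homology Whitehead theorem (Theorem~\ref{citethm:homologywhitehead}) implies that $\cM_k\to\cN_k$ is an equivalence of spectra for each $k$, and hence $\cM\to\cN$ is an equivalence of graded bimodules.

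The only nontrivial point is that the graded refinement does not introduce any new choices that break the construction; this is reduced to checking that the graded lifts $\GmHinv{\bullet}$ and $\GmTinvNF{\bullet}$ exist and that the whole chain of constructions in Sections~\ref{sec:build-spec-bim} and~\ref{sec:gluing} is natural in $\GrCat$. This naturality is automatic because the Burnside-to-spectra functors, rectification, and homotopy colimits all commute with wedge decompositions indexed by $\GrCat$. So the ungraded argument transports verbatim, one grading at a time.
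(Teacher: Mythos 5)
Your proposal is correct and follows essentially the same strategy as the paper, whose own proof of Theorem~\ref{thm:graded-gluing} is just the one-sentence remark that it ``differs from the proof of Theorem~\ref{thm:gluing} only in that the notation is more cumbersome.'' You have filled in the bookkeeping that remark elides: the key verifications are exactly that the multi-saddle gluing cobordism has $\intgr$-degree $n$ (which cancels against the $+n$ shift in Equation~\eqref{eq:q-gr-alg} and the $n-|v|$ versus $p-|v|$ shifts in Equation~\eqref{eq:q-gr-tangle}, so the pairing is homogeneous of quantum degree $0$), that the bar construction and homotopy colimits commute with the wedge decomposition over $\GrCat$, and that the quasi-isomorphisms in Lemma~\ref{lem:gluing-commutes} and Lemma~\ref{lem:Khovanov-DTP} are quantum-degree-preserving, after which the homology Whitehead theorem applies to each summand.
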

The proof differs from the proof of Theorem~\ref{thm:gluing} only in
that the notation is more cumbersome.

\begin{remark}
  There is an asymmetry in Formula~\eqref{eq:q-gr-tangle}: the number
  of points $2n$ on the right of the tangle appears, but the number of
  points $2m$ on the left of the tangle does not.
\end{remark}

\begin{remark}
  The quantum gradings we have defined agree with the gradings in
  Khovanov's paper on the arc algebras~\cite{Kho-kh-tangles}, but not
  with those in his first paper on Khovanov
  homology~\cite{Kho-kh-categorification}. See also
  Remark~\ref{rem:grading-convention}.
\end{remark}

\section{Some computations and applications}\label{sec:appl}
\subsection{The connected sum theorem}
We start by noting that our previous connected sum theorem can be
understood as a special case of tangle gluing. Recall:
\begin{theorem}[{\cite[Theorem 8]{LLS-khovanov-product}}]
  Given any knots $K_1$, $K_2$ there are $\KTSpecCat{1}$-module structures on $\KTSpecBim{K_i}$ so that
  $\KTSpecBim{K_1\#K_2}\simeq \KTSpecBim{K_1}\DTP_{\KTSpecCat{1}}\KTSpecBim{K_2}$.
\end{theorem}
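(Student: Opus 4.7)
The plan is to derive the connected sum formula as a direct consequence of the tangle gluing result, Theorem~\ref{thm:gluing}. First, choose a base point on each knot $K_i$ and cut $K_i$ there to obtain a $(0,2)$-tangle $A_i$ and a $(2,0)$-tangle $B_i$, so that $K_i \simeq A_i \cdot \cap \simeq \cup \cdot B_i$, where $\cap$ and $\cup$ denote the unique $(2,0)$-cap and $(0,2)$-cup tangles respectively. The geometric definition of connected sum (remove a small arc from each summand and rejoin the endpoints) then identifies $K_1 \# K_2$ with the horizontal composition $A_1 \cdot B_2$, up to isotopy.

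The main technical step will be identifying $\KTSpecBim{\cap}$ with $\KTSpecRing{1}$ as a left $\KTSpecCat{1}$-module, and $\KTSpecBim{\cup}$ with $\KTSpecRing{1}$ as a right $\KTSpecCat{1}$-module. On chain complexes this is essentially tautological: writing $a$ for the unique matching in $\Crossingless{1}$, we have $\KTfunc(\cap) = V(a\cap) \cong V = \KTalg{1}$, via the standard left multiplication. To promote this to the spectrum level I would construct an explicit left-$\KTSpecCat{1}$-module map $\KTSpecRing{1}\to\KTSpecBim{\cap}$ sending the canonical unit $\SphereS\to\Hom_{\KTSpecCat{1}}(a,a)$ to the generator of $\KTSpecBim{\cap}$ labelling the single circle of $a\cap$ by $1$, and then verify that it is a homology isomorphism. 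Both modules are connective (being built from $K$-theory of finite sets via shifts and homotopy colimits over cubes), so the homology Whitehead theorem (Theorem~\ref{citethm:homologywhitehead}) upgrades the chain-level identification to the required equivalence of module spectra.

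Granting this identification, the rest is formal. Applying Theorem~\ref{thm:gluing} to $K_i \simeq A_i \cdot \cap$ gives an equivalence
\[
  \KTSpecBim{K_i} \simeq \KTSpecBim{A_i} \DTP_{\KTSpecCat{1}} \KTSpecBim{\cap} \simeq \KTSpecBim{A_i} \DTP_{\KTSpecCat{1}} \KTSpecRing{1} \simeq \KTSpecBim{A_i}
\]
of $(\KTSpecCat{0},\KTSpecCat{1})$-bimodules, which endows $\KTSpecBim{K_1}$ with a right $\KTSpecCat{1}$-module structure. Symmetrically, applying Theorem~\ref{thm:gluing} to $K_i \simeq \cup \cdot B_i$ gives $\KTSpecBim{K_i}\simeq \KTSpecBim{B_i}$ and endows $\KTSpecBim{K_2}$ with a left $\KTSpecCat{1}$-module structure. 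A final application of Theorem~\ref{thm:gluing} to $K_1 \# K_2 \simeq A_1 \cdot B_2$ then yields
\[
  \KTSpecBim{K_1 \# K_2} \simeq \KTSpecBim{A_1} \DTP_{\KTSpecCat{1}} \KTSpecBim{B_2} \simeq \KTSpecBim{K_1} \DTP_{\KTSpecCat{1}} \KTSpecBim{K_2}.
\]

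The principal obstacle is the cap/cup module identification: while the underlying homology computation and connectivity are immediate, one must build the comparison map coherently compatible with the $\KTSpecCat{1}$-action (as a map of module spectra rather than merely of spectra), which requires some care given that $\KTSpecBim{\cap}$ is constructed via $K$-theory, rectification, and iterated mapping cones. One should also verify that the resulting module structures coincide, at least up to equivalence, with those of the earlier connected sum theorem from~\cite{LLS-khovanov-product}; this likely reduces to showing that both arise from the same cobordism-level decomposition in $\mCobD$.
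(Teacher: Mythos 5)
Your overall strategy (cut the knots into $(0,2)$- and $(2,0)$-tangles, note that $K_1\#K_2$ is their horizontal composite, and apply \Theorem{gluing}) is the same as the paper's. The difference is in how you identify $\KTSpecBim{K_i}$ with the tangle bimodule and transport the module structure. You route this through an auxiliary computation of the cap module: you propose proving $\KTSpecBim{\cap}\simeq\KTSpecRing{1}$ as a left $\KTSpecCat{1}$-module and then computing $\KTSpecBim{A_i}\DTP_{\KTSpecCat{1}}\KTSpecBim{\cap}\simeq\KTSpecBim{A_i}$. You correctly flag this as the ``principal obstacle'': it is a genuine extra step, since one must build the comparison $\KTSpecRing{1}\to\KTSpecBim{\cap}$ as a map of \emph{module} spectra through the $K$-theory/rectification/mapping-cone machinery before invoking Whitehead.

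The paper bypasses that obstacle entirely. Since $\Crossingless{0}=\{\emptyset\}$ and $\Crossingless{1}=\{c\}$ are both singletons, the bimodule $\KTSpecBim{T_1}$ for the $(0,2)$-tangle $T_1$ is concentrated at the single object $(\emptyset,c)$, and the cube whose iterated cone yields $\KTSpecBim{T_1}(\emptyset,c)$ is indexed over the same crossings, and assigns to each vertex $v$ the same one-manifold, as the cube computing the knot spectrum $\KTSpecBim{K_1}$: one has $\emptyset(T_1)_v\Wmirror{c}=(K_1)_v$ on the nose, because closing $T_1$ with the cap $\Wmirror{c}$ gives back the diagram $K_1$. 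So $\KTSpecBim{T_1}(\emptyset,c)\simeq\KTSpecBim{K_1}$ by inspection, the right $\KTSpecCat{1}$-module structure is just the bimodule action already present on $\KTSpecBim{T_1}$, and one application of \Theorem{gluing} to $T_1T_2=K_1\#K_2$ finishes. Your detour through $\KTSpecBim{\cap}$ is not \emph{wrong} --- the cap module computation can be carried out as you sketch --- but it trades a tautological identification for a nontrivial module-spectrum comparison, which is precisely the gap you left open. Replacing that step with the direct identification would close your proof and bring it in line with the paper's.
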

\begin{proof}
  Delete a small interval from $K_i$ to obtain a $(0,2)$-tangle $T_1$
  and a $(2,0)$-tangle $T_2$. Since there is a unique crossingless
  matching $c$ of $2$ points, $\KTSpecBim{T_i}$ consists of a single
  spectrum $\KTSpecBim{K_1}\simeq \KTSpecBim{T_1}(\emptyset,c)$ (respectively
  $\KTSpecBim{K_2}\simeq \KTSpecBim{T_2}(c,\emptyset)$), together with a map
  \begin{align*}
    \KTSpecBim{T_1}(\emptyset,c)\smas \Hom_{\KTSpecCat{1}}(c,c)&\to\KTSpecBim{T_1}(\emptyset,c)\\
    \Hom_{\KTSpecCat{1}}(c,c)\smas \KTSpecBim{T_1}(c,\emptyset)&\to\KTSpecBim{T_2}(c,\emptyset)    
  \end{align*}
  making $\KTSpecBim{T_1}(\emptyset,c)$ (respectively
  $\KTSpecBim{T_2}(c,\emptyset)$) into a module spectrum over the ring
  spectrum $\Hom_{\KTSpecCat{1}}(c,c)$. So, the statement is immediate
  from Theorem~\ref{thm:gluing}.
\end{proof}

\begin{remark}
  In \cite[Theorem 8]{LLS-khovanov-product}, the derived tensor
  product over $\KTSpecCat{1}$ was denoted $\otimes_{\mathbb{H}^1}$,
  and the Khovanov spectra were denoted $\KhSpace(K_i)$. The
  construction of this paper is the `opposite' of the construction of
  the previous paper (see Remark~\ref{rem:hom-not-cohom}) and
  therefore $\KTSpecBim{K_i}=\KhSpace(m(K_i))$ where $m(K_i)$ is the
  mirror knot.
\end{remark}

Next we note that the K\"unneth spectral sequence for structured
spectra implies a K\"unneth spectral sequence for Khovanov generalized
homology (e.g., Khovanov $K$-theory, Khovanov bordism, \dots):
\begin{theorem}
  Suppose $K$ is decomposed as a union of a $(0,2n)$-tangle $T_1$ and
  a $(2n,0)$-tangle $T_2$. Then for any generalized homology theory
  $h_*$ there is a spectral sequence
  \[
    \Tor^{h_*(\KTSpecCat{n})}_{p,q}(h_*(\KTSpecBim{T_1}),h_*(\KTSpecBim{T_2}))\Rightarrow h_{p+q}(\KTSpecBim{K}).
  \]
\end{theorem}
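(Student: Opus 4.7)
The plan is to reduce the statement to a standard Künneth-type spectral sequence for modules over a ring spectrum, using the gluing theorem (Theorem~\ref{thm:gluing}) to identify $\KTSpecBim{K}$ with a derived smash product.

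First I would apply the gluing theorem: since $K=T_1T_2$, Theorem~\ref{thm:gluing} gives an equivalence of bimodules $\KTSpecBim{K}\simeq \KTSpecBim{T_1}\DTP_{\KTSpecCat{n}}\KTSpecBim{T_2}$, and hence an isomorphism of generalized homology groups $h_*(\KTSpecBim{K})\cong h_*(\KTSpecBim{T_1}\DTP_{\KTSpecCat{n}}\KTSpecBim{T_2})$. Since $T_1$ is a $(0,2n)$-tangle and $T_2$ is a $(2n,0)$-tangle, the source/target wedges reduce so that $\KTSpecBim{T_1}$ is naturally a right module over the ring spectrum $\KTSpecRing{n}=\bigvee_{a,b}\Hom_{\KTSpecCat{n}}(a,b)$ and $\KTSpecBim{T_2}$ a left module over the same ring spectrum, and the derived tensor over $\KTSpecCat{n}$ coincides with the derived smash $\KTSpecBim{T_1}\wedge^{\LL}_{\KTSpecRing{n}}\KTSpecBim{T_2}$ (this is the standard translation between module spectra over a spectral category and module spectra over its ``total'' ring spectrum).

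Next I would invoke the Elmendorf-Kriz-Mandell-May Künneth spectral sequence for module spectra over a structured ring spectrum: given a ring spectrum $R$, a right $R$-module $M$, and a left $R$-module $N$, for any homology theory $h_*$ there is a strongly convergent spectral sequence
\[
E^2_{p,q}=\Tor^{h_*(R)}_{p,q}(h_*(M),h_*(N))\Rightarrow h_{p+q}(M\wedge^{\LL}_R N).
\]
The hypotheses needed (pointwise cofibrancy of the inputs so that the derived smash has the correct homotopy type, and that one works with a sufficiently structured model of the ring spectrum) are furnished by Lemma~\ref{lem:pointwise-cofibrant} together with the fact that $\KTSpecRing{n}$ is built from symmetric spectra via the Elmendorf-Mandell machine, so that the standard derived smash formalism applies. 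Applying this spectral sequence with $R=\KTSpecRing{n}$, $M=\KTSpecBim{T_1}$, and $N=\KTSpecBim{T_2}$, and identifying the abutment with $h_{p+q}(\KTSpecBim{K})$ via the gluing equivalence above, gives the desired spectral sequence.

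The only real content beyond citing the standard machinery is checking that our cofibrancy holds with enough naturality to run the spectral sequence; this is exactly Lemma~\ref{lem:pointwise-cofibrant}, which guarantees that the simplicial bar construction used to build the derived smash has the correct homotopy type levelwise, so the spectral sequence arising from applying $h_*$ to the filtered bar resolution converges with $E^2$-term $\Tor^{h_*(\KTSpecRing{n})}_{*,*}(h_*(\KTSpecBim{T_1}),h_*(\KTSpecBim{T_2}))$. No additional flatness hypotheses are needed, as the $\Tor$ on the $E^2$-page absorbs all the algebraic input; the main obstacle, if any, is purely bookkeeping to rewrite the tensor product over the spectral category $\KTSpecCat{n}$ as a tensor product over the ring spectrum $\KTSpecRing{n}$ so that the classical Künneth machinery applies verbatim.
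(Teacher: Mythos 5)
Your proof is correct and takes essentially the same approach as the paper, which simply cites \cite[Theorem 6.4]{EKMM-top} after invoking the equivalence of symmetric spectra and EKMM spectra; you have merely made explicit the steps the paper's one-line proof leaves implicit (the use of Theorem~\ref{thm:gluing} to identify $\KTSpecBim{K}$ with the derived tensor product, the translation from bimodules over the spectral category $\KTSpecCat{n}$ to modules over the ring spectrum $\KTSpecRing{n}$, and the cofibrancy input from Lemma~\ref{lem:pointwise-cofibrant}).
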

\begin{proof}
  This is a corollary~\cite[Theorem 6.4]{EKMM-top}, after using the
  equivalence of symmetric spectra and EKMM spectra.
\end{proof}

\subsection{Hochschild homology and links in \texorpdfstring{$S^1\times S^2$}{S1XS2}}\label{sec:S1S2}
Using Hochschild homology, Rozansky defined a knot homology for links
in $S^1\times S^2$ with even winding number around
$S^1$~\cite{Rozansky-Kh-S1S2} (see
also~\cite{Willis-Kh-S1S2}). In this section we note that Rozansky's
invariant admits a stable homotopy refinement, and conjecture that the
refinement is a knot invariant.

Given an $(n,n)$-tangle $T$ in $[0,1]\times \DD^2$, there are three
ways one can close $T$:
\begin{enumerate}
\item Identify $(0,p)\sim(1,p)$ to obtain a knot $K_{S^1\times\DD^2}\subset S^1\times\DD^2$.
\item Include $S^1\times\DD^2$ as a neighborhood of the unknot in
  $S^3$, and let $K_{S^3}\subset S^3$ be the image of
  $K_{S^1\times\DD^2}$.
\item Include $S^1\times\DD^2$ in
  $S^1\times S^2=(S^1\times\DD^2)\cup_\bdy (S^1\times\DD^2)$, and let
  $K_{S^1\times S^2}\subset S^1\times S^2$ be the image of
  $K_{S^1\times\DD^2}$.
\end{enumerate}

It is clear that every link in $S^1\times\DD^2$, $S^3$, and
$S^1\times S^2$ arises this way. If we require that $n$ be even (which
we shall) then the links which arise in $S^1\times\DD^2$ and
$S^1\times S^2$ are exactly those with even winding number around
$S^1$.

Rozansky's invariant of a knot $K$ in $S^1\times S^2$ is the
Hochschild homology of $\KTfunc(T)$, where $T$ is a tangle whose
closure is $K$. Correspondingly, the stable homotopy lift is the
topological Hochschild homology of $\KTSpecBim{T}$, the definition of
which we recall briefly:
\begin{definition}
  Given a pointwise cofibrant spectral category $\Cat$ and a
  $(\Cat,\Cat)$-bimodules $\SModule$, the \emph{topological Hochschild
    homology} $\THH_{\Cat}(\SModule)=\THH(\SModule)$ of $\SModule$ is
  the homotopy colimit of the diagram
  \[
    \cdots
    \mathrel{\substack{\textstyle\rightarrow\\[-0.5ex]
    \textstyle\rightarrow \\[-0.5ex]
    \textstyle\rightarrow\\[-0.5ex]
    \textstyle\rightarrow}}
    \!\!\!\!\!\!\coprod_{a_1,a_2,a_3\in\Ob(\Cat)}\!\!\!\!\!\!\SModule(a_3,a_1)\smas\Cat(a_1,a_2)\smas \Cat(a_2,a_3)
    \mathrel{\substack{\textstyle\rightarrow\\[-0.5ex]
    \textstyle\rightarrow \\[-0.5ex]
    \textstyle\rightarrow}}
    \!\!\!\!\coprod_{a_1,a_2\in\Ob(\Cat)}\!\!\!\!\SModule(a_2,a_1)\smas\Cat(a_1,a_2)\rightrightarrows\!\!\coprod_{a_1\in\Ob(\Cat)}\!\!\SModule(a_1,a_1)
  \]
  where $\Cat(a,b)$ denotes $\Hom_{\Cat}(a,b)$ and the maps
  \[
    d_i\co \SModule(a_n,a_1)\smas\Cat(a_1,a_2)\smas \cdots\smas\Cat(a_{n-1},a_n)\to \coprod\SModule(b_{n-1},b_1)\smas \Cat(b_1,b_2)\smas \cdots\smas\Cat(b_{n-2},b_{n-1})
  \]
  are given by composition
  $\Cat(a_{i},a_{i+1})\smas \Cat(a_{i+1},a_{i+2})\to\Cat(a_i,a_{i+2})$ if
  $1\leq i\leq n-2$ and the actions
  $\SModule(a_n,a_1)\smas\Cat(a_1,a_2) \to \SModule(a_n,a_2)$ and
  $\Cat(a_{n-1},a_n)\smas\SModule(a_n,a_1)\to\SModule(a_{n-1},a_{1})$ if
  $i=0$ or $n-1$, respectively.
\end{definition}
(Compare~\cite[Proposition 3.5]{BM-top-spectral}. Recall from
Lemma~\ref{lem:pointwise-cofibrant} that $\KTSpecCat{n}$ is pointwise
cofibrant.)

\begin{proposition}\label{prop:S1D2invt}
  If $T$ and $T'$ induce isotopic knots in $S^1\times \DD^2$ then for
  each $j\in\ZZ$,
  \[
    \THH(\KTSpecBim{T,j})\simeq \THH(\KTSpecBim{T',j}).
  \]
\end{proposition}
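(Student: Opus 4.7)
The plan is to reduce the statement to two kinds of moves on tangles and to handle each one separately. By a standard argument (see e.g.\ the usual discussion of closures of tangles), any two $(n,n)$-tangles $T,T'$ whose $S^1\times\DD^2$-closures are isotopic are related by a finite sequence of (i) tangle isotopies fixing the endpoints, i.e., Reidemeister moves in $[0,1]\times\DD^2$, and (ii) \emph{cyclic moves}, i.e., replacing a decomposition $T=T_1T_2$ (with $T_1$ an $(n,k)$-tangle and $T_2$ a $(k,n)$-tangle) by $T'=T_2T_1$, a $(k,k)$-tangle. Since $\THH$ preserves pointwise equivalences of pointwise-cofibrant bimodules (and $\KTSpecBim{T}$ is pointwise cofibrant by Lemma~\ref{lem:pointwise-cofibrant}), it suffices to establish the corresponding graded equivalences for each graded piece separately.

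For moves of type (i), Theorem~\ref{thm:spec-invariance}, together with the graded refinement sketched in Section~\ref{sec:q-gr}, produces a zigzag of equivalences of graded $(\KTSpecCat{n},\KTSpecCat{n})$-bimodules $\KTSpecBim{T,j}\simeq\KTSpecBim{T',j}$ for each $j\in\ZZ$. Applying $\THH_{\KTSpecCat{n}}$ and using the fact that $\THH$ carries equivalences of pointwise-cofibrant bimodules to equivalences of spectra yields $\THH(\KTSpecBim{T,j})\simeq\THH(\KTSpecBim{T',j})$.

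For moves of type (ii), I would first apply the graded gluing theorem (Theorem~\ref{thm:graded-gluing}) to obtain graded equivalences
\[
\KTSpecBim{T_1T_2}\simeq \KTSpecBim{T_1}\DTP_{\KTSpecCat{k}}\KTSpecBim{T_2}, \qquad \KTSpecBim{T_2T_1}\simeq \KTSpecBim{T_2}\DTP_{\KTSpecCat{n}}\KTSpecBim{T_1}.
\]
I would then invoke the cyclic (``trace-like'') invariance of topological Hochschild homology: for spectral categories $\Cat,\Dat$, a pointwise-cofibrant $(\Cat,\Dat)$-bimodule $\SModule$ and a pointwise-cofibrant $(\Dat,\Cat)$-bimodule $\SNodule$, one has a natural equivalence
\[
\THH_{\Cat}(\SModule\DTP_{\Dat}\SNodule)\simeq \THH_{\Dat}(\SNodule\DTP_{\Cat}\SModule).
\]
This is a standard property of $\THH$ of spectral categories (compare Blumberg--Mandell~\cite{BM-top-spectral}); at the level of simplicial bar constructions both sides compute $\SModule\DTP_{\Cat\smas\Dat^{\op}}\SNodule$. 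Since pointwise cofibrancy is preserved by the gluing construction, applying this with $\SModule=\KTSpecBim{T_1}$ and $\SNodule=\KTSpecBim{T_2}$ (and tracking the quantum grading, which is added along every smash factor in the bar complex) yields $\THH(\KTSpecBim{T_1T_2,j})\simeq\THH(\KTSpecBim{T_2T_1,j})$ for every $j$.

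The main obstacle, I anticipate, is purely bookkeeping: namely, verifying that the graded refinement of the gluing theorem and the cyclic invariance of $\THH$ assemble compatibly with the quantum grading convention of Section~\ref{sec:q-gr}, and that the pointwise cofibrancy needed to invoke $\THH$'s homotopy invariance is preserved along the zigzag of equivalences produced by Reidemeister invariance and by the gluing theorem. Once these verifications are in place, chaining the two types of moves finishes the proof.
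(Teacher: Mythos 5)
Your proposal takes essentially the same approach as the paper's own proof: reduce to Reidemeister moves plus rotations (which the paper calls ``rotations'' and you call ``cyclic moves''), handle the former via Theorem~\ref{thm:spec-invariance} and the homotopy invariance of $\THH$, and handle the latter via the gluing theorem combined with the trace property of $\THH$ (citing Blumberg--Mandell). The only superficial differences are that you cite the graded gluing Theorem~\ref{thm:graded-gluing} where the paper cites Theorem~\ref{thm:gluing} and leaves the grading implicit, and you spell out the pointwise-cofibrancy and grading bookkeeping the paper takes for granted.
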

\begin{proof}
  Given a $(2n,2n)$-tangle $T$ decomposed as a composition of two
  smaller tangles, $T=T_1\circ T_2$, we will call the tangle
  $T_2\circ T_1$ a \emph{rotation of $T$}.  If $T$ and $T'$ induce
  isotopic knots in $S^1\times\DD^2$ then $T$ and $T'$ are related by
  a sequence of Reidemeister moves and rotations. Topological
  Hochschild homology is invariant under quasi-isomorphisms of
  spectral bimodules~\cite[Proposition 3.7]{BM-top-spectral}, so by
  Theorem~\ref{thm:spec-invariance} Reidemeister moves do not change
  $\THH(\KTSpecBim{T,j})$. Topological Hochschild homology is a trace,
  in the sense that given spectral categories $\Cat$, $\Dat$, a
  $(\Cat,\Dat)$-bimodule $\SModule$ and a $(\Dat,\Cat)$-bimodule
  $\SNodule$,
  \[
    \THH_{\Cat}(\SModule\DTP_{\Dat}\SNodule)\simeq \THH_{\Dat}(\SNodule\DTP_{\Cat}\SModule)
  \]
  \cite[Proposition 6.2]{BM-top-spectral}.  Thus, it
  follows from Theorem~\ref{thm:gluing} that $\THH(\KTSpecBim{T,j})$
  is invariant under rotation as well.
\end{proof}

\begin{remark}
  Since we have only defined an invariant of a $(2m,2n)$-tangle, any
  link in $S^1\times\DD^2$ which arise from our construction has
  even winding number.
\end{remark}

\begin{proposition}
  The singular homology of $\THH(\KTSpecBim{T})$ is Rozansky's
  invariant $\Roz{K_{S^2\times S^1}}$.
\end{proposition}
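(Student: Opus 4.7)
The plan is to reduce the computation of $H_*(\THH(\KTSpecBim{T}))$ to the classical Hochschild homology $\HH_*(\KTalg{n},\KTfunc(T))$, which is Rozansky's invariant $\Roz{K_{S^2\times S^1}}$ by definition.

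First, I would apply the chain functor $C_*\co\mSpectra\to\mComplexes$ to the simplicial bar object defining $\THH(\KTSpecBim{T})$. Since $\KTSpecCat{n}$ and $\KTSpecBim{T}$ are pointwise cofibrant (Lemma~\ref{lem:pointwise-cofibrant}), and $C_*$ preserves homotopy colimits and sends smash products to tensor products up to natural quasi-isomorphism, this produces a natural quasi-isomorphism
\[
C_*(\THH(\KTSpecBim{T})) \simeq \THH_{C_*\KTSpecCat{n}}(C_*\KTSpecBim{T}),
\]
where the right-hand side is the chain-level Hochschild homology of $C_*\KTSpecBim{T}$ as a dg bimodule over the dg category $C_*\KTSpecCat{n}$. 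Since all objects involved are connective, we may freely pass between spectral and chain-level homotopy colimits.

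Next, I would apply the connective-cover zigzag of Lemma~\ref{lem:connective-zigzag} to reduce $C_*\KTSpecCat{n}$ and $C_*\KTSpecBim{T}$ to their $H_0$'s. By (the argument of) Proposition~\ref{prop:homology-right}, these are respectively the arc algebra $\KTalg{n}$ and the Khovanov tangle bimodule $\KTfunc(T)$ (up to the overall $N_+$-shift). Since derived Hochschild homology takes pointwise quasi-isomorphisms of sufficiently flat dg categories and bimodules to quasi-isomorphisms, these zigzags give a natural quasi-isomorphism
\[
\THH_{C_*\KTSpecCat{n}}(C_*\KTSpecBim{T}) \simeq \HH_*(\KTalg{n},\KTfunc(T)).
\]
Sweetness of $\KTfunc(T)$ (Lemma~\ref{lem:Khovanov-DTP}) ensures that the derived Hochschild complex agrees with the underived one, which is Rozansky's invariant $\Roz{K_{S^2\times S^1}}$ by definition.

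The main obstacle will be justifying the two comparisons rigorously: the identification of chain-level Hochschild homology with spectral $\THH$ after applying $C_*$, and the invariance of (derived) Hochschild homology under the zigzag of quasi-isomorphisms from $C_*\KTSpecCat{n}$ down to $\KTalg{n}$. On the topological side the key ingredients are pointwise cofibrancy of both the spectral category and bimodule, and the appropriate monoidality of $C_*$. On the algebraic side the key ingredients are sweetness of the Khovanov bimodules, together with the flatness of $\KTalg{n}$ over $\mathbb{Z}$, which together guarantee that the derived and underived bar constructions compute the same homology.
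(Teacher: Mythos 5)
Your proof is correct and follows essentially the route the paper gestures at: the paper's own proof is a single line deferring to the argument of Proposition~\ref{prop:homology-right} (and the closely related Lemma~\ref{lem:gluing-commutes}), and your proposal fleshes that out precisely as intended — apply $C_*$ to the cyclic bar construction using pointwise cofibrancy, monoidality of $C_*$, and preservation of homotopy colimits, then run the $\tau_{\geq 0}$ zigzag of Lemma~\ref{lem:connective-zigzag} to reduce $C_*\KTSpecCat{n}$ and $C_*\KTSpecBim{T}$ to $\KTalg{n}$ and $\KTfunc(T)$. One minor remark: the appeal to sweetness at the end is not actually needed. Hochschild homology (spectral or algebraic) is defined via the cyclic bar construction and is therefore already a derived invariant; what matters for comparing the chain-level $\THH$ with Rozansky's $\HH_*(\KTalg{n},\KTfunc(T))$ is only $\ZZ$-flatness (which holds, since all the underlying abelian groups are free) and the invariance of the bar complex under term-wise quasi-isomorphisms of flat dg bimodules, both of which you already invoke. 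Sweetness is needed for the gluing theorem's comparison of derived with underived tensor products, but there is no analogous underived construction in play here.
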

\begin{proof}
  The proof is similar to the proof of
  Proposition~\ref{prop:homology-right}, and is left to the reader.
\end{proof}

\begin{conjecture}\label{conj:S1S2invt}
  If $T$ and $T'$ induce isotopic knots in $S^1\times S^2$ then for
  each $j\in\ZZ$,
  \[
    \THH(\KTSpecBim{T,j})\simeq \THH(\KTSpecBim{T',j}).
  \]
\end{conjecture}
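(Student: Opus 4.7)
The plan is to extend Proposition~\ref{prop:S1D2invt} by identifying the additional generating moves that relate $(2n,2n)$-tangles whose closures are isotopic in $S^1\times S^2$ but not necessarily in $S^1\times \DD^2$, and then proving $\THH$-invariance under each such move. Isotopy invariance in $S^1\times\DD^2$ is generated by Reidemeister moves together with cyclic rotations $T_1T_2\leftrightsquigarrow T_2T_1$, both of which were handled in the proof of Proposition~\ref{prop:S1D2invt}: Reidemeister invariance follows from Theorem~\ref{thm:spec-invariance} and the fact that $\THH$ preserves equivalences of bimodules, and rotation invariance follows from the trace property of $\THH$ combined with the gluing Theorem~\ref{thm:gluing}.

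The additional moves needed for $S^1\times S^2$ encode passage through the extra $2$-disk glued to $\partial\DD^2$. I would first fix a finite list of such ``handle-slide'' moves, following the surgery/handle-body presentations used by Rozansky~\cite{Rozansky-Kh-S1S2} and related work on skein modules in $S^1\times S^2$. Each such move takes an $(2n,2n)$-tangle $T$ to an $(2(n+k),2(n+k))$-tangle $T^{\sharp}$ for some $k\geq 0$ by introducing strands that wind around the belt sphere of the new $2$-handle. The conjecture then reduces to showing that for each such move,
\[
\THH_{\KTSpecCat{n}}(\KTSpecBim{T,j})\simeq \THH_{\KTSpecCat{n+k}}(\KTSpecBim{T^{\sharp},j}).
\]
A natural strategy is to construct, for each handle-slide, an explicit $(\KTSpecCat{n+k},\KTSpecCat{n})$-bimodule $\SModule$ (together with a compatible $(\KTSpecCat{n},\KTSpecCat{n+k})$-bimodule on the other side) and a chain of equivalences relating $\KTSpecBim{T^{\sharp}}$ to a tensor sandwich of $\KTSpecBim{T}$ between these two bimodules, after which the trace property of $\THH$ combined with Theorem~\ref{thm:graded-gluing} produces the desired equivalence of topological Hochschild homologies.

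The hard part will be the handle-slide move itself. Unlike the moves treated in Proposition~\ref{prop:S1D2invt}, it fundamentally changes the ambient spectral category from $\KTSpecCat{n}$ to $\KTSpecCat{n+k}$, so there is no naive bimodule equivalence to invoke, and the two THH spectra are built from different diagrams. A rigorous argument would presumably have to lift Rozansky's chain-level proof to the spectrum level, which in turn appears to require a categorified Jones--Wenzl-type projector and a delicate stabilization/cofinality argument, perhaps in the spirit of the stable-limit constructions of Willis and Hogancamp. These ingredients require new techniques beyond those developed here, which is why we state the result only as a conjecture.
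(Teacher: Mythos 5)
This statement is labeled a \emph{conjecture} in the paper, and the paper offers no proof: the only thing the paper says about it is the remark immediately following, namely that by Rozansky's Theorem~2.2, given Proposition~\ref{prop:S1D2invt} it would suffice to check invariance of $\THH(\KTSpecBim{T,j})$ under the single move of dragging the first strand of $T$ around the others. Your discussion is therefore the right kind of answer: you correctly recognize that Reidemeister and cyclic-rotation invariance are already handled by Proposition~\ref{prop:S1D2invt} (via Theorem~\ref{thm:spec-invariance}, the trace property of $\THH$, and Theorem~\ref{thm:gluing}), and that what remains is invariance under moves coming from the extra $2$-handle in $S^1\times S^2$, and you honestly conclude that new techniques would be needed rather than claiming a proof.

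One point where your formulation diverges from the paper's (and from Rozansky's): you phrase the remaining move as one that changes a $(2n,2n)$-tangle into a $(2(n+k),2(n+k))$-tangle, forcing a comparison of $\THH$ over two different spectral categories $\KTSpecCat{n}$ and $\KTSpecCat{n+k}$. The move the paper has in mind---dragging the first strand around the others---stays within $(2n,2n)$-tangles, so the relevant spectral category is fixed; the difficulty is showing that this nonlocal modification of the tangle leaves $\THH$ unchanged, not reconciling $\THH$ over different categories. Your version thus introduces an additional (and probably unnecessary) obstacle. Your instinct that categorified Jones--Wenzl projectors or stable-limit techniques may be relevant is reasonable speculation about what a future proof might require, but it is not something the paper commits to.
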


As Rozansky notes, given Proposition~\ref{prop:S1D2invt}, to verify
Conjecture~\ref{conj:S1S2invt} it suffices to verify that
$\THH(\KTSpecBim{T,j})$ is invariant under dragging the first strand
around the others~\cite[Theorem 2.2]{Rozansky-Kh-S1S2}.

\subsection{Where the ladybug matching went: an example}
Our longtime readers will recall that a key step in the construction
of $\KTSpecBim{K}$ is the ladybug matching, which provides an
identification across each 2-dimensional face in the cube of
resolutions. (This matching is equivalent to the rule for composing
genus $0$ cobordisms to get a genus $1$ cobordism in
Section~\ref{sec:CobE-to-Burn}.) In particular, the ladybug matching
is relevant for certain pairs of crossings in a diagram $K$. Such
readers may wonder where the ladybug matching has gone, now that
the Khovanov homotopy type can be constructed by composing a sequence
of 1-crossing tangles. We answer this question, with an example.

\begin{figure}
  \centering
  \includegraphics{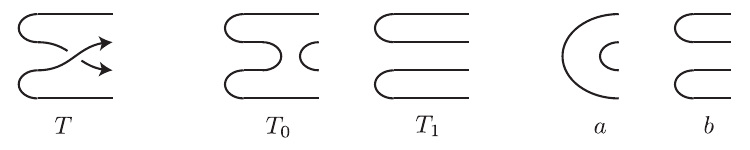}
  \caption{\textbf{Where have all the ladybugs gone?} Left: a tangle $T$. Center: the resolutions $T_0$ and $T_1$ of $T$. Right: the crossingless matchings $a$ and $b$.}
  \label{fig:ladybug-gone}
\end{figure}

Consider the $(0,4)$-tangle $T$ shown in
Figure~\ref{fig:ladybug-gone}. If we let $a$ and $b$ be the two
crossingless matchings on $4$ strands, labeled as in that figure, then 
\begin{align*}
 \KTSpecBim{T}(a)&=\Cone\left(\SphereS_{a,1\otimes 1}\vee\SphereS_{a,1\otimes X}\vee\SphereS_{a,X\otimes 1}\vee\SphereS_{a,X\otimes X}\longrightarrow \SphereS_{a,1}\vee\SphereS_{a,X}\right)\\
&=\Cone\left(\SphereS_{a,1\otimes 1}\to\SphereS_{a,1}\right)\vee\Cone\left(\SphereS_{a,1\otimes X}\vee\SphereS_{a,X\otimes 1}\to\SphereS_{a,X}\right)\vee\Cone\left(\SphereS_{a,X\otimes X}\to\pt\right),\\
  \KTSpecBim{T}(b)&=\Cone\left(\SphereS_{b,1}\vee\SphereS_{b,X}\longrightarrow\SphereS_{b,1\otimes 1}\vee\SphereS_{b,1\otimes X}\vee\SphereS_{b,X\otimes 1}\vee\SphereS_{b,X\otimes X}\right)\\
&=\left(\SphereS_{b,1\otimes 1}\right)\vee\Cone\left(\SphereS_{b,1}\to\SphereS_{b,1\otimes X}\vee\SphereS_{b,X\otimes 1}\right)\vee\Cone\left(\SphereS_{b,X}\to\SphereS_{b,X\otimes X}\right),
\end{align*}
where we have used subscripts to indicate the Khovanov generator
corresponding to each summand. These mapping cones are indicated in
Figure~\ref{fig:cones-of-spheres-new} (where $\SphereS$ has been depicted
as $S^1$).

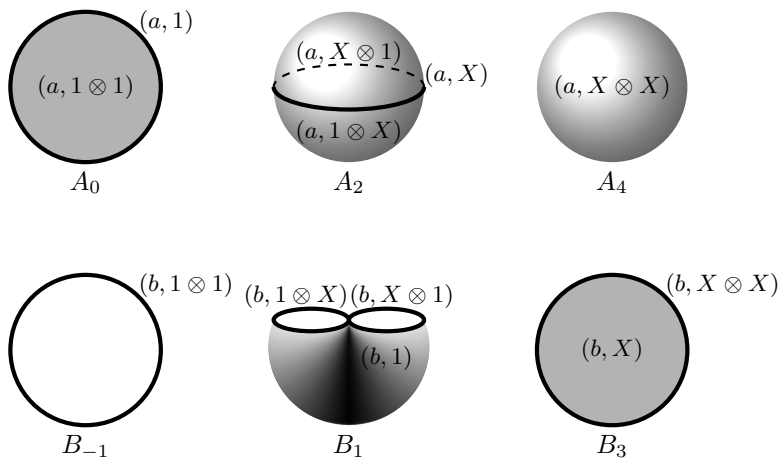
\begin{figure}
  \centering
  \begin{tikzpicture}
    
    \begin{scope}
      \draw[ultra thick,fill=black!30] (0,0) circle (1cm);
      \node[anchor=south west,inner sep=0pt,outer sep=0pt] at (45:1cm) {\small $(a,1)$};
      \node at (0,0) {\small $(a,1\otimes 1)$};
      \node[anchor=north] at (0,-1) {$A_0$};
    \end{scope}

    \begin{scope}[xshift=3.5cm]
      \shade [ball color=white] (0,0) circle [radius=1cm];
      \begin{scope}[yscale=0.3]
        \draw[ultra thick] (-1,0) arc (180:360:1cm); 
        \draw[dashed,thick] (1,0) arc (0:180:1cm); 
      \end{scope}
      \node[anchor=south west,inner sep=0pt,outer sep=0pt] at (1,0) {\small $(a,X)$};
      \node at (0,-0.6) {\small $(a,1\otimes X)$};
      \node at (0,0.45) {\small $(a,X\otimes 1)$};
      \node[anchor=north] at (0,-1) {$A_{2}$};
    \end{scope}

    \begin{scope}[xshift=7cm]
      \shade [ball color=white] (0,0) circle [radius=1cm];
      \node at (0,0) {\small $(a,X\otimes X)$};
      \node[anchor=north] at (0,-1) {$A_{4}$};
    \end{scope}

    \begin{scope}[yshift=-3.5cm]
      \draw[ultra thick] (0,0) circle (1cm);
      \node[anchor=south west,inner sep=0pt,outer sep=0pt] at (45:1cm) {\small $(b,1\otimes 1)$};
      \node[anchor=north] at (0,-1) {$B_{-1}$};
    \end{scope}

    \begin{scope}[xshift=3.5cm,yshift=-3.5cm]
      \foreach \cc in {0,0.005,...,1}{
        \pgfmathsetmacro\r{acos(\cc)}
        \foreach \sr in {\r,180-\r}{
          \pgfmathsetmacro\cirlen{1+0.4*sin(\sr)}
          \pgfmathsetmacro\cirwidabs{max(0.05,0.3*\cc)}
          \pgfmathsetmacro\ltint{0.9*abs(\sr-90)/90}
          \definecolor{currentcolor}{rgb}{\ltint, \ltint, \ltint}
          \begin{scope}[yshift=0.4cm,yscale=-1,rotate=\sr,yscale=\cirwidabs,xscale=\cirlen]
            \fill[currentcolor] (0.5,0) circle (0.5cm);
          \end{scope}
        }}

      \node at (0.5,-0.1) {\small $(b,1)$};
      \begin{scope}[yshift=0.4cm,yscale=0.3]
      \draw[ultra thick,fill=white] (0.5,0) circle (0.5cm);
      \draw[ultra thick,fill=white] (-0.5,0) circle (0.5cm);
      \node[anchor=south east,inner sep=0pt] at (0,0.5) {\small $(b,1\otimes X)$};
      \node[anchor=south west,inner sep=0pt] at (0,0.5) {\small $(b,X\otimes 1)$};
      \end{scope}
    \node[anchor=north] at (0,-1) {$B_1$};
    \end{scope}

    \begin{scope}[xshift=7cm,yshift=-3.5cm]
      \draw[ultra thick,fill=black!30] (0,0) circle (1cm);
      \node[anchor=south west,inner sep=0pt,outer sep=0pt] at (45:1cm) {\small $(b,X\otimes X)$};
      \node at (0,0) {\small $(b,X)$};
      \node[anchor=north] at (0,-1) {$B_3$};
    \end{scope}

  \end{tikzpicture}
  \caption{\textbf{Some mapping cones.} The space $\KTSpecBim{T}(a)$
    is the wedge sum of the spaces $A_0$, $A_2$, and $A_{4}$,
    while $\KTSpecBim{T}(b)$ is the wedge sum of the spaces
    $B_{-1}$, $B_1$, and $B_3$ (the subscripts denote the
    quantum gradings). A cellular decomposition is shown with the
    cells labeled by the corresponding Khovanov generators. The space
    $A_2$ is built from one $1$-cell labeled $(a,X)$ two $2$-cells
    labeled $(a,1\otimes X)$ and $(a,X\otimes 1)$. The space $B_1$ has
    two $1$-cells labeled $(b,1\otimes X)$ and $(b,X\otimes 1)$ and
    one $2$-cell labeled $(b,1)$.}
  \label{fig:cones-of-spheres-new}
\end{figure}

Consider now the spaces
$A_2=\Cone\left(\SphereS_{a,1\otimes X}\vee\SphereS_{a,X\otimes
    1}\to\SphereS_{a,X}\right)$ and
$B_{1}=\Cone\left(\SphereS_{b,1}\to\SphereS_{b,1\otimes
    X}\vee\SphereS_{b,X\otimes 1}\right)$. The operation
$\KTSpecBim{T}(b)\otimes \Hom_{\KTSpecCat{2}}(b,a) \to
\KTSpecBim{T}(a)$ gives a map
\[
  B_1\smas \SphereS_{b\Wmirror{a},1}\to A_2,
\]
where $\SphereS_{b\Wmirror{a},1}$ is the wedge summand of
$\Hom_{\KTSpecCat{2}}(b,a)$ which labels the single circle in
$b\Wmirror{a}$ by $1$ (which lives in quantum grading $1$). This map
sends half of $B_1$ to the top half in $A_2$ and half of $B_1$ to the
bottom half in $A_2$. Which half is sent to which half is determined
by the ladybug matching. The two maps are, of course, homotopic, by
rotating the sphere $A_2$ by $\pi$ or $-\pi$, but the homotopy is not
canonical.

\vspace{-0.3cm}
\bibliographystyle{MyPlain}
\bibliography{newbibfile}
\vspace{1cm}
\end{document}